\documentclass[leqno,11pt]{article}
\usepackage{amsfonts}
\usepackage[utf8]{inputenc}
\usepackage[T1]{fontenc}
\usepackage{lmodern}
\usepackage{amsthm}
\usepackage{amsmath,amssymb}
\usepackage{graphicx,floatrow,subfig}
\usepackage{multirow,makecell}
\usepackage{pifont}
\usepackage{enumerate}
\usepackage{mathtools}
\usepackage{mathrsfs}
\usepackage{xcolor}
\usepackage{geometry}
\usepackage{hyperref}
\usepackage{color}
\usepackage{bm}
\usepackage{multirow,makecell}
\usepackage{mathtools}
\allowdisplaybreaks
\allowdisplaybreaks[4]
\newtheorem{thm}{Theorem}
\newtheorem{lm}{Lemma}
\newtheorem{pro}{Proposition}

\newtheorem{defi}{Definition}

\newtheorem{rem}{Remark}
\newtheorem{cor}{Corollary}
\title{\bf Cyclicity of lips and the center--focus problem at infinity}
\author{{\small Hebai Chen$^1$,\ \ \, Dehong Dai$^1$, \ \ \, Vadim Kaloshin$^{2}$, \ \ \, Zhijie Li$^{1}$}\\
		\footnote{Email: chen\_hebai@csu.edu.cn (H. Chen), dai\_dehong@csu.edu.cn (D. Dai), vadim.kaloshin@gmail.com (V. Kaloshin,
			corresponding author), li\_zhijie@csu.edu.cn (Z. Li)}	
	{\footnotesize{\em{$^1$School of Mathematics and Statistics, HNP-LAMA, Central South University, Changsha 410083, China}}}
	\\
	{\footnotesize {\em{$^2$
Institute of Science and Technology Austria (ISTA), Klosterneuburg 3400, Austria
}}}}

\date{}
\begin{document}
    \maketitle

    %%%%%%%%%%%%%%%%%
    \begin{abstract}
	In this work, we discover an intriguing Li\'{e}nard system that is integrable near infinity and near zero and has Ilyashenko--Kotova lips in between. Moreover, we give a necessary and sufficient condition for a polynomial Li\'{e}nard system to have a center at infinity. In a different direction, we improve a lower bound for the Hilbert number of Li\'{e}nard systems. The improvement is due to the development of Brudnyi's method for calculating the cyclicity of centers near infinity and near zero.
	\\
    {\bf Keywords:} polynomial Li\'{e}nard system; center--focus problem; Hilbert number; limit cycle; lips. \\
    {\bf MSC (2020):} 34C07; 34C23; 34C37; 34K18
    \end{abstract}
	
\tableofcontents	
	
	\baselineskip 16pt
	\parskip 10pt

	%%%%%%%%%%%%%%%%%%%%%%%%%%%%%%%%%%%%%%%%%%%%%%%%%%%%%%%%%%%%%%%%%%%%%%%%%%%%%%%%%%%%%%%%%%%%%%%%%
		\section{Introduction and main results}
{
The pioneering works of Poincar\'{e} \cite{P} and Dulac \cite{Dulac}
introduced the concept of a center.  Since then, distinguishing a center from
a focus has remained a central problem in the qualitative theory of planar
differential systems, commonly referred to as the center--focus problem
\cite{BPY,TY}.  The corresponding problem at infinity has also received
considerable attention \cite{BR,BRY}.  Throughout this paper, centers and foci
at infinity are understood in the Poincar\'{e} disc; the compactification
charts and the precise definitions used here are recalled in
Appendix~\ref{app:poincare}.
}

A widely studied system with both theoretical and practical significance is the generalized polynomial Li\'{e}nard system \cite{LMP, Smale91, Smale98, ZDHD}:
\begin{equation} 
\dot x=y,\qquad
\dot y=-g(x)-f(x)y,
\label{ls}
\end{equation}
where 
$$g(x)=\sum_{i=k}^{m}a_{i}x^{i},~~~~~~  f(x)=\sum_{i=s}^{n}b_{i}x^{i}$$
with $m, n, k, s\in\mathbb{N}$ and $a_{k}a_{m}b_{s}b_{n}\neq0$, and 
the dot represents the derivative of the independent variable $t$.

In this paper, we prove two types of results concerning the above system:
\begin{itemize}
\item ({\it center at infinity}) We give a necessary and sufficient condition for \eqref{ls}
to have a center at infinity (Theorem \ref{cf-thm1}). For $\deg f=1$ and $\deg g=3,5$ 
we give a complete topological classification of all possible phase portraits 
(see Figs. \ref{fig:m3n1} and \ref{fig:m5n1}).

An interesting feature of one phase portrait is a lips configuration (Fig.~\ref{lip-Ham}). 
This phase portrait was discovered for abstract planar vector fields by Ilyashenko--Kotova 
and has surprisingly large semilocal cyclicity (see Section~\ref{sec:lips}).  

\item ({\it improved estimate for the Hilbert number}) We improve known bounds for the so-called 
Hilbert number (cyclicity of \eqref{ls}) for $\deg g=5$ and large $\deg f$ (Theorem \ref{maint}). 

The improvement is due to the application of two new methods: the Brudnyi cyclicity
method near infinity and the Xiong--Han method near two cuspidal separatrices.
We also apply Brudnyi cyclicity near zero (Theorem \ref{t-hn5}), but this does not improve the lower bound on the Hilbert number. 
\end{itemize}

\subsection{Centers and foci at infinity}

	 The center--focus problem at infinity for polynomial Li\'{e}nard systems was 
	 initially proposed by Dumortier and Herssens in \cite{DH}, but has not been fully resolved. 		   				 
 They provided a classification of the behavior near infinity using Poincar\'{e} and Poincar\'{e}-Lyapunov compactifications, showing that the dynamical behavior depends critically on $a_m$ and $b_n$, except in the case where 
\begin{center}
   $m, n$ are odd, $m>2n+1$, $a_m>0$ or $m=2n+1$, $n$ is odd and $4(n+1)a_mb_{n}^{-2}>1$.
   \end{center}
   In such cases, the dynamical behavior also depends on the values of the other coefficients $a_i$ and $b_j$.

 Center singularities in planar differential systems can be categorized into distinct types based on their linear approximations and nonlinear characteristics. See the definitions of centers of linear and nilpotent types in Appendix~\ref{app:poincare}.

Recently, Llibre and Valls \cite{LV} presented a necessary and sufficient condition for 
the generalized polynomial Li\'{e}nard system \eqref{ls} to have a global center of linear type.
Following their work, the first author and co-workers \cite{CLZ} provided a necessary and 
sufficient condition for the generalized polynomial Li\'{e}nard system \eqref{ls} to have a global 
center of either linear or nilpotent type. They successfully solved the center--focus problem 
at infinity for the generalized polynomial Li\'{e}nard system \eqref{ls} with a single equilibrium.
However, these methods do not extend to systems with multiple equilibria,
since the function $\int_{0}^{x}g(x){\rm d}x$ does not exhibit the property of being monotonically 
decreasing on $(-\infty,0)$ and monotonically increasing on $(0,\infty)$ in such cases.

\subsection{Center--focus problem}
Following \cite{DH}, we address the following fundamental question in this paper:

{\it Can we give necessary and sufficient conditions for a generalized polynomial 
	Li\'{e}nard system to have a center at infinity, thereby solving 
	the center--focus problem at infinity?}
	
{
	Every finite equilibrium of system \eqref{ls} has the form $(x_*,0)$ with
	$g(x_*)=0$.  Its index belongs to $\{-1,0,1\}$, and the sum of the indices of
	all finite equilibria is $1$ under the hypotheses considered below.  We shall
	therefore distinguish equilibria in the right half-plane $x>0$ and in the
	left half-plane $x<0$.  The corresponding index configurations are derived in
	Section~\ref{sec:proof}.
}

Building upon this classification, we now present the main result, which answers the center--focus problem at infinity affirmatively. 

{
	For system~\eqref{ls}, every finite equilibrium is of the form
	\[
	E_{x_*}=(x_*,0),
	\qquad
	g(x_*)=0.
	\]
	For an isolated equilibrium \(E\), we denote its Poincar\'e index by
	\(\operatorname{ind}(E)\).
	
	\begin{thm}\label{cf-thm1}
		Assume that \(g(0)=0\), that all finite equilibria of system~\eqref{ls}
		are isolated, and that
		\[
		\operatorname{ind}(O)\in\{-1,1\},
		\qquad O=(0,0).
		\]
		Then \(P_\infty\) is a center at infinity of system~\eqref{ls} if and only
		if the following conditions hold:
		\begin{description}
			
			\item[(i)] The integers \(m\) and \(n\) are odd, and either
			\[
			m>2n+1,\qquad a_m>0,
			\]
			or
			\[
			m=2n+1,\qquad
			4(n+1)a_m b_n^{-2}>1.
			\]
			
			\item[(ii)] One has
			\[
			F(x_1)=F(x_2)
			\]
			whenever
			\[
			G(x_1)=G(x_2),
			\qquad x_1<0<x_2,
			\]
			where
			\[
			F(x)=\int_0^x f(\xi)\,{\rm d}\xi,
			\qquad
			G(x)=\operatorname{sgn}(x)
			\int_0^x |g(\xi)|\,{\rm d}\xi.
			\]
			
		\end{description}
	\end{thm}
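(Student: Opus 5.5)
We prove necessity of (i) first, then reduce the center question at infinity to a symmetry statement for large orbits, and then prove the equivalence of that statement with (ii).

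\emph{Necessity of (i).} We use the Dumortier--Herssens classification of the behaviour near infinity recalled in the Introduction. Outside the two exceptional parameter regimes listed there, the dynamics near the equator of the Poincar\'e disc is determined by \(a_m\) and \(b_n\) alone. In those cases it contains characteristic directions: hyperbolic or elliptic sectors at the infinite singular points, or orbits escaping to and coming from infinity. Hence no annulus of periodic orbits accumulating on the equator can exist, and \(P_\infty\) cannot be a center. So we may assume (i).

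\emph{Monodromy and first-return map.} Under (i) every orbit starting far from the origin turns around it, so infinity is monodromic and a first-return map is defined near \(P_\infty\). Whether \(P_\infty\) is a center amounts to whether all sufficiently large orbits are closed.

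\emph{Change of variables.} We pass to the Li\'enard plane \(\dot x=y-F(x)\), \(\dot y=-g(x)\) (after \(y\mapsto y+F(x)\)). For \(|x|\) large, \(g\) has a fixed sign on each side, namely \(xg(x)>0\), because \(a_m>0\) and \(m\) is odd, or because of the discriminant condition when \(m=2n+1\). We then introduce \(u=G(x)\) on each half-line. The index hypothesis \(\operatorname{ind}(O)=\pm1\), together with total index \(1\) and isolated equilibria, is used exactly here. It guarantees that \(G\) as defined (with \(|g|\)) is strictly increasing, so \(u=G(x)\) is a homeomorphism of \(\mathbb R\) with \(G(x)\to\pm\infty\) as \(x\to\pm\infty\).

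\emph{Reduction to the \(y\)-axis crossings.} A large orbit crosses the \(y\)-axis twice and makes one excursion in \(x>0\) and one in \(x<0\). Let \(x_1(u)<0<x_2(u)\) be the inverses of \(G\) on the two half-lines. In the variable \(u\) the two half-systems become \(dy/du=-(y-F(x_i(u)))^{-1}\cdot(\text{sign factor})\). Here we must check carefully that the sign factor is consistent with orientation when \(g\) changes sign at small \(|x|\). Since \(|g|\) is used, the monotone parametrisation survives and the large orbits still wind once. The orbit is closed iff the half-return maps on the \(y\)-axis coincide.

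\emph{Sufficiency of (ii).} If \(F(x_1(u))=F(x_2(u))\) for all \(u\), the two half-systems are identical in \((u,y)\). Then the map \((x_1(u),y)\mapsto(x_2(u),y)\) conjugates the left half-flow to the right one and reverses time. This is the classical Cherkas-type reversibility argument, and it forces every large orbit to be symmetric and hence closed.

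\emph{Necessity of (ii): the main obstacle.} Suppose \(\Phi(u)=F(x_1(u))-F(x_2(u))\not\equiv0\). We would show the return map is not the identity for orbits of large amplitude. First we would try expanding the displacement function in the amplitude near infinity, with an inverse amplitude \(\rho\to0\). The leading nonzero term should be a weighted integral of \(\Phi\) along the ``limiting'' orbit at infinity. The difficulty is that \(\Phi\) may vanish to high order at infinity and also be non-analytic in \(u\), so a single leading-term computation is not enough.

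The second approach is to use the analyticity of \(x_i(u)\) near \(\pm\infty\), since \(G\) is a polynomial there. This allows \(\Phi\) to be expanded as a Puiseux series in \(u^{-1/(m+1)}\). The first nonzero coefficient then gives a nonzero Lyapunov-type quantity at infinity, by comparing with the reversible system as in the Christopher/Cherkas composition argument. Finally, identity of \(\Phi\equiv0\) for large \(u\) propagates to all \(u\) by analytic continuation of the relation \(G(x_1)=G(x_2)\Rightarrow F(x_1)=F(x_2)\) along the algebraic curve defined by \(G(x_1)=G(x_2)\). This continuation step needs care where \(G\) is only piecewise polynomial, because of the sign changes of \(g\).
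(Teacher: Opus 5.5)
\textbf{There are genuine gaps in both directions for (ii), and the ``only if'' direction cannot be completed because it is false as stated.}

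Your necessity of (i) is the paper's argument (the Dumortier--Herssens classification, Lemma~\ref{inf}).

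\textbf{Sufficiency.} First a slip: $G(x)=\operatorname{sgn}(x)\int_0^x|g|$ is nonnegative, decreasing on $(-\infty,0]$ and increasing on $[0,\infty)$. So it is a fold, not a homeomorphism of $\mathbb R$ onto $\mathbb R$. Its monotonicity on each half-line comes from $|g|\ge 0$ and $g\not\equiv 0$, not from the index hypothesis. More seriously, with $u=G(x)$ the two sheets satisfy
\[
\frac{{\rm d}y}{{\rm d}u}=-\frac{\operatorname{sgn} g(x_1(u))}{F(x_1(u))-y},\qquad
\frac{{\rm d}y}{{\rm d}u}=\frac{\operatorname{sgn} g(x_2(u))}{F(x_2(u))-y}.
\]
Condition (ii) makes these identical only if also $-\operatorname{sgn} g(x_1(u))=\operatorname{sgn} g(x_2(u))$ for every $u$, i.e.\ the sign changes of $g$ on the two half-lines are matched by the pairing $G(x_1)=G(x_2)$. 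Equivalently, $(x,y)\mapsto(x_2(G(x)),y)$ is a time-reversing orbit equivalence only under this matching. You flag the point and then assert it away; using $|g|$ does not supply it. The paper spends Step~I(i) of Lemmas~\ref{C2}--\ref{C3} trying to derive exactly this root pairing from $F_1'\equiv F_2'$, i.e.\ from $f(x_1)/(-|g(x_1)|)=f(x_2)/|g(x_2)|$ and the local orders of zeros.

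\textbf{Necessity.} Your argument for (ii) is only a plan. Its decisive claim is that the first nonzero Puiseux coefficient of $\Phi$ at $u=\infty$ gives a nonzero Lyapunov quantity at infinity. This overlooks that the return map of a large orbit also receives $O(1/Y)$ contributions ($Y$ the height at which it crosses $x=0$) from the bounded strip containing the zeros of $g$. There the sheets differ through the sign factors and through the piecewise constants by which $G$ differs from $\pm\int_0^x g$. These contributions are not governed by $\Phi$ near $\infty$ and can cancel its effect. The continuation step fails too: on each pair of pieces the pairing lies on a different curve $\epsilon_1\int_0^{x_1}g-\epsilon_2\int_0^{x_2}g=c$, with piece-dependent $\epsilon_i\in\{\pm1\}$ and $c$. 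So $\Phi\equiv0$ near $u=\infty$ does not propagate to small $u$.

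\textbf{Counterexample.} The ``only if'' part of (ii) is false under the stated hypotheses. Take $g(x)=(x+1)x(x-1)(x-2)(x-3)$ and $f(x)=b(x-1)$ with $b>0$.
\begin{itemize}
\item Hypotheses: $g(0)=0$ and $g'(0)=-6$, so $\operatorname{ind}(O)=-1$. Also $m=5>2n+1=3$ and $a_5=1$, so (i) holds.
\item Center: with $s=x-1$ the system is $\dot s=y$, $\dot y=-(s^5-5s^3+4s)-bsy$. This is the $\mathcal G_6$ system of Corollary~\ref{cf-cor3}, translated so that the saddle $E_{l2}$ sits at $O$. It is reversible under $(s,y,t)\mapsto(-s,y,-t)$, and infinity is monodromic. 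Hence every large orbit is symmetric and closed, and $P_\infty$ is a center.
\item Failure of (ii): $G(x)=3x^2+O(x^3)$ on both sides of $0$, so the pairing is $x_2=-x_1+O(x_1^2)$. Then $F(x_1)-F(x_2)=-b(x_1-x_2)\bigl(1-\tfrac{x_1+x_2}{2}\bigr)\neq0$.
\end{itemize}

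\textbf{Consequences.} Condition (ii) depends on the base point of $G$. The paper's proof moves the middle index-$\pm1$ equilibrium to $O$ ``without loss of generality'', which is not a legitimate normalization here. Its Step~II also sets the $O(|y|^{-1})$ inner errors against a separation it calls fixed, but which is not bounded away from zero as the orbit goes to infinity. A correct necessity proof needs an additional hypothesis, for instance on which equilibrium is placed at $O$. It must use that hypothesis together with a genuine estimate of the inner-strip contribution.
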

	
	{
		\begin{rem}\label{rem:conditions-not-automatic}
			The conditions in Theorem~\ref{cf-thm1} are not automatically satisfied by
			every generalized Li\'enard system.
			
			For example, consider
			\[
			f(x)=x+1,
			\qquad
			g(x)=x(x^2+1)^2.
			\]
			The origin is the unique finite equilibrium.  Moreover, the Jacobian at the origin satisfies
			\[
			D_o
			=
			\begin{pmatrix}
				0&1\\
				-1&-1
			\end{pmatrix},
			\qquad
			\det D_o=1,
			\]
			and hence
			\(
			\operatorname{ind}(O)=1.
			\)
			Thus all the hypotheses preceding conditions~{\rm(i)} and~{\rm(ii)} in
			Theorem~\ref{cf-thm1} are satisfied.
			
			In this example,
			$m=5$, $n=1$, $a_5=b_1=1$.
			Therefore,
			$
			m>2n+1$, $a_5>0,
			$
			and condition~{\rm(i)} is satisfied.  On the other hand,
			\[
			F(x)
			=
			\int_0^x(\xi+1)\,{\rm d}\xi
			=
			\frac{x^2}{2}+x,
			\]
			whereas
			\[
			G(x)
			=
			\operatorname{sgn}(x)
			\int_0^x |\xi|(\xi^2+1)^2\,{\rm d}\xi
			=
			\frac{(x^2+1)^3-1}{6}.
			\]
			Hence, for every \(x>0\),
			\[
			G(-x)=G(x),
			\]
			but
			\[
			F(-x)
			=
			\frac{x^2}{2}-x
			\neq
			\frac{x^2}{2}+x
			=
			F(x).
			\]
			Thus condition~{\rm(ii)} is not satisfied, and infinity is not a center.
			
			This example shows that the conditions in
			Theorem~\ref{cf-thm1} are not automatically
			satisfied by every generalized Li\'enard system.
		\end{rem}
}}

If $g(x)$ is an odd function, we obtain a simplified criterion
\begin{cor}\label{cf-cor1}
Assume that $g(x)$ is odd. Then
$P_{\infty}$ is a center at infinity  of  system \eqref{ls} if and only if the condition {\bf(i)} in 
{Theorem \ref{cf-thm1} holds and $f(x)$ is odd.} 
\end{cor}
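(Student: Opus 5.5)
We derive the corollary directly from Theorem~\ref{cf-thm1}. The work is in showing that, when $g$ is odd, condition~(ii) is equivalent to $f$ being odd.

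First we check that the hypotheses of Theorem~\ref{cf-thm1} hold. If $g$ is odd, then $g(0)=0$. The hypotheses on isolated equilibria and on $\operatorname{ind}(O)\in\{-1,1\}$ are taken as the standing assumptions under which the corollary is stated. So $P_\infty$ is a center at infinity if and only if (i) and (ii) hold.

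Next we translate condition~(ii). Since $g$ is odd, $|g|$ is even, so $x\mapsto\int_0^x|g(\xi)|\,{\rm d}\xi$ is odd. Multiplying by $\operatorname{sgn}(x)$ gives that $G$ is even. Also $G$ is strictly increasing on $(0,\infty)$, because $|g|>0$ away from the isolated zeros of $g$. Hence for $x_1<0<x_2$ we have $G(x_1)=G(x_2)$ exactly when $x_1=-x_2$, since $G(x_1)=G(-x_1)$ and $G$ is injective on $(0,\infty)$. Condition~(ii) therefore reads $F(-x)=F(x)$ for all $x>0$, which says $F$ is even.

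Finally, $F$ is a polynomial with $F(0)=0$. It is even if and only if its derivative $f$ is odd: if $F$ is even, then $f=F'$ is odd; conversely, if $f$ is odd, then $F(-x)=\int_0^{-x}f=\int_0^{x}f(-\eta)(-{\rm d}\eta)=F(x)$. Combining the three steps gives the claim.

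The only delicate point is the strict monotonicity of $G$ on each half-line. Without it, the level-set matching in (ii) would not reduce to $x_1=-x_2$. Strict monotonicity follows because $g$ is a nonzero polynomial, so its zeros are finite and $|g|>0$ almost everywhere. The rest is routine.
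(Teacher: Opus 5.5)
Your proof is correct and follows the same route as the paper. It reduces to Theorem~\ref{cf-thm1}, uses that $G$ is even and strictly increasing on $(0,\infty)$ to turn condition~(ii) into $F(-x)=F(x)$, and then passes between $F$ even and $f=F'$ odd. One small improvement: the hypotheses you take as standing assumptions need not be assumed, because they hold automatically. The zeros of the nonzero polynomial $g$ are isolated, and oddness forces the lowest-order term $a_kx^k$ to have $k$ odd, so $\operatorname{ind}(O)=\pm1$ by Lemma~\ref{index} (the Li\'enard shear fixes $O$).
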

When $n=1$, we have the following theorem.
\begin{thm}\label{cf-thm2}
When $n=1$, $P_{\infty}$ is a center at infinity  of  system \eqref{ls} if and only if 
the system can be rescaled to the form:
\begin{equation*}
\dot{x}=y,\qquad
\dot{y}=-g(x)-bxy,
\end{equation*}
where $g(x)$ is an odd function with $m\geq 3$ and $a_m$, $b$ satisfy $a_m>b^2/8>0$ when $m=3$ and $a_m>0$ when $m>3$.
\end{thm}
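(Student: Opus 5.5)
The plan is to deduce Theorem~\ref{cf-thm2} from Theorem~\ref{cf-thm1}, after first arranging its standing hypotheses. Since $g\not\equiv 0$ is a polynomial, all finite equilibria $(x_*,0)$ are isolated. If infinity is a center, $m$ must be odd by condition (i) (the analysis of \cite{DH} behind (i) does not use the location of equilibria). Then $g$ has a real zero, and the indices of the finite equilibria sum to $1$, so some equilibrium $E_{x_*}$ has index $\pm1$. Translating $x\mapsto x+x_*$ keeps the Li\'enard form, keeps $\deg f=1$ and $\deg g=m$, and does not change whether $P_\infty$ is a center. So I may assume $g(0)=0$ and $\operatorname{ind}(O)\in\{-1,1\}$, and write $f(x)=b_0+b_1x$ with $b_1\neq0$.

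\emph{Necessity.} Assume $P_\infty$ is a center, so (i) and (ii) of Theorem~\ref{cf-thm1} hold. With $n=1$, (i) says that $m$ is odd and either $m>3$ with $a_m>0$, or $m=3$ with $8a_3b_1^{-2}>1$, i.e.\ $a_3>b_1^2/8$. In particular $m\ge3$.

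For (ii), note that $G$ is continuous, strictly increasing on $(0,\infty)$ and strictly decreasing on $(-\infty,0)$, with $G(0)=0$. Since $m$ is odd, $G(x)\to+\infty$ as $x\to\pm\infty$. Hence there is a homeomorphism $\varphi:(0,\infty)\to(-\infty,0)$ with $G(\varphi(x))=G(x)$ and $\varphi(x)\to0$ as $x\to0^+$.

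Condition (ii) gives $F(\varphi(x))=F(x)$ with $F(x)=b_0x+\tfrac{b_1}{2}x^2$. Because $\varphi(x)\neq x$, the number $\varphi(x)$ must be the other root of the quadratic $F(\cdot)=F(x)$, namely
\[
\varphi(x)=-x-\frac{2b_0}{b_1}.
\]
Letting $x\to0^+$ forces $b_0=0$, so $\varphi(x)=-x$ and $f(x)=b_1x$.

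Then $G(-x)=G(x)$ for all $x>0$, i.e.\ $\int_{-x}^0|g|=\int_0^x|g|$. Differentiating gives $|g(-x)|=|g(x)|$. For polynomials this means $g(-x)\equiv g(x)$ or $g(-x)\equiv-g(x)$, since $g(-x)^2-g(x)^2\equiv0$ factors. The even alternative is excluded because the leading degree $m$ is odd and $a_m\neq0$. Hence $g$ is odd, and with $b=b_1$ this is exactly the stated normal form and inequalities.

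\emph{Sufficiency.} Start from the normal form with $g$ odd, $m\ge3$ odd, and the stated inequalities. Then $g(0)=0$ and $g(x)=x^kh(x)$ with $k$ odd and $h(0)\neq0$. So $O$ is an isolated equilibrium of index $\pm1$: it has the sign pattern of $x^k$, i.e.\ it is antisaddle-like or saddle-like. The hypotheses of Theorem~\ref{cf-thm1} therefore hold. Condition (i) is the stated inequality with $n=1$. Condition (ii) holds because $F(x)=bx^2/2$ is even, and $G(x_1)=G(x_2)$ with $x_1<0<x_2$ forces $x_1=-x_2$, by the evenness of $G$ (from the oddness of $g$) and the monotonicity of $G$ on each half-line. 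By Theorem~\ref{cf-thm1}, $P_\infty$ is a center.

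The main obstacle is the preliminary reduction: justifying that when $g(0)\neq0$ or $\operatorname{ind}(O)=0$ one can translate to an equilibrium of index $\pm1$. This relies on the oddness of $m$ coming from the behaviour at infinity before Theorem~\ref{cf-thm1} is available, and on the index-sum statement recalled in the introduction. I would handle it by citing the Dumortier--Herssens classification of \cite{DH}, which shows that for $m$ even (or the other excluded cases) infinity is never a center.
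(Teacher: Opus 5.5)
Both of your directions reproduce the paper's proof. For necessity, the second root of $F(\cdot)=F(x)$ gives $x_1+x_2=-2b_0/b_1$, the limit $x\to0^+$ forces $b_0=0$, then $G(-x)=G(x)$ gives $g(-x)^2\equiv g(x)^2$, and the parity of $m$ makes $g$ odd. For sufficiency, your direct check of (ii) from the evenness of $F$ and $G$ is exactly the content of Corollary~\ref{cf-cor1}, which the paper cites. Your only addition is the preliminary translation, which the paper omits; it simply applies (ii) to the system as given. That step needs one correction: you cannot move an arbitrary equilibrium of index $\pm1$ to the origin. Condition (ii) depends on that choice, so the hypothesis $\operatorname{ind}(O)\in\{-1,1\}$ in Theorem~\ref{cf-thm1} cannot be used at face value.

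For example, $g(x)=x^3-x$, $f(x)=bx$ with $0<b<2\sqrt2$ has a center at infinity (Corollary~\ref{cf-cor2} with $a=-1$). Moving the index-one equilibrium $x=1$ to the origin gives $\tilde g(x)=x^3+3x^2+2x$ and $\tilde f(x)=b(x+1)$. Near $0$ one has $G(x)\approx x^2$ on both sides, so paired points satisfy $x_1\approx-x_2$ and $F(x_1)\neq F(x_2)$. Your argument would then ``prove'' $b_0=b=0$. The proofs of Lemmas~\ref{C2} and~\ref{C3} in fact put the \emph{middle} real zero of odd multiplicity of $g$ at the origin: they move $\bar E_{2N}$, respectively $\bar E_{2N+1}$, there, and in case {\bf(C1)} there is only one candidate. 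Translate to that equilibrium instead; it is well defined because the number of such zeros has the parity of $m$, hence is odd. The rest of your argument then goes through unchanged. The sufficiency direction needs no change, since for odd $g$ the origin is automatically the middle such zero.
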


We also provide explicit global phase portraits for specific cases in the Poincar\'{e} disc:
\begin{cor}\label{cf-cor2}
When $n=1, m=3$, $P_{\infty}$ is a center at infinity of  system \eqref{ls} if and only if the variables $x$, $y$, 
and $t$ can be rescaled such that the system takes the form of 
\begin{equation}
	\dot{x}=y,\qquad
	\dot{y}=-(ax+x^3)-bxy,
\label{m3n1}
\end{equation}
where $a\in\mathbb{R}$ and $0<b<2\sqrt{2}$.
Moreover, there are only two classes of global phase portraits in the Poincar\'{e} disc, as shown in {\rm Fig. \ref{fig:m3n1}}.
\end{cor}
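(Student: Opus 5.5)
The plan has two parts. First I derive the normal form \eqref{m3n1} from Theorem~\ref{cf-thm2}. Then I classify the global phase portraits by splitting into the cases $a\ge 0$ and $a<0$.

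\emph{Normal form.} By Theorem~\ref{cf-thm2} with $n=1$, $m=3$, the point $P_\infty$ is a center at infinity exactly when the system can be written as $\dot x=y$, $\dot y=-(a_1x+a_3x^3)-bxy$ with $a_3>b^2/8>0$. The even coefficients of $g$ vanish because $g$ is odd, and the constant term of $f$ vanishes by the rescaling in that theorem. I then apply the scaling $x\mapsto \alpha x$, $y\mapsto\beta y$, $t\mapsto\gamma t$ and choose the parameters to normalize $a_3=1$ and $b>0$. Reversing $t$ together with $y$, or using the symmetry $x\mapsto -x$, turns $b$ into $-b$, so the sign of $b$ can be fixed. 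Under this normalization the condition $a_3>b^2/8$ becomes $1>b^2/8$, that is, $0<b<2\sqrt2$. The parameter $a$ stays free, and its sign is invariant under the scaling. The converse direction is immediate from Theorem~\ref{cf-thm2}.

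\emph{Phase portraits.} The system is reversible under $(x,y,t)\mapsto(-x,y,-t)$, since $g$ and $f$ are both odd. Equivalently, condition (ii) holds via $F(x)=bx^2/2$, which is even. Consequently any equilibrium with eigenvalues of zero real part, or any monodromic equilibrium on the $y$-axis, is a center. For $a>0$ the only equilibrium is the origin; it is a linear center and is in fact a center by reversibility. For $a=0$ the origin is nilpotent, and the symmetry again makes it a center. In both cases the disc contains a global center: the whole plane is filled with periodic orbits, and infinity is a center as well. This gives the first class of portraits.

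For $a<0$ there are three equilibria, $O$ and $(\pm\sqrt{-a},0)$. The point $O$ is a saddle. The side equilibria have Jacobian trace $\mp b\sqrt{-a}$ and determinant $-2a>0$, so one is an unstable and the other a stable focus or node. They are exchanged by the reversing symmetry. The separatrices of $O$ then determine the portrait. Because infinity is a center, the region near infinity is foliated by closed orbits. Reversibility forces each unstable separatrix to be the time-reversed mirror image of a stable one. This produces a heteroclinic-type configuration in which an unstable separatrix of $O$ goes to the stable anti-saddle and a stable separatrix comes from the unstable one. The outer separatrices form a figure-eight boundary, bounding the annulus of periodic orbits around infinity. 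This gives the second class.

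\emph{Main obstacle.} The hard part is showing that the $a<0$ portrait is unique. This requires excluding limit cycles around each focus and fixing the relative position of the separatrices, namely that the outer separatrices coincide and form a homoclinic loop enclosing all three equilibria. For the homoclinic loop I would use the symmetric-orbit argument: each separatrix of $O$ leaving along a direction symmetric to the incoming one, once shown to cross the $y$-axis, closes up by reversibility. For limit cycles, I would note that a cycle around a single focus cannot be symmetric, and then apply a Dulac function or a Zhang-type uniqueness argument on the half-plane $x>0$, where $f(x)=bx$ is one-signed. The divergence $-bx$ has constant sign there, so Bendixson's criterion alone rules out closed orbits contained in $x>0$. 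Any closed orbit must therefore cross the $y$-axis and is then symmetric, which forces it to surround all three equilibria, and such orbits are precisely the ones outside the loop.
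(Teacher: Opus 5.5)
Your outline follows the paper: the normal form comes from Theorem~\ref{cf-thm2} plus a scaling under which $b^2/a_3$ is invariant, and you split into $a\ge0$ and $a<0$, using reversibility and Bendixson to exclude cycles around a single focus. Replacing the global-center criterion of \cite{CLZ} and Lemma~\ref{C3} by direct reversibility arguments is legitimate.

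\textbf{Gap at $a=0$.} This step rests on a false principle. Reversibility under $(x,y,t)\mapsto(-x,y,-t)$ does \emph{not} make every equilibrium with eigenvalues of zero real part a center. It only makes a \emph{monodromic} point on the $y$-axis a center. A reversible nilpotent point can have elliptic and hyperbolic sectors, which is exactly what happens for \eqref{m5n1} in $\mathcal G_2$ (Lemma~\ref{lm:m5n1g2}). For $\dot x=y$, $\dot y=-x^3-bxy$ you are in the borderline case of Andreev's theorem: $\dot y|_{y=0}=-x^3$ and the divergence $-bx$ has order $1=(3-1)/2$. There the origin is monodromic if and only if $b^2-8<0$. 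So the hypothesis $0<b<2\sqrt2$ must be used here too, not only at infinity; this is condition (ii*) that the paper checks in Lemma~\ref{lm:m3n1gc}. The same fact is needed for your claim that the whole plane is filled with periodic orbits. The argument that an orbit leaving the positive $y$-axis returns to the negative one, hence is symmetric and closed, works only once you know no orbit tends to the origin.

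\textbf{The portrait for $a<0$.} The phrase ``the outer separatrices form a figure-eight boundary'' describes the wrong picture. A figure-eight, i.e.\ two loops each around one focus, is precisely what the Green-formula argument of Lemma~\ref{lm:m3n1lc} rules out. The correct picture, which you do state at the end, is a single loop $W^u_{\mathrm{III}}=W^s_{\mathrm{IV}}$ around both foci. Here $W^u_{\mathrm{III}}$ leaves into $\{x<0,y<0\}$ and $W^s_{\mathrm{IV}}$ arrives from $\{x>0,y<0\}$. Inside the loop, $W^u_{\mathrm I}\to E_r$ and $W^s_{\mathrm{II}}$ emanates from $E_l$.

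\textbf{Missing crossing arguments.} You only announce the needed crossings. They follow quickly from $E=\tfrac12y^2+\tfrac{a}{2}x^2+\tfrac14x^4$, for which $\dot E=-bxy^2$.
\begin{itemize}
\item Along $W^u_{\mathrm I}$, $E$ decreases strictly from its value $0$ at $O$ while $x>0$, whereas $E=\tfrac12y^2\ge0$ on the $y$-axis. So the orbit never leaves $x>0$, and having no cycles there, it tends to $E_r$.
\item Suppose $W^u_{\mathrm{III}}$ stayed in $x<0$. Then $E$ would increase from $0$ along it. Its $\omega$-limit set, bounded because infinity is a center, could contain neither $O$ nor $E_l$, where $E=-a^2/4<0$. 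Bendixson excludes cycles in $x<0$. Hence $W^u_{\mathrm{III}}$ crosses the positive $y$-axis, and your reversibility argument gives $W^u_{\mathrm{III}}=W^s_{\mathrm{IV}}$.
\end{itemize}
Orbits outside this loop meet both half-axes and are therefore closed.
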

 \begin{figure}[!htb]
		\centering
		\subfloat[$a\geq0$]
		{\includegraphics[scale=0.7]{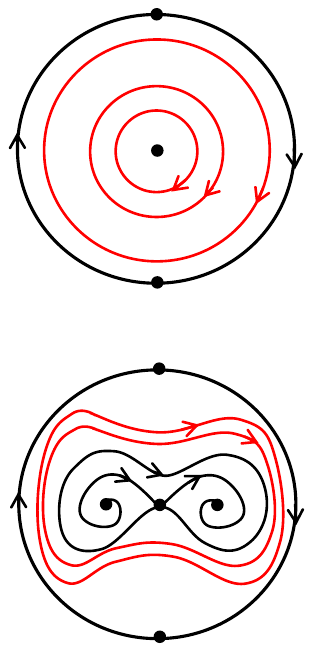}}\hspace{25pt}
		\subfloat[$a<0$]
		{\includegraphics[scale=0.7]{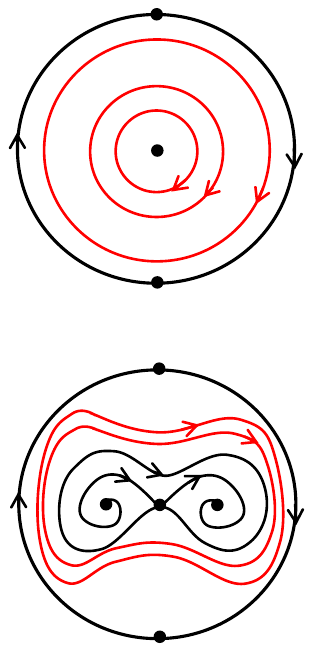}}\hspace{25pt}
		\caption{{\footnotesize Global phase portraits in the Poincar\'{e} disc of system \eqref{m3n1}.}}
		\label{fig:m3n1}
	\end{figure}
\begin{cor}\label{cf-cor3}
When $m=5$, $P_{\infty}$ is a center at infinity of  system \eqref{ls} if and only if the variables $x$, $y$, and $t$ can be rescaled such that the system takes the form of 
\begin{equation}
\dot{x}=y,\qquad
\dot{y}=-(ax+cx^3+x^5)-bxy,
\label{m5n1}
\end{equation}
where $a, c\in\mathbb{R}$ and $b>0$.
Moreover, the six parameter regions below give five topologically distinct global phase portraits in the Poincar\'e disc, as shown in {\rm Fig. \ref{fig:m5n1}}, and the bifurcation diagram as shown in {\rm Fig. \ref{bi-fig}}, 
where 
\begin{equation*}
        \begin{aligned}
        \mathcal{G}_1&:=\left\{(a, c, b)\in\mathbb{R}^2\times\mathbb{R}^{+}: c>0,a>0,~{\rm or}~c>\frac{b^2}{8}, a=0,~{\rm or}~c=0,a\geq0,~{\rm or}~c<0,a>\frac{c^2}{4}\right\},
        \\
        \mathcal{G}_2&:=\left\{(a, c, b)\in\mathbb{R}^2\times\mathbb{R}^{+}: 0<c\leq\frac{b^2}{8}, a=0\right\},
        \\
        \mathcal{G}_3&:=\left\{(a, c, b)\in\mathbb{R}^2\times\mathbb{R}^{+}: a<0\right\},
        \\
        \mathcal{G}_4&:=\left\{(a, c, b)\in\mathbb{R}^2\times\mathbb{R}^{+}: c<0,a=0\right\},
        \\
         \mathcal{G}_5&:=\left\{(a, c, b)\in\mathbb{R}^2\times\mathbb{R}^{+}: c<0, a=\frac{c^2}{4} \right\},
         \\
         \mathcal{G}_6&:=\left\{(a, c, b)\in\mathbb{R}^2\times\mathbb{R}^{+}: c<0,0<a<\frac{c^2}{4}\right\}.
   \end{aligned}
	\end{equation*}
\end{cor}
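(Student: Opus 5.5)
The plan is to derive the normal form from Theorem~\ref{cf-thm2} and then classify the phase portraits region by region. I read the corollary in the context $n=1$, as in Theorem~\ref{cf-thm2} and Corollary~\ref{cf-cor2}.

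\emph{Normal form.} Theorem~\ref{cf-thm2} says that $P_\infty$ is a center at infinity iff $f(x)=bx$, $g$ is odd and $a_m>0$; for $m=5>3$ there is no condition on $b$ beyond $b\neq0$. So $g(x)=a_1x+a_3x^3+a_5x^5$ with $a_5>0$, and the substitution $x\mapsto \alpha x$, $t\mapsto\beta t$, $y\mapsto (\alpha/\beta)y$, with $\alpha,\beta>0$ chosen so that $\alpha^4\beta^2 a_5=1$, normalizes the leading coefficient to $1$. The sign of $b$ is fixed by $(y,t)\mapsto(-y,-t)$. This map sends $\dot y=-g-bxy$ to $\dot y=-g+bxy$. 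It reverses time, which is harmless for topological classification, so we may take $b>0$. The converse direction is immediate from Theorem~\ref{cf-thm2}.

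\emph{Phase portraits.} The key structural fact is that, since $f$ and $g$ are odd, system \eqref{m5n1} is reversible under $(x,y,t)\mapsto(-x,y,-t)$. Hence any monodromic equilibrium on the $y$-axis, and the origin in particular, is a center as soon as it is monodromic. The finite equilibria are the real roots of $x(a+cx^2+x^4)=0$, and the six regions are exactly the cases of the root count and multiplicities of $a+cu+u^2$ for $u=x^2>0$, together with the degenerate case $a=0$.

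At a simple equilibrium $(x_*,0)$ the Jacobian has determinant $g'(x_*)$ and trace $-bx_*$. Thus points with $g'(x_*)<0$ are saddles. Points with $g'(x_*)>0$ are a center at the origin, and for $x_*\neq0$ they come as a pair of foci or nodes of opposite stability, swapped by the reversibility.
\begin{itemize}
\item For $a>0$ (part of $\mathcal G_1$, and $\mathcal G_6$), the origin is a linear center.
\item In $\mathcal G_6$ there are additionally two saddles at $\pm\sqrt{u_-}$ and two anti-saddles at $\pm\sqrt{u_+}$; this is where the lips configuration should appear.
\item In $\mathcal G_5$ the saddles and anti-saddles coalesce into saddle-nodes.
\item For $a<0$ ($\mathcal G_3$), the origin is a saddle and there are two anti-saddles.
\item For $a=0$ the origin is nilpotent with $g\sim cx^3$ and $f=bx$. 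I will use the standard normal-form and blow-up analysis of $\dot x=y$, $\dot y=-cx^3-bxy$ (as in the $m=3$ case of Corollary~\ref{cf-cor2}). This gives a center for $c>b^2/8$, an elliptic-plus-hyperbolic-sector point for $0<c\le b^2/8$ ($\mathcal G_2$), and a degenerate saddle for $c<0$ ($\mathcal G_4$).
\item The case $a=c=0$ reduces to $g=x^5$, which falls in $\mathcal G_1$ as a center.
\end{itemize}
Identifying $\mathcal G_1$ and $\mathcal G_2$ as topologically the same (a center at the origin surrounded by periodic orbits) would reduce six regions to five portraits. I will check this against the bifurcation diagram.

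\emph{Main obstacle.} The hard part is controlling the global separatrix configuration and ruling out limit cycles, especially in $\mathcal G_3$, $\mathcal G_5$ and $\mathcal G_6$.
\begin{itemize}
\item I would first try Bendixson's criterion. The divergence is $-bx$, which has constant sign in each open half-plane, so there are no closed orbits contained in $x>0$ or in $x<0$. Closed orbits crossing $x=0$ are then handled by reversibility: such an orbit is symmetric, so it must surround a symmetric set of equilibria, and its index-sum condition forces it to enclose the origin.
\item For orbits around the origin I would compare with the energy $H=y^2/2+\int_0^x g$. Reversibility gives the return map as an involution composed with itself, yielding a continuum of periodic orbits up to the first separatrix.
\item The separatrices of the saddles are then tracked using reversibility: the unstable separatrix of $(\sqrt{u_-},0)$ is the mirror of the stable separatrix of $(-\sqrt{u_-},0)$. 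Together with the center at infinity, this forces the connections (a heteroclinic loop bounding the period annulus, producing the lips).
\end{itemize}
I expect the delicate step to be proving that these saddle connections occur for all parameters in $\mathcal G_6$, rather than just generically. The plan is to show that every separatrix must either cross the $y$-axis, where it closes by symmetry, or spiral into a focus, and to exclude the latter by a monotonicity argument on $H$ along orbits, using $\dot H=-bxy^2$.
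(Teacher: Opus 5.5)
Your normal-form step is fine and is essentially the paper's (Theorem~\ref{cf-thm2} plus a scaling; note that $(x,y)\mapsto(-x,-y)$ also flips $b$ without reversing time). Closing orbits that meet the $y$-axis twice directly by the reversibility $(x,y,t)\mapsto(-x,y,-t)$ is a legitimate and simpler substitute for the paper's Lemmas~\ref{C1} and~\ref{C3}.

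\textbf{The count of five portraits.} This is where the real gap lies. Identifying $\mathcal G_1$ with $\mathcal G_2$ is impossible, and your own local analysis shows it. In $\mathcal G_2$ the unique finite equilibrium has an elliptic sector, so there are orbits homoclinic to the origin. In $\mathcal G_1$ every finite non-equilibrium orbit is periodic. The merge the statement actually needs (Fig.~\ref{fig:m5n1}(c)) is $\mathcal G_3\cup\mathcal G_4$, and your $\mathcal G_4$ bullet misses why. For $a=0$, $c<0$ one has $g=x^3(x^2+c)$. Besides the nilpotent but topologically saddle origin, there are anti-saddles at $(\pm\sqrt{-c},0)$ with $g'=2c^2>0$: a sink at $x=\sqrt{-c}$ and a source at $x=-\sqrt{-c}$. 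This is exactly the $\mathcal G_3$ configuration, and you would still have to check that the global separatrix skeleton agrees. The lips also belong to $\mathcal G_5$, where $g=x(x^2+c/2)^2$ produces the two saddle-nodes. In $\mathcal G_6$ the loop around the central period annulus is a hyperbolic two-saddle loop.

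\textbf{The separatrix plan.} Write $0<u_-<u_+$ for the roots of $u^2+cu+a$ in $\mathcal G_6$. The inner connections there are not delicate. Since $g>0$ on $(0,\sqrt{u_-})$, one has $\dot y<0$ on $\{y=0,\ 0<x<\sqrt{u_-}\}$. So the down-left unstable separatrix of $(\sqrt{u_-},0)$ keeps $y<0$ with $x$ decreasing and cannot tend to the center at the origin. It must therefore hit the $y$-axis, after which reversibility sends it to $(-\sqrt{u_-},0)$. The up-left stable separatrix is handled the same way in backward time.

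Your idea of excluding spiralling into a focus by monotonicity of $H$ is backwards. The potential $\int_0^x g$ has a local maximum at $\sqrt{u_-}$, and $\dot H=-bxy^2\le 0$ for $x>0$. Hence the up-right unstable separatrix of $(\sqrt{u_-},0)$ enters the bounded well $\{H<H(\sqrt{u_-},0),\ x>\sqrt{u_-}\}$, and LaSalle forces it into the sink $(\sqrt{u_+},0)$. Only the down-right stable separatrix is forced back to the $y$-axis: $H$ increases along it in backward time, and it cannot reach infinity, which is a center. It then closes up by symmetry into an outer connection.

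\textbf{The point $a=c=0$.} Your claim that $g=x^5$ gives a center is false. For $\dot x=y$, $\dot y=-x^5-bxy$, Andreev's theorem applies with restoring term $-x^5$, divergence $-bx$, and $5>2\cdot 1+1$ with exponent $1$ odd. It gives one elliptic and one hyperbolic sector; the blow-up $y=x^2v$ exhibits an orbit reaching the origin along $y\approx-\frac{b}{2}x^2$, so the origin is not monodromic. That point carries the $\mathcal G_2$ portrait. The paper's $\mathcal G_1$ has the same boundary defect, since condition (ii*) of Corollary~\ref{CLZ} fails for $k=5$, $s=1$.
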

 \begin{figure}[!htbp]
		\centering
		\subfloat[$\mathcal{G}_1$]
		{\includegraphics[scale=0.7]{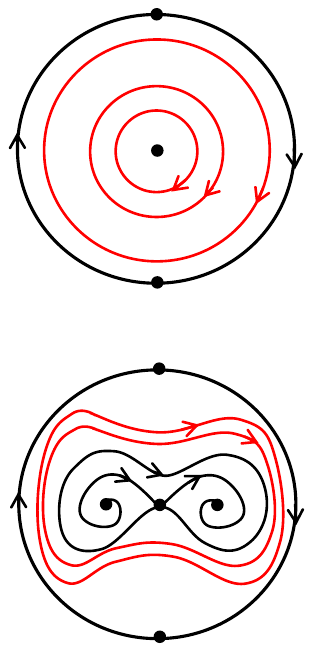}}\hspace{25pt}
		\subfloat[$\mathcal{G}_2$]
		{\includegraphics[scale=0.7]{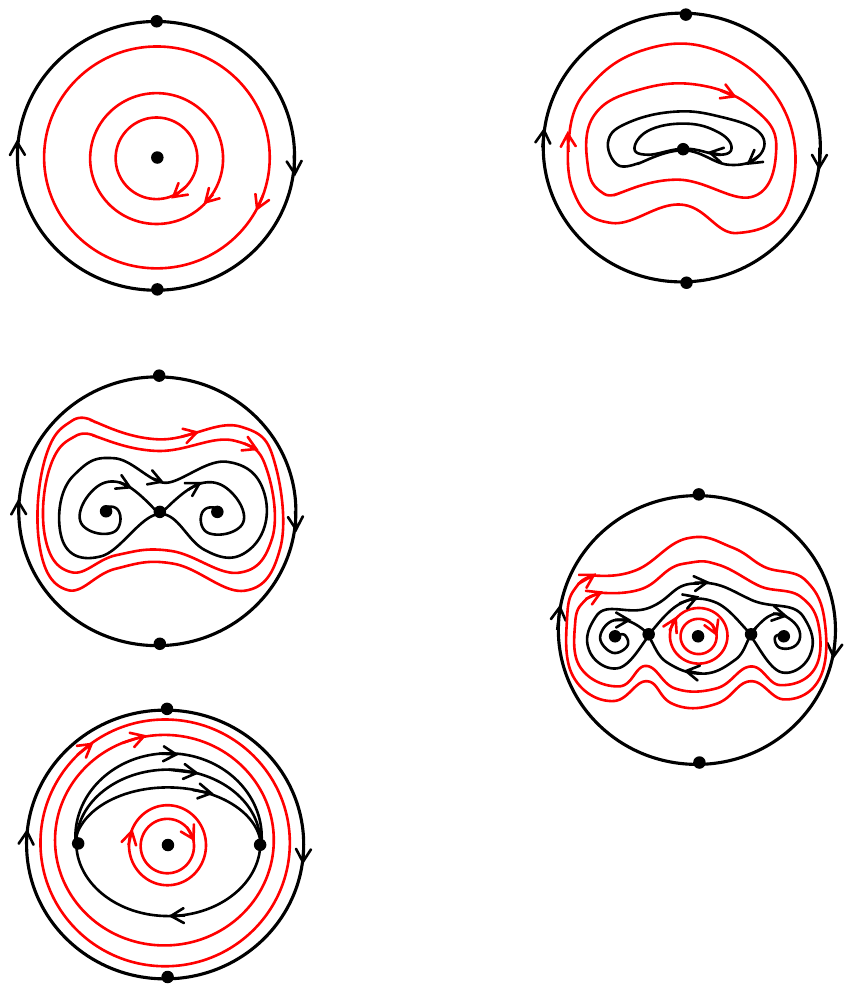}}\hspace{25pt}
		\subfloat[$\mathcal{G}_3\cup\mathcal{G}_4$]
		{\includegraphics[scale=0.7]{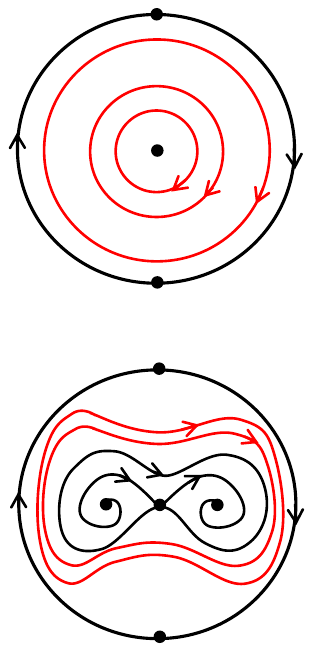}}\hspace{25pt}\\
		\subfloat[$\mathcal{G}_5$]
		{\includegraphics[scale=0.7]{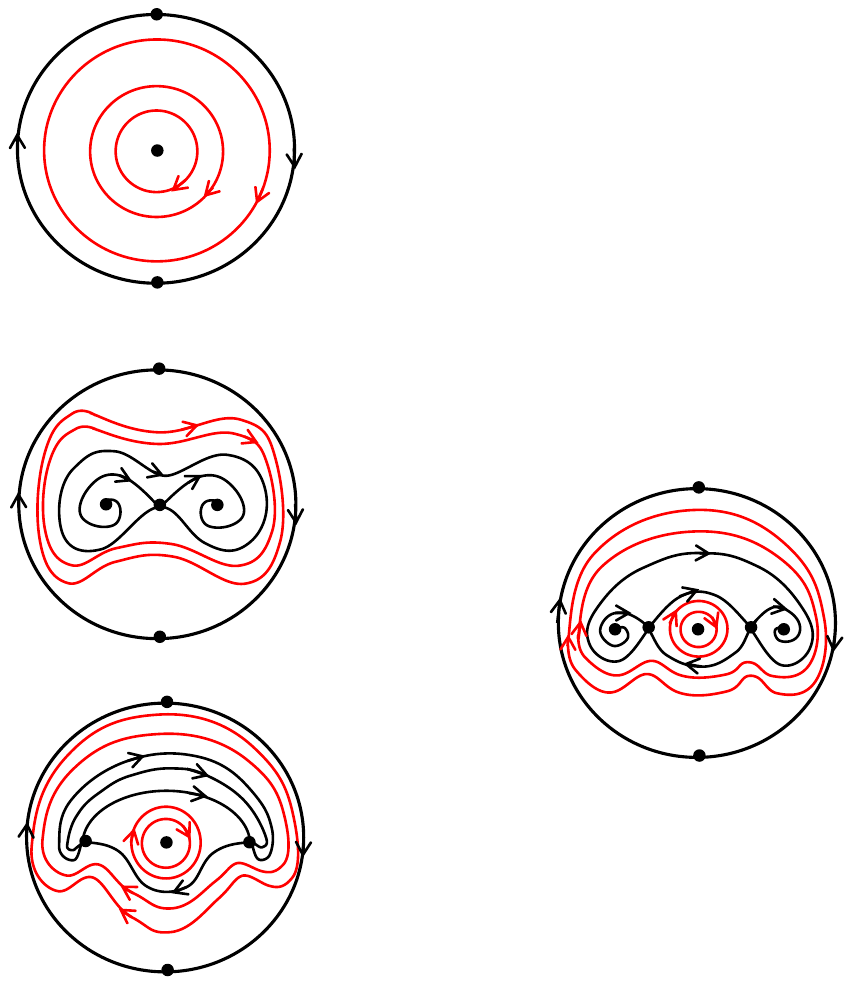}}\hspace{25pt}
		\subfloat[$\mathcal{G}_6$]
		{\includegraphics[scale=0.7]{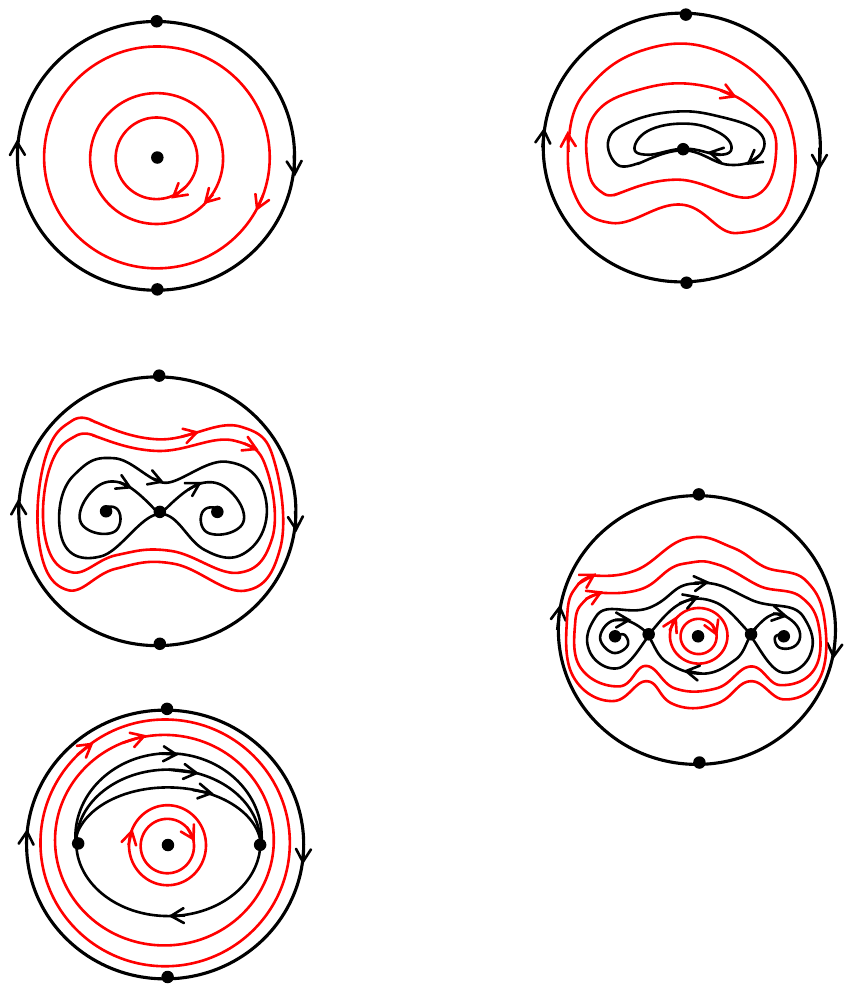}}\hspace{25pt}
		\caption{{\footnotesize Global phase portraits in the Poincar\'{e} disc of system \eqref{m5n1}.}}
		\label{fig:m5n1}
	\end{figure}

 \begin{figure}[!htbp]
	\centering
	{\includegraphics[scale=0.22]{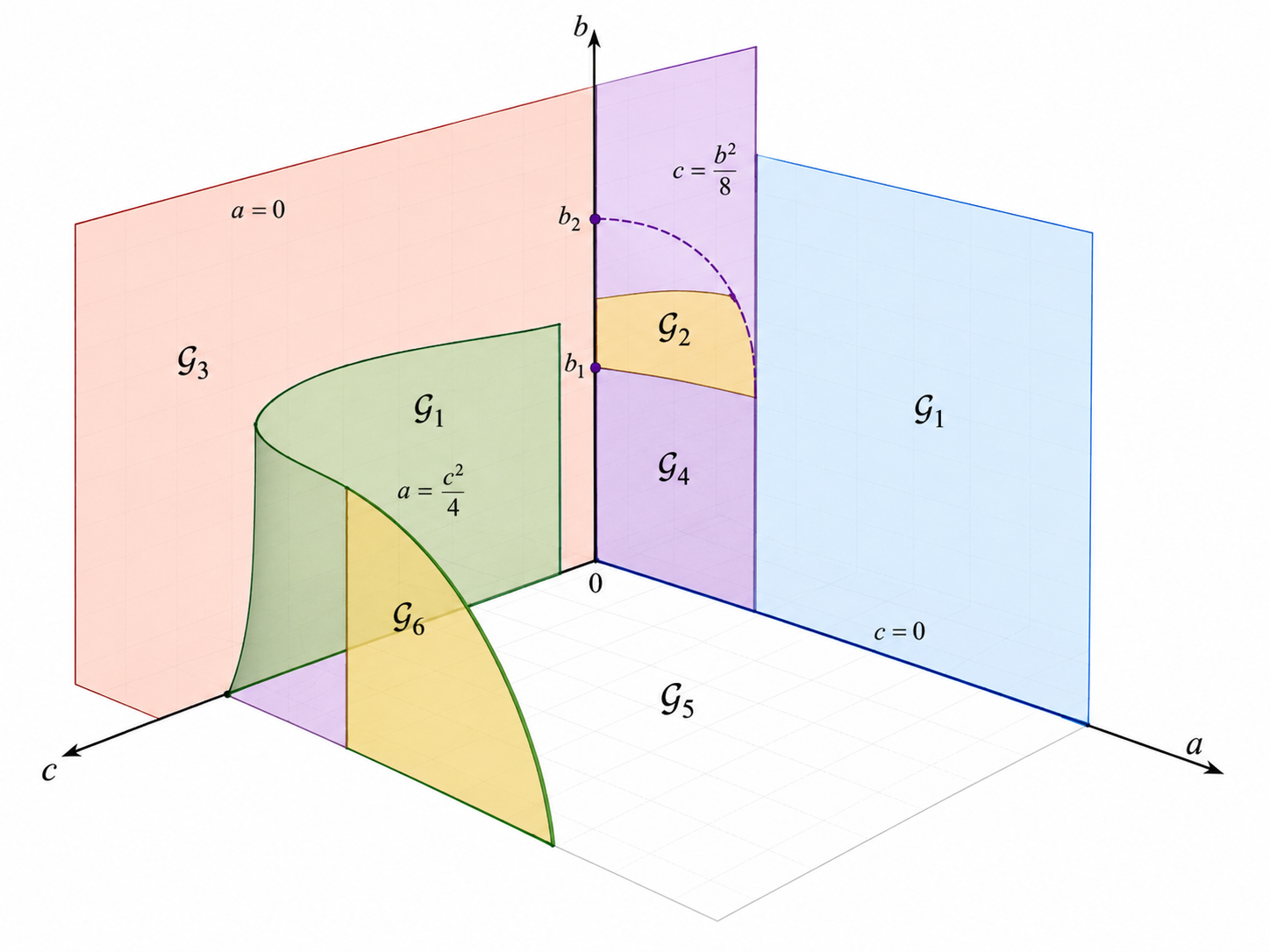}}\hspace{2pt}
	\caption{Bifurcation region in $(a,b,c)$-space for $b>0$.}
	\label{bi-fig}
\end{figure}

\subsection{Ilyashenko--Kotova lips and Hilbert numbers}

We first introduce an intriguing structure known as the Ilyashenko-Kotova lips.
\begin{defi}
				The Ilyashenko-Kotova lips refer to the ensemble of a codimension $3$ polycycle consisting of two saddle-nodes connected by a separatrix
connection.
	\end{defi}

{

The bifurcation of polycycles in families with a small number of parameters
is an important topic in planar bifurcation theory and is closely related to
the Hilbert--Arnold problem for vector fields; see \cite{IL,I}.  In particular,
Fig.~\ref{fig:m5n1}(d) contains an Ilyashenko--Kotova lips configuration,
which is shown separately in Fig.~\ref{lip-origin}.

An Ilyashenko--Kotova lips configuration is a codimension-three polycycle
formed by two saddle-node singularities and two heteroclinic separatrix
connections joining them.  Consequently, its generic unfolding requires
three independent parameters.

Let
\(
\mathcal X=\{X_\mu\}_{\mu\in\mathbb R^3}
\)
be a three-parameter family of smooth planar vector fields, and suppose that
\(X_{\mu_*}\) contains a lips polycycle \(\Gamma\) at a critical parameter
value \(\mu_*\).  The cyclicity
\(
\operatorname{Cycl}(\Gamma,\mathcal X)
\)
is the maximal number of distinct limit cycles that can simultaneously
bifurcate from \(\Gamma\) for a parameter value arbitrarily close to
\(\mu_*\).

Following \cite{I,IL}, let \(C(3)\) denote the supremum of the total number of
limit cycles that can bifurcate from all the polycycles of a vector field
corresponding to a critical parameter value in a typical three-parameter
family.  The following result shows that the unboundedness of \(C(3)\) is
already realized by the lips configuration.

\begin{pro}[\cite{IY}]
For every \(N\in\mathbb N\), there exists a typical three-parameter family
\[
\mathcal X=\{X_\mu\}_{\mu\in\mathbb R^3},
\]
a critical parameter value \(\mu_*\), and a lips polycycle
\(\Gamma\) of \(X_{\mu_*}\) such that
\[
\operatorname{Cycl}(\Gamma,\mathcal X)>N.
\]
\end{pro}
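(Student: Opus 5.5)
The plan is to reproduce the Ilyashenko--Kotova (Ilyashenko--Yakovenko) construction. The approach is to reduce the cyclicity of the lips to counting fixed points of a one-dimensional Poincaré map, which is a composition of two Dulac/correspondence maps near the saddle-nodes and two regular maps along the connections. The point is that the lips polycycle carries an extra functional modulus, namely the gluing of the two connecting arcs. This modulus is invisible to the three parameters, so it can be chosen freely in a typical family, and that freedom is what makes the cyclicity unbounded.

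First, normalize near each saddle-node. By the Takens / Il'yashenko normal form, after a finitely smooth change of coordinates each saddle-node of the unfolding is orbitally equivalent to
\[
\dot x = x^2+\varepsilon_j,\qquad \dot y=\pm y,\qquad j=1,2.
\]
Both saddle-nodes have their parabolic sectors oriented so that the polycycle $\Gamma$ is formed by the two center manifolds, one of which forms the upper lip and the other the lower lip. For $\varepsilon_j>0$ the saddle-node disappears, and the correspondence map from the entrance transversal to the exit transversal becomes a map close to a rotation by the time $\int dx/(x^2+\varepsilon_j)\approx\pi/\sqrt{\varepsilon_j}$. In suitable "time" charts it is the translation
\[
\tau\mapsto\tau+\pi/\sqrt{\varepsilon_j}+o(1).
\]
The third parameter $\delta$ breaks one heteroclinic connection, shifting the relative position of the two lips.

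Second, compose the maps. The regular transition maps along the connections are $C^\infty$ diffeomorphisms $\Delta_1,\Delta_2$ between transversals, and these are free data of the family. In the logarithmic (Fatou) coordinates, where the saddle-node holonomy is a unit shift, the Poincaré map becomes
\[
P=\Delta_2\circ T_{c_2}\circ\Delta_1\circ T_{c_1},
\]
with $T_c$ translations and $c_j\to\infty$ as $\varepsilon_j\to0^+$. Passing to the quotient circle $\mathbb R/\mathbb Z$ of each Fatou coordinate, the maps $\Delta_j$ descend to circle diffeomorphisms, the "gluing" maps. Cycles then correspond to solutions of
\[
\tilde\Delta_2\bigl(\tilde\Delta_1(\theta)+\alpha_1\bigr)+\alpha_2=\theta \pmod 1,
\]
where $\alpha_j=c_j\bmod 1$ sweep the whole torus as $\mu\to\mu_*$, and $\delta$ adjusts a shift.

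Third, choose the data. Take $\tilde\Delta_1=\mathrm{id}$ and $\tilde\Delta_2(\theta)=\theta+\eta\sin(2\pi N\theta)$ with small $\eta$. Then for $\alpha_1+\alpha_2\equiv0$ the equation becomes $\eta\sin(2\pi N(\theta+\alpha_1))=0$, which has $2N$ transversal roots on the circle. Each transversal root gives a hyperbolic limit cycle close to $\Gamma$, because the fixed-point equation only perturbs it by the $o(1)$ terms. Parameters $\mu\to\mu_*$ with $\alpha_1+\alpha_2\equiv0$ exist arbitrarily close to $\mu_*$, so $\operatorname{Cycl}(\Gamma,\mathcal X)\ge 2N>N$. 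Genericity ("typical") holds because the construction only imposes open conditions on the finite jets at the saddle-nodes, namely nonzero second-order and transversal unfolding terms, and the gluing diffeomorphisms are moduli not constrained by the typicality requirements. Small perturbations keep transversality, so the conclusion persists on an open set of families.

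The main obstacle is justifying that the $o(1)$ remainders in the saddle-node correspondence maps are uniformly $C^1$-small in the Fatou coordinates as $\varepsilon_j\to0$, uniformly in the parameters. This uniformity is needed for the transversal roots to survive. I would try the Il'yashenko--Yakovenko estimates on the Dulac maps of unfolded saddle-nodes, using the finitely smooth normal form with uniform dependence on $\varepsilon$.
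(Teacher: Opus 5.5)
The paper does not prove this proposition; it quotes it from \cite{IY}, so your argument has to stand on its own. Your guiding idea is right: the lips carry a functional modulus that the three parameters do not see, and this is what makes the count unbounded. But the reduction you build on it fails.

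The step where the $\Delta_j$ ``descend to circle diffeomorphisms'' of $\mathbb R/\mathbb Z$ is false for saddle-node \emph{equilibria} of a planar flow. In the finitely smooth normal form you quote, the transit through the vanished saddle-node from $\{x=-1\}$ to $\{x=1\}$ is exactly linear, $y\mapsto e^{\pm T_j}y$ with $T_j=\pi/\sqrt{\varepsilon_j}+O(1)$. In $\log|y|$ this is a translation. However, the regular transition maps do not commute with unit translations of $\log|y|$, so nothing passes to a quotient circle: the orbit space of a flow near a transversal is an interval. Indeed, a smooth germ $z\mapsto\alpha z+O(z^2)$ is asymptotically a pure translation in $s=\log z$, while $s\mapsto s+\eta\sin(2\pi Ns)$ corresponds to $z\mapsto z\,e^{\eta\sin(2\pi N\log z)}$, which is not $C^1$ at $z=0$. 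The Fatou-coordinate/circle picture belongs to parabolic fixed points of maps (e.g.\ semistable cycles), not to the lips. Hence your equation modulo $1$, with $c_j \bmod 1$ sweeping a torus, is not the fixed-point equation of the Poincar\'e map, and the $2N$ roots are not established.

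You also treat the two regular maps symmetrically, and they are not. The huge contraction at the saddle-node with stable parabolic sector pushes everything into an exponentially thin strip around the center--center connection. So the transition along that connection enters only through its $1$-jet $y=\delta+\alpha z+O(z^2)$. The only functional datum visible at scale $O(1)$ is the transition $\Delta$ across the region filled by orbits going from one parabolic sector to the other. A wiggle placed in the connection map produces no cycles.

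The correct reduction is simpler. Let $L$ (unstable parabolic sector) and $R$ (stable parabolic sector) be the saddle-nodes. Let $u$ be a coordinate on the exit transversal of $L$ inside the lips region, $v=\Delta(u)$ the regular transition to the entrance transversal of $R$, and $y=\delta+\alpha z+O(z^2)$ with $\alpha>0$ the map along the connection from $R$ back to $L$. Write $u=K_1y$ for the transit through $L$ and $z=K_2v$ for the transit through $R$, where $K_1=e^{T_1}\to\infty$ and $K_2=e^{-T_2}\to0$. Then the fixed points of the Poincar\'e map, written in $u$, solve
\[
u=D+\Lambda\,\Delta(u)+O(\Lambda K_2),\qquad \Lambda=\alpha K_1K_2,\quad D=K_1\delta,
\]
with a $C^1$-small remainder. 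Choose $\varepsilon_2=\varepsilon_2(\varepsilon_1)$ to fix $K_1K_2$, and set $\delta=D/K_1$. This keeps any prescribed $\Lambda>0$, $D\in\mathbb R$ while $\mu\to\mu_*$, and uses all three parameters.

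Hyperbolic cycles then correspond to transversal intersections of the graph of $\Delta$ with the line $v=(u-D)/\Lambda$. By a flow-box modification inside the lips region, realize $\Delta(u)=u+\eta\sin(2\pi(N+1)u)$ on $0<u<1$ with $0<\eta<1/(2\pi(N+1))$, and take $(\Lambda,D)=(1,0)$. This gives the $2N+1$ transversal intersections $u=k/(2N+2)$. The condition is open, so it holds for some typical family.

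These cycles pass through distinct $u_k$ and accumulate on distinct polycycles of the continuum forming the lips ensemble. That is the sense in which the statement has to be read, and it is consistent with uniform bounds on the cyclicity of a single elementary polycycle in generic families. In this framework, the uniformity issue you single out reduces to the existence of a finitely smooth normal form depending smoothly on $\varepsilon_j$, in which the transit is exactly linear.
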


Consequently,
\(
C(3)=+\infty.
\)
This means that the cyclicity of a lips polycycle occurring in a typical
three-parameter family admits no uniform finite upper bound. 

The preceding result concerns general three-parameter families of smooth
planar vector fields.  It does not imply unbounded cyclicity within a
polynomial Li\'enard family of fixed degrees, nor does it imply that a
fixed-degree Hilbert number is infinite.  In the present paper, we instead
consider generalized polynomial Li\'enard systems of fixed degrees.  For
\(n\geq2\), we take
$\deg f=2n+1$, $\deg g=5,$
and construct an explicit lower bound for \(H(2n+1,5)\).

More precisely, let \(\mathcal N(f,g)\) denote the number of limit cycles of
system \eqref{ls}.  The corresponding Hilbert number is defined by
\[
H(n,m)
:=
\sup\left\{
\mathcal N(f,g):
\deg f\leq n,\quad \deg g\leq m
\right\}.
\]
The previously known lower bounds used for comparison are summarized below.
}
\begin{figure}[!htbp]
	\centering
	{\includegraphics[scale=0.3]{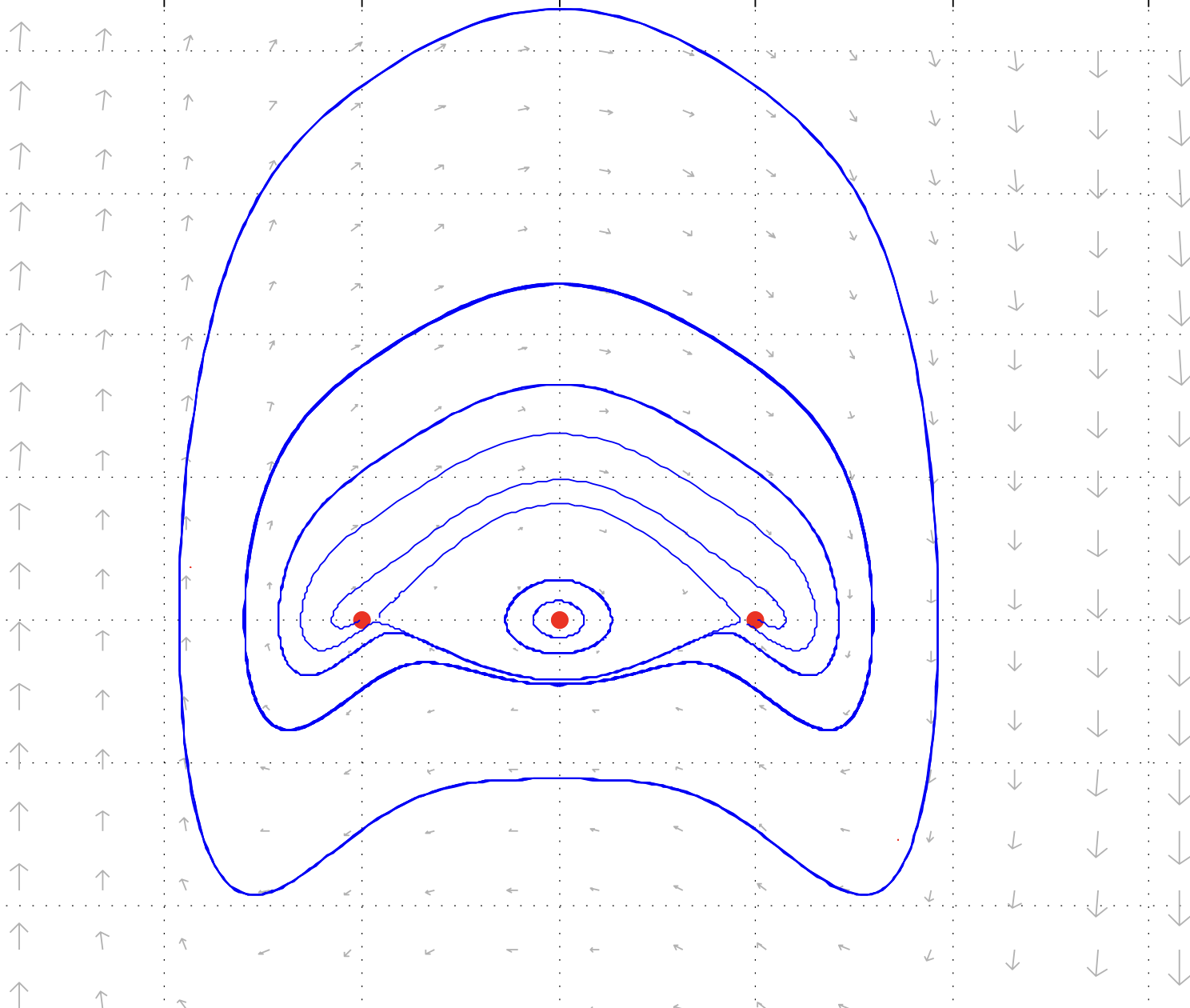}}\hspace{8pt}
	\caption{{Diagram of lips of system \eqref{m5n1} in $\mathcal{G}_5$.}}
	\label{lip-origin}
\end{figure}
\begin{enumerate}[(i)]
					\item Blows and Lloyd \cite{BL} proved
					$
					H(n,1)\geq\left[\frac{n}{2}\right]~~{\text{for}}~~n\ge1.
					$
					\item Han \cite{H1} proved
					$
					H(n,2)\geq\left[\frac{2n+1}{3}\right]~~{\text{for}}~~n\ge2.
					$
					\item Xiong \cite{X2} proved
					$
					H(n,3)\geq n+\left[\frac{n}{4}\right]~~{\text{for}}~~n\ge1.
					$
					\item For $m\geq4$, Han and Romanovski \cite{HR} proved general conclusions
					$$
					H(n,4)\geq H(n,3)\geq 2\left[\frac{n-1}{4}\right]+\left[\frac{n-1}{2}\right]~~{\text{for}}~~n\ge3,$$
					$$H(n,6)\geq H(n,5)\geq 2\left[\frac{n-1}{3}\right]+\left[\frac{n-1}{2}\right]~~{\text{for}}~~n\ge5.$$
					\item For fixed $n$, Xiong \cite{X1} obtained $H(n,5)\geq 2n+1$ for $n=2,3,4$ and $H(n, 5)\geq5 + n$ for $n = 5, 6, 7, 8$, Li and Yang \cite{LY} obtained $H(n,5)\geq n+7-\left[\frac{n+1}{6}\right]$ for $n=10,11,12$ and
					 $H(n,5)\geq n+8-\left[\frac{n+1}{6}\right]$ for $13\leq n\leq 20$.
					\item For $m=5,6$, Xiong and Han \cite{XH} proved that
					$$
					H(n,5)\geq2\left[\frac{n-1}{3}\right]+\left[\frac{n}{2}\right]+2,\quad 
					H(n,6)\geq 2\left[\frac{n-1}{3}\right]+2\left[\frac{n-1}{2}\right]+3.$$
				\end{enumerate}

      %%%%%%%%%%%%%%%%%%%%%%%%%%%%%%%%%%%%          
\subsection{Cyclicity of lips}\label{sec:lips}

{
In the remainder of this subsection, the cyclicity of the lips means the
local cyclicity of the corresponding two-cuspidal polycycle: we count only
limit cycles of the perturbed system that lie in a sufficiently small
neighborhood of the polycycle and converge to it as the perturbation
parameters tend to zero. 
} In addition, we also count the cyclicity near zero and near infinity. 

To obtain an explicit lower bound for the local cyclicity of the lips
within a polynomial Li\'enard family, we first identify the Hamiltonian
two-cuspidal configuration arising from system~\eqref{m5n1}.
Recall that the degenerate parameter surface associated with
$\mathcal G_5$ is characterized by
$c<0,$ $a=\frac{c^2}{4}$.
On the Hamiltonian boundary $b=0$, system~\eqref{m5n1} becomes
\begin{equation}
    \dot{x}=y,\qquad
    \dot{y}
    =-x\left(x^2+\frac{c}{2}\right)^2,
    \qquad c<0.
    \label{general-lips-Ham}
\end{equation}
For every $c<0$, let
\[
r=\sqrt{-\frac{c}{2}}.
\]
Under the scaling
\begin{equation*}
    \left(x, y, t\right)\rightarrow \left(\sqrt{\frac{-2}{c}}x,\quad \sqrt{\frac{8}{-c^3}}y,\quad \frac{2}{-c}t\right),
\end{equation*}
system~\eqref{general-lips-Ham} is transformed into
\begin{equation}
   \dot x=y,\qquad
   \dot y=-x(x^2-1)^2.
    \label{211}
\end{equation}
Thus all systems satisfying
\[
c<0,\qquad a=\frac{c^2}{4},\qquad b=0
\]
are equivalent, up to a linear scaling of the phase variables and a
positive rescaling of time, to the normalized case
$a=1$, $c=-2$, $b=0$.

System~\eqref{211} possesses two cusps at $(\pm1,0)$, whose
separatrix connections form the two-cuspidal polycycle shown in
Fig.~\ref{lip-Ham}. 
{This Hamiltonian system possesses two cusps at $(\pm1,0)$, whose separatrix connections form the lips configuration shown in Fig.~\ref{lip-Ham}.}
\begin{figure}[!htbp]
	\centering
	{\includegraphics[scale=0.3]{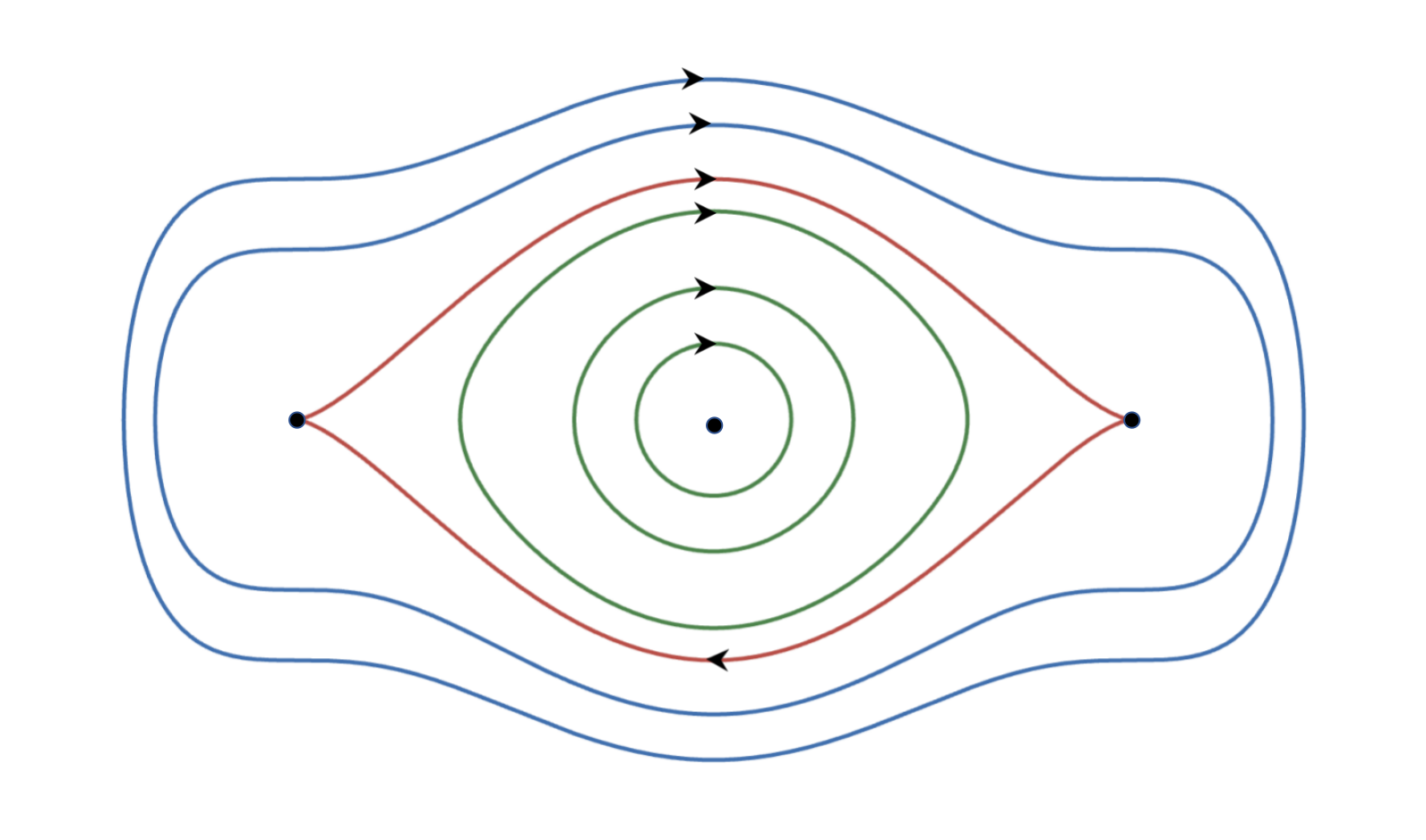}}\hspace{12pt}
	\caption{{Diagram of lips of system \eqref{211}.}}
	\label{lip-Ham}
\end{figure}

{
We next perturb this Hamiltonian system within a fixed-degree polynomial
Li\'{e}nard family.  Our aim is to construct limit cycles that bifurcate
locally from the lips and to combine them with limit cycles arising near
infinity.  This gives an explicit lower bound for the corresponding Hilbert
number.  We establish the following result.
}

{
\begin{thm}
\label{maint}
For every $n\geq2$,
\[
 H(2n+1,5)\geq B(n),
\]
where
\[
 B(n)=2n+\left[\frac n3\right]
 +\left[\frac{n+1}{3}\right]-2.
\]
\end{thm}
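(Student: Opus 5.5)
We perturb the Hamiltonian system \eqref{211} inside the Li\'enard class with $\deg g=5$ and $\deg f\le 2n+1$:
\[
\dot x=y,\qquad \dot y=-x(x^2-1)^2-\varepsilon f(x,\delta)\,y,
\qquad f(x,\delta)=\sum_{i=0}^{2n+1}\delta_i x^i .
\]
The Hamiltonian is $H(x,y)=y^2/2+G_0(x)$ with $G_0(x)=\int_0^x\xi(\xi^2-1)^2d\xi=\bigl((x^2-1)^3+1\bigr)/6$. Its phase portrait has three period annuli: an interior annulus $\{0<h<1/6\}$ around the centre at the origin, bounded by the lips (the two cuspidal loops through $(\pm1,0)$ at level $h=1/6$), and an exterior annulus $\{h>1/6\}$ extending to the centre at infinity. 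We count separately the limit cycles born (a) near the origin, (b) near the two-cuspidal polycycle, and (c) near infinity, and then show these contributions can be realised simultaneously.

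The first-order Melnikov function is
\[
M(h)=\oint_{H=h} f(x)\,y\,dx=\sum_i\delta_i I_i(h).
\]
Write $f=f_{\rm even}+f_{\rm odd}$. On the interior ovals $H=h$ is even in $x$, so only $f_{\rm even}$ contributes. On the exterior ovals the same symmetry $x\mapsto -x$ holds. The odd part of $f$ is therefore invisible at first order, so we use it as a separate set of parameters: we perturb $f$ in stages ($f=\varepsilon f_1+\varepsilon^2 f_2+\dots$, or an equivalent higher-order scheme). Even coefficients produce cycles through $M_1$. After choosing the even part so that $M_1$ has a prescribed structure, the odd coefficients produce additional cycles through a second-order Melnikov function. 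This is the mechanism that produces counts of the form $[n/3]+[(n+1)/3]$, as in Xiong--Han and Han--Romanovski.

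\emph{Near the lips.} Following Xiong--Han for cuspidal loops, we expand $M(h)$ as $h\to 1/6$. The expansion involves terms $c_0+c_1(h-1/6)^{3/4}+c_2(h-1/6)+\dots$ with fractional exponents coming from the cusps. We show the first $k$ coefficients are independent linear functions of the $\delta_i$, which gives $k$ small zeros on each side (interior and exterior). Independence reduces to the nondegeneracy of a determinant of moments $\int x^{2j}(x^2-1)^{\alpha}dx$, which we would compute explicitly.

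\emph{Near zero and near infinity.} Near zero, the Lyapunov-type expansion of $M$ at $h=0$ gives about $[n/2]$ cycles; this count is subsumed, and we only need compatibility. Near infinity we apply Brudnyi's method: change coordinates by $x=1/u$ and a Lyapunov-type transformation, so that the exterior annulus becomes a neighbourhood of a centre. The Bautin ideal of the first- and second-order Melnikov functions in the $\delta$'s then gives the cyclicity as its number of independent generators. Counting the monomials that survive in the expansion of $I_i(h)$ as $h\to\infty$ (asymptotically $I_{i}(h)\sim h^{(i+4)/6}$-type powers) gives roughly $n$ contributions from the even part plus a third-type count from the odd part.

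\emph{Combination.} The final step adds the counts, $2n+[n/3]+[(n+1)/3]-2$, using the standard argument that when the relevant expansion coefficients (at the lips, and at infinity) form a full-rank linear map in the $\delta_i$, we can choose $\delta$ with alternating signs in a hierarchical order. The two subtracted units account for the dependence relations, namely the shared constant coefficient $c_0$ and one relation at infinity. Each coefficient change creates one zero, and the different regions do not interfere, by continuity and the persistence of simple zeros.

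The main obstacle is the rank computation. We must prove that the combined Jacobian of (lips coefficients) $\cup$ (infinity coefficients) with respect to the $\delta_i$ has the claimed rank for every $n\ge2$, not just for small $n$. We would first attempt this by writing all coefficients as explicit Beta-function moments and exhibiting a triangular structure after ordering by degree. If the triangular structure fails, we would instead verify the rank symbolically for small $n$ and use an inductive degree-raising argument for larger $n$.
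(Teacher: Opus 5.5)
Your overall frame (even part of $f$ acting at first order, odd part entering only at higher order, regions counted separately and then combined) matches the paper's. The accounting that is supposed to produce $B(n)$ does not work, however, and the two nontrivial parts of the count are attributed to the wrong mechanisms.

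\textbf{The combination step cannot reach $B(n)$.} The damping $f$ has only $2n+2$ coefficients. Any argument in which each zero is paid for by an independently prescribed coefficient (your ``full-rank linear map \dots each coefficient change creates one zero'') gives at most $2n+1$ zeros. But $B(n)-(2n+1)=[n/3]+[(n+1)/3]-3>0$ for $n\ge6$.

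The missing idea is that both sides of the two-cuspidal loop are driven by the \emph{same} even coefficients $a$, and the outer side acquires zeros for free.
\begin{itemize}
\item The paper reduces every $I_i$ to $I_0,I_1,I_2$ with polynomial coefficients in $h$ (Lemma~\ref{MM}).
\item Via the $3\times3$ block $\mathcal M^+$ it shows that $a\mapsto(\Lambda_0^+,\dots,\Lambda_n^+)$ is an isomorphism (Lemma~\ref{lm:Ln-plus-nondegenerate}). An alternating, strongly dominant choice therefore gives $n$ zeros of the inner function $M^+$.
\item The outer coefficients are then forced: $\Lambda_r^-=\eta_r\Lambda_r^{+}+\sum_{q<r}t_{rq}\Lambda_q^{+}$.
\item In the selected ordering $C_0,D_0,C_1,E_0,D_1,C_2,\dots$, the exponents are $0,5/6,1,7/6,11/6,2,\dots$ (not $3/4$, since both cusps have order one). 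The diagonal signs are $\operatorname{sgn}\eta_0=1$ and $\operatorname{sgn}\eta_{3j+1}=\operatorname{sgn}\eta_{3j+2}=\operatorname{sgn}\eta_{3j+3}=(-1)^{j+1}$.
\item So the alternating inner sequence becomes $+,+,-,+,+,-,\dots$ on the outer side, and $M^-$ gets $[n/3]+[(n+1)/3]$ extra zeros (Lemma~\ref{hn5l}).
\end{itemize}
Hence your claim of ``$k$ zeros on each side'' is wrong: the outer side cannot be prescribed independently and gets fewer zeros. The term $[n/3]+[(n+1)/3]$ has nothing to do with the odd part or a second-order Melnikov function.

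\textbf{The infinity step is misattributed.} At infinity the even part is already used up, so the ``roughly $n$ contributions from the even part'' are not available. In the paper, $a\in\mathcal C_{\rm lip}^a$ with $a_n\neq0$ is held fixed. The infinity cycles come solely from $\beta_i=\varepsilon_b b_i$, through the $\varepsilon_a\beta^2$-homogeneous part of the weighted displacement $\Delta_\infty=(P_\infty^M-R^M)/M$ with $M=6n-3$.
\begin{itemize}
\item Lemma~\ref{lm:low-coefficients-infty} gives $C_{2q+1}^*=\Gamma_q^\infty a_n\beta_n\beta_{n-q}+\mathcal L_q^\infty(a,\beta_n,\dots,\beta_{n-q+1})$ with $\Gamma_q^\infty\neq0$, for $q=0,\dots,n-2$.
\item So $\beta_n,\dots,\beta_2$ can be solved for recursively, giving $Q_\infty^*(z)$ ($z=R^2$) $n-2$ prescribed simple positive roots (Lemmas~\ref{lm:C-infty-nonempty} and~\ref{lm:infinity-cyclicity}).
\item This is the source of the $-2$ in $B(n)=(n-2)+\eta_{\rm lip}(n)$, not a shared $c_0$.
\end{itemize}
This triangular structure in the odd parameters, with $a$ fixed, is also what makes the two regions compatible. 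The combined Jacobian you single out as the main obstacle is therefore never needed.

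Finally, the origin plays no role in $B(n)$. Proposition~\ref{prop:P3-not-origin-max} in fact shows that the maximal origin and lips hierarchies cannot be realized together, so appealing to compatibility there would not help.
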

}

We now compare the lower bound obtained in Theorem \ref{maint} with some
previously known estimates. Compared with the result of \cite{HR}, we recall that
\[
H(N,5)\geq
2\left[\frac{N-1}{3}\right]
+
\left[\frac{N-1}{2}\right].
\]
Taking \(N=2n+1\), this gives
\[
H(2n+1,5)
\geq
2\left[\frac{2n}{3}\right]+n.
\]
Thus we define
\[
\Delta_1(n)
=
B(n)
-
\left(
2\left[\frac{2n}{3}\right]+n
\right).
\]
{
A direct computation gives \(\Delta_1(n)>0\) for every \(n\geq7\); hence the present bound is weaker for \(n=2,3\), coincides with the estimate of \cite{HR} for \(n=4,5,6\), and is strictly better for every \(n\geq7\).
}

Compared with the result of \cite{XH}, namely
\[
H(N,5)\geq
2\left[\frac{N-1}{3}\right]
+
\left[\frac N2\right]
+
2,
\]
we take again \(N=2n+1\) and obtain
\[
H(2n+1,5)\geq
2\left[\frac{2n}{3}\right]+n+2.
\]
Define
\[
\Delta_2(n)
=
B(n)
-
\left(
2\left[\frac{2n}{3}\right]+n+2
\right).
\]
{
A direct computation gives \(\Delta_2(n)>0\) for every \(n\geq13\); the two bounds coincide for \(n=10,11,12\), and the present bound improves the estimate of \cite{XH} for every \(n\geq13\).
}

{
Finally, since the class with \(\deg g\leq3\) is contained in the class with
\(\deg g\leq5\), one has
\[
H(N,5)\geq H(N,3).
\]
Thus the estimate of \cite{X2},
\[
H(N,3)\geq N+\left[\frac N4\right],
\]
also provides a lower bound for \(H(N,5)\).  Taking \(N=2n+1\), we define
}
\[
\Delta_3(n)
=
B(n)
-
\left(
2n+1+\left[\frac{2n+1}{4}\right]
\right).
\]
{
A direct computation shows that $\Delta_3(n)>0$ for $n=21$ and for all
$n\geq23$.
}

{
For the fixed-degree estimates obtained in \cite{X1,LY}, substituting \(N=2n+1\) into the bound of
Theorem~\ref{maint} gives
\[
\begin{array}{c|ccccccc}
N&5&7&11&13&15&17&19\\ \hline
B\bigl((N-1)/2\bigr)
&3&6&11&14&16&19&22\\
\text{bounds in \cite{X1,LY}}
&10&12&16&19&21&22&24
\end{array}
\]
respectively.  Hence the fixed-degree estimates of \cite{X1,LY} are stronger
in all the overlapping cases.  However, those results cover only finitely
many prescribed degrees, whereas Theorem~\ref{maint} provides the uniform
formula
\[
H(2n+1,5)\geq
B(n)
=
2n+\left[\frac n3\right]
+\left[\frac{n+1}{3}\right]-2
\]
for every \(n\geq2\). 
}

We now summarize the mechanism behind Theorem~\ref{maint}.  The lower bound is
obtained by combining two local families of limit cycles: one bifurcating from
infinity and the other from the two-cuspidal loop.  We do not use the family
near the origin in the unconditional statement, because the finite-dimensional
compatibility between the origin condition and the lips condition requires
additional information on the transition matrix between the corresponding
\(a\)-parameter coordinates.

\begin{enumerate}[a)]

\item \textbf{Near infinity.}
Near infinity, a weighted compactification transforms the problem into a
regular radial equation on a fixed outer annulus.  The weighted displacement
has selected coefficients whose leading parts form a triangular
nondegenerate system in the effective \(b\)-parameters.

The compactification and the exact \(R\)-equation are given in
Section~\ref{subsec:limit-cycles-infinity}.  The weighted
Picard--Brudnyi expansion is established in
Lemma~\ref{lm:weighted-picard-brudnyi}, and the triangular dependence of the
selected coefficients is proved in
Lemma~\ref{lm:low-coefficients-infty}.
The recursive realization of the selected infinity polynomial is given in
Lemma~\ref{lm:C-infty-nonempty}.
Finally, Lemma~\ref{lm:infinity-cyclicity} proves that the exact weighted
displacement has at least \(n-2\) simple positive zeros.
Consequently, the cyclicity contributed by infinity is at least \(n-2\).
For \(n=2\), this contribution is understood to be zero.

\item \textbf{Near the two-cuspidal loop.}
Near the Ilyashenko--Kotova lips, the leading contribution is the first
Melnikov function.  The Melnikov functions \(M^\pm(h)\) admit 
expansions on the two sides of the two-cuspidal loop.

{
The detailed construction is given in
Section~\ref{subsec:limit-cycles-lips}.  In particular,
Lemma~\ref{lm:Ln-plus-nondegenerate} proves that the selected plus-side
coefficient map
\[
a=(a_0,\ldots,a_n)\longmapsto
\bigl(\Lambda_0^+(a),\ldots,\Lambda_n^+(a)\bigr)
\]
is a linear isomorphism.
Therefore, the plus-side coefficients can be chosen with alternating signs
and strong dominance, producing at least \(n\) simple zeros of \(M^+(h)\),
and hence plus-side cyclicity at least \(n\).
The induced sign relations for the minus-side coefficients and the resulting
two-sided cyclicity are proved in Lemma~\ref{hn5l}.  More precisely, the total
lips contribution to cyclicity is
\[
\eta_{\rm lip}(n)
=
n+\left[\frac n3\right]
+\left[\frac{n+1}{3}\right].
\]
}

\end{enumerate}

{
The parameter choices for these two regions are compatible.  Indeed, choose
\(a\in\mathcal C_{\rm lip}^a\) with \(a_n\neq0\).
For this fixed \(a\), the effective \(b\)-parameters can be chosen by the
triangular construction in Lemma
\ref{lm:infinity-cyclicity}.  Hence
\[
\mathbb P_2\cap\mathbb P_3\neq\emptyset.
\]
}

Combining the two local contributions gives
\[
(n-2)+\eta_{\rm lip}(n)
=
2n+\left[\frac n3\right]
+\left[\frac{n+1}{3}\right]-2
=
B(n).
\]
This proves the lower bound stated in Theorem~\ref{maint}.

 %%%%%%%%%%%%%%%%%%%%%%%%%%%%%%%%%%%%%%%%%%%%%
\subsection{Cyclicity of a near-Hamiltonian system with lips}
Consider the Li\'{e}nard system \eqref{ls} as follows
\begin{equation}
\dot{x} = y, \qquad
\dot{y} = -x(x^2 - 1)^2 - \varepsilon_a \sum_{i=0}^{n} a_i x^{2i} y - \varepsilon_b \sum_{i=0}^{n} b_i x^{2i+1} y,
	\label{or}
\end{equation}
{where $0<\varepsilon_b^2\ll\varepsilon_a\ll\varepsilon_b\ll1$, $a_i,b_i\in\mathbb R$, $a_n\neq0$, and $n\in\mathbb Z_+$.} 

System \eqref{or} represents a perturbation of a Hamiltonian vector field with Hamiltonian
\begin{equation}
	H(x,y) = \frac{y^2}{2} + \frac{3x^2 - 3x^4 + x^6}{6}.
	\label{H}
\end{equation}
{
Let $H_0(2n+1,5)$ denote the maximum number of limit cycles realized within
the family \eqref{or}.  Since this family is contained in the class defining
the Hilbert number, one has
\[
 H(2n+1,5)\geq H_0(2n+1,5).
\]
We now state the local and compatibility results used below.
}

Before stating the global result, we recall the selected coefficients and the
corresponding parameter conditions in the three local regions.

\medskip
\noindent
\textbf{1. The origin coefficients.}

{
Let \(P_0\) denote the first return map near the origin.  The radial equation
from which \(P_0\) is obtained is derived in
Section~\ref{subsec:origin-analysis}; see
\eqref{origin-radial-expanded}.  The displacement map has the expansion
}
\[
P_0(r)-r
=
\sum_{i\geq0}C_i^0r^{i+1}.
\]
{
 The coefficients \(C_i^0\) are computed by applying the Picard--Brudnyi
formula to \eqref{origin-radial-expanded}.  In particular, their selected
even part has the triangular form
}
\begin{equation}
C_{2j}^0
=
\varepsilon_a
\left(
\gamma_j a_j+\mathcal L_j(a_0,\ldots,a_{j-1})
\right)
+
O(\delta^2),
\qquad
j=0,\ldots,n,
\label{C-even-origin}
\end{equation}
{
where \(\gamma_j\neq0\). Here
\[
\delta
=
\max_{0\leq i\leq n}
\left\{
|\varepsilon_a a_i|,
|\varepsilon_b b_i|
\right\}
\]
is the effective perturbation size.  The derivation of
\eqref{C-even-origin}, together with the estimates for the unselected
coefficients, is given in Section~\ref{subsec:origin-analysis}.
The corresponding zero-counting result is proved in Lemma~\ref{ln5o}.
}

We define the origin cone \(\mathcal C_{\rm org}\) by
\[
\mathcal C_{\rm org}
=
\left\{
(a,b,\varepsilon_a,\varepsilon_b):
\begin{array}{l}
0<|C_0^0|
\ll
|C_2^0|
\ll
\cdots
\ll
|C_{2n}^0|\ll1,\\[1mm]
C_0^0C_2^0>0,\quad
C_{2j}^0C_{2j+2}^0<0,\quad j=1,\ldots,n-1,\\[1mm]
C_{2j+1}^0=O(\delta^2)\ \text{is dominated by the selected even terms}
\end{array}
\right\}.
\]

\medskip
\noindent
\textbf{2. The infinity coefficients.}

{
The variables used near infinity are introduced in
Section~\ref{subsec:limit-cycles-infinity} by
\[
x=\frac{\cos\theta}{r},
\qquad
y=\frac{\sin\theta}{r},
\qquad
R=r^{1/3}.
\]
Thus \(r\to0^+\), and equivalently \(R\to0^+\), corresponds to infinity in
the original \((x,y)\)-plane.

Let
\(
P_\infty(R)
\)
denote the first return map of the exact \(R\)-equation derived in
Section~\ref{subsec:limit-cycles-infinity}.  Set
\(
M=6n-3
\)
and define the weighted displacement by
\[
\Delta_\infty(R)
=
\frac{P_\infty(R)^M-R^M}{M}.
\]
Since \(M\) is odd, one has
\[
\Delta_\infty(R)=0
\quad\Longleftrightarrow\quad
P_\infty(R)^M=R^M
\quad\Longleftrightarrow\quad
P_\infty(R)=R.
\]
Thus the positive zeros of \(\Delta_\infty\) are exactly the positive fixed
points of the first return map \(P_\infty(R)\).
} 

{
By Lemma~\ref{lm:weighted-picard-brudnyi}, the weighted displacement admits
the asymptotic expansion
}
\begin{equation*}
\Delta_\infty(R)
\sim
\sum_{j\ge0}C_j^\infty R^j.
\label{Cinf-intro}
\end{equation*}
{
Here \(C_j^\infty\) are asymptotic coefficients of the weighted displacement
on the outer annulus; they are not Taylor coefficients at \(R=0\) for fixed
nonzero perturbation parameters.

The polynomial \(Q_\infty^*\) collects the selected odd leading coefficients
in the asymptotic expansion of the weighted displacement
\(\Delta_\infty\).  More precisely, define
\[
Q_\infty^*(z,a,\beta)
=
C_1^*(a,\beta)
+
C_3^*(a,\beta)z
+\cdots+
C_{2n-3}^*(a,\beta)z^{n-2}.
\]
Equivalently, the corresponding selected odd leading contribution to
\(\Delta_\infty\) is
\[
\varepsilon_a RQ_\infty^*(R^2,a,\beta)
=
\varepsilon_a
\left(
C_1^*R+C_3^*R^3+\cdots+C_{2n-3}^*R^{2n-3}
\right).
\]
By Lemma~\ref{lm:low-coefficients-infty}, its coefficients satisfy
\[
C_1^*(a,\beta)
=
\Gamma_0^\infty a_n\beta_n^2,
\]
and, for \(q=1,\ldots,n-2\),
\[
C_{2q+1}^*(a,\beta)
=
\Gamma_q^\infty a_n\beta_n\beta_{n-q}
+
\mathcal L_q^\infty
\bigl(a,\beta_n,\ldots,\beta_{n-q+1}\bigr).
\]
Here
\[
\Gamma_q^\infty\neq0,
\qquad q=0,\ldots,n-2,
\]
and, for \(q=1,\ldots,n-2\),
\(\mathcal L_q^\infty\) is independent of \(\beta_{n-q}\).

The selected odd leading contribution to the weighted displacement is
\[
\varepsilon_a
\left(
C_1^*R+C_3^*R^3+\cdots+C_{2n-3}^*R^{2n-3}
\right)
=
\varepsilon_a RQ_\infty^*(R^2,a,\beta).
\]
Since \(\varepsilon_a>0\) and \(R>0\), the sign of this contribution at
\(R=\sqrt z\) is the same as that of \(Q_\infty^*(z,a,\beta)\).

After \(\beta_n\neq0\) has been fixed, \(C_{2q+1}^*\) depends linearly on
\(\beta_{n-q}\), with nonzero coefficient
\[
\Gamma_q^\infty a_n\beta_n,
\qquad q=1,\ldots,n-2,
\]
whereas none of the preceding coefficients
\[
C_1^*,C_3^*,\ldots,C_{2q-1}^*
\]
contains \(\beta_{n-q}\). The construction and the zero properties of \(Q_\infty^*\) used below are
established in Lemma~\ref{lm:C-infty-nonempty}.
}

We define
\[
\mathcal C_{\rm inf}
=
\left\{
(a,b,\varepsilon_a,\varepsilon_b):
\begin{array}{l}
a_n\neq0,\quad
\beta_i=\varepsilon_b b_i,\quad i=0,\ldots,n,\\[1mm]
\text{there exist }
0<z_1^-<z_1^+<\cdots<z_{n-2}^-<z_{n-2}^+\ll1
\text{ such that}\\[1mm]
Q_\infty^*(z_j^-,a,\beta)
Q_\infty^*(z_j^+,a,\beta)<0,
\\[1mm]
\operatorname{sgn}
\Delta_\infty\!\left(\sqrt{z_j^\pm}\right)
=
\operatorname{sgn}
Q_\infty^*(z_j^\pm,a,\beta),
\qquad j=1,\ldots,n-2.
\end{array}
\right\}.
\]

\medskip
\noindent
\textbf{3. The lips coefficients.}

For related asymptotic expansions of Melnikov functions near nilpotent cuspidal loops and their applications to limit-cycle bifurcations in polynomial Li'enard systems, we refer to Xiong \cite{X2} and Xiong and Han \cite{XH}. For the two-cuspidal loop, we introduce the notation
\[
h_0=\frac16,\qquad
\tau_-=h-h_0>0,\qquad
\tau_+=h_0-h>0.
\]
Thus \(L_h^-\) corresponds to \(h>h_0\), while \(L_h^+\) corresponds to
\(h<h_0\).  Near the two-cuspidal loop, the Melnikov functions have the
 expansions
\[
M^-(h)
=
\sum_{j\ge0}C_j^-\tau_-^j
+
\sum_{j\ge0}D_j^-\tau_-^{j+5/6}
+
\sum_{j\ge0}E_j^-\tau_-^{j+7/6},
\]
and
\[
M^+(h)
=
\sum_{j\ge0}C_j^+\tau_+^j
+
\sum_{j\ge0}D_j^+\tau_+^{j+5/6}
+
\sum_{j\ge0}E_j^+\tau_+^{j+7/6}.
\]

We select the coefficients in the order
\(
\Lambda_0^\pm=C_0^\pm
\)
and, for \(j\ge0\),
\[
\Lambda_{3j+1}^\pm=D_j^\pm,\qquad
\Lambda_{3j+2}^\pm=C_{j+1}^\pm,\qquad
\Lambda_{3j+3}^\pm=E_j^\pm.
\]

{
The  expansions and the selected coefficients are derived in
Section~\ref{subsec:limit-cycles-lips}.  The nondegeneracy calculation in
Lemma~\ref{lm:Ln-plus-nondegenerate} proves that the map
}
\[
a\longmapsto
\bigl(\Lambda_0^+(a),\ldots,\Lambda_n^+(a)\bigr)
\]
{
is a linear isomorphism.
}

Therefore, the cone
\[
\mathcal C_{\rm lip}^a
=
\left\{
a\in\mathbb R^{n+1}:
\begin{array}{l}
\Lambda_j^+(a)\Lambda_{j+1}^+(a)<0,
\qquad j=0,\ldots,n-1,\\[1mm]
0<|\Lambda_0^+(a)|
\ll|\Lambda_1^+(a)|
\ll\cdots\ll|\Lambda_n^+(a)|
\end{array}
\right\}
\]
is nonempty.

{
For \(a\in\mathcal C_{\rm lip}^a\), the selected plus-side coefficients have
alternating signs and strong dominance.  The resulting plus-side cyclicity,
the induced sign relations on the minus side, and the total two-sided
cyclicity are proved in Lemma~\ref{hn5l}.  In particular,
}
\[
\operatorname{sgn}C_j^-
=
(-1)^j\operatorname{sgn}C_j^+,
\qquad
\operatorname{sgn}D_j^-
=
(-1)^{j+1}\operatorname{sgn}D_j^+,
\]
and
\[
\operatorname{sgn}E_j^-
=
(-1)^{j+1}\operatorname{sgn}E_j^+.
\]

{
Consequently, the total cyclicity contributed by the two sides of the lips
is at least
}
\[
\eta_{\rm lip}(n)
=
n+\left[\frac n3\right]
+\left[\frac{n+1}{3}\right].
\]

Define
\[
B(n)
=
(n-2)+\eta_{\rm lip}(n)
=
2n+\left[\frac n3\right]
+\left[\frac{n+1}{3}\right]-2.
\]
Then we have the following result.

\begin{thm}
\label{t-hn5}
Let
\[
a=(a_0,a_1,\ldots,a_n),\qquad
b=(b_0,b_1,\ldots,b_n),
\]
and assume \(n\ge2\).  {
Define
\[
 \mathbb P_1:=\mathcal C_{\rm org},\qquad
 \mathbb P_2:=\mathcal C_{\rm inf},
\]
and
\[
 \mathbb P_3
 :=\left\{(a,b,\varepsilon_a,\varepsilon_b):
 a\in\mathcal C_{\rm lip}^a,
 \ 0<\varepsilon_b^2\ll\varepsilon_a\ll\varepsilon_b\ll1
 \right\}.
\]
}
Then the following statements hold.

\begin{enumerate}[(i)]
\item If
$
(a,b,\varepsilon_a,\varepsilon_b)\in\mathbb P_1,
$
then system \eqref{or} has at least \(n-1\) limit cycles near the origin.

\item If
$
(a,b,\varepsilon_a,\varepsilon_b)\in\mathbb P_2,
$
then system \eqref{or} has at least \(n-2\) limit cycles near infinity.  

\item If
$
(a,b,\varepsilon_a,\varepsilon_b)\in\mathbb P_3,
$
then system \eqref{or} has at least \(\eta_{\rm lip}(n)\) limit cycles near
the two-cuspidal loop.
\end{enumerate}
\end{thm}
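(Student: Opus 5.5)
The three statements are proved separately. Each follows the same scheme: an asymptotic expansion of a displacement or Melnikov function whose selected coefficients are triangular or linearly independent in the parameters, a choice of those coefficients with alternating signs and strong dominance, and a sign-change count that persists under the higher-order remainder.

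\emph{Origin, part (i).} We start from the radial equation \eqref{origin-radial-expanded} and the expansion $P_0(r)-r=\sum_{i\ge0}C_i^0r^{i+1}$, with even coefficients in the triangular form \eqref{C-even-origin}. Because the unperturbed origin is a nondegenerate center of \eqref{H}, the map $P_0$ is analytic in $r$ and in the parameters. Odd coefficients are annihilated to first order by the symmetry $(x,y)\mapsto(-x,-y)$ of the unperturbed part, so they are $O(\delta^2)$, as encoded in $\mathcal C_{\rm org}$. For $(a,b,\varepsilon_a,\varepsilon_b)\in\mathbb P_1$ we pick radii $r_0>r_1>\dots$ with $r_{j}^2\sim |C_{2j-2}^0/C_{2j}^0|$. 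At $r_j$ the term $C_{2j}^0r^{2j+1}$ dominates all the others. This uses the strong dominance $|C_0^0|\ll\cdots\ll|C_{2n}^0|$, the domination of the odd terms, and an analytic tail bound $O(r^{2n+2})$ uniform on a fixed disc. The signs prescribed in $\mathcal C_{\rm org}$ then produce sign changes of $P_0(r)-r$. Since $C_0^0C_2^0>0$, the first pair gives no change, so the alternations $C_{2j}^0C_{2j+2}^0<0$ for $j=1,\dots,n-1$ give $n-1$ changes. By the intermediate value theorem, $P_0(r)-r$ has $n-1$ positive zeros, which are limit cycles shrinking to the origin. This is Lemma~\ref{ln5o}.

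\emph{Infinity, part (ii).} This is essentially built into $\mathcal C_{\rm inf}$. By definition there are points $z_1^-<z_1^+<\dots<z_{n-2}^+$ at which $\Delta_\infty(\sqrt{z_j^\pm})$ has the sign of $Q^*_\infty(z_j^\pm)$, and these signs are opposite within each pair. Since $\Delta_\infty$ is continuous on the outer annulus (Lemma~\ref{lm:weighted-picard-brudnyi}), each interval $(\sqrt{z_j^-},\sqrt{z_j^+})$ contains a zero of $\Delta_\infty$. These intervals are pairwise disjoint, and zeros of $\Delta_\infty$ are exactly fixed points of $P_\infty$ because $M$ is odd. This gives $n-2$ periodic orbits near infinity. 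They are limit cycles because a return map in a nonintegrable regime has isolated fixed points. Alternatively, Lemma~\ref{lm:infinity-cyclicity} makes the zeros simple. Nonemptiness of the cone is Lemma~\ref{lm:C-infty-nonempty}, obtained by fixing $\beta_n$ and solving triangularly for $\beta_{n-1},\dots,\beta_2$.

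\emph{Lips, part (iii).} This is the main work. Let $M^\pm(h)$ denote the first Melnikov functions. Since $\varepsilon_a\gg\varepsilon_b^2$, the leading term of the displacement is $\varepsilon_aM_a+\varepsilon_bM_b$, and $M_b\equiv0$: the $b$-perturbation $x^{2i+1}y$ is odd-symmetric and integrates to zero over the symmetric ovals. So the $\varepsilon_a$ part is the first Melnikov function, and the remainder is $O(\varepsilon_b^2)=o(\varepsilon_a)$ uniformly on compact $h$-intervals.

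The steps are as follows.
\begin{enumerate}[(1)]
\item Derive the expansions of $M^\pm$ near $h_0$ at the nilpotent cusps $(\pm1,0)$, where locally $H-h_0\sim y^2/2\pm(x\mp1)^3$. This follows the Xiong and Xiong--Han computations, giving the exponents $5/6$ and $7/6$.
\item Invoke Lemma~\ref{lm:Ln-plus-nondegenerate}, which says $a\mapsto(\Lambda_0^+,\dots,\Lambda_n^+)$ is an isomorphism. With alternating, strongly dominated coefficients, $M^+$ has $n$ simple zeros accumulating at $h_0$ from below.
\item Relate the minus-side coefficients to the plus-side ones. The outer orbit $L_h^-$ passes the cusps on the other branch, so its local integrals are the analytic continuation with $\tau_-\to e^{\pm i\pi}\tau_+$. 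This produces the sign relations $\operatorname{sgn}C_j^-=(-1)^j\operatorname{sgn}C_j^+$ and $(-1)^{j+1}$ for the $D,E$ coefficients, which are imported from Lemma~\ref{hn5l}.
\item Count the alternations on the minus side of the reordered sequence $\Lambda^-$. The $C$-coefficients flip sign pattern relative to the $D$- and $E$-coefficients, so among each triple $(D_j,C_{j+1},E_j)$ only a fraction survive as sign changes. The count $[n/3]+[(n+1)/3]$ is obtained by bookkeeping modulo $3$.
\item Since all these zeros are simple, they persist for the true displacement, giving $\eta_{\rm lip}(n)$ limit cycles.
\end{enumerate}

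The main obstacle is step (4) together with step (3). The exact phases in the continuation across the cusps and the resulting residue count modulo $3$ must be checked carefully. One must also ensure the dominance ordering chosen on the plus side still induces strong dominance, and not just signs, on the minus side. If dominance fails there, I would choose the gaps $|\Lambda_j^+|/|\Lambda_{j+1}^+|$ progressively smaller so that the same ratios transfer through the continuation constants.
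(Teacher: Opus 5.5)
Your proposal follows the paper's proof: each part reduces to the same local result, namely Lemma~\ref{ln5o} at the origin, the intermediate-value argument built into $\mathcal C_{\rm inf}$ as in Lemma~\ref{lm:infinity-cyclicity}, and Lemmas~\ref{lm:Ln-plus-nondegenerate} and~\ref{hn5l} at the lips. A few minor slips need fixing:
\begin{enumerate}[(a)]
\item In (i) the test radii must increase, $0<r_0<r_1<\cdots<r_n$, and sit strictly between consecutive balance points, not at $r_j^2\sim|C_{2j-2}^0/C_{2j}^0|$ where two terms have equal size and neither dominates.
\item In (ii), isolation of the fixed points should be justified by analyticity of $P_\infty$ in $R>0$ together with $\Delta_\infty\not\equiv0$.
\item The lips sign relations come from $\sigma_0,\sigma_1>0$ and $\kappa_0,\kappa_1<0$. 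They do not come from an $e^{\pm i\pi}$ continuation, which would give non-real phases such as $e^{5i\pi/6}$.
\end{enumerate}
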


\begin{thm}
    We have
\(
\mathbb P_1\cap\mathbb P_2\neq\emptyset.
\)
Consequently, 
\[
H_0(2n+1,5)
\ge
(n-1)+(n-2)
=
2n-3.
\]
\label{thm-p12}
\end{thm}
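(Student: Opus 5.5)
We will show that the two sets of conditions can be met at the same time. The key observation is that they constrain essentially different groups of parameters. The origin cone $\mathcal C_{\rm org}$ is governed, to leading order, only by the $a$-parameters, via \eqref{C-even-origin}. The infinity cone $\mathcal C_{\rm inf}$, once $a_n\neq0$ is fixed, is governed by the effective $b$-parameters $\beta_i=\varepsilon_b b_i$, via the triangular structure of Lemma~\ref{lm:low-coefficients-infty}.

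\textbf{Step 1: choose $a$ for the origin.} Since $\gamma_j\neq0$, the map $a\mapsto(\gamma_ja_j+\mathcal L_j(a_0,\ldots,a_{j-1}))_{j=0}^n$ is lower triangular and invertible. We choose its values recursively:
\begin{itemize}
\item first $a_n\neq0$ of unit size;
\item then $a_{n-1},\ldots,a_0$, so that the leading parts of $C_{2j}^0/\varepsilon_a$ satisfy $0<|C_0^0|\ll|C_2^0|\ll\cdots\ll|C_{2n}^0|$, with $C_0^0C_2^0>0$ and $C_{2j}^0C_{2j+2}^0<0$.
\end{itemize}
These are open conditions on the leading parts. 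They therefore survive the $O(\delta^2)$ errors and the unselected odd coefficients $C_{2j+1}^0=O(\delta^2)$, provided $\delta^2\ll\varepsilon_a|C_0^0|$. This is where the regime $\varepsilon_b^2\ll\varepsilon_a$ is used: the contribution of $\beta$ to $\delta^2$ is of order $\varepsilon_b^2$. We will have to choose the dominance ratios among the $C_{2j}^0$ before shrinking $\varepsilon_a,\varepsilon_b$, so that the lowest coefficient $\varepsilon_a|C_0^0|$ still dominates $\varepsilon_b^2|b|^2$.

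\textbf{Step 2: choose $b$ for infinity with $a$ fixed.} Keep the $a$ from Step 1; in particular $a_n\neq0$. Then apply Lemma~\ref{lm:C-infty-nonempty} and Lemma~\ref{lm:infinity-cyclicity}.
\begin{itemize}
\item Fix $\beta_n\neq0$.
\item Choose $\beta_{n-1},\ldots,\beta_2$ recursively so that $C_1^*,C_3^*,\ldots,C_{2n-3}^*$ alternate in sign with strong dominance. This works because each $C_{2q+1}^*$ is affine in $\beta_{n-q}$ with nonzero slope $\Gamma_q^\infty a_n\beta_n$, and the earlier coefficients do not involve $\beta_{n-q}$.
\item This yields points $z_j^\pm$ with $Q_\infty^*(z_j^-)Q_\infty^*(z_j^+)<0$, and the sign agreement with $\Delta_\infty(\sqrt{z_j^\pm})$ supplied by Lemma~\ref{lm:infinity-cyclicity}.
\item The remaining $\beta_0,\beta_1$ are free. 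We take them of the same order as the others.
\end{itemize}
Because the construction only prescribes ratios among the $\beta_i$, we may rescale $\beta=\varepsilon_b b$ by taking $\varepsilon_b$ small, keeping $b$ fixed.

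\textbf{Step 3: compatibility.} We must check that the Step 2 choices do not spoil Step 1. The $b$-dependence of the origin coefficients enters only through the $O(\delta^2)$ remainder in \eqref{C-even-origin}. Fix $a$ and $b$, then pick $\varepsilon_a,\varepsilon_b$ in the regime $\varepsilon_b^2\ll\varepsilon_a\ll\varepsilon_b\ll1$ small enough that:
\begin{itemize}
\item the origin sign and dominance pattern holds;
\item the infinity sign conditions of Lemma~\ref{lm:infinity-cyclicity} hold, since its leading part $\varepsilon_aRQ_\infty^*$ must dominate the error terms on the outer annulus.
\end{itemize}
The point in $(a,b,\varepsilon_a,\varepsilon_b)$-space obtained this way lies in $\mathbb P_1\cap\mathbb P_2$.

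\textbf{Step 4: counting.} Theorem~\ref{t-hn5}(i),(ii) give $n-1$ limit cycles near the origin and $n-2$ near infinity. These lie in disjoint regions: a small neighborhood of $O$, and the outer annulus $R\ll1$. Hence the cycles are distinct, and
$$H_0(2n+1,5)\ge 2n-3.$$

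\textbf{Main obstacle.} The hard part is the quantitative uniformity in Step 3. The origin cone requires $\varepsilon_a|C_0^0|$ to beat $O(\delta^2)$, and $\delta$ contains $\varepsilon_b|b_i|$. We need to confirm that $\delta^2$ really scales as $\varepsilon_a^2+\varepsilon_b^2$ with no $\varepsilon_a\varepsilon_b$ cross terms of the wrong size. We also need the infinity error bounds of Lemma~\ref{lm:infinity-cyclicity} to be uniform in the fixed $a$. We will first check this by tracking the second-order Picard--Brudnyi terms of \eqref{origin-radial-expanded}. If cross terms appear, we will tighten the regime, for instance to $\varepsilon_a=\varepsilon_b^{3/2}$, and use a correspondingly larger ratio $|C_0^0|$ relative to $|b|^2$.
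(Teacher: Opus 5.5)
Your plan is the paper's proof. Pick a point of the origin cone with $a_n\neq0$, build $\beta_n,\dots,\beta_2$ by the recursion of Lemma~\ref{lm:C-infty-nonempty}, then shrink into $\varepsilon_b^2\ll\varepsilon_a\ll\varepsilon_b\ll1$.

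The origin half is fine, and safer than your last paragraph fears. At $\varepsilon_a=0$, system \eqref{or} is reversible under $(x,y,t)\mapsto(-x,y,-t)$ and the origin is a center. So every $C_m^0$ vanishes when $\varepsilon_a a=0$, and the remainder in \eqref{C-even-origin} is really $O(\varepsilon_a\delta)$. (Since $a_j$ first enters $C_{2j}^0$, you prescribe the targets and solve for $a_0$ first; $a_n\neq0$ then comes from openness, as in the paper.)

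The gap is the infinity half. A point lies in $\mathbb P_2=\mathcal C_{\rm inf}$ only if $\operatorname{sgn}\Delta_\infty(\sqrt{z_j^\pm})=\operatorname{sgn}Q_\infty^*(z_j^\pm,a,\beta)$ at all test points. Lemma~\ref{lm:infinity-cyclicity} does not supply this: its proof reads it off the definition of $\mathcal C_{\rm inf}$. Lemma~\ref{lm:C-infty-nonempty} concerns only the polynomial $Q_\infty^*$. So the whole content of $\mathbb P_1\cap\mathbb P_2\neq\emptyset$ is your sentence that $\varepsilon_aRQ_\infty^*$ ``must dominate the error terms''. The paper's proof makes the same claim without argument (``while preserving the infinity sign conditions''), so you have reproduced its route together with its unproved step.

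That step fails as proposed. Since $Q_\infty^*$ is homogeneous quadratic in $\beta$, fixing $b$ and letting $\varepsilon_b\to0$ leaves its zeros, hence your test points, unchanged. Meanwhile, at each fixed $R$, $\Delta_\infty(R)/\varepsilon_a$ tends to the $\beta$-free first-order term, the Abelian integral $\sum_ia_iJ_i$ in the $R$-variable. Its leading part is $a_nR^{4n-1}\int_0^{2\pi}A_n\,d\theta$, and this integral is negative because $A_n\le0$ and $A_n\not\equiv0$. The selected part is only $O(\varepsilon_a\varepsilon_b^2)$. Shrinking $\varepsilon_b$ therefore hands the sign at every sufficiently small test point to this single-signed term, which destroys the alternation rather than preserving it.

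Letting the test points shrink with $\beta$ does not rescue this. Rescale $x=\lambda X$, $y=\lambda^3Y$, $t=\lambda^{-2}T$ with $\lambda\sim R^{-1}$. Then the leading first-order weighted displacement is $\varepsilon_aa_nR^{4n-1}\Phi(\hat\beta)$, with $\hat\beta_j=\beta_jR^{1-2j}$ and $\Phi$ analytic, $\Phi(0)\neq0$. Your $\varepsilon_aRQ_\infty^*$ is only a piece of the quadratic part of this expression. So a sign change needs $|\hat\beta|\gtrsim1$, and there the higher-order parts are no longer small. For example, the words of Lemma~\ref{lm:weighted-picard-brudnyi} with one $a$-block and four $b$-blocks have weighted degree $6n+3-2i-2\sum j_t=3-4n<0$ at top indices, so they are absent from $\sum_{j\ge0}C_j^\infty R^j$ altogether. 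The quadratic terms of degree $\ge 2n-1$ that $Q_\infty^*$ omits (e.g.\ $a_n\beta_{n-1}\beta_2R^{2n-1}$) are not controlled by the triangular construction either.

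What is missing is a genuine proof that the exact $\Delta_\infty$ has $n-2$ sign changes for some parameters that also lie in $\mathbb P_1$. Without it, neither your argument nor the paper's establishes $H_0(2n+1,5)\ge 2n-3$.
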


\begin{thm}
     We have
\(
\mathbb P_2\cap\mathbb P_3\neq\emptyset.
\)
Consequently,
{
\[
H_0(2n+1,5)
\ge
(n-2)+\eta_{\rm lip}(n)
=
B(n).
\]
Therefore,
\[
 H(2n+1,5)\ge B(n).
\]
}
\label{thm-hn}
\end{thm}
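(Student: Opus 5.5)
The plan is to show \(\mathbb P_2\cap\mathbb P_3\neq\emptyset\) by a two-stage parameter choice that exploits the splitting of the parameters. Membership in \(\mathbb P_3\) constrains only the vector \(a\) and the ordering \(0<\varepsilon_b^2\ll\varepsilon_a\ll\varepsilon_b\ll1\). The infinity conditions defining \(\mathcal C_{\rm inf}\), after \(a\) is fixed, are governed by the effective parameters \(\beta_i=\varepsilon_b b_i\).

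\textbf{Step 1: fix \(a\) and \(\beta_n\).} First fix \(a\in\mathcal C_{\rm lip}^a\). This cone is nonempty because, by Lemma~\ref{lm:Ln-plus-nondegenerate}, \(a\mapsto(\Lambda_0^+(a),\ldots,\Lambda_n^+(a))\) is a linear isomorphism. We may pull back any vector with alternating signs and strong dominance. Since \(\Lambda_n^+\neq0\) can be prescribed, we arrange \(a_n\neq0\), possibly after perturbing within the open cone, since \(a_n\neq0\) is an open dense condition. Then choose \(\beta_n\neq0\). With \(a\) fixed, Lemma~\ref{lm:low-coefficients-infty} shows that \(C_1^*=\Gamma_0^\infty a_n\beta_n^2\neq0\). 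It also shows that each \(C_{2q+1}^*\) is affine in \(\beta_{n-q}\) with nonzero slope \(\Gamma_q^\infty a_n\beta_n\), and is independent of \(\beta_{n-q'}\) for \(q'>q\).

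\textbf{Step 2: choose \(\beta_{n-1},\ldots,\beta_2\) recursively.} Solving successively for \(\beta_{n-1},\beta_{n-2},\ldots,\beta_2\), we can prescribe the values \(C_3^*,\ldots,C_{2n-3}^*\) freely. We impose alternating signs with \(|C_1^*|\ll|C_3^*|\ll\cdots\ll|C_{2n-3}^*|\). Then \(Q_\infty^*(z)\) has \(n-2\) simple small positive roots, bracketed by points \(z_j^\pm\) with sign changes; this is the content of Lemma~\ref{lm:C-infty-nonempty}. The remaining \(\beta_0,\beta_1\) stay free, say zero. Finally set \(b_i=\beta_i/\varepsilon_b\).

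\textbf{Step 3: choose the \(\varepsilon\)'s and verify the sign condition.} The size scale of \(\beta\) is independent of the choice of \(\varepsilon_a,\varepsilon_b\), so we may still take \(\varepsilon_b\) small and \(\varepsilon_a\) in the window \(\varepsilon_b^2\ll\varepsilon_a\ll\varepsilon_b\) required by \(\mathbb P_3\). The lips conditions depend only on \(a\) and this ordering, so they are unaffected by \(b\); thus the point lies in \(\mathbb P_3\). It remains to check \(\operatorname{sgn}\Delta_\infty(\sqrt{z_j^\pm})=\operatorname{sgn}Q_\infty^*(z_j^\pm)\), which is exactly Lemma~\ref{lm:infinity-cyclicity}. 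That lemma says the remainder of \(\Delta_\infty-\varepsilon_aRQ_\infty^*(R^2)\) is dominated at the sample points once \(\delta\) is small. This places the point in \(\mathbb P_2\).

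\textbf{Conclusion and main obstacle.} The two families of limit cycles are disjoint: one lies near infinity on an outer annulus, the other in a small neighbourhood of the compact two-cuspidal loop. Theorem~\ref{t-hn5}(ii),(iii) then give \((n-2)+\eta_{\rm lip}(n)=B(n)\) cycles. Since family \eqref{or} has \(\deg f\le 2n+1\) and \(\deg g=5\), this yields \(H(2n+1,5)\ge H_0(2n+1,5)\ge B(n)\). The main obstacle is the uniformity in Step 3. The remainder estimate near infinity must hold with \(\beta\) of the chosen size while \(\varepsilon_a\) is constrained by the lips window. I expect to handle this by fixing \(\beta\) first and noting that the selected infinity terms scale with \(\varepsilon_a\). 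The error terms are then \(O(\varepsilon_a\cdot\text{higher order in }R,\beta)+O(\delta^2)\), so shrinking the sample points \(z_j^\pm\) together with the dominance ratios keeps them subordinate.
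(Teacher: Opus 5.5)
Your route is the paper's route: fix $a\in\mathcal C_{\rm lip}^a$ with $a_n\neq0$, solve the triangular system of Lemma~\ref{lm:C-infty-nonempty} for $\beta_n,\ldots,\beta_2$, and take $\varepsilon_b^2\ll\varepsilon_a\ll\varepsilon_b\ll1$. Steps~1 and~2 are fine. The gap is in Step~3.

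Membership in $\mathbb P_2=\mathcal C_{\rm inf}$ requires $\operatorname{sgn}\Delta_\infty(\sqrt{z_j^\pm})=\operatorname{sgn}Q_\infty^*(z_j^\pm,a,\beta)$. Lemma~\ref{lm:infinity-cyclicity} does not give this. The sign relation is part of the definition of $\mathcal C_{\rm inf}$; the lemma takes it as a hypothesis and then only applies the intermediate value theorem. No result in the paper estimates $\Delta_\infty-\varepsilon_aRQ_\infty^*(R^2)$. The paper's own proof also just says ``choose $b\in\mathcal C_{\rm inf}$''. So the one nontrivial point, that the fibre of $\mathcal C_{\rm inf}$ over your $a$ is nonempty, cannot be obtained by citation; it has to be proved.

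Your sketched remedy does not prove it, for two reasons.

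First, an error $O(\delta^2)$ with no factor $\varepsilon_a$ is at least of size $|\beta|^2$. It therefore swamps the selected terms $\varepsilon_a|\beta|^2R^{2q+1}$. At minimum you need that, for $\varepsilon_a=0$, system \eqref{or} is reversible under $(x,y,t)\mapsto(-x,y,-t)$, so that $\Delta_\infty\equiv0$ there.

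Second, and more seriously, the $\beta$-independent part of $\Delta_\infty$ is the classical first-order term. Its leading monomial $\varepsilon_a a_n\bigl(\int_0^{2\pi}A_n\,d\theta\bigr)R^{4n-1}$ is nonzero: $A_n<0$ off a null set, and $a_n\neq0$ is forced because $C_1^*=\Gamma_0^\infty a_n\beta_n^2$. Under the quasi-homogeneous rescaling $x\mapsto R^{-1}x$, $y\mapsto R^{-3}y$, $t\mapsto R^{2}t$, the damping $\beta_jx^{2j+1}y$ has effective size $s_j=\beta_jR^{1-2j}$. Equivalently, the monomial $a_i\beta_{j_1}\cdots\beta_{j_k}$ enters $\Delta_\infty$ at weighted degree $6n-1-2i+k-2(j_1+\cdots+j_k)$. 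Hence, relative to that leading term, every $\varepsilon_a\beta^k$-homogeneous contribution is $O(|s|^k)$, where $|s|=\max_j|s_j|$.

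The consequences are as follows.
\begin{itemize}
\item The selected quadratic terms beat the $\beta$-independent term only where $|s_ns_{n-q}|\gg1$.
\item In that regime the $\beta^4,\beta^6,\ldots$ contributions are no longer controlled by the quadratic ones, so nothing ties the sign of $\Delta_\infty$ to that of $Q_\infty^*$.
\item Shrinking $z_j^\pm$ makes this worse, not better. The roots of $Q_\infty^*$ are invariant under $\beta\mapsto\lambda\beta$, since the $C^*_{2q+1}$ are quadratic in $\beta$. Meanwhile, decreasing $R$ increases every $|s_j|$ with $j\geq1$.
\end{itemize}
Establishing the sign condition would require an analysis of the outer annulus that is non-perturbative in $\beta$. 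Neither your proposal nor the paper supplies one.
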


\begin{pro}
\label{prop:P3-not-origin-max}
Assume that the maximal lips hierarchy defining \(\mathbb P_3\) and the
maximal origin hierarchy defining \(\mathcal C_{\rm org}\) are interpreted
in the usual asymptotic sense of \(\ll\). Then these two maximal hierarchies
cannot be realized simultaneously with arbitrarily strong dominance.
Equivalently, there is no sequence of normalized parameters for which both
sets of successive dominance ratios tend to zero.
\end{pro}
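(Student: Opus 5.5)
Both hierarchies are conditions on the same $(n+1)$-tuple $a=(a_0,\ldots,a_n)$. The origin cone $\mathcal C_{\rm org}$ prescribes dominance of the selected even coefficients $C_{2j}^0$. By \eqref{C-even-origin}, their leading parts are a triangular linear image of $a$:
\[
\ell_j^0(a)=\gamma_j a_j+\mathcal L_j(a_0,\ldots,a_{j-1}),
\]
with error $O(\delta^2)$. The lips cone $\mathcal C_{\rm lip}^a$ prescribes dominance of $\Lambda_0^+(a),\ldots,\Lambda_n^+(a)$, which by Lemma~\ref{lm:Ln-plus-nondegenerate} is also a linear isomorphism of $a$. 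I would therefore work entirely in the projective space of $a$: normalize $|a|=1$ and regard both hierarchies as conditions on the direction of $a$, after dividing by $\varepsilon_a$ and discarding the $O(\delta^2)$ terms, which are negligible in the regime $\varepsilon_b^2\ll\varepsilon_a$.

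The key observation concerns the limit of each hierarchy. Along a sequence with all successive ratios $|\ell_{j}^0/\ell_{j+1}^0|\to0$, the normalized vector $\ell^0(a)/|\ell^0(a)|$ tends to $\pm e_n$. Hence $a$ converges projectively to the unique direction $v_{\rm org}=T_0^{-1}e_n$, where $T_0$ is the triangular matrix of the $\ell_j^0$. Because $T_0$ is triangular, $v_{\rm org}$ is the direction $a_0=\cdots=a_{n-1}=0$, $a_n\neq0$: the $a$ for which $\ell_0^0=\cdots=\ell_{n-1}^0=0$ forces successively $a_0=0,\ldots,a_{n-1}=0$. Likewise, maximal lips dominance forces $a\to v_{\rm lip}=T_+^{-1}e_n$, where $T_+$ is the matrix of $(\Lambda_0^+,\ldots,\Lambda_n^+)$. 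So simultaneous realization along one sequence would force $v_{\rm org}=v_{\rm lip}$, that is, $\Lambda_j^+(e_n)=0$ for $j=0,\ldots,n-1$. The proof is then reduced to showing that the pure monomial perturbation $x^{2n}y$ does not annihilate all lower selected lips coefficients.

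To prove this I would compute directly. Take the $a$-part of the first Melnikov function for $a=e_n$:
\[
M^\pm(h)=-\oint_{L_h^\pm} x^{2n}y\,dx,
\]
or the analogous expression with the paper's sign conventions. Already the constant term $\Lambda_0^+(e_n)=C_0^+$, which is (up to a constant factor) the integral over the two-cuspidal loop, gives
\[
C_0^+ = c\oint_{L_{h_0}} x^{2n}y\,dx = 2c\int_{-1}^{1} x^{2n}\sqrt{2h_0-\tfrac{3x^2-3x^4+x^6}{3}}\,dx = 2c\int_{-1}^{1}x^{2n}\sqrt{\tfrac{(1-x^2)^3}{3}}\,dx,
\]
for some constant $c\neq0$ (using $2h_0=1/3$). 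The integrand is strictly positive on $(-1,1)$, so $C_0^+\neq0$. This step is the crux: I must check that $\Lambda_0^+$ is indeed the loop integral of the plus-side branch with no cancellation from the $b$-terms. The $b$-contribution carries odd powers $x^{2i+1}y$, so it integrates to zero over the symmetric loop, and in any case it is of order $\varepsilon_b$ with its own separate scaling in Lemma~\ref{hn5l}. Hence $\Lambda_0^+(v_{\rm org})\neq0$, which contradicts the requirement $|\Lambda_0^+|/|\Lambda_n^+|\to0$ that the lips hierarchy imposes in the limit.

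Finally I would make the contradiction quantitative, so that it rules out "arbitrarily strong dominance" rather than only exact limits. By continuity of the linear map $T_+$, there exists $\eta>0$ such that every normalized $a$ within $\eta$ of $v_{\rm org}$ satisfies $|\Lambda_0^+(a)|\ge \kappa|\Lambda_n^+(a)|$ for some fixed $\kappa>0$. Origin dominance ratios below a threshold force $a$ into this neighbourhood, since $T_0^{-1}$ is bounded; the lips ratio $|\Lambda_0^+/\Lambda_1^+|\cdots|\Lambda_{n-1}^+/\Lambda_n^+| = |\Lambda_0^+/\Lambda_n^+|$ would then be bounded below by $\kappa$. So no sequence can make both sets of ratios tend to zero.

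The main obstacle is confirming that the $O(\delta^2)$ remainders and the $\varepsilon_b$-dependent corrections do not shift the limiting directions. I would handle this by noting that they are uniformly small relative to the $\varepsilon_a|a|$ scale in the prescribed regime.
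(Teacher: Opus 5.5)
Your proposal is correct and is essentially the paper's proof. Origin dominance, together with the triangular first-order form of the $C_{2j}^0$, forces the normalized $a$ to accumulate on the $e_n$ direction. Lips dominance would then force $\Lambda_0^+(e_n)=0$, which contradicts $\Lambda_0^+(e_n)=I_n^+(h_0)\neq0$.

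The differences are cosmetic. The paper takes the limit through a convergent subsequence $a^{(k)}\to a^*$ rather than your projective/neighbourhood formulation. It proves the nonvanishing by Green's formula, $-\iint_{D^+}x^{2n}\,dx\,dy$, which is the same quantity as your explicit integral $2\int_{-1}^{1}x^{2n}\sqrt{(1-x^2)^3/3}\,dx$.
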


{
\begin{rem}
Proposition~\ref{prop:P3-not-origin-max} excludes only the simultaneous
maximal constructions.  Whether a nonmaximal lips hierarchy can be combined
with additional cycles near the origin is left open.
\end{rem}
}

{
Since $H(2n+1,5)\ge H_0(2n+1,5)$, Theorem~\ref{maint} follows immediately
from Theorem~\ref{thm-hn}.
}

The system also exhibits independent dynamics in the middle energy region. We study the local cyclicity within the period
annulus
\(
\{H=h:\ h\in(1/6,\infty)\}
\)
in Appendix.

\begin{rem}
	The Appendix gives a separate construction of at least (n) simple zeros of
	the first-order Melnikov function on the regular energy interval
	\((1/6,\infty)\). We do not claim that the corresponding interior limit
	cycles can be realized simultaneously with the maximal local constructions
	near the lips and infinity.
\end{rem}

The paper is organized as follows.  Section~2 recalls the compactification and
index tools used in the analysis at infinity.  Section~3 proves the center
criterion and classifies the corresponding global phase portraits.  Section~4
studies the cyclicity near the origin, the two-cuspidal loop, and infinity,
proves the compatibility results, and derives lower bounds for Hilbert
numbers.  The Appendix recalls some background definitions and gives a
separate realization result for limit cycles bifurcating from the regular
period annulus with \(h\in(1/6,\infty)\).

\section{Preliminaries}	
\subsection{Analysis of center at infinity}\label{sec:pre}
This subsection establishes the theoretical foundation for analyzing the center conditions at infinity for generalized polynomial Li\'{e}nard systems.
We begin by applying the  Li\'{e}nard transformation
\(
(x,y)\to\left(x,y+F(x)\right).
\)
The Li\'{e}nard transformation converts system  \eqref{ls} into the equivalent form
\begin{equation}
			\dot{x}=y-F(x),\qquad
\dot{y}=-g(x),
	\label{lst}
\end{equation}
where $F(x): =\int_{0}^{x}f(\xi){\rm d}\xi$. The Li\'{e}nard shear
\[
\mathcal S(x,y)=(x,y-F(x))
\]
is a polynomial homeomorphism of $\mathbb R^2$ with inverse
$\mathcal S^{-1}(x,Y)=(x,Y+F(x))$. Moreover, it is proper, since the
$x$-coordinate is unchanged. Consequently it maps complements of compact
sets to complements of compact sets and carries periodic orbits of
\eqref{ls} to periodic orbits of \eqref{lst}. Thus the exterior property
used below---the existence of a neighborhood of infinity filled with closed
orbits---is preserved by this transformation. We do not claim that
$\mathcal S$ extends to a homeomorphism of the boundary of the Poincar\'{e}
disc; when $\deg F>1$ such an extension need not preserve the boundary
directions.
To facilitate our analysis, fix an integer $k\geq0$ and define
\begin{equation}
	w(x)=((k+1)G(x))^{\frac{1}{k+1}},
	\label{w}
\end{equation}
where $G(x): =\text{sgn}(x)\int_{0}^{x}|g(\xi)|{\rm d}\xi$.
By $G^{'}(x)=\text{sgn}(x)|g(x)|$ and $G(0)=0$,  $G(x)$ is decreasing for $x<0$ and increasing for $x>0$ and $G(x)\geq0$.
Furthermore, it follows from ${\rm d}w/{\rm d}x=\text{sgn}(x)w^{-k}|g(x)|$ that  $w(x)$ is decreasing for $x<0$ and increasing for $x>0$. Thus, we can conclude that $w(x)\geq0$.
We then define $x_1(w)$ (resp., $x_2(w)$) to be the branch of the inverse of $w(x)$ for $x\leq0$ (resp., $x\geq0$).
Using the transformation $w=w(x)$, we can transform system \eqref{lst} into the following form in the zone $x\leq0$:
\begin{equation}
	\begin{aligned}
		\frac{{\rm d}w}{{\rm d}y}&=\frac{-w^{-k}|g(x)|{\rm d}x}{{\rm d}y}=\frac{F(x)-y}{w^k}\cdot\frac{|g(x)|}{-g(x)}=\frac{F(x_1(w))-y}{w^k}\cdot(-\text{sgn}(g(x_1(w))))
		\\
		&=: \frac{F_1(w)-y}{w^k}\cdot(-\text{sgn}(g_1(w))),
		\label{f1}
	\end{aligned}
\end{equation}
where $w\geq0$, $F_1(w)=F(x_1(w))$ and $g_1(w)=g(x_1(w))$.
Similarly, by the transformation $w=w(x)$, system \eqref{lst} can be expressed in the zone $x\geq0$ as follows:
\begin{equation}
	\begin{aligned}
		\frac{{\rm d}w}{{\rm d}y}&=\frac{w^{-k}|g(x)|{\rm d}x}{{\rm d}y}=\frac{F(x)-y}{w^k}\cdot\frac{|g(x)|}{g(x)}=: \frac{F_2(w)-y}{w^k}\cdot\text{sgn}(g_2(w)),
		\label{f2}
	\end{aligned}
\end{equation}
where $w\geq0$, $F_2(w)=F(x_2(w))$ and $g_2(w)=g(x_2(w))$.

{Let \(B\) be the intersection of the orbit with the \(y\)-axis having
positive \(y\)-coordinate.  Following the orbit forward and backward from
\(B\), let \(A\) and \(C\) denote its next intersections with the negative
part of the \(y\)-axis.  We denote the resulting oriented orbit arc by
\(\widehat{ABC}\).}
Consider an orbit arc $\widehat{ABC}$ that encloses all equilibria of system \eqref{lst} in $\mathbb{R}^2$, as shown in Fig. \ref{abc}(a).
Let $\mathcal{G}$ denote the set of all such orbit arcs.
Since $g$ is a nonzero polynomial, $|g|$ cannot vanish identically on any
nontrivial interval. Hence $G$, and therefore $w$, is strictly decreasing on
$(-\infty,0]$ and strictly increasing on $[0,+\infty)$. Thus the
restrictions of $x\mapsto w(x)$ to the two half-lines are continuous
bijections onto $[0,+\infty)$, with inverse branches $x_1(w)$ and $x_2(w)$.
The map $x\mapsto w(x)$ is therefore a folding map: for every $w>0$ there
are exactly two preimages $x_1(w)<0<x_2(w)$, while $w=0$ corresponds to
$x=0$. No global one-to-one identification of the two phase planes is used.
The intersections of the orbit arc $\widehat{ABC}$ with the regions
$x\leq0$ and $x\geq0$ correspond to integral curves $\gamma_1$ and
$\gamma_2$ on the two folded sheets in the $w$-$y$ plane, satisfying
\eqref{f1} and \eqref{f2}, respectively, as shown in Fig. \ref{abc}(b).
For the sake of simplicity, we will refer to the corresponding points of $A$, $B$, and $C$ in the $x$-$y$ plane as $A'$, $B'$, and $C^{'}$, respectively, in the $w$-$y$ plane.
It can be verified that $\gamma_1$ (resp., $\gamma_2$) corresponds to the integral curve $\widehat{B'C'}$ (resp., $\widehat{B'A'}$) of equation \eqref{f1} (resp., \eqref{f2}) starting from $B'$ and ending at $C'$ (resp., starting from $A'$ and ending at $B'$) on the $y$-axis of the $w$-$y$ plane.
In other words, the orbit arc $\widehat{ABC}$ is closed if and only if $A'$ and $C'$ coincide in the $w$-$y$ plane.
\begin{figure}[!htb]
	\centering
	\subfloat[An orbit arc $\widehat{ABC}$ encircling all the equilibria]
	{\includegraphics[scale=0.6]{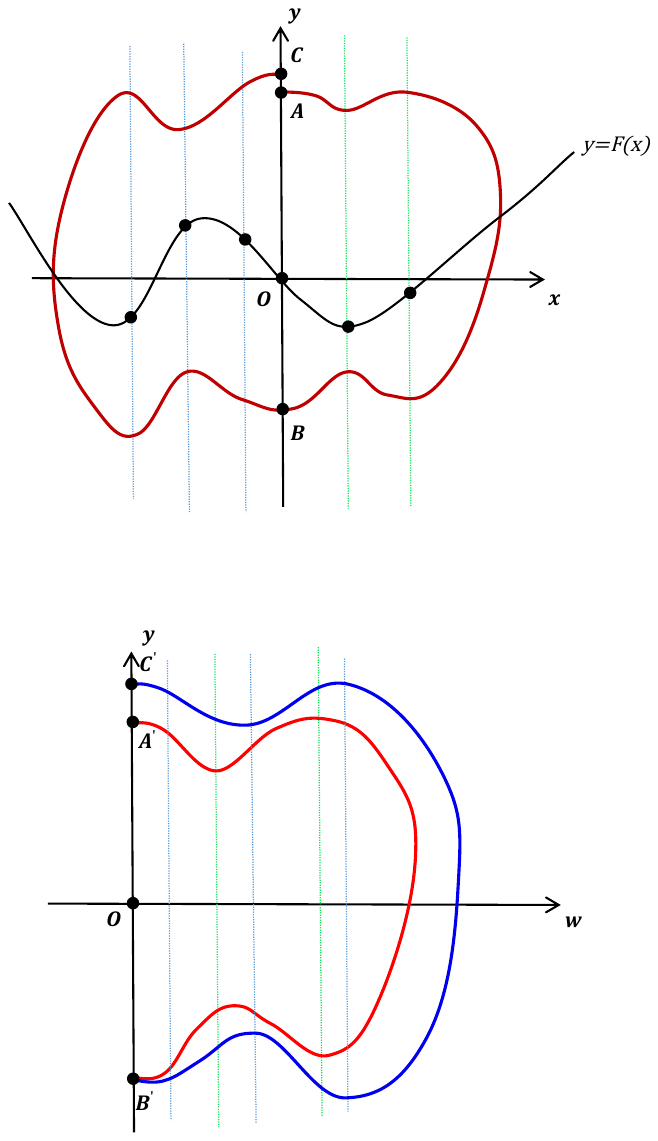}}\hspace{10pt}
	\subfloat[The orbit arcs starting from $B^{'}$ for equations \eqref{f1} and \eqref{f2}]
	{\includegraphics[scale=0.6]{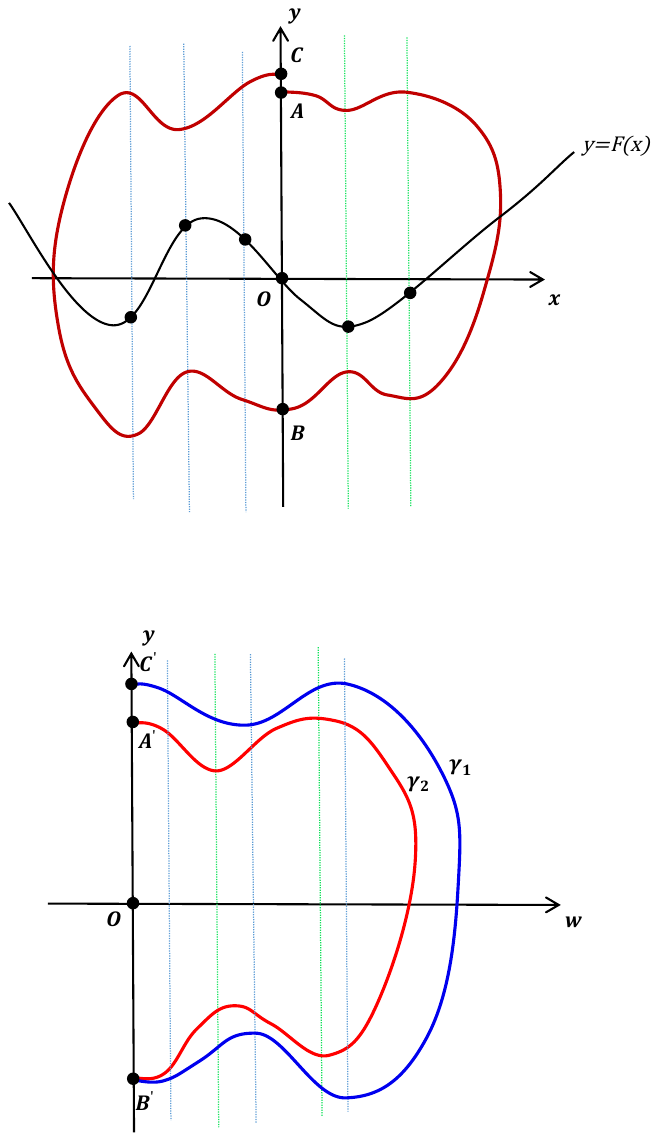}}\hspace{10pt}
	\caption{{\footnotesize The orbit arcs for showing the change of transformation \eqref{w}.}}
	\label{abc}
\end{figure}
We next provide criteria for classifying linear type and nilpotent type center here 
\begin{cor}[Corollary 3 in \cite{CLZ}]
\label{CLZ}
	Assume that $g(x)$ is an odd function.
	System \eqref{ls} has a linear type global center at the origin  if and only if
	following conditions 
	\begin{itemize}
		\item[\bf (\romannumeral1)]$xg(x)>0$ for all $x\ne0$;
		\item[\bf (\romannumeral2)] $k=1$, $s\geq 1$, $a_k>0$; 
		\item[\bf (\romannumeral3)] $m$ is odd,  $m>2n+1$, $a_m>0$, or  $m=2n+1$ and $4(n+1)a_mb_n^{-2}>1$;
	\end{itemize}
	hold and
	$f(x)$ is odd;
	polynomial Li\'{e}nard system \eqref{ls} has a nilpotent type global center at the origin  if and only if
	following conditions 
	\begin{itemize}
		\item[\bf (\romannumeral1)]$xg(x)>0$ for all $x\ne0$;
		\item[\bf (\romannumeral2*)]$k$ is odd,   $2<k<2s+1$, $a_k>0$, or $k=2s+1\geq3$ and $b_s^2-2(k+1)a_k<0$;
		\item[\bf (\romannumeral3)] $m$ is odd,  $m>2n+1$, $a_m>0$; or  $m=2n+1$ and $4(n+1)a_mb_n^{-2}>1$;
	\end{itemize}
	hold and
	$f(x)$ is odd.
\end{cor}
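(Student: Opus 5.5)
The plan is to split each of the two equivalences into a local part at the origin, a part at infinity, and a global symmetry part. A global center at the origin means that the origin is a center whose period annulus fills $\mathbb R^2$. In particular the origin is the unique finite equilibrium and it has index $1$. Since every equilibrium has the form $(x_*,0)$ with $g(x_*)=0$, uniqueness forces $g(x)\neq 0$ for $x\ne0$. If $xg(x)<0$ on a side, the origin would be a saddle-type (index $-1$) or a saddle-node, not a center. Hence condition (i) is necessary. Throughout, I would work with the Li\'enard form \eqref{lst}, which is legitimate because the shear $\mathcal S$ preserves periodic orbits and neighborhoods of infinity.

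\textbf{Local type at the origin.} The Jacobian of \eqref{ls} at $O$ is $\begin{pmatrix}0&1\\-g'(0)&-f(0)\end{pmatrix}$. It is of linear (nondegenerate) center type only if the trace vanishes and the determinant is positive. This means $f(0)=0$, i.e. $s\ge1$, together with $g'(0)>0$, i.e. $k=1$ and $a_1>0$; this is condition (ii). If instead $k\ge2$, the linear part is nilpotent. I would invoke the Andreev--Takens normal-form criterion for the nilpotent singularity $\dot x=y$, $\dot y=-a_kx^k-b_sx^sy+\cdots$. It yields a monodromic point exactly when $k$ is odd, $a_k>0$, and either $k<2s+1$ or $k=2s+1$ with $b_s^2<2(k+1)a_k$; this is (ii*). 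Monodromy is necessary for a center, so (ii*) is necessary in the nilpotent case.

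\textbf{Infinity.} All orbits being closed forces $P_\infty$ to be monodromic in the Poincar\'e disc. Using the Dumortier--Herssens classification recalled in the introduction, the only cases in which infinity is not determined to contain nonperiodic orbits (i.e. infinite singular points with hyperbolic or parabolic sectors) are those listed in (iii). So (iii) is necessary. Conversely, (iii) together with (i) guarantees that every orbit turns around the origin and meets the $y$-axis on both sides; near infinity this comes from the classification, and in the bounded part it follows from $xg>0$ with a standard rotation argument.

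\textbf{Role of oddness of $f$.} Since $g$ is odd, $G$ is even, so in the $w$--$y$ construction of Section~\ref{sec:pre} we have $x_1(w)=-x_2(w)$. For sufficiency, suppose $f$ is odd. Then $F$ is even, and \eqref{lst} is reversible under $(x,y,t)\mapsto(-x,y,-t)$. Every orbit that crosses the $y$-axis twice is therefore symmetric, hence closed, and together with the monodromy above this gives a global center. For necessity, suppose $f$ is not odd, so that $F_1(w)-F_2(w)=F(-x_2(w))-F(x_2(w))\not\equiv0$. This function has a Puiseux expansion in $w$ with a nonzero leading term, so it has a fixed sign on some interval $(0,w_0)$. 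For orbits through $B'$ with small $y$, the integral curves $\gamma_1,\gamma_2$ of \eqref{f1}, \eqref{f2} stay in $w<w_0$. A comparison argument for the scalar equations $dw/dy$, whose right-hand sides are strictly ordered there, then shows that $A'\neq C'$. So small orbits are not closed, contradicting the existence of a center. This comparison step is the main obstacle. One has to handle the singular factor $w^{-k}$ and the turning points where $y=F_i(w)$, and I would choose $k$ in \eqref{w} adapted to the order of $g$ at $0$ so that the two equations become regular and the strict ordering can be integrated.
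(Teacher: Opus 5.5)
The paper does not prove this statement. It is quoted from \cite{CLZ} and used only as a black box in Lemmas \ref{lm:m3n1gc} and \ref{lm:m5n1gc}, so the useful comparison is with the route the paper's own machinery would give. Your reconstruction is sound in outline. The Jacobian gives (ii). Andreev's monodromy criterion applied to $\dot y=-a_kx^k-b_sx^sy+\cdots$ gives exactly (ii*). Lemma \ref{inf}, i.e.\ \cite{DH}, gives (iii). Reversibility of \eqref{lst} under $(x,y,t)\mapsto(-x,y,-t)$ when $F$ is even gives sufficiency.

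The genuinely different step is how you show that $f$ must be odd.
\begin{itemize}
\item \emph{Paper's route.} A global center makes $P_\infty$ a center at infinity, and $O$ is the unique equilibrium with index $1$. So Theorem \ref{cf-thm1} applies, and Corollary \ref{cf-cor1} yields both (iii) and the oddness of $f$. That argument compares the folded traces for very large orbits (Lemma \ref{C1}), where $F_1-F_2$ has a fixed sign for large $w$. It requires the integral-equation analysis of $z(w)$ in Steps II(iii)--(iv).
\item \emph{Your route.} Your Cherkas-type comparison near the origin is more elementary. It uses only that the odd part of $F$ has a fixed sign on a small interval and that small orbits are closed.
\item \emph{Trade-off.} Your argument uses the center at $O$. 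The paper's argument needs only an exterior period annulus, so it characterizes centers at infinity, which is more than this corollary requires.
\end{itemize}

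Two technical corrections to your sketch.

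First, no choice of $k$ in \eqref{w} makes $\mathrm dw/\mathrm dy=(F_i(w)-y)/w^k$ regular at $w=0$. For $k\ge1$ it is singular there. For $k=0$, $F_i(w)$ need not be Lipschitz in the nilpotent case. What works is a three-stage argument, assuming $F_1>F_2$ on $(0,w_0)$:
\begin{itemize}
\item Near the crossing $B'$, use $\mathrm dy/\mathrm dw=w^k/(F_i(w)-y)$. It is Lipschitz in $y$ because $y\neq0$ there, for any $k$. It gives $y_1(w)<y_2(w)$ on the upper branches just after $B'$.
\item For $w>0$, switch to the $\mathrm dw/\mathrm dy$ form, where the right-hand sides are strictly ordered. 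The turning points $y=F_i(w)$ are harmless, since $\mathrm dw/\mathrm dy=0$ there. This gives $w_1(y)<w_2(y)$ until $\gamma_1$ returns to $w=0$.
\item If $A'=C'$, the $\mathrm dy/\mathrm dw$ form at the common lower endpoint forces $w_1(y)>w_2(y)$ just above it, which is a contradiction.
\end{itemize}

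Second, in the sufficiency, the claim that every orbit meets both halves of the $y$-axis does not follow from $xg>0$ alone in the bounded part. You need the monodromy of $O$ from (ii)/(ii*) to exclude a half-orbit in $x>0$ tending to $O$ along a characteristic direction. You invoke monodromy at the end, but it is needed at this step.
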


\subsection{Strategy for the cyclicity of the lips}
\label{subsec:lips-strategy}

We first introduce the geometric object and the notation used throughout this
subsection.  Consider the near-Hamiltonian system
\begin{equation}
 \dot{x}=H_y+\varepsilon p(x,y),\qquad
 \dot{y}=-H_x+\varepsilon q(x,y),
 \label{i1}
\end{equation}
where \(H,p,q\) are polynomials and
\[
 p(x,y)=\sum_{i,j\in\mathbb N}a_{ij}x^iy^j,\qquad
 q(x,y)=\sum_{i,j\in\mathbb N}b_{ij}x^iy^j.
\]
For \(\varepsilon=0\), system \eqref{i1} reduces to the Hamiltonian system
\begin{equation}
 \dot{x}=H_y,\qquad
 \dot{y}=-H_x.
 \label{i2}
\end{equation}

Let \(S\) be a cusp of \eqref{i2} on the critical level \(H=h_0\).
After translating \(S\) to the origin, assume that
\begin{equation}
 H(x,y)-h_0
 =
 \frac12y^2+\sum_{i+j\geq3}h_{ij}x^iy^j.
 \label{1.5}
\end{equation}
There is a unique analytic function
\[
 \varphi(x)=\sum_{j\geq2}e_jx^j
\]
such that \(H_y(x,\varphi(x))\equiv0\).  Write
\begin{equation}
 H(x,\varphi(x))-h_0
 =
 h_kx^k+O(x^{k+1}),
 \qquad h_k\neq0.
 \label{1.6}
\end{equation}
If \(k=2m+1\geq3\), then \(S\) is called a cusp of order \(m\).

\begin{defi}
A \emph{two-cuspidal loop} of \eqref{i2} is a heteroclinic polycycle
\[
 L_0=L_1\cup L_2\cup\{S_1,S_2\}
 \subset\{H=h_0\},
\]
where \(S_1\) and \(S_2\) are cusps, \(L_1\) and \(L_2\) are heteroclinic
orbits, and
\[
 \omega(L_1)=\alpha(L_2)=S_2,\qquad
 \omega(L_2)=\alpha(L_1)=S_1.
\]
\end{defi}

For the unperturbed system associated with \eqref{or}, one has
\[
 h_0=\frac16,\qquad
 S_1=(-1,0),\qquad
 S_2=(1,0),
\]
and both cusps are of order one.  The corresponding two-cuspidal loop is
shown in Fig.~\ref{tcl}.

\begin{figure}[!htbp]
 \centering
 \includegraphics[width=3.5in]{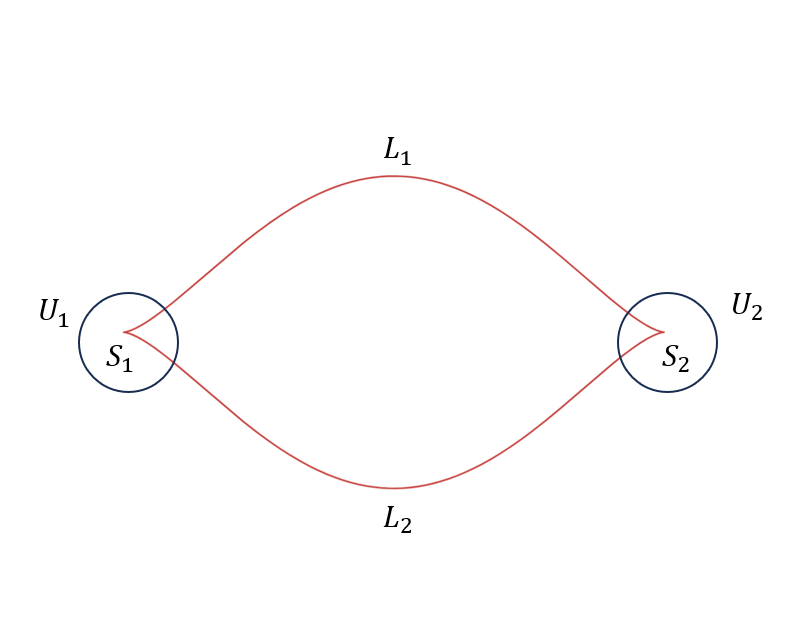}
 \caption{The two-cuspidal loop of system \eqref{i2}.}
 \label{tcl}
\end{figure}

Assume that two families of periodic orbits approach \(L_0\) from its two
sides.  We use the convention
\[
 L_h^-\subset\{H=h\},\qquad h>h_0,
\]
for the outer family, and
\[
 L_h^+\subset\{H=h\},\qquad h<h_0,
\]
for the inner family.  Set
\[
 \tau_+=h-h_0>0,\qquad
 \tau_-=h_0-h>0.
\]
The corresponding first-order Melnikov functions are
\begin{equation}
 M^\pm(h)
 =
 \oint_{L_h^\pm}\bigl(q(x,y)\,{\rm d}x-p(x,y)\,{\rm d}y\bigr).
 \label{1.3}
\end{equation}
This convention for the superscripts \(+\) and \(-\) will be used throughout
the remainder of the paper.

We next recall the local result for a homoclinic loop containing one cusp.
We now present a lemma on the derivative of  $M^{\pm}(h)$ for a homoclinic loop with a cusp. 
\begin{lm}[\cite{HZY}]
	Assume that system \eqref{i1} satisfies \eqref{1.5}. Let $U$ denote a disk of diameter $\varepsilon_0>0$ with center at the origin (cusp). Then
	$$
	\left.\lim _{h \rightarrow 0^{ \pm}} \int_{L_h^{ \pm} \cap U}\left(p_x+q_y-\sigma\right)\right|_{\varepsilon=0} \mathrm{d} t=\left.\int_{L_0 \cap U}\left(p_x+q_y-\sigma\right)\right|_{\varepsilon=0} \mathrm{d} t,
	$$	
	where $\sigma=\left.\left(p_x+q_y\right)\right|_{x=y=0}$, is called the divergence of the cusp.
	\label{lemX}
\end{lm}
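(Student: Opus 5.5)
The lemma of \cite{HZY} concerns a single cusp at the origin, normalized as in \eqref{1.5}, and asserts that the integral over time of the reduced divergence $p_x+q_y-\sigma$ along $L_h^\pm\cap U$ converges to the same integral along $L_0\cap U$. The plan is to reduce the claim to dominated convergence after reparametrizing by a spatial variable, so that the divergent transit time near the cusp is absorbed by the vanishing of the integrand there.

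\textbf{Step 1: reduce to the unperturbed integrand.} Evaluating at $\varepsilon=0$ gives the fixed polynomial $\Phi(x,y)=p_x+q_y-\sigma$, which satisfies $\Phi(0,0)=0$. Hence $|\Phi(x,y)|\le K(|x|+|y|)$ on $U$.

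\textbf{Step 2: describe the orbits near the cusp.} Since $H_y=y+O(|(x,y)|^2)$, the implicit function theorem gives $y=\varphi(x)+\tilde y$. In these coordinates
\[
H-h_0=\tfrac12\tilde y^2(1+O(|x|+|\tilde y|))+h_kx^k+O(x^{k+1}),
\]
with $k=2m+1$. By a Morse-type lemma in $\tilde y$ one may write $H-h_0=\tfrac12 Y^2+\psi(x)$, where $\psi(x)=h_kx^k(1+O(x))$. The level sets $L_h^\pm\cap U$ are then the graphs $Y=\pm\sqrt{2(h-h_0-\psi(x))}$ over an $x$-interval. Along them $\dot x=H_y\approx Y$ is nonvanishing except where $Y=0$, which is a turning point bounded away from the cusp. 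Because $k$ is odd, the turning point lies on only one side of the cusp.

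\textbf{Step 3: convert time integrals to $x$-integrals.} Write
\[
\int\Phi\,dt=\int\frac{\Phi(x,y(x,h))}{\dot x}\,dx,
\qquad |\dot x|\ge c|Y|.
\]
On $L_0$ one has $|Y|\sim|x|^{k/2}$, while $|\Phi|\le K(|x|+|y|)\le K'|x|$ near the cusp, since $y=O(x^2)+O(|x|^{k/2})$. The integrand is therefore bounded by $C|x|^{1-k/2}$. This is integrable only if $k<4$, which covers our case $k=3$ with $|x|^{-1/2}$. For general $k$ a sharper bound is needed: use $\Phi(x,y)=\Phi_x(0)x+\Phi_y(0)y+O(|(x,y)|^2)$, integrate the term linear in $x$ by exploiting that it is odd in $x$, and treat it separately as a principal-value-type cancellation. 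Near a turning point, substitute $Y$ for $x$ as the integration variable to remove the $1/\sqrt{\,\cdot\,}$ singularity.

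\textbf{Step 4: uniform domination.} For $h$ near $h_0$ on the side $\pm$, the level curve satisfies $|Y|\ge c|x|^{k/2}$ on the portion away from the turning point, because $\pm(h-h_0)$ has the correct sign there. The same bound as for $L_0$ then dominates the integrands uniformly in $h$, and pointwise convergence as $h\to h_0$ follows by continuity. Dominated convergence yields the limit on each monotone piece. The pieces near turning points, or near the boundary of $U$, converge by ordinary smooth dependence on $h$.

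\textbf{Main obstacle.} The main obstacle is the domination for higher-order cusps, where $|x|^{1-k/2}$ is not integrable. There I would first try to show that the contributions of the upper and lower branches of $L_h^\pm$ cancel at leading order, since $dt=dx/\dot x$ changes sign with $Y$. This would reduce the effective integrand to $O(|x|^{2-k/2}+|x|)$ and iterate as needed. For the present application, which has order-one cusps with $k=3$, the direct estimate of Step 3 already suffices.
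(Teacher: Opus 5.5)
\textbf{Verdict: there is a genuine gap, in Steps 2 and 4.} The paper itself gives no proof of this lemma; it only quotes \cite{HZY}. So I am judging your argument on its own merits.

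\textbf{The false geometric claim.} The turning point of $L_h^\pm\cap U$, where $Y=0$, is \emph{not} bounded away from the cusp. It solves $\psi(x)=h-h_0$, so it sits at distance $\asymp|h-h_0|^{1/k}$ from the cusp and tends to it as $h\to h_0$. This happens for both families: the inner one turns on the loop side, while the outer one passes the cusp and turns on the opposite side. For \eqref{211} this is explicit near $(1,0)$. The ovals cross the $x$-axis at $x^2=1-(1-6h)^{1/3}$ for $h<1/6$, and at $x^2=1+(6h-1)^{1/3}$ for $h>1/6$ (this is $r(h)^2$ in the Appendix). Both tend to $1$.

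\textbf{Why this breaks Step 4.} The piece of the orbit near the turning point cannot be handled by ``ordinary smooth dependence on $h$''. The bound $|Y|\ge c|x|^{k/2}$ fails there; on the inner side, for example, $Y^2\approx 2|h_k|(|x|^k-|x_*|^k)$. Moreover, $1/|\dot x|$ blows up at a turning point that moves into the cusp, so the integrands admit no $h$-independent integrable majorant on this piece. This shrinking region is where the transit time diverges, like $|h-h_0|^{1/k-1/2}$, so it carries the whole content of the lemma. Without subtracting $\sigma$, it contributes a term of exact order $\sigma|h-h_0|^{-1/6}$ to $\oint(p_x+q_y)\,dt$. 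After integration in $h$, that is the source of the $|h|^{5/6}$ term with coefficient proportional to $a_{10}+b_{01}$ in Corollary~\ref{chzy}.

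\textbf{How to repair it.} Split the arc at $|x|=C|h-h_0|^{1/k}$ with $C$ large.
\begin{itemize}
\item On $|x|\ge C|h-h_0|^{1/k}$ one does have $Y^2\ge c|x|^k$ uniformly in $h$. On $L_h$ you only have $|y|\lesssim x^2+|Y|$; your bound $|\Phi|\le K'|x|$ holds on $L_0$ only. Still, this gives $|\Phi|/|\dot x|\le C'(|x|^{1-k/2}+1)$, and dominated convergence applies to the integrands extended by zero.
\item On $|x|\le C|h-h_0|^{1/k}$, rescale $x=|h-h_0|^{1/k}s$. The time spent there is $O(|h-h_0|^{1/k-1/2})$, because the square-root singularity at the turning point is integrable in $s$. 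Meanwhile $|\Phi|=O(|h-h_0|^{1/k})$ there. So this piece, and the matching piece of $L_0$, contribute $O(|h-h_0|^{(4-k)/(2k)})$. This tends to $0$ when $k=3$.
\end{itemize}

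\textbf{Your fallback for $k\ge5$ fails.} Along both branches of $L_0$ near the cusp, $dt>0$ and $x$ has the same sign. Reversing $\dot x$ is compensated by reversing the direction in which $x$ is traversed. Hence the leading terms $\Phi_x(0)\,x\,dt$ from the two branches add rather than cancel, and oddness in $x$ plays no role. For $k\ge5$ and $\Phi_x(0)\ne0$ the right-hand side itself diverges. The lemma is therefore a statement about order-one cusps ($k=3$), which is the case used in Corollary~\ref{chzy} and in the paper.
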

\begin{lm}[\cite{HZY}]
	Assume that  \eqref{1.5} and \eqref{1.6} hold, and the equation $H(x, y)=0$ defines a homoclinic loop $L_0$ with a cusp with $h_k<0$ and $k\geq 3$ odd. Then, $M^{ \pm}(h)$ admits the asymptotic expansions
{	\footnotesize
	\begin{align*}
		M^{-}(h)= & c_0+\sum_{l=0}^{\frac{k-3}{2}} c_{l+1}|h|^{\frac{l+1}{k}+\frac{1}{2}}+\sum_{i \geq 0}\left(c_{\frac{k-1}{2}+1+k i} h^{i+1}+\sum_{l=\frac{k-1}{2}}^{k-2} c_{l+2+k i}|h|^{\frac{l+1}{k}+\frac{1}{2}+i}\right. \\
		& \left.+\sum_{l=0}^{\frac{k-3}{2}} c_{k+1+l+k i}|h|^{\frac{l+1}{k}+\frac{1}{2}+i+1}\right)~~{\text{for}}~~0<-h \ll 1, \\
		M^{+}(h)= & \bar{c}_0+\sum_{l=0}^{\frac{k-3}{2}} \bar{c}_{l+1} h^{\frac{l+1}{k}+\frac{1}{2}}+\sum_{i \geq 0}\left(\bar{c}_{\frac{k-1}{2}+1+k i} h^{i+1}+\sum_{l=\frac{k-1}{2}}^{k-2} \bar{c}_{l+2+k i} h^{\frac{l+1}{k}+\frac{1}{2}+i}\right. \\
		& \left.+\sum_{l=0}^{\frac{k-3}{2}} \bar{c}_{k+1+l+k i} h^{\frac{l+1}{k}+\frac{1}{2}+i+1}\right)~~{\text{for}}~~0<h \ll 1,
	\end{align*}
}
	where $c_0, \bar{c}_0, c_{\frac{k-1}{2}+1+k i}, \bar{c}_{\frac{k-1}{2}+1+k i}, i \geq 0$ are constants.
	\label{lemY}
\end{lm}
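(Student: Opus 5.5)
The expansions follow from splitting each Melnikov integral into a regular part away from the cusp and a local part inside a small disk \(U\) around the cusp. I take the derivative \(M^{\pm\prime}(h)=\oint_{L_h^\pm}(p_x+q_y)|_{\varepsilon=0}\,{\rm d}t\), which holds because \(M^\pm(h)=\oint_{L_h^\pm}(q\,{\rm d}x-p\,{\rm d}y)\) is differentiated by Green's formula and \({\rm d}x\wedge{\rm d}y={\rm d}H\wedge{\rm d}t\). I then write \(p_x+q_y=\sigma+(p_x+q_y-\sigma)\). By Lemma~\ref{lemX}, the integral of \(p_x+q_y-\sigma\) over \(L_h^\pm\cap U\) has a finite limit. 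Away from \(U\) the period flow is regular, so that part is analytic in \(h\). Hence \(M^{\pm\prime}(h)=\sigma T_U^\pm(h)+A^\pm(h)\). Here \(T_U^\pm(h)\) is the passage time through \(U\). The function \(A^\pm\) is analytic in \(h\) plus terms of the same singular type as the time integral, but they are multiplied by vanishing factors. To handle these, I use the refined normal form below.

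The core step is a local normal form at the cusp. Using \eqref{1.5} and \eqref{1.6}, I make an analytic change of coordinates \(X=x(1+\dots)\), \(Y=y-\varphi(x)+\dots\) which brings \(H-h_0\) to \(\tfrac12Y^2+h_kX^k\) (a Morse-type lemma in the \(y\)-direction, using \(H_y(x,\varphi(x))=0\) and the division theorem). On \(\tfrac12Y^2+h_kX^k=h\) I integrate \({\rm d}t={\rm d}X/H_Y\cdot J\), with \(J\) the analytic Jacobian. With \(h_k<0\), the orbit passes the cusp for either sign of \(h\). Scaling \(X=|h|^{1/k}u\), \(Y=|h|^{1/2}v\), each monomial \(X^iY^j\) in the integrand contributes a term of order \(|h|^{(i+1)/k+(j-1)/2}\). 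In addition, the cutoff at the boundary of \(U\) contributes an analytic-in-\(h\) part. Collecting the exponents gives exactly the families \(|h|^{(l+1)/k-1/2+i}\) for \(M'\), together with integer powers. The index \(l\) ranges over residues mod \(k\), and even \(j\) vanish by the symmetry \(Y\mapsto-Y\), which leaves only the half-integer shifts.

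Integrating \(M^{\pm\prime}\) in \(h\) raises each exponent by one. This produces the stated terms \(|h|^{(l+1)/k+1/2+i}\), the integer powers \(h^{i+1}\), and the constant \(c_0\) or \(\bar c_0\), which is the value \(M^\pm(0)\) on the loop. The bookkeeping of indices, namely \(l\le (k-3)/2\) versus \(l\ge (k-1)/2\) and the shift by \(k\) per step in \(i\), is exactly the reorganization of \(\{(l+1)/k+1/2+i\}\) that excludes integer exponents, since \(k\) is odd. The two sides \(h\gtrless0\) are treated identically, with \(|h|\) replacing \(h\) and different constants.

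I expect the main obstacle to be justifying that the local time integral has a convergent, or at least asymptotic, expansion of this exact form, uniformly up to the cusp. I would first try to reduce it to Beta-type integrals \(\int u^i(1-\mathrm{sgn}(h)h_k u^k)^{(j-1)/2}{\rm d}u\) over a range growing like \(|h|^{-1/k}\). I would then expand the endpoint contributions in powers of \(|h|^{1/k}\) to separate the analytic part from the singular part. Lemma~\ref{lemX} is used precisely to control the \(\sigma\)-free remainder.
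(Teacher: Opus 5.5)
The paper gives no proof of this lemma; it only quotes \cite{HZY}. Your route is the standard one: analytic normal form $H-h_0=\frac12Y^2+h_kX^k$, analytic outer transit, scaled monomial integrals on the local arc, then integration of $M'$. As written, however, your exponent count does not give the stated expansion.

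A monomial $X^iY^{2s}$ in the local integrand $f(X,Y)\,dX/Y$ contributes $|h|^{\frac{i+1}{k}+s-\frac12}$ to $M'$, and you let $l$ run over \emph{all} residues mod $k$. When $k\mid i+1$ this exponent is a half-integer, and after integration it produces terms $|h|^{n+\frac12}$, $n\ge1$, in $M^\pm$. The lemma excludes exactly these: in every block $l+1$ runs only over $1,\dots,k-1$, so no $|h|^{n+\frac12}$ appears. For instance, for $k=3$ there is no $|h|^{3/2}$ in Corollary~\ref{coro1}. Scaling alone cannot detect this, so you must show that such monomials have no singular part. They have none, because on the level curve $Y\,dY+kh_kX^{k-1}dX=0$, and therefore
\[
X^{kp-1}Y^{2s}\,\frac{dX}{Y}=-\frac{1}{kh_k}\Bigl(\frac{h-\frac12Y^2}{h_k}\Bigr)^{p-1}Y^{2s}\,dY .
\]
Since $h$ is constant on the curve, this form is exact there. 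Its integral over the local arc is therefore a polynomial in $h$ and in the values of $Y$ at the two exit points. Those values are analytic in $h$; for example $Y=\pm\sqrt{2h-2h_kX_1^k}$ if $U$ is cut off by $X=X_1$. In your Beta-type evaluation this appears as the vanishing of the finite-part constant whenever $k\mid i+1$. Only after this step are the surviving exponents of $M'$ exactly $\frac{l+1}{k}-\frac12+i$ with $1\le l+1\le k-1$.

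There are also smaller corrections. Your parity statement is reversed. The branches $Y=\pm Y_+(X)$ are traversed in opposite $X$-directions, so
\[
\int f\,\frac{dX}{Y}=\int_{X_t}^{X_1}\bigl(f(X,Y_+)+f(X,-Y_+)\bigr)\frac{dX}{Y_+}.
\]
Hence it is the monomials \emph{odd} in $Y$ that cancel, and the even ones survive and give the half-integer shifts.

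The fact that $\frac{i+1}{k}+s-\frac12\notin\mathbb Z$ for odd $k$ is more than a bookkeeping remark. It is exactly what rules out $\log|h|$ terms when you expand the endpoint contributions. It therefore belongs in the justification step you identify as the main obstacle.

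Finally, Lemma~\ref{lemX} plays no role in the form of the expansion. It only identifies the coefficient of the first integer power of $h$.
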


\begin{cor}[\cite{HZY}]\label{chzy}
	For $k = 3$, the Melnikov function expansions simplify to
\[
	\begin{aligned}
		M^{-}(h)= & c_0+c_1|h|^{\frac{5}{6}}+\sum_{i \geq 0}\left(c_{2+3 i} h^{i+1}+c_{3+3 i}|h|^{\frac{7}{6}+i}+c_{4+3 i}|h|^{\frac{11}{6}+i}\right)~~{\text{for}}~~0<-h \ll 1, \\
		M^{+}(h)=& \bar{c}_0+\bar{c}_1 h^{\frac{5}{6}}+\sum_{i \geq 0}\left(\bar{c}_{2+3 i} h^{i+1}+\bar{c}_{3+3 i} h^{\frac{7}{6}+i}+\bar{c}_{4+3 i} h^{\frac{11}{6}+i}\right)~~{\text{for}}~~0<h \ll 1,
	\end{aligned}
\]
	with coefficients given by
\[
	\begin{aligned}
		& c_0=\bar{c}_0=\oint_{L_0} q \mathrm{d} x-p \mathrm{d}y, \quad
		c_1=2 \sqrt{2} h_{30}^{-\frac{1}{3}} \sigma_0\left(a_{10}+b_{01}\right), \bar{c}_1=2 \sqrt{2} h_{30}^{-\frac{1}{3}} \kappa_0\left(a_{10}+b_{01}\right), \\
		& c_2=\oint_{L_0}\left(p_x+q_y-a_{10}-b_{01}\right) \mathrm{d}t+\mathcal{L}\left(a_{10}+b_{01}\right),\\
		& \bar{c}_2=\oint_{L_0}\left(p_x+q_y-a_{10}-b_{01}\right) \mathrm{d}t+\mathcal{L}\left(a_{10}+b_{01}\right), \\
		& c_{3+3 i}=(-1)^i \sigma_1 \sum_{j=0}^i r_{3(i-j)+1, j} \rho_{3(i-j)+1, j}, \quad \bar{c}_{3+3 i}=\kappa_1 \sum_{j=0}^i r_{3(i-j)+1, j} \rho_{3(i-j)+1, j}, \\
		& c_{4+3 i}=(-1)^{i+1} \sigma_0 \sum_{j=0}^{i+1} r_{3(i+1-j), j} \rho_{3(i+1-j), j}, \quad \bar{c}_{4+3 i}=\kappa_0 \sum_{j=0}^{i+1} r_{3(i+1-j), j} \rho_{3(i+1-j), j},
	\end{aligned}
\]
	and $\sigma_i$, $\kappa_i$, $r_i$ and $\rho_i$ are constants.
	\label{coro1}
\end{cor}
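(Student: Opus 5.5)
The statement is the case $k=3$ of Lemma~\ref{lemY}, and I would prove it in two stages. First I would specialize the exponents, which is pure bookkeeping. Then I would identify the coefficients by going back to the construction that underlies Lemma~\ref{lemY} in \cite{HZY}, namely the splitting of $M^{\pm}$ into a regular part and a local part at the cusp.

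\emph{Exponents.} For $k=3$ one has $\frac{k-3}{2}=0$ and $\frac{k-1}{2}=k-2=1$. The first sum in Lemma~\ref{lemY} then reduces to the single term $c_1|h|^{\frac13+\frac12}=c_1|h|^{5/6}$. In the sum over $i$:
\begin{itemize}
\item the regular term is $c_{2+3i}h^{i+1}$;
\item the middle sum has only $l=1$, giving $c_{3+3i}|h|^{\frac23+\frac12+i}=c_{3+3i}|h|^{7/6+i}$;
\item the last sum has only $l=0$, giving $c_{4+3i}|h|^{\frac13+\frac12+i+1}=c_{4+3i}|h|^{11/6+i}$.
\end{itemize}
The same substitution applied to $M^+$ yields the stated form of both expansions.

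\emph{Low-order coefficients.} By continuity of $M^{\pm}$ at $h=0$ (the level $h_0$ after translation), $c_0=\bar c_0=M(0)=\oint_{L_0}q\,{\rm d}x-p\,{\rm d}y$. For the next terms I would use the identity
\[
\frac{{\rm d}M^{\pm}}{{\rm d}h}=\oint_{L_h^{\pm}}(p_x+q_y)\,{\rm d}t ,
\]
and write $p_x+q_y=\sigma+(p_x+q_y-\sigma)$ with $\sigma=a_{10}+b_{01}$.
\begin{itemize}
\item By Lemma~\ref{lemX}, the integral of $p_x+q_y-\sigma$ has a finite limit as $h\to0^{\pm}$, namely $\oint_{L_0}(p_x+q_y-\sigma)\,{\rm d}t$.
\item The remaining term $\sigma T^{\pm}(h)$ carries the singular behaviour, where $T^{\pm}$ is the time spent near the cusp.
\item Using the local normal form $H-h_0=\tfrac12y^2+h_{30}x^3+\cdots$, obtained by an analytic change of coordinates, the passage time scales like $h_{30}^{-1/3}|h|^{-1/6}$ times a universal constant. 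The constant is $\sigma_0$ on the outer side and $\kappa_0$ on the inner side, since the level curves differ by whether they cross the $x$-axis.
\item Integrating in $h$ gives the $|h|^{5/6}$ term with coefficient $2\sqrt2\,h_{30}^{-1/3}\sigma_0(a_{10}+b_{01})$, respectively $\kappa_0$ on the other side.
\item The regular remainder of $\sigma T$ contributes the term $\mathcal L(a_{10}+b_{01})$ in $c_2$ and $\bar c_2$.
\end{itemize}

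\emph{Higher coefficients.} Here I would expand $p$ and $q$ in the normalized coordinates as $\sum r_{ij}x^iy^j$. The local integrals $\int x^iy^j\,{\rm d}x$ over the arc of $\tfrac12y^2+h_{30}x^3=h$ near the cusp are homogeneous under the weighted scaling $x\sim|h|^{1/3}$, $y\sim|h|^{1/2}$. Each monomial therefore contributes $|h|^{(i+1)/3+(j+1)/2}$ times a constant, which I denote $\rho_{ij}$.

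Collecting the exponents $\frac76+i$ and $\frac{11}6+i$ picks out exactly the index pairs $(3(i-j)+1,\,j)$ and $(3(i+1-j),\,j)$, respectively. The side-dependent Beta-type factors give $\sigma_1,\sigma_0$ on one side and $\kappa_1,\kappa_0$ on the other. The factors $(-1)^i$ and $(-1)^{i+1}$ come from rewriting the outer-side expansion, which is naturally in the signed variable $h$, in terms of $|h|$.

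\emph{Main obstacle.} I expect the hardest step to be this last sign and index bookkeeping, i.e. verifying that the half-integer powers of $h$ convert to $|h|$ exactly as stated on the outer side. I would check it first on the monomial $p=x$, $q=0$, and only then run the general collection.
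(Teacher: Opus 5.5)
Your first stage matches the paper. Corollary~\ref{chzy} is stated as the case $k=3$ of Lemma~\ref{lemY}, with the coefficient formulas quoted from \cite{HZY} and not derived, and your exponent bookkeeping is correct. Your treatment of $c_0,c_1,c_2$ via $M'(h)=\oint_{L_h}(p_x+q_y)\,dt$ and Lemma~\ref{lemX} is also fine, with one slip: both families cross the $x$-axis. In the normalization $H-h_0=\frac12y^2+x^3$, with $x=h^{1/3}s$ (real cube root), the scaled curve is $s\le1$ for $h>0$, going around the cusp, and $s\ge1$ for $h<0$, inside the horn of the separatrix.

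\textbf{The index bookkeeping for the higher coefficients.} This step fails as you set it up. The line integral $\int x^iy^j\,dx$ scales like $|h|^{(i+1)/3+j/2}$, not $|h|^{(i+1)/3+(j+1)/2}$. More seriously, if the second index of $r_{ij}$ is a power of $y$, the pairs $(3(i-j)+1,j)$ carry exponents that vary with $j$ (with your formula, $i+\frac76-\frac j2$). So collecting equal exponents can never produce the sums $\sum_j r_{3(i-j)+1,j}\rho_{3(i-j)+1,j}$.

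The fix is to stay in the divergence framework you already used for $c_1$. In the normalizing chart $dt=J\,dx/y$; expand $J\,(p_x+q_y)=\sum r_{a,j}x^ay^{2j}$, since odd powers of $y$ cancel between the branches $y=\pm\sqrt{2(h-x^3)}$. Then $x^ay^{2j}$ contributes $|h|^{(a+1)/3+j-1/2}$ to $M'$ and $|h|^{(a+1)/3+j+1/2}$ to $M$. The exponents $\frac76+i$ and $\frac{11}6+i$ force $a=3(i-j)+1$ and $a=3(i+1-j)$ respectively. So the second index in the statement counts powers of $y^2$, and $c_1$ is the member $i=-1$ of the second family.

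\textbf{The sign factors.} Your explanation of $(-1)^i$ and $(-1)^{i+1}$ also fails. The substitution $x=h^{1/3}s$ produces only $(-1)^a$, i.e.\ $(-1)^{i-j+1}$ or $(-1)^{i+1-j}$. That depends on the summation index, so it cannot be pulled out of $\sum_j$. Set $\alpha=\frac{a+1}3$ and $\gamma=j-\frac12$. What must be shown is that the two scaled local contributions to $M'$ (for $h>0$ and $h<0$),
\[
F_+=\mathrm{f.p.}\!\int_{-\infty}^{1}s^a(1-s^3)^{\gamma}\,ds,\qquad
F_-=(-1)^a\,\mathrm{f.p.}\!\int_{1}^{\infty}s^a(s^3-1)^{\gamma}\,ds,
\]
have a ratio independent of $j$. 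With $K=B(\alpha,\gamma+1)$, the Beta reflection formula gives $F_+=\frac K3\bigl(1+(-1)^a\sec\pi\alpha\bigr)$ and $F_-=(-1)^{a+j}\frac K3\tan\pi\alpha$. Hence $F_-/F_+=(-1)^i/\sqrt3$ for $a=3(i-j)+1$, and $F_-/F_+=(-1)^{i+1}/\sqrt3$ for $a=3(i+1-j)$.

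This is what allows one common sum on both sides with side constants $\sigma_\nu,\kappa_\nu$. The extra sign from integrating $M'$ from $0$ to $h<0$ is absorbed into $\sigma_\nu$, consistent with $\sigma_\nu\kappa_\nu<0$ as used in Lemma~\ref{hn5l}. The same formulas show that the would-be $|h|^{N+1/2}$ terms ($a\equiv2\pmod 3$) vanish.

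Finally, the arc integrals over a fixed neighbourhood are not homogeneous, so homogeneity alone does not give the constants. The $\rho$'s are finite parts. The subtracted pieces are analytic in $h$ (no logarithms, since $\alpha+\gamma\notin\mathbb Z$), so they only feed the integer-power coefficients $c_{2+3i}$.
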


A fundamental question arises: {\it What is the relationship between the Melnikov function expansions for a one-cuspidal loop  (homoclinic loop) and a two-cuspidal loop (heteroclinic loop)?}

 The following two results establish that the asymptotic coefficients $\{c_i\}$ for a one-cuspidal loop (as characterized in Lemma \ref{lemY}) admit a natural generalization to the case of two-cuspidal loop if $S_1$ and $S_2$ have the same order (when $S_1$ and $S_2$ have different orders, see \cite{MY}). 
 
 \begin{lm}[\cite{YH}]\label{lmy}
Consider system \eqref{i1}. For each $i$, $i=1,2$, the order of $S_{i}\left(x_{i}, y_{i}\right)$ is $m_{i}$. Suppose $m_{1}\geq m_{2}\geq 1$. For the Melnikov functions given in \eqref{1.3}, we have
\begin{align*}
	M^{-}(h,\delta)=& c_{0}+\sum_{i=1}^{2}\sum_{l=0}^{l_{1}^{[i]}} B_{l, 0}^{[i]} c_{l+1}^{[i]}\left|h-\frac{1}{6}\right|^{\frac{k_{i}+2+2 l}{2 k_{i}}}
	+c_{m_{1}+1}\left(h-\frac{1}{6}\right)\\
	&+\sum_{i=1}^2\sum_{l=l_1^{[i]}+1}^{k_i-2} B_{l, 0}^{[i]} c_{l+2}^{[i]}\left|h-\frac{1}{6}\right|^{\frac{k_i+2+2 l}{2 k_i}}\\
	&-\sum_{i=1}^2\sum_{l=k_i}^{l_2^{[i]}}\frac{B_{l-k_i, 0}^{[i]} c_{l+1}^{[i]}}{k_i+2+2 l}\left|h-\frac{1}{6}\right|^{\frac{k_i+2+2 l}{2 k_i}}+O\left(\left(h-\frac{1}{6}\right)^2\right)
\end{align*}
for $0<1/6-h\ll 1$ , and
\begin{align*}
	M^+(h)=& c_{0}+\sum_{i=1}^{2}\sum_{l=0}^{l_{1}^{[i]}}\bar{B}_{l, 0}^{[i]} c_{l+1}^{[i]}\left|h-\frac{1}{6}\right|^{\frac{k_{i}+2+2 l}{2 k_{i}}}+c_{m_{1}+1}^{*}\left(h-\frac{1}{6}\right)\\
	&+\sum_{i=1}^{2}\sum_{l=l_{1}^{[i]}+1}^{k_{i}-2}\bar{B}_{l, 0}^{[i]} c_{l+2}^{[i]}\left|h-\frac{1}{6}\right|^{\frac{k_{i}+2+2 l}{2 k_{i}}}\\
	&+\sum_{i=1}^{2}\sum_{l=k_{i}}^{l_{2}^{[i]}}\frac{\bar{B}_{l-k_{i}, 0}^{[i]} c_{l+1}^{[i]}}{k_{i}+2+2 l}\left|h-\frac{1}{6}\right|^{\frac{k_{i}+2+2 l}{2 k_{i}}}
	+O\left(\left(h-\frac{1}{6}\right)^{2}\right)
\end{align*}
for $0<h-1/6\ll 1$, where
\begin{align*}
	& l_1^{[i]}=\frac{k_i-3}{2},\quad l_2^{[i]}=\frac{3(k_i-1)}{2},\quad k_i=2 m_i+1,\\
	& c_{m_1+1}=\bar{c}_{m_1+1}+\sum_{i=1}^2\sum_{l=0}^{l_1^{[i]}} O_1\left(c_{l+1}^{[i]}\right),\\
	& c_{m_1+1}^*=\bar{c}_{m_1+1}+\sum_{i=1}^2\sum_{l=0}^{l_1^{[i]}} O_1^*\left(c_{l+1}^{[i]}\right),
\end{align*}
where $O_{1}(\cdot)$, $O_{1}^{*}(\cdot)$ means $\cdot$ times some constant, and
\[
\bar{c}_{m_1+1}=\oint_{L_0}\left(p_x+q_y\right) {\rm d}t=\sum_{j=1}^2\int_{L_{j}}\left(p_x+q_y\right) {\rm d}t,
\]
if $c_{l+1}^{[i]}=0$, $i=1,2$ for $l=0,1,\ldots, l_1^{[i]}$.
 \end{lm}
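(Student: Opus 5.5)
The plan is to localize the Melnikov integral \eqref{1.3} near each cusp and to treat the two cusps independently, reducing the two-cuspidal case to the one-cusp analysis behind Lemma~\ref{lemY} and Corollary~\ref{chzy}.

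\textbf{Step 1: splitting the integral.} Fix small disks \(U_1,U_2\) centred at \(S_1,S_2\). Write
\[
M^\pm(h)=I_{\rm reg}^\pm(h)+I_{1}^\pm(h)+I_{2}^\pm(h),
\]
where \(I_i^\pm\) is the integral of \(q\,{\rm d}x-p\,{\rm d}y\) over \(L_h^\pm\cap U_i\). The regular piece \(I_{\rm reg}^\pm\) is taken over the arcs of \(L_h^\pm\) outside \(U_1\cup U_2\). These arcs stay uniformly away from equilibria, and their endpoints lie on fixed transversal sections \(\partial U_i\). Hence \(I_{\rm reg}^\pm\) is analytic in \(h\) at \(h=1/6\) and contributes only to \(c_0\), to the linear term and to \(O((h-1/6)^2)\).

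\textbf{Step 2: the local integrals.} I would translate \(S_i\) to the origin and put \(H-h_0\) in the form \eqref{1.5}--\eqref{1.6}. The shift \(y\mapsto y-\varphi(x)\) followed by the substitution \(u^k=H(x,\varphi(x))-h_0\) brings the level curves to \(\tfrac12 Y^2\pm u^{k_i}=h-h_0\) up to analytic corrections. The local integral then becomes a sum of terms of the form
\[
\int \frac{u^l\,\rho(u)\,{\rm d}u}{\sqrt{h-h_0\mp u^{k_i}}}, \qquad l\geq 0 .
\]
The rescaling \(u=|h-h_0|^{1/k_i}s\) shows that the \(l\)-th Taylor coefficient \(c^{[i]}_{l+1}\) of the perturbation at \(S_i\) produces exactly the power
\[
|h-\tfrac16|^{(k_i+2+2l)/(2k_i)}
\]
multiplied by a Beta-type constant. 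That constant is \(B^{[i]}_{l,0}\) on the outer side and \(\bar B^{[i]}_{l,0}\) on the inner side, because the two sides pass around the cusp differently. This is the same computation as in \cite{HZY} for one cusp, done once for each \(S_i\). Powers with \(l\ge k_i\) are obtained from the \(l-k_i\) ones by integrating by parts, which gives the factor \(1/(k_i+2+2l)\) and the opposite sign on the two sides. Integer powers of \(h-\tfrac16\) are absorbed into the analytic part.

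\textbf{Step 3: the linear coefficient.} I would differentiate, using \(M'(h)=\oint_{L_h}(p_x+q_y)\,{\rm d}t\), and subtract the cusp divergences \(\sigma\) at \(S_1\) and \(S_2\). Applying Lemma~\ref{lemX} in each \(U_i\) and summing over \(i\) yields \(\bar c_{m_1+1}\). The divergence terms that were removed are linear in the \(c^{[i]}_{l+1}\), \(l\le l_1^{[i]}\), which gives the \(O_1\) and \(O_1^*\) corrections. When all these coefficients vanish, only \(\oint_{L_0}(p_x+q_y)\,{\rm d}t\) remains.

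\textbf{Main obstacle.} I expect the hardest part to be the bookkeeping in Step~2 on the two sides. One has to control the relative signs and orientations of the branches passing each cusp, since the inner and outer families traverse the cusp neighbourhoods in opposite senses for a heteroclinic loop. One also has to check that the fractional powers coming from the two cusps do not interfere; because \(m_1\ge m_2\), some exponents may coincide, and then their coefficients simply add. I would first verify the signs for the symmetric model \eqref{211}, where \(k_1=k_2=3\), and then argue that the general case follows from the same local normal form.
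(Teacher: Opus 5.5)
The paper does not prove this lemma; it imports it from \cite{YH}, which extends the one-cusp analysis of \cite{HZY}. Your architecture matches the one used there: regular arcs analytic in $h$, an analytic normal form $\tfrac12v^2\pm u^{k_i}$ at each $S_i$, the rescaling $u=|h-\tfrac16|^{1/k_i}s$, and $M'(h)=\oint_{L_h}(p_x+q_y)\,{\rm d}t$ for the linear coefficient.

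\textbf{Step~3 fails for cusps of order $m_i\geq2$.} Lemma~\ref{lemX} subtracts only the constant $\sigma$, and its conclusion is false once $k_i\geq5$. In normal-form coordinates $\dot u$ is $v$ times a unit, and $|v|\sim u^{k_i/2}$ along $L_0$, so ${\rm d}t\sim u^{-k_i/2}\,{\rm d}u$. A monomial $u^l$ of the divergence therefore gives $\int_0u^{l-k_i/2}\,{\rm d}u=+\infty$ for every $1\leq l\leq l_1^{[i]}=\tfrac{k_i-3}{2}$. For example, with $H=\tfrac12y^2-x^5$, $p=0$, $q=xy$, one has $\sigma=0$ but $\int_{L_0\cap U}x\,{\rm d}t=\infty$. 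So Lemma~\ref{lemX} applies as stated only when $k_i=3$, i.e.\ $l_1^{[i]}=0$. For general orders you must do the following.
\begin{enumerate}
\item Subtract near each $S_i$ the whole singular jet $\sum_{l=0}^{l_1^{[i]}}r^{[i]}_lu^l$ of the divergence in normal-form coordinates, time-change factor included. These monomials are exactly what produce $c^{[i]}_1,\dots,c^{[i]}_{l_1^{[i]}+1}$.
\item Show by a direct estimate that the remainder converges. Its monomials $u^lv^j$ have $l\geq\tfrac{k_i-1}{2}$ or $j\geq1$, so its integral along $L_h$ is uniformly controlled and tends to its integral along $L_0$.
\item Read off the $h$-independent constants in the expansions of $\int_{L_h\cap U_i}u^l\,{\rm d}t$, $l\leq l_1^{[i]}$. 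These constants are the corrections $O_1(c^{[i]}_{l+1})$ and $O_1^*(c^{[i]}_{l+1})$.
\end{enumerate}
Only when all these $c^{[i]}_{l+1}$ vanish does $\oint_{L_0}(p_x+q_y)\,{\rm d}t$ converge and give the linear coefficient. Your remark that the removed terms are ``linear in $c^{[i]}_{l+1}$, $l\leq l_1^{[i]}$'' presupposes this stronger subtraction, which Lemma~\ref{lemX} does not supply.

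\textbf{Step~2 has an exponent mismatch.} With the integrand $u^l\rho(u)/\sqrt{h-h_0\mp u^{k_i}}$, your rescaling yields $|h-\tfrac16|^{\frac{l+1}{k_i}-\frac12}$. That is one full power below the claimed $\frac{k_i+2+2l}{2k_i}$; these are the exponents of $M'$, not of $M$. For $M$, the exponent $\frac{k_i+2+2l}{2k_i}$ comes from integrands $u^{l+k_i}/v$ or $(h-\tfrac16)u^l/v$, e.g.\ from $\int u^lv\,{\rm d}u$.

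The consistent route is to expand $M'$ locally and then integrate in $h$. This is also where the factor $\frac{1}{k_i+2+2l}$ and the sign in the third sum actually come from, not from the integration by parts itself. On $L_h$ one has $\pm u^{k_i}=(h-\tfrac16)-\tfrac12v^2$. After one integration by parts, the $u^l$-term of $M'$ equals a constant times $(h-\tfrac16)$ times the $u^{l-k_i}$-term, up to analytic terms. The singular part of that term is proportional to $|h-\tfrac16|^{\gamma}$ with $\gamma=\frac{l-k_i+1}{k_i}-\frac12$. Integrating $s|s|^{\gamma}$ gives $\frac{2k_i}{k_i+2+2l}|h-\tfrac16|^{\frac{k_i+2+2l}{2k_i}}$, with the same sign on both sides. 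By contrast, integrating $|s|^{\gamma}$ in the low-order terms produces a factor $\operatorname{sgn}(h-\tfrac16)$. That factor is absorbed into $B^{[i]}_{l,0}$ and $\bar B^{[i]}_{l,0}$, and so reappears as the explicit minus sign in $M^-$.

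\textbf{Orientation.} The two families do not pass the cusps in opposite senses. Both are oriented by the Hamiltonian flow and, by continuity along the regular arcs $L_1,L_2$, run in the same direction. What distinguishes $B^{[i]}_{l,0}$ from $\bar B^{[i]}_{l,0}$ is geometric. With the cusp opening towards $u>0$, one family passes between the two branches, with turning point $u=|h-\tfrac16|^{1/k_i}$. The other passes around the tip, with turning point $u=-|h-\tfrac16|^{1/k_i}$. So the rescaled integrals run over $[1,\infty)$ and $[-1,\infty)$ respectively.
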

 Combining Corollary \ref{chzy} and Lemma \ref{lmy} (see more details in \cite{HZY,YH}), we can give directly the generalized Melnikov function expansion for two-cuspidal  loop is established as follow:
 \begin{cor}
	Assume that there exist two families of periodic orbits near the two-cuspidal loop $L_0$,
	$
	L_h^{ \pm}: H(x, y)=h~~{\text{for}}~~0<|h| \ll 1, 
	$
	then the Melnikov functions are
	\begin{equation*}
		M^{\pm}(h) = \oint_{L_h^{\pm}} \left( q \mathrm{d}x - p \mathrm{d}y \right).
	\end{equation*}
	For $k = 3$, $M^{\pm}(h)$ can be expressed as
	\begin{align*}
		M^{-}(h) &= c_0 + c_1 |h|^{\frac{5}{6}} + \sum_{i \geq 0} \left( c_{2+3i} |h|^{i+1} + c_{3+3i} |h|^{\frac{7}{6}+i} + c_{4+3i} |h|^{\frac{11}{6}+i} \right) 
		 \quad \text{for }~~ 0 < \tfrac{1}{6} - h \ll 1, \\
		M^{+}(h) &= \bar{c}_0 + \bar{c}_1 h^{\frac{5}{6}} + \sum_{i \geq 0} \left( \bar{c}_{2+3i} h^{i+1} + \bar{c}_{3+3i} h^{\frac{7}{6}+i} + \bar{c}_{4+3i} h^{\frac{11}{6}+i} \right)
		\quad \text{for } ~~0 < h - \tfrac{1}{6} \ll 1,
	\end{align*}
	where the coefficients satisfy
	\begin{align*}
		c_0 &= \bar{c}_0 = \oint_{L_0} q \mathrm{d}x - p \mathrm{d}y, \quad
		c_2 = \bar{c}_2 = \oint_{L_0} \left( p_x + q_y - a_{10} - b_{01} \right) \mathrm{d}t + \mathcal{L}(a_{10} + b_{01}), \\
		c_i &= c_i(S_1) + c_i(S_2), \quad \bar{c}_i = \bar{c}_i(S_1) + \bar{c}_i(S_2), \quad\quad i \in \mathbb{Z}_+ \setminus \{1\},
	\end{align*}
	and for each cusp $S_j$, $j=1,2$,
	\begin{align*}
		c_1(S_j)&= 2\sqrt{2}\, h_{30}^{-1/3} \sigma_0 (a_{10} + b_{01}), 
		~~\bar{c}_1(S_j) = 2\sqrt{2}\, h_{30}^{-1/3} \kappa_0 (a_{10} + b_{01}), \\
		c_{3+3i}(S_j)&= (-1)^i \sigma_1 \sum_{\ell=0}^i r_{3(i-\ell)+1,\ell} \rho_{3(i-\ell)+1,\ell},~~
		\bar{c}_{3+3i}(S_j)= \kappa_1 \sum_{\ell=0}^i r_{3(i-\ell)+1,\ell} \rho_{3(i-\ell)+1,\ell}, \\
		c_{4+3i}(S_j)&= (-1)^{i+1} \sigma_0 \sum_{\ell=0}^{i+1} r_{3(i+1-\ell),\ell} \rho_{3(i+1-\ell),\ell},~~
		\bar{c}_{4+3i}(S_j)= \kappa_0 \sum_{\ell=0}^{i+1} r_{3(i+1-\ell),\ell} \rho_{3(i+1-\ell),\ell},
	\end{align*}
	with $\sigma$, $\kappa$, $r$, and $\rho_i$ being constants.
	\label{mel-lips}
\end{cor}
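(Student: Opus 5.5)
The plan is to derive the expansion by localizing the Melnikov integral \eqref{1.3} near each of the two cusps and adding the local contributions. Both cusps \(S_1=(-1,0)\) and \(S_2=(1,0)\) lie on the level \(H=h_0=\tfrac16\) and have the same order \(m_1=m_2=1\), so \(k_1=k_2=3\). First I would translate each \(S_j\) to the origin and check that \(H-h_0\) takes the normal form \eqref{1.5} with \(h_3\neq0\). For \eqref{H}, near \(x=\pm1\) one has \(H(x,0)-\tfrac16=\tfrac16(x^2-1)^3\), a cubic zero. The parameter \(h\) in the statement is measured from \(h_0\), that is, \(h\) plays the role of \(h-\tfrac16\) in Lemma~\ref{lmy}.

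Next, fix disks \(U_1,U_2\) of small diameter around \(S_1,S_2\) and decompose
\[
M^\pm(h)=\sum_{j=1}^2\int_{L_h^\pm\cap U_j}(q\,{\rm d}x-p\,{\rm d}y)
+\int_{L_h^\pm\setminus(U_1\cup U_2)}(q\,{\rm d}x-p\,{\rm d}y).
\]
The last integral runs over arcs of regular level curves crossing transversally the compact pieces of \(L_1\) and \(L_2\) away from the equilibria. By analytic dependence of the flow on initial data, it is analytic in \(h\) and therefore contributes only integer powers \(|h|^{i}\). Each integral over \(L_h^\pm\cap U_j\) is exactly the local object treated in \cite{HZY} for a one-cuspidal loop. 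Applying Lemma~\ref{lemY} with \(k=3\), that is, Corollary~\ref{chzy}, at \(S_j\) gives the fractional terms \(|h|^{5/6}\), \(|h|^{7/6+i}\), \(|h|^{11/6+i}\) with the local coefficients \(c_1(S_j)\), \(c_{3+3i}(S_j)\), \(c_{4+3i}(S_j)\) and their barred analogues, plus an analytic remainder. Lemma~\ref{lmy} with \(m_1=m_2=1\) confirms that in the equal-order case the two families of exponents coincide and that no cross terms appear. Summing then gives \(c_i=c_i(S_1)+c_i(S_2)\) and \(\bar c_i=\bar c_i(S_1)+\bar c_i(S_2)\) for the fractional orders.

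For the integer-order coefficients I would proceed as follows. The constant term is \(c_0=\bar c_0=\oint_{L_0}q\,{\rm d}x-p\,{\rm d}y\), by continuity of \(M^\pm\) at \(h=0\). The linear coefficient comes from differentiating in \(h\): after subtracting the cusp divergences, Lemma~\ref{lemX} lets the limit \(h\to0^\pm\) pass inside the integral. This produces the common value \(\oint_{L_0}(p_x+q_y-a_{10}-b_{01})\,{\rm d}t\), plus the explicit multiple \(\mathcal L(a_{10}+b_{01})\) coming from the subtracted divergence terms. The correction \(O_1(c_1^{[i]})\) in Lemma~\ref{lmy} is likewise absorbed into this multiple of \(a_{10}+b_{01}\).

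The main obstacle is orientation and sign bookkeeping. On the outer side \(L_h^-\) and on the inner side \(L_h^+\), each cusp is passed on opposite sides, so the local constants \(\sigma_i\) and \(\kappa_i\), including the factor \((-1)^i\), must be matched consistently at both cusps. In addition, the transversal section parametrized by \(h\) must be the same one used in \cite{HZY,YH}. I would handle this by using the symmetry \((x,y,t)\mapsto(-x,-y,t)\) of the unperturbed field \eqref{H}. This symmetry maps \(U_1\) onto \(U_2\) and preserves both the orientation of \(L_h^\pm\) and the side of approach. It follows that the universal constants \(\sigma_i,\kappa_i\) are identical at \(S_1\) and \(S_2\), and only the perturbation data \(a_{10}+b_{01}\), \(r\), \(\rho\) differ between the two cusps, which is exactly the additive form claimed.
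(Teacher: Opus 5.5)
Your proposal follows the paper's own route. The paper obtains this corollary by combining the one-cusp expansion of Corollary~\ref{chzy} with the equal-order two-cusp Lemma~\ref{lmy}: the singular contributions of $S_1$ and $S_2$ are added, and the regular arcs supply only integer powers, which is exactly your localization. Your rotation $(x,y,t)\mapsto(-x,-y,t)$ also usefully makes explicit why $\sigma_i,\kappa_i$ coincide at the two cusps, a point the paper leaves implicit.

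One caveat, inherited from the statement itself, concerns $c_2$. A single subtracted constant $a_{10}+b_{01}$ gives a convergent integral only when the divergences at $S_1$ and $S_2$ agree. This holds for the even part $x^{2i}y$ of the perturbation in \eqref{or}, which is the only part entering $M^\pm$. In general your cusp-by-cusp subtraction produces the correct finite object, but not literally the displayed single-constant formula.
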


The preceding one-cuspidal expansion is local at the singular point.
For a two-cuspidal loop, the singular contributions from \(S_1\) and \(S_2\)
are added, while the regular parts of the two heteroclinic arcs contribute
to the integer-power coefficients.

System \eqref{or} is integrable for $\varepsilon_a=\varepsilon_b=0$. A fundamental approach to studying the cyclicity of limit cycles—both near the origin and near infinity—is through the analysis of the Poincar\'{e} map. The asymptotic expansion of this map provides the necessary framework for quantifying the birth and persistence of limit cycles under small perturbations. The following lemma establishes the asymptotic expression for the Poincar\'{e} map.
\begin{lm}[\cite{Br}]
	Consider the differential equation
$$
		\frac{{\rm d}r}{{\rm d}\theta} = \sum_{k=0}^{\infty} a_k(\theta) r^{k+1}.
$$
	For sufficiently small initial $r$, the first return map $P(a)$ is given by an absolutely convergent power series
$$
		P(a)(r) = r + \sum_{n=1}^{\infty} c_n(a) r^{n+1},
$$
	where $$c_n(a) = \sum_{i_1 + \cdots + i_k = n} c_{i_1, \ldots, i_k} I_{i_1, \ldots, i_k},$$
	\begin{align*}
		I_{i_1, \ldots, i_k}(\theta)=:\idotsint\limits_{0 \leq s_1 \leq \cdots \leq s_k \leq 2\pi} a_{i_1}(s_1) \cdots a_{i_k}(s_k)  {\rm d}s_1 \cdots {\rm d}s_k,
	\end{align*}
	and the coefficients satisfy
	\begin{equation}
		c_{i_1, \ldots, i_k} = (n - i_1 + 1)(n - i_1 - i_2 + 1) \cdots 1.
	\end{equation}
	\label{by}
\end{lm}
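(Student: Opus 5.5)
The statement to prove is Lemma~\ref{by} (Brudnyi's formula). I would follow the standard iterative route: write the solution with initial value $r(0)=r$ as a power series $r(\theta)=r+\sum_{n\ge1}u_n(\theta)r^{n+1}$ with $u_n(0)=0$, substitute it into $\frac{{\rm d}r}{{\rm d}\theta}=\sum_k a_k(\theta)r^{k+1}$, and compare coefficients of $r^{n+1}$. Set $u_0\equiv1$. The ODE then becomes the recursive system
\[
u_n'(\theta)=\sum_{k=1}^{n} a_k(\theta)\!\!\sum_{j_0+\cdots+j_k=n-k}\!\! u_{j_0}(\theta)\cdots u_{j_k}(\theta).
\]
Here the coefficient of $r^{k+1}$ contributes through the $(k+1)$-fold product of the series for $r$. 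If $a_0\neq0$ the series starts at index $0$, and one first removes $a_0$ by the substitution $r\mapsto r\,e^{\int_0^\theta a_0}$. Presumably this is the normalization in \cite{Br}, where indices $i_j\ge1$ effectively appear in $c_n$. I would state that normalization explicitly.

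Next I would prove by induction on $n$ that
\[
u_n(\theta)=\sum_{i_1+\cdots+i_k=n} c_{i_1,\ldots,i_k}\,I_{i_1,\ldots,i_k}(\theta),
\]
where $I$ is the iterated integral over $0\le s_1\le\cdots\le s_k\le\theta$. The key structural fact is that products of iterated integrals are again linear combinations of iterated integrals (the shuffle relation). An integration $\int_0^\theta a_{i}(s)(\cdots)\,{\rm d}s$ then appends the letter $i$ at the top position. The cleaner alternative, which I prefer, is to differentiate in a different variable. Consider $\partial_\theta$ of the iterated integral with last letter $i_k$, and verify that the claimed series satisfies the ODE directly. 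Write $R(\theta)=r+\sum_w c_w I_w(\theta) r^{|w|+1}$. Its derivative picks out the last letter $i_k$, giving $a_{i_k}(\theta)\sum c_{w'i_k}I_{w'}r^{|w'|+i_k+1}$. So the identity needed is $\sum_{w'} c_{w'i}I_{w'}r^{|w'|+i+1}=R^{i+1}$. This reduces to the combinatorial identity that $(\sum_w c_w I_w r^{|w|})^{i+1}$, expanded by shuffles, has coefficient $c_{w'i}$ on $I_{w'}$.

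The main obstacle is checking this coefficient formula $c_{i_1,\ldots,i_k}=(n-i_1+1)(n-i_1-i_2+1)\cdots1$. I would attack it by reading the product from the other end, i.e., by the backward (adjoint) equation. The return map satisfies $P(r)=\Phi_{2\pi}(r)$. Varying the initial time gives the transport equation $\partial_{s}\Phi_{s,2\pi}(r)=-\,A(s,r)\,\partial_r\Phi_{s,2\pi}(r)$, with $A(s,r)=\sum_k a_k(s)r^{k+1}$. This is linear in $\Phi$, so Picard iteration gives $\Phi=\sum_k\int_{s_1\le\cdots\le s_k}(A(s_1)\partial_r)\cdots(A(s_k)\partial_r)\,r$. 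Applying the operators $a_{i}(s)r^{i+1}\partial_r$ successively to monomials produces exactly the factors $(\text{current degree})$. The rightmost operator acts on $r$ and gives factor $1$. The next acts on $r^{i_k+1}$ and gives $i_k+1$, and so on, up to the factor $n-i_1+1$. This yields the stated product with no shuffle bookkeeping. Here the ordering convention, whether $s_1$ pairs with $i_1$, needs care and should be matched to the definition of $I$.

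Finally, for absolute convergence I would use the bound $|a_k(\theta)|\le CK^k$ for $\theta\in[0,2\pi]$ to get $|I_w|\le (2\pi)^k C^k K^{n}/k!$. I would also bound the number of compositions times the product of factors, which is at most $n!\,\binom{n-1}{k-1}$-type. The generating-function majorant $\dot\rho=\sum CK^k\rho^{k+1}$ has an analytic flow for small $\rho$, which gives convergence for $|r|$ small.
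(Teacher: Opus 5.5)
Your preferred route is correct and is essentially the Picard-iteration argument the paper relies on. The paper quotes the lemma from Brudnyi without proof and describes the same mechanism in Lemma~\ref{lm:weighted-picard-brudnyi}: iterate the backward equation $\partial_s\Phi_{s,2\pi}=-A(s,r)\partial_r\Phi_{s,2\pi}$, so that $a_{i_1}(s_1)r^{i_1+1}\partial_r\cdots a_{i_k}(s_k)r^{i_k+1}\partial_r$ applied to $r$ yields exactly $(n-i_1+1)(n-i_1-i_2+1)\cdots1$, with $s_1$ paired to $i_1$ as in the paper's $I_{i_1,\ldots,i_k}$.

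Your caveat that the statement needs $a_0\equiv0$ is right, since otherwise $P(r)=e^{\int_0^{2\pi}a_0}r+\cdots$. For convergence, rely on the majorant argument rather than the $n!$-type count, which is too crude on its own. The majorant argument is valid because all $c_{i_1,\ldots,i_k}\ge0$, and because on $r$-jets $A\partial_r$ raises degree, so each coefficient is a finite Picard sum.
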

This result guarantees that the displacement function can be systematically analyzed through its series expansion. The coefficients ${c_n(a)}$ which encapsulate the influence of the perturbation terms, become the critical parameters for detecting fixed points of the Poincar\'{e} map, which correspond to limit cycles of the system.
To apply this analytical framework to system \eqref{or} for the distinct regimes as $H \to 0$(near the origin) and $H \to \infty$(near infinity), we introduce the following coordinate transformation
\begin{equation*}
	u = \operatorname{sgn}(x) \sqrt{\frac{(x^2 - 1)^3 + 1}{3}}, \quad \mathrm{d}\tau = \frac{x(x^2 - 1)^2}{|u| \operatorname{sgn}(u)}  \mathrm{d}t.
\end{equation*}
A direct calculation gives
\begin{equation}
\frac{{\rm d}u}{\mathrm{d}\tau}=y, \qquad
\frac{{\rm d}y}{\mathrm{d}\tau}
=-u-\frac{y}{(\omega-1)^2}
\left(
\varepsilon_a |u|\sum_{i=0}^{n}a_i\omega^{i-\frac12}
+\varepsilon_b u\sum_{i=0}^{n}b_i\omega^i
\right),
	\label{uy}
\end{equation}
where
\[
u^2=3^{-1}\left((\omega-1)^3+1\right).
\]
Indeed, $\omega=x^2$ and
$\mathrm d\tau/\mathrm dt=|x|(x^2-1)^2/|u|$; hence the even damping
part produces the factor $|u|$, whereas the odd damping part produces $u$.
Equation \eqref{uy} is the exact equation used in the local calculation
near the origin below. The separate analysis near infinity is given in
Subsection~\ref{subsec:limit-cycles-infinity}.

\section{Center--focus problem}\label{sec:proof}

\subsection{Proofs of Theorem \ref{cf-thm1} and Corollary  \ref{cf-cor1}}
{This subsection gives the proofs of Theorem~\ref{cf-thm1} and Corollary~\ref{cf-cor1}.}

According to \cite{DH}, a center--focus problem arises in statement {\bf(i)}.
The following lemma describes the dynamics near infinity of system \eqref{ls} when statement {\bf(i)} holds.
\begin{lm}
	There is  no orbit of system \eqref{ls} connecting equilibria at infinity in the Poincar\'{e} disc if and only if {statement {\bf(i)} of Theorem~\ref{cf-thm1} holds.}
	System \eqref{ls} exhibits two equilibria $I_{A}^{\pm}$ located on the $y$-axis at infinity. 
	Moreover, the dynamics near infinity are shown in {\rm Fig. \ref{infty}}.
	\label{inf}
\end{lm}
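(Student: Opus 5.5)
Under condition (i) the proof reduces to a local analysis of the two charts of the Poincar\'e compactification at infinity, using the classification of Dumortier--Herssens \cite{DH}. In the chart $U_1$ we set $x=1/v$, $y=u/v$, multiply by $v^{m-1}$, and obtain a polynomial field whose infinite singularities lie on $\{v=0\}$ and are given by the roots of the leading part. A direct computation shows that on $\{v=0\}$ the reduced field has the form $\dot u=-a_m-b_nu\,v^{\,m-n-1}\cdots-u^2v^{\,m-2}\cdots$. When $m>2n+1$ the constant term $-a_m$ dominates for $m$ odd. When $m=2n+1$ the relevant quadratic $a_m+b_nu+\frac{1}{n+1}u^2$ (after the Poincar\'e--Lyapunov weighted blow-up of weights $(1,n+1)$) has discriminant $b_n^2-4a_m/(n+1)$. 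Hence there are no singular points in the interior of chart $U_1$ on the equator exactly when $m,n$ are odd and either $a_m>0$ with $m>2n+1$, or $4(n+1)a_mb_n^{-2}>1$. In all remaining cases (parity failing, the sign of $a_m$ wrong, or the discriminant nonnegative) we exhibit equilibria at infinity inside $U_1$, or an odd/even parity mismatch that forces the flow on the equator to be nonrotating. In each of these cases a hyperbolic or parabolic sector of the blown-up singularity produces an orbit joining infinite equilibria, which gives the ``only if'' direction by contraposition.

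Next we treat the chart $U_2$ ($x=u/v$, $y=1/v$), which always contains the origin $I_A^{\pm}$, i.e.\ the endpoints of the $y$-axis. There the field degenerates completely, so we perform quasi-homogeneous blow-ups with weights $(1,n+1)$ if $m\le 2n+1$, and $(2,m+1)$ if $m>2n+1$. Under (i) we check on the exceptional divisor that the only singularities are at the two poles, and that they are nodes or saddle-nodes whose local sectors are attached only to the equator. This yields the picture of Fig.~\ref{infty}: orbits enter near $I_A^+$ from the finite plane and leave near $I_A^-$, and no separatrix runs along or between infinite points. For the ``if'' direction we combine the $U_1$ result (the equator is free of singularities apart from $I_A^\pm$) with this $U_2$ description, so that the whole boundary circle, with $I_A^{\pm}$ passed by the flow, carries no connecting orbit.

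The main obstacle is the $U_2$ blow-up in the borderline case $m=2n+1$. There the weighted directional blow-up gives the quadratic $\frac{1}{n+1}z^2+b_nz+a_m$ on the divisor, and one must verify that the sign condition $4(n+1)a_m>b_n^2$ removes all its real roots. One must also check that the corner singularities remain of a type without hyperbolic sectors reaching infinity. I would handle this by writing the blown-up field explicitly and reading off the linear parts at the corners, relying on the tables in \cite{DH} for the generic cases so that only the sign checks need to be redone.
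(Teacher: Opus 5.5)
The paper does not reprove this lemma; it takes the whole classification at infinity from \cite{DH}, with details in \cite{CLZ}. Your reconstruction of that classification has a structural error: condition (i) is not the condition for $U_1$ to be free of infinite singularities. In $U_1$, after multiplying by $v^{m-1}$,
$\dot u=-\sum_i a_iv^{m-i}-u\sum_i b_iv^{m-1-i}-u^2v^{m-1}$.
So on $\{v=0\}$ the field is just $\dot u=-a_m\neq0$ whenever $m>n+1$, regardless of the parities of $m,n$ or the sign of $a_m$. No quadratic and no discriminant occur in $U_1$. All of (i) is decided in the weighted blow-up of $I_A^\pm$.

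Your ``only if'' argument, which looks for equilibria in $U_1$ or a nonrotating equator, therefore misses the basic failure modes. Take $m,n$ odd, $m>2n+1$, $a_m<0$. The equator has no singular points besides $I_A^\pm$, and it rotates. The obstruction sits on the divisor: in $U_2$ ($u=x/y$, $v=1/y$), set $u=\rho^{m-1}$, $v=\rho^{m+1}w$. After dividing by a power of $\rho$, the principal part is $\dot\rho=\frac{\rho}{m-1}(w^{m-1}+a_m)$, $\dot w=-\frac{w}{m-1}\bigl(2a_m+(m+1)w^{m-1}\bigr)$. The zeros $w^{m-1}=2|a_m|/(m+1)$ correspond to the orbits $y\approx\pm\sqrt{2|a_m|/(m+1)}\,x^{(m+1)/2}$ running into $I_A^\pm$. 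The same happens for $m=2n+1$ with $b_n^2\geq4(n+1)a_m$.

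Your divisor quadratic is also wrong. With $u=\rho^n$, $v=\rho^{n+1}w$, $z=w^n$, one gets $(n+1)z^2+b_nz+a_m=0$, with discriminant $b_n^2-4(n+1)a_m$. For your $\frac1{n+1}z^2+b_nz+a_m$, the hypothesis $4(n+1)a_m>b_n^2$ does not exclude real roots; take $n=1$, $a_m=1$, $b_n=5/2$.

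Finally, what lets the flow pass by $I_A^\pm$ is that the corners where the divisor meets the equator are hyperbolic saddles. Their eigenvalues are $a_m/(m-1)$ and $-2a_m/(m-1)$, resp.\ $\pm a_m/n$, giving a hyperbolic sector on the disc side. Describing them as nodes or saddle-nodes is incorrect, and a node there would itself produce orbits from the finite plane into $I_A^\pm$.

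More seriously, the requirement that $n$ be odd cannot come out of this local geometry at all. The equator flow depends only on the parity of $m$ and on $\operatorname{sgn}a_m$. The blow-up at $I_A^\pm$ is blind to the parity of $n$:
for $m>2n+1$, the term $b_nx^ny$ has weighted degree $2n+m+1<2m$ and drops out of the principal part;
for $m=2n+1$, the charts on the divisors at $I_A^\pm$ give $(n+1)z^2\pm b_nz+a_m$, with the same discriminant for either parity.
Hence take $n$ even, $m$ odd, and either $m>2n+1$, $a_m>0$, or $m=2n+1$, $b_n^2<4(n+1)a_m$. Then the exterior is monodromic and there is no connecting orbit, although (i) fails. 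Your ``parity mismatch makes the equator flow nonrotating'' only handles the parity of $m$.

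What excludes even $n$ is a statement about the return map. For even $n$, infinity is a focus whose stability is fixed by $\operatorname{sgn}b_n$. For instance, for $m>2n+1$ the leading energy change per turn is $-b_n\oint x^ny\,dx$, which is nonzero by Green's formula since $x^n\geq0$. For odd $n$ this term vanishes by the symmetry $x\mapsto-x$, and the center--focus problem arises. This is exactly the part of the Dumortier--Herssens classification that the paper imports.

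So your plan must do one of two things. Either import this focus statement and read the lemma as ``monodromic with leading-order stability undetermined iff (i)''. Or prove the equivalence with (i) replaced by the monodromy condition above. A sector analysis at $I_A^\pm$ alone cannot give the ``$n$ odd'' half.
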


\begin{proof}
	According to the result of \cite{DH}, we can obtain this conclusion and get the dynamics near infinity directly. Moreover, the detailed proof can be found in \cite{CLZ}.
\end{proof}
To facilitate the study of the problem, we analyze system \eqref{lst}. As noted above, the Li\'{e}nard shear is a proper homeomorphism of the finite plane and preserves the exterior closed-orbit property needed here; no homeomorphic extension to the boundary of the Poincar\'{e} disc is used. Assume that $E^{*}=(x^*, y^*)$ is an equilibrium  of system \eqref{lst}.
Using Taylor series to expand $(y-F(x),-g(x))$ around $E^{*}$, we have
\begin{equation*}
	\begin{aligned}
		y-F(x)&=y-y^*+p(x-x^*)+O(|x-x^*|^2),\\
		-g(x)&=q_r(x-x^*)^r+O(|x-x^*|^{r+1}),
	\end{aligned}
\end{equation*}
where $p,q_r\in\mathbb{R}$ and $q_r\neq0$.
Then, we have the following lemmas, which describe the qualitative properties of the equilibria of system \eqref{lst}.
\begin{figure}[!htb]
	\centering
	{\includegraphics[scale=0.6]{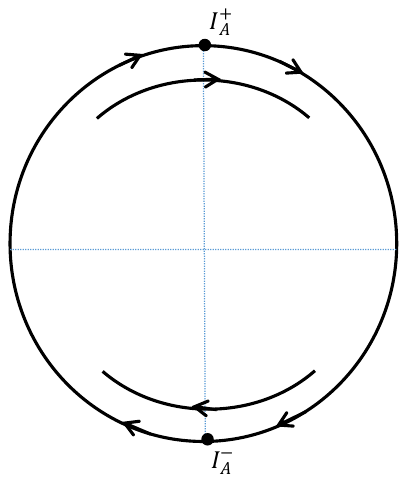}}\hspace{10pt}
	\caption{{Dynamics near infinity in the Poincar\'{e} disc of system \eqref{ls} when statement {\bf(i)} holds. }}
	\label{infty}
\end{figure}
\begin{lm}
	The index of the equilibrium of system \eqref{lst}  has only three possible values: $0$, $1$ or $-1$.
	Moreover, the index of $E^{*}$ is $0$ (resp., $1$; $-1$) if and only if $r$ is even (resp., $r$ is odd and $q_r<0$; $r$ is odd and $q_r>0$).
	\label{index}
\end{lm}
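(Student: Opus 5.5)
We need to prove Lemma \ref{index}. The equilibrium $E^*=(x^*,y^*)$ of system \eqref{lst} satisfies $g(x^*)=0$ and $y^*=F(x^*)$. We use the expansions written just before the lemma. Since every component of the vector field is polynomial and $E^*$ is isolated, the index is well defined.

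The plan is to compute the index by deforming the vector field, keeping the equilibrium isolated along the homotopy. Translate $E^*$ to the origin with $X=x-x^*$, $Y=y-y^*$. The system becomes
\[
\dot X = Y - \phi(X),\qquad \dot Y = \psi(X),
\]
where $\phi(X)=F(x^*+X)-F(x^*)=-pX+O(X^2)$ and $\psi(X)=-g(x^*+X)=q_rX^r+O(X^{r+1})$. Next apply the homeomorphism $(X,Y)\mapsto (X,Z)$ with $Z=Y-\phi(X)$. This change of variables is orientation preserving, so it preserves the index, and the system becomes
\[
\dot X=Z,\qquad \dot Z=\psi(X)-\phi'(X)Z.
\]
On any small circle, zeros of this field occur only at $Z=0$ and $\psi(X)=0$, so only at the origin.

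Now consider the homotopy
\[
\dot X=Z,\qquad \dot Z=\psi(X)-\lambda\phi'(X)Z,\qquad \lambda\in[0,1].
\]
For each $\lambda$ the only zero in a small disc is the origin, since $Z=0$ forces $\psi(X)=0$. The index is therefore constant in $\lambda$. It equals the index of $(Z,\psi(X))$, and this in turn equals the index of $(Z,q_rX^r)$, by the homotopy $\psi_s=q_rX^r+s(\psi-q_rX^r)$. That homotopy also has no other zeros near the origin, because $\psi_s(X)=X^r(q_r+O(X))$ vanishes only at $X=0$ for $|X|$ small.

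Finally, compute the index of $V=(Z,q_rX^r)$ directly. The first component vanishes on $Z=0$ and the second on $X=0$. Along the unit circle, the vector $V$ points along $\pm e_1$ exactly at $(\pm1,0)$ in the $Z=0$ slot, so we count rotations. If $r$ is even, the second component has a constant sign on $Z$-slices, so $V$ never takes the direction $(0,-\operatorname{sgn} q_r)$. The map to $S^1$ is then not surjective, and the index is $0$. If $r$ is odd, homotope $q_rX^r$ to $q_rX$ through $q_r(tX^r+(1-t)X)$, which vanishes only at $X=0$. This gives the linear field with matrix $\begin{pmatrix}0&1\\ q_r&0\end{pmatrix}$, whose determinant is $-q_r$. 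The index is therefore $\operatorname{sgn}(-q_r)$: it equals $1$ if $q_r<0$ and $-1$ if $q_r>0$. This covers all cases and shows the index lies in $\{-1,0,1\}$.

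The main point needing care is the isolation of zeros along each homotopy. In every deformation above this reduces to the observation that $Z=0$ together with $X^r(q_r+O(X))=0$ forces $X=0$ near the origin.
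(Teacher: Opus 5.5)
Your proof is correct, and it takes a different route from the paper's.

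\textbf{The paper's route.} It does not build the homotopies by hand. It cites \cite[Theorem~7.1 of Chapter~3.2]{ZDHD} to reduce the index at $E^*$ to the index of the principal part $(y+px,\ q_rx^r)$, keeping the coupling coefficient $p$. It then evaluates that index through a Cauchy-index formula \cite[Theorem~4.1 of Chapter~3.4]{ZDHD}. This gives $0$ when $r$ is even, and $-N(q_rt^r,1+pt)$ when $r$ is odd, which equals $+1$ for $q_r<0$ and $-1$ for $q_r>0$.

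\textbf{Your route.} You proceed in three steps.
\begin{enumerate}
\item The shear $Z=Y-\phi(X)$ removes the coupling term entirely. It is simply the inverse Li\'enard transformation, sending \eqref{lst} back to Li\'enard form.
\item Straight-line homotopies remove the damping term $\phi'(X)Z$ and the higher-order part of $\psi$. Along each of them the zero set is visibly $\{X=Z=0\}$, uniformly in the homotopy parameter.
\item The index of $(Z,q_rX^r)$ is computed by elementary degree theory. For even $r$ the Gauss map misses a direction, so the degree is $0$. For odd $r$ a homotopy leads to a nondegenerate linear field, whose index is $\operatorname{sgn}\det=\operatorname{sgn}(-q_r)$.
\end{enumerate}

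\textbf{Comparison.} The paper's argument is shorter, but it depends on two textbook results and on a Cauchy-index computation in which $p$ must be carried along. Yours is self-contained: it uses only homotopy invariance of the winding number and the index of a nondegenerate linear field. It also shows directly why $p$, and in fact all of $F$, plays no role in the index. Since your three cases are exhaustive and give distinct values, the ``if and only if'' in the lemma follows as well.

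\textbf{A slip to fix.} One sentence is wrong, although nothing later depends on it. On the unit circle, $V=(Z,q_rX^r)$ is horizontal (along $\pm e_1$) at $(X,Z)=(0,\pm1)$, where $X=0$. At $(X,Z)=(\pm1,0)$, where $Z=0$, it is vertical. Your sentence assigns these the other way round.
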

\begin{proof}
	According to \cite[Theorem 7.1 of Chapter 3.2]{ZDHD}, we can derive that $$J_{E^*}(X,Y)=J(y+px,q_rx^r).$$
	Furthermore, based on \cite[Theorem 4.1 of Chapter 3.4]{ZDHD} and the definition of the Cauchy index, we  can obtain that  
	\begin{equation*}
		\begin{aligned}
			J(y+px,q_rx^r)=
			\begin{dcases}
				0, ~~~~~~~~~~~~~~~~~~~~~~~~~~~~~~~~~~\text{when} ~1+r~ \text{is odd},\\
				-N(q_rt^r,1+pt)=
				\begin{dcases}
					1,~ &\text{when} ~1+r~ \text{is even},~ q_r<0,\\
					-1,~&\text{when} ~1+r~ \text{is even}, ~q_r>0.
				\end{dcases}
			\end{dcases}
		\end{aligned}
	\end{equation*}
	Therefore, the index of the equilibrium of system \eqref{lst} has only three possible values: $0$, $1$ or $-1$. Specifically, $r$ is odd and $q_r<0$ if and only if the index of $E^{*}$ is $1$, $r$ is odd and $q_r>0$ if and only if the index of $E^{*}$ is $-1$, and $r$ is even if and only if the index of $E^{*}$ is $0$.
\end{proof}
\begin{rem}
	According to \cite[Theorem 7.1 and Theorem 7.2 of Chapter 2]{ZDHD}, if the index of an equilibrium of system \eqref{lst} is -1, then the equilibrium is a saddle; if the index of an equilibrium of system \eqref{lst} is 1, then the equilibrium is an anti-saddle; if the index of the equilibrium is 0, then the equilibrium is either a saddle-node or a cusp, or it is a critical point with elliptic and hyperbolic sectors.
\end{rem}
\begin{lm}
	The sum of indices of  the finite equilibria of system \eqref{lst} is 0 (resp., 1; -1) when $m$ is even (resp., $m$ is odd and $a_m>0$; $m$ is odd and $a_m<0$).
	\label{index1}
\end{lm}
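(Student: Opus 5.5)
The plan is to compute the sum of indices with the Poincaré index formula, applied to the vector field $X=(y-F(x),-g(x))$ of system \eqref{lst} along a large closed curve. Equivalently, I will add up the local indices of Lemma~\ref{index}.

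The direct route is a sign count. Every finite equilibrium lies on the curve $y=F(x)$ at a real zero $x_*$ of $g$. By Lemma~\ref{index}, the index at $x_*$ is $+1$ when $g$ changes sign from negative to positive at $x_*$ (then $q_r<0$, since $-g$ has leading local coefficient $q_r$). It is $-1$ when $g$ changes from positive to negative, and $0$ when $g$ does not change sign ($r$ even). Hence the total index is
\[
\#\{\text{sign changes } -\to+\}-\#\{\text{sign changes } +\to-\}.
\]
Along the real line the sign changes of $g$ alternate in type. So this difference depends only on the signs of $g$ at $-\infty$ and $+\infty$, that is, on $\operatorname{sgn}((-1)^m a_m)$ and $\operatorname{sgn}(a_m)$.

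The cases are then read off directly.
\begin{itemize}
\item If $m$ is even, $g$ has the same sign at both ends. There are equally many changes of each type, and the sum is $0$.
\item If $m$ is odd and $a_m>0$, then $g$ goes from $-$ at $-\infty$ to $+$ at $+\infty$. The changes alternate beginning and ending with a $-\to+$ change, so the sum is $1$.
\item If $m$ is odd and $a_m<0$, the same reasoning gives $-1$.
\end{itemize}

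The one point needing care is that all equilibria must be isolated, so that the indices are defined. This holds because $g\not\equiv0$ has finitely many zeros and each zero gives exactly one equilibrium $(x_*,F(x_*))$. If $g$ has no real zeros, the alternation argument must still be consistent. This is immediate, since then $m$ is even and the sum is $0$.

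As a cross-check I would also compute the rotation of $X$ along a large rectangle, or via the Poincaré compactification, and confirm it agrees with the sign count. That computation is not needed for the proof. There is no real obstacle here; the only step needing attention is the sign convention $q_r<0 \Leftrightarrow$ index $1$, which must be matched with the direction of the sign change of $g$.
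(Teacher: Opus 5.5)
Your proposal is correct and is essentially the paper's argument: add the local indices from Lemma~\ref{index}, note that the odd-multiplicity zeros of $g$ alternate between index $+1$ and $-1$, and fix the net count from the sign of $g$ at the ends. The only difference is how the parity of the number of odd-multiplicity zeros is obtained. The paper gets it from the real factorization $m=\sum m_j+2\sum\nu_\ell$, while you read it directly from $\operatorname{sgn}((-1)^m a_m)$ and $\operatorname{sgn}(a_m)$ at $\mp\infty$; the two are equivalent.
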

\begin{proof}
Factor $g$ over $\mathbb R$ in the standard form
\[
g(x)=a_m
\prod_{j=1}^{p}(x-r_j)^{m_j}
\prod_{\ell=1}^{q}
\bigl((x-\alpha_\ell)^2+\beta_\ell^2\bigr)^{\nu_\ell},
\qquad \beta_\ell>0,
\]
where $r_1<\cdots<r_p$ are the distinct real zeros. The irreducible
quadratic factors are strictly positive on $\mathbb R$ and hence do not
change the sign of $g$.

A real zero of even multiplicity produces an equilibrium of index $0$ by
Lemma~\ref{index}. Let
$\rho_1<\rho_2<\cdots<\rho_s$
be the real zeros of odd multiplicity. The sign of $g$ changes at each
$\rho_j$, so the signs of the leading coefficients of $-g$ at these zeros
alternate. By Lemma~\ref{index}, the corresponding indices alternate
between $1$ and $-1$. Their sum is therefore $0$ if $s$ is even. If $s$ is
odd, the last sign is determined by the sign of $g(x)$ for large positive
$x$, namely by $\operatorname{sgn}(a_m)$, and hence
\[
\sum_{g(r)=0}\operatorname{ind}(r,F(r))
=
\begin{cases}
0, & s\ \text{even},\\
1, & s\ \text{odd and }a_m>0,\\
-1,& s\ \text{odd and }a_m<0.
\end{cases}
\]
Finally,
\[
m=\sum_{j=1}^{p}m_j+2\sum_{\ell=1}^{q}\nu_\ell,
\]
so $m$ and the number $s$ of odd-multiplicity real zeros have the same
parity. The stated conclusion follows.
\end{proof}
\begin{defi}
	A closed-orbit set is a set in which every orbit contained within it is a closed orbit.
\end{defi}

For the folded equations, set
\[
\Sigma=\{w\geq0:g(x_1(w))g(x_2(w))=0\}.
\]
Since $g$ is a polynomial, $\Sigma$ is finite. On each component of
$[0,+\infty)\setminus\Sigma$, the functions $x_i(w)$ and $F_i(w)$ are real
analytic. If an outer orbit arc crosses a critical level
$\bar w\in\Sigma$, say $x_i(\bar w)=\xi$, then the crossing point is not the
finite equilibrium $(\xi,F(\xi))$, and therefore
\(
y-F(\xi)\neq0.
\)
Hence, on the adjacent regular intervals,
\[
\frac{{\rm d}y}{{\rm d}w}
=
\pm\frac{w^k}{F_i(w)-y}
\]
has a right-hand side locally Lipschitz in $y$. The usual uniqueness theorem,
together with continuity of the original orbit, therefore gives unique
continuation through $\bar w$. We shall use this fact below whenever a
comparison is continued through a critical level.

We also note that $F_1$ and $F_2$ are continuous semialgebraic functions.
Indeed, $G$ is piecewise polynomial and semialgebraic, the inverse branches
$x_i(w)$ are semialgebraic, and hence so are $F_i=F\circ x_i$. Consequently,
if
\(
D(w):=F_1(w)-F_2(w)
\)
is not identically zero, then $\{D\neq0\}$ is a finite union of open
intervals, on each of which $D$ has a fixed strict sign. In particular,
$\{D\neq0\}$ has a rightmost component.

Consider whether the  set $\mathcal{G}$ is a closed-orbit  set of  system \eqref{lst} when $m$ is odd and $a_m>0$.
To systematically analyze this problem, we classify the system into three essentially different cases based on the number of saddle points:
{\bf(C1)} there is no saddle;
{\bf(C2)} there are $2N-1$ ($N\geq1$) saddles;
{\bf(C3)} there are $2N$ ($N\geq1$) saddles.

Firstly, we consider {\bf (C1)}. 
According to the  Lemmas \ref{index} and \ref{index1}, we can obtain that  system \eqref{lst} has only one equilibrium  with index $1$, while  the indices of all other equilibria are $0$.
Even if the origin is not an equilibrium  of  system \eqref{ls}, we can always move the equilibrium with index $1$ to the origin through coordinate transformation.
Without loss of generality, we assume that the index of  the origin is $1$. Then,  due to the continuity of $g(x)$, we have $xg(x)\geq0$ meaning that $G(x)=\text{sgn}(x)\int_{0}^{x}|g(\xi)|{\rm d}\xi=\int_{0}^{x}g(\xi){\rm d}\xi$.
And it follows from  $xg(x)\geq0$ that $\text{sgn}(g_1(w))=-1$ when $g_1(w)\neq0$ and $\text{sgn}(g_2(w))=1$ when $g_2(w)\neq0$.

\begin{lm}
	In case {\bf (C1)}, the set $\mathcal{G}$ is a closed-orbit set if and only if 
	\(
	F_1(w)\equiv F_2(w)\) for \(0\leq w\leq +\infty.
	\)
	\label{C1}
\end{lm}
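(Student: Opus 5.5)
The proof works in the folded $w$--$y$ picture of Section~\ref{sec:pre}. In case {\bf(C1)} we have $\operatorname{sgn}g_1=-1$ and $\operatorname{sgn}g_2=1$ wherever they are nonzero, so \eqref{f1} and \eqref{f2} both reduce to
\[
\frac{{\rm d}w}{{\rm d}y}=\frac{F_i(w)-y}{w^k},\qquad i=1,2,
\]
taken with the same orientation. An arc $\widehat{ABC}\in\mathcal G$ is closed exactly when the integral curve $\gamma_1$ of the $F_1$-equation from $B'$ and the integral curve $\gamma_2$ of the $F_2$-equation through $B'$ meet the $y$-axis at the same point, i.e.\ $A'=C'$. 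The natural tool is a comparison argument, with the uniqueness-through-critical-levels remark used to continue solutions past $\Sigma$.

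\textbf{Sufficiency.} If $F_1\equiv F_2$, then $\gamma_1$ and $\gamma_2$ solve the same equation with the same initial point $B'$. By the unique continuation established above they coincide, so $A'=C'$. Hence every arc in $\mathcal G$ closes and $\mathcal G$ is a closed-orbit set.

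\textbf{Necessity.} Suppose $D=F_1-F_2\not\equiv0$. By semialgebraicity, $\{D\neq0\}$ is a finite union of open intervals, so it has a rightmost component $(w_-,w_+)$ with $w_+\le+\infty$, and $D$ has a fixed sign there, say $D>0$. There are two cases.
\begin{itemize}
\item If $w_+=+\infty$, then $D>0$ for all large $w$. Arcs in $\mathcal G$ reach arbitrarily large $w$ (they enclose all equilibria). For such an arc, $B'$ lies at large $w$-level relative to $w_-$.
\item If $w_+<\infty$, we instead take an arc whose maximal $w$-value $w^*$, attained at the point where the arc crosses the curve $y=F_i(w)$, lies in $(w_-,w_+)$.
\end{itemize}
In either case $D\equiv0$ on $[w^*,\infty)$ is false only through the interval containing $w^*$.

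Now compare the two solutions. Write them as $y=y_i(w)$ on the upper and lower branches, with ${\rm d}y/{\rm d}w=w^k/(F_i(w)-y)$. On the region $w>w_-$ near the turning point, $F_1>F_2$ gives a strict ordering of the slopes. Along both the upper and the lower branch, the $F_1$-curve then stays strictly on one side of the $F_2$-curve. Below $w_-$ (or below the turning level) they solve identical equations, or equations with $D$ of fixed sign on finitely many subintervals. By uniqueness they cannot cross there, so the strict separation created near the top propagates down to $w=0$. This gives $A'\neq C'$, so the orbit is not closed, a contradiction.

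The delicate point is making the comparison rigorous through the turning point, where $F_i(w)-y=0$ and ${\rm d}y/{\rm d}w$ is singular. I would handle it by switching to $w$ as a function of $y$ there, using ${\rm d}w/{\rm d}y=(F_i-y)/w^k$, which is regular for $w>0$. The $F_1$-curve then reaches a strictly larger maximal $w$ and lies outside the $F_2$-curve. The remaining issue is to ensure the ordering is not undone at lower $w$. There I would argue that any later intervals with $D\neq0$ only affect the curves in the same monotone way on both branches, so the upper and lower intersection points both shift consistently. Checking that this yields $A'\neq C'$, rather than a cancellation, is where I expect the main obstacle. If a direct argument fails, I would instead pick the arc so that $B'$ itself lies at $w=0$ above all critical levels, so only the single rightmost interval matters.
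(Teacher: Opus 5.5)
Your sufficiency argument is correct and matches the paper's. The necessity argument has a genuine gap, at exactly the step you flag.

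The assertion that below $w_-$ the two traces ``cannot cross by uniqueness'' is false. On every component of $\{D\neq0\}$ to the left of $w_-$, the traces $\gamma_1$ and $\gamma_2$ solve \emph{different} equations, so uniqueness says nothing there. Moreover, $D$ may have the opposite sign on those components, and for a given arc their effect can exactly cancel the separation produced on the rightmost component. This really happens. The van der Pol system $\dot x=y-(x^3/3-x)$, $\dot y=-x$ has the \textbf{(C1)} configuration, and $D(w)=2F(x_1(w))$ is positive for small $w$ and negative for large $w$. Yet its limit cycle, of amplitude about $2>\sqrt3$, is a closed arc of $\mathcal G$ reaching into the rightmost component. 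So an argument showing that an arbitrary arc reaching that component fails to close proves too much. Your fallback does not help: $B'$ always lies on $w=0$, and every arc sweeps through all levels $0\le w\le w_{\max}$. Your bounded-case choice is also unavailable in general. Arcs of $\mathcal G$ enclose all equilibria, so their maximal $w$ is bounded below and need not fall in $(w_-,w_+)$; and even when it does, the inner components still intervene.

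The missing idea is to use the freedom in choosing the arc, as the paper does when $D>0$ on $(w^*,\infty)$. Take arcs whose crossings with $w=w^*$ have $|y|=Y\to\infty$. On $[0,w^*]$ one then has $|{\rm d}y/{\rm d}w|\le 2(w^*)^k/(Y-2M_*)$, where $M_*=\max_i\max_{[0,w^*]}|F_i|$. Hence the inner passages move each trace by only $O(1/Y)$, whatever the sign of $D$ there. The comparison is then run only on $[w^*,\infty)$; your device of treating $w$ as a function of $y$ is the right way through the turning point. You must still show that the strict ordering of exit points obtained there dominates these errors. This needs an actual estimate, since $D$ may tend to $0$ as $w\to\infty$.

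As for the bounded case, the paper's own dismissal of it (separation at $\beta$ plus the common tail flow) relies on the same propagation back through $[0,\beta]$. It is cleaner to show that the case never occurs. In \textbf{(C1)}, $G$ is a polynomial and every nonzero root of $g$ has even multiplicity. So at each point of the pairing arc $w\mapsto(x_1(w),x_2(w))$, $w>0$, the curve $G(x_1)=G(x_2)$ is either smooth or, in real-analytic local coordinates, of the form $u^a=v^b$ with $a,b$ odd. In the latter case its real points lie in a single irreducible germ. Hence the whole arc lies in one irreducible component $Z$ of that curve. By B\'ezout, $F(x_1)-F(x_2)$ either vanishes on $Z$ or has only finitely many zeros on it. Thus $D\not\equiv0$ forces $D$ to have a strict sign for all large $w$, and the bounded case is vacuous.
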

\begin{proof}
	The proof employs a comprehensive two-step approach
	
% 	​​{\bf Step I:} 
	{\bf Step I:} Sufficiency proof demonstrating that the identity condition guarantees closed orbits
	
%	{\bf ​​Step II​​:} 
	{\bf Step II:} Necessity proof by contradiction, divided into four technical sub-steps

	{\bf Step I: Sufficiency.}
	In case {\bf(C1)}, one has
$
-\operatorname{sgn}g_1(w)=\operatorname{sgn}g_2(w)=1$
whenever the two values are nonzero. Hence, if
$F_1(w)\equiv F_2(w)$, equations \eqref{f1} and \eqref{f2} are identical
on every regular component. Ordinary uniqueness gives coincidence of the two
folded traces there, and the continuation argument above propagates this
coincidence through the finitely many levels at which $g_i(w)=0$.
Consequently $A'=C'$, and the orbit arc $\widehat{ABC}$ is closed.
	
	{\bf Step II: Necessity.}  We will use a proof by contradiction to demonstrate its necessity.  Due to the complexity of the proof, we have divided it into four parts.
	\begin{description}
		\item[(i)] If $F_1\not\equiv F_2$, the semialgebraic observation above
		shows that $\{F_1-F_2\neq0\}$ has finitely many connected components.
		We use its rightmost component in the comparison below.
		\item[(ii)] On this component $F_1-F_2$ has a fixed strict sign.
		After the symmetry $(y,t)\mapsto(-y,-t)$ if necessary, it is enough
		to treat the case $F_1>F_2$ there. If this component has a finite right
		endpoint, the strict separation produced before that endpoint is
		continued through the subsequent common regular pieces by uniqueness
		and through critical levels by the continuation argument above.
		\item[(iii)] Assume that  there exists an orbit arc $\widehat{ABC}$ in  $\mathcal{G}$.
		Let $\widehat{B^{'}C^{'}}$ cross $B^{*}=(w^*, y_{B^{*}})$ and $C^{*}=(w^*, y_{C^{*}})$,  where $y_{B^{*}}$ is a number sufficiently small and tending to $-\infty$  and $y_{C^{*}}$ is a number  sufficiently large and tending to $+\infty$. 
		And the intersection of $\widehat{B^{'}C^{'}}$ and $F_1(w)$ is $P=(w_P,F_1(w_P))$.
		Clearly, there are three possible types of relationships between $y_{A^*}$ and $y_{C^{*}}$ where $A^{*}$ represents another intersection of $\widehat{B^{'}A^{'}}$ and the line $w=w^*$.
		Define $\gamma^{'}$ as the integral curve of equation \eqref{f2} starting from $A^{*}$. 
		Then, we will prove that $D$ cannot lie on $\gamma_2$ where    
		$D=(w_P,y_D)$ represents  the intersection of $\gamma^{'}$ and the line $w=w_{P}$
		in each of these three cases.
		\item[(iv)] 
		Last, we will prove that  $w_Q < w_P$ where 
		$\widehat{B^{*}P}$ is the integral curve of \eqref{f1} and $Q=(w_Q, F_2(w_Q)$)  is located on the curve $y = F_2(w)$, implying that $A'$ and $C'$ do not coincide.
	\end{description}
	
\noindent{\bf Step II(i).}
Set $D(w)=F_1(w)-F_2(w)$. By the semialgebraic observation above, if
$D\not\equiv0$ then $\{D\neq0\}$ is a finite union of open intervals. Let
$I_*=(\alpha,\beta)$ be its rightmost component, where
$0\leq\alpha<\beta\leq+\infty$, and choose a fixed
$w^*\in I_*$. Then $D$ has one strict sign on $I_*$. If
$\beta<+\infty$, one has $D(w)=0$ for every $w>\beta$.

\noindent{\bf Step II(ii).}
After applying $(y,t)\mapsto(-y,-t)$ if necessary, we may assume
$
F_1(w)>F_2(w),$   $\qquad w\in I_*.$
If $\beta<+\infty$, choose an outer orbit whose rightmost point lies to
the right of $\beta$. The scalar comparison theorem on $I_*$ produces a
strict separation of the two folded traces at $w=\beta$. In case
{\bf(C1)} the sign factors satisfy
$-\operatorname{sgn}g_1=\operatorname{sgn}g_2$ on every regular component,
so for $w>\beta$ the two branch equations are identical. Ordinary
uniqueness, together with the continuation argument above, shows that two
traces which are distinct at $\beta$ cannot meet again; hence their
endpoints at $w=0$ cannot coincide and the orbit is not closed. Therefore
the bounded case is already excluded.

It remains only to consider $\beta=+\infty$. In this case
$
F_1(w)>F_2(w)$, $w>w^*$,
after increasing $w^*$ inside $I_*$ if necessary. The comparison in
Steps~II(iii)--II(iv) is then exactly the one used below. Thus no existence
of a largest zero of $F_1-F_2$ is required.
	\begin{figure}[!htb]
		\centering
		\subfloat[$y_{A^{*}}>y_{C^{*}}$]
		{\includegraphics[scale=0.55]{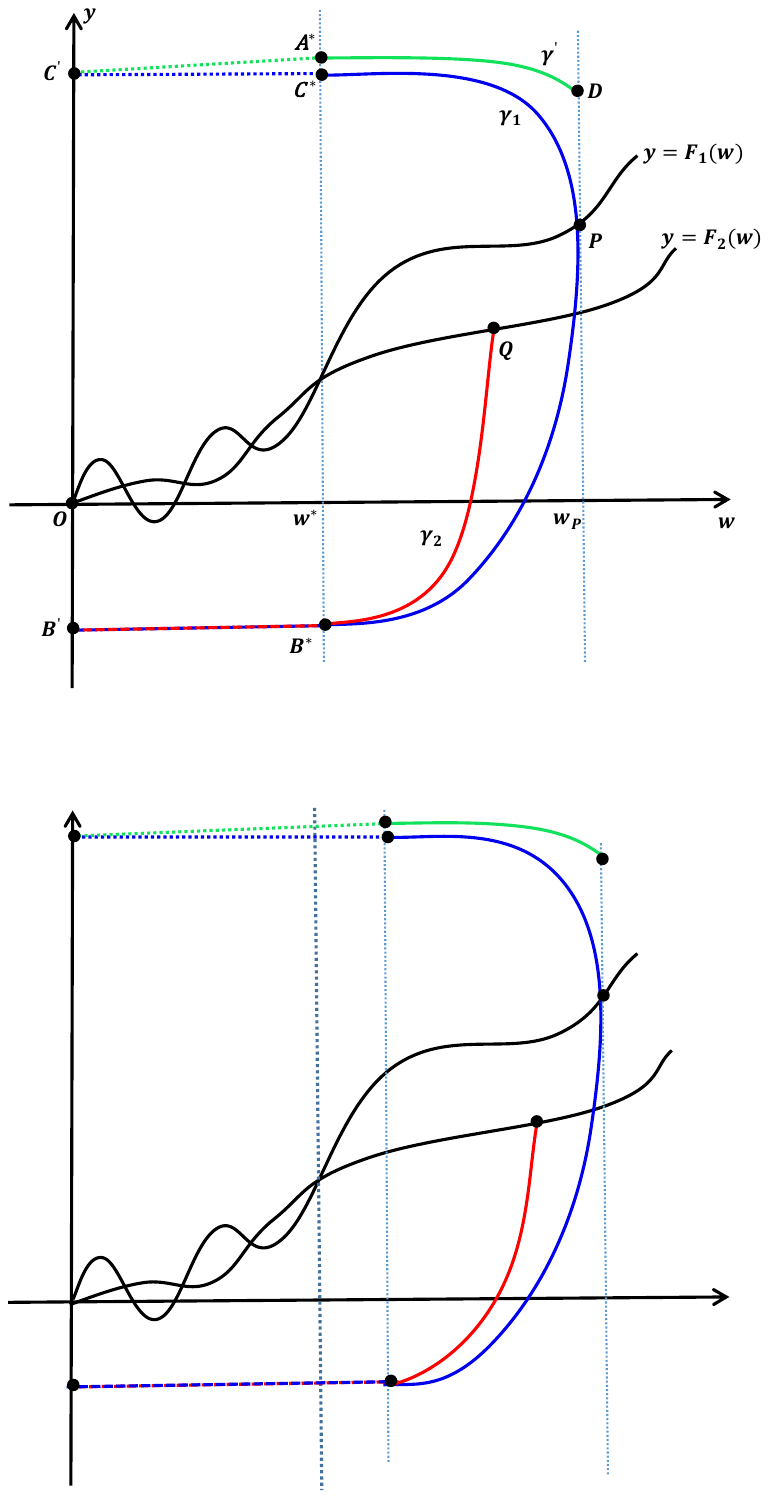}}\hspace{10pt}
		\subfloat[$y_{A^{*}}=y_{C^{*}}$]
		{\includegraphics[scale=0.55]{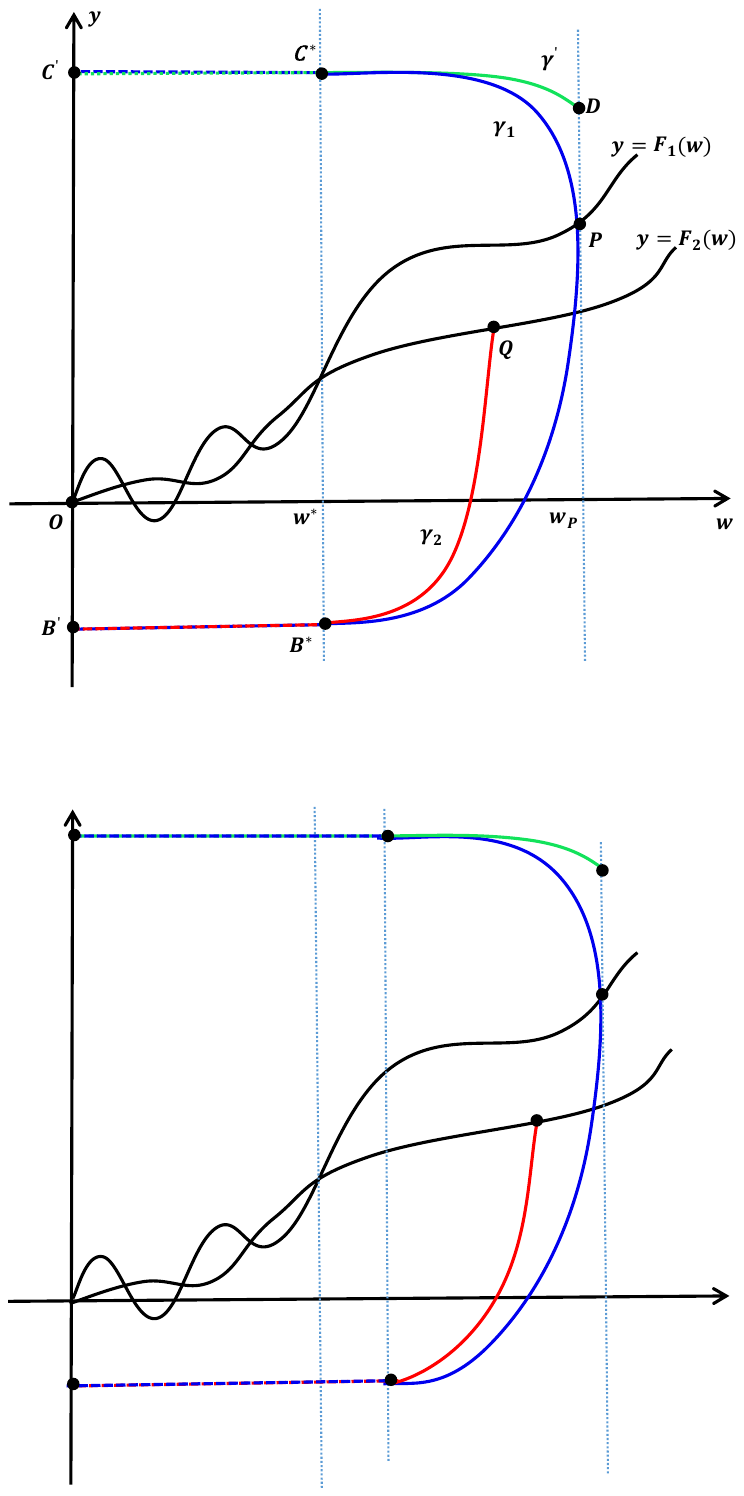}}\hspace{10pt}
		\subfloat[$y_{A^{*}}<y_{C^{*}}$ and $\bar{w}=w_P$]
		{\includegraphics[scale=0.55]{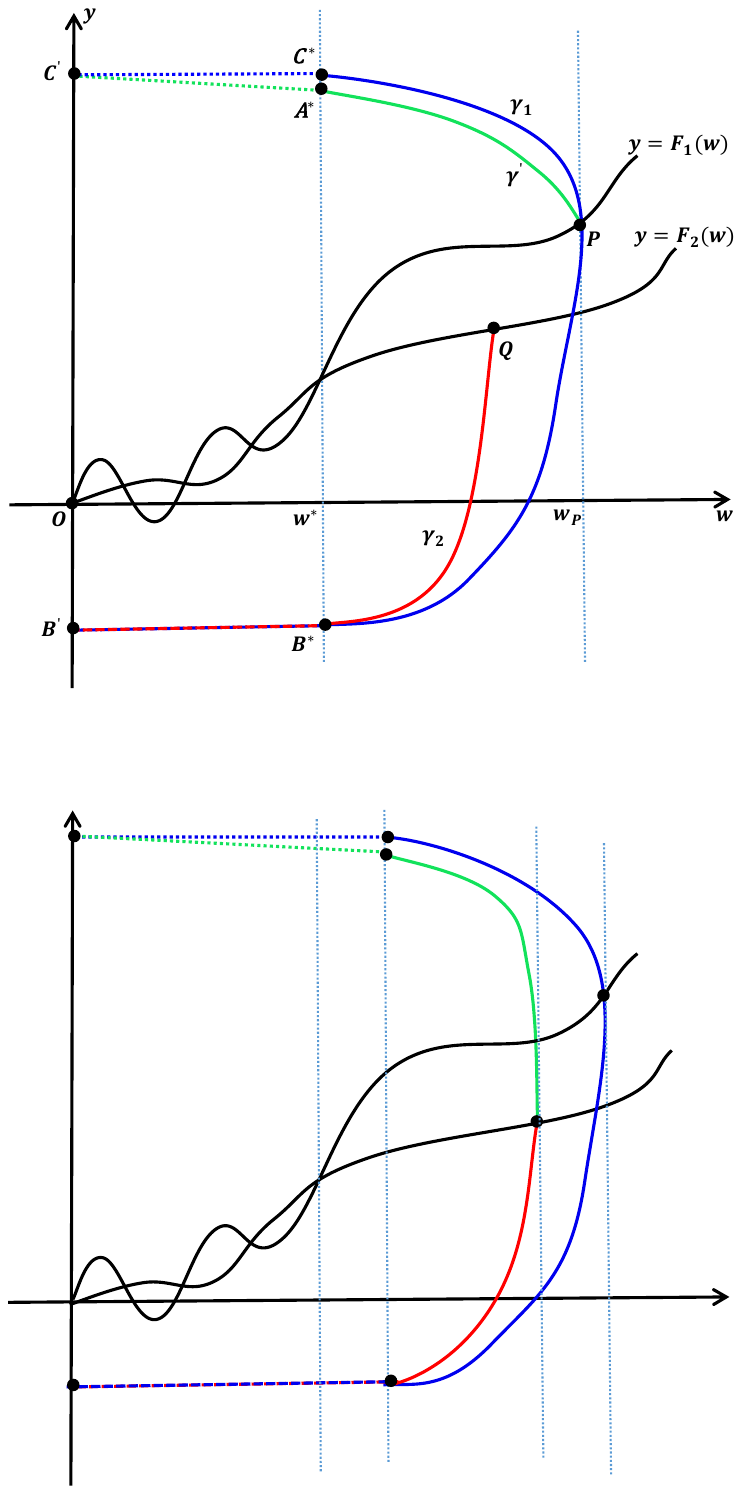}}\hspace{10pt}
		\subfloat[$y_{A^{*}}<y_{C^{*}}$ and $\bar{w}<w_P$]
		{\includegraphics[scale=0.55]{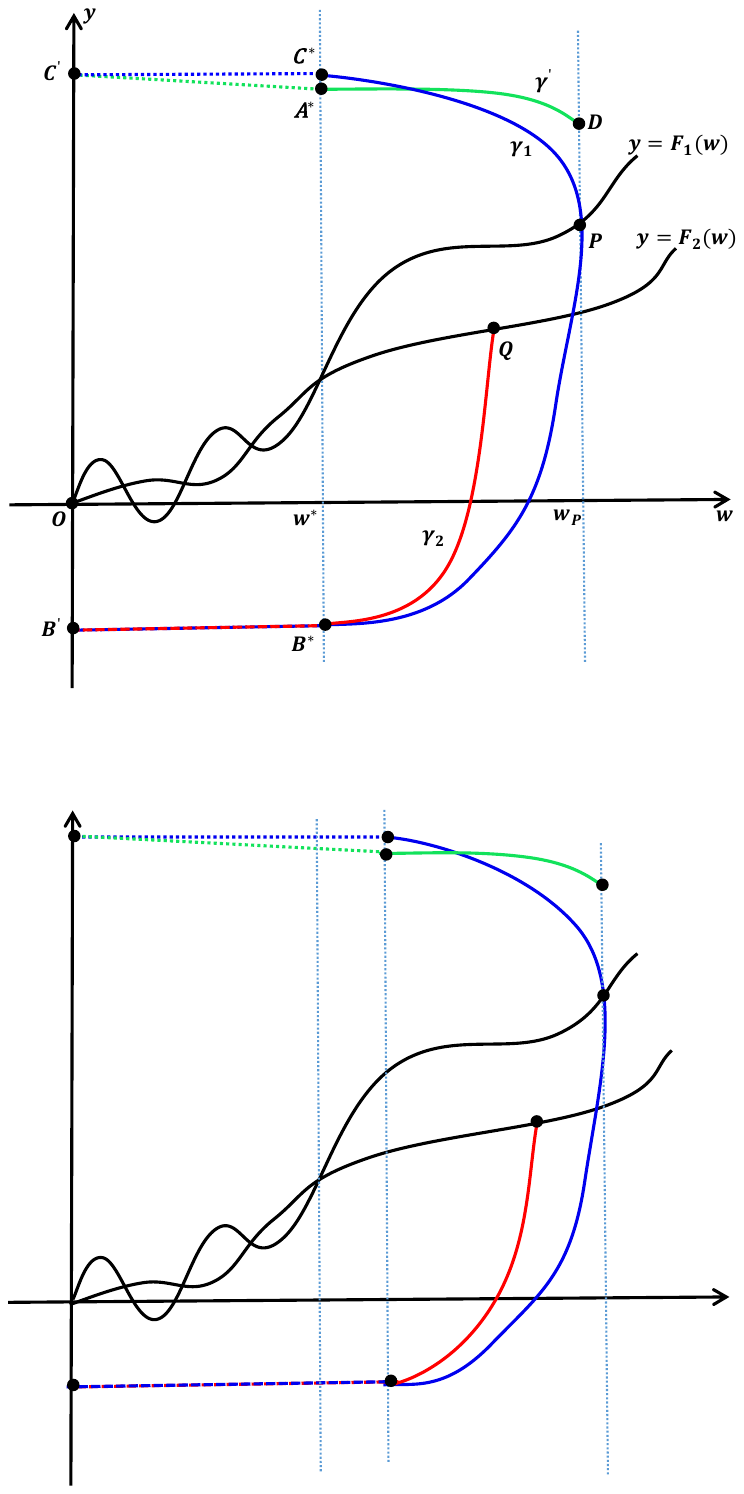}}\hspace{10pt}	
		\caption{{\footnotesize $\widehat{ABC}$ is not a closed orbit in case {\bf(C1)}.}}
		\label{c1}
	\end{figure}
	
	{\bf Step II(iii).} 
	We demonstrate that no orbit arc $\widehat{ABC}\in\mathcal{G}$ can be closed when $F_1(w) \not\equiv F_2(w)$.
	Let $\widehat{B^{'}C^{'}}$ cross $B^{*}=(w^*, y_{B^{*}})$ and $C^{*}=(w^*, y_{C^{*}})$,  where $y_{B^{*}}$ is sufficiently small and tending to $-\infty$  and $y_{C^{*}}$ is a number  sufficiently large and tending to $+\infty$.
	And the intersection of $\widehat{B^{'}C^{'}}$ and $F_1(w)$ is $P=(w_P,F_1(w_P))$.
	Since $w^*$ is fixed, put
\[
M_*=\max_{i=1,2}\max_{0\leq w\leq w^*}|F_i(w)|.
\]
If $Y=|y(w^*)|$ is sufficiently large, then $|y(w)|\geq Y/2$ on
$[0,w^*]$, and on every regular subinterval
\[
\left|\frac{{\rm d}y}{{\rm d}w}\right|
\leq
\frac{w^k}{|y|-M_*}
\leq
\frac{2(w^*)^k}{Y-2M_*}.
\]
After integration and continuation through the finitely many critical
levels, this gives
\[
|y_{C^{'}}-y_{C^{*}}|=O(|y_{C^{*}}|^{-1}),
\qquad
|y_{A^{'}}-y_{A^{*}}|=O(|y_{A^{*}}|^{-1})
\]
as the outer orbit tends to infinity, where $A^{*}$ is the other
intersection of $\widehat{B^{'}A^{'}}$ with $w=w^*$.
When $A'$ and $C'$ coincide,
	it is evident that there are the following three types of relationships:
	
	{\begin{center}
			(a) $y_{A^{*}}>y_{C^{*}}$;~~
			(b) $y_{A^{*}}=y_{C^{*}}$;~~
			(c) $y_{A^{*}}<y_{C^{*}}$.~~
		\end{center}
	}
	
	For each case, we define an auxiliary integral curve $\gamma'$ of equation \eqref{f2} starting from $A^*$ and demonstrate that it must intersect the line $w = w_P$ at a point $D=(w_P, y_D)$.
	When  (a) holds, applying the comparison theorem to equations \eqref{f1} and \eqref{f2}, we find that for $w\in(w^*,w_P)$,  the curve $\gamma^{'}$
	lies strictly to the right of the integral curve $\widehat{C^*P}$.
	Consequently, $\gamma^{'}$
	cannot meet the line $w=w_P$ at point $P$, and must instead intersect it at a distinct point $D$ located above $P$, as illustrated in Fig. \ref{c1}(a).
	
	When (b) holds,  a similar application of the comparison theorem shows that the integral curve $\gamma^{'}$ for $w\in(w^*,w_P)$ again lies to the right of $\widehat{C^{*}P}$.
Therefore, $\gamma^{'}$ must intersect the line $w=w_{P}$ at $D=(w_P,y_D)$ with $y_D>F_1(w_P)$, see  Fig. \ref{c1}(b).

	When (c) holds, this case requires a more detailed analysis. Let $G=(w_G, y_G)$ denote the first point where $\gamma^{'}$ intersects the vertical isocline $y=F_2(w)$. The points on 
	$\widehat{C^*P}\cap\{(w,y)|w^*\leq w<\min\{w_G,w_P\}\}$ and $\widehat{A^*G}\cap\{(w,y)|w^*\leq w<\min\{w_G,w_P\}\}$ can be presented by $(w,y_1(w))$ and  $(w,y_2(w))$ respectively. Let
	$z(w): =y_2(w)-y_1(w)$ be the vertical distance between $y_2(w)$ and $y_1(w)$, with initial condition $z(w^*)=y_{A^*}-y_{C^*}$.
From \eqref{f1} and \eqref{f2}, we express $z(w)$ relative to its value
at the reference point $w^{*}$. Since
\(
z(w^*)=y_{A^*}-y_{C^*},
\)
we have
\[
\begin{aligned}
	z(w)
	={}&z(w)-z(w^*)+(y_{A^*}-y_{C^*})
	\\
	={}&
	\left.
	\left\{y_2(\tau)-y_1(\tau)\right\}
	\right|_{w^*}^{w}
	+(y_{A^*}-y_{C^*})
	\\
	={}&
	\int_{w^*}^{w}
	\left\{
	\frac{\tau^k}{F_2(\tau)-y_2(\tau)}
	-
	\frac{\tau^k}{F_1(\tau)-y_1(\tau)}
	\right\}
	{\rm d}\tau
	+(y_{A^*}-y_{C^*})
	\\
	={}&
	M_1(w)
	+
	\int_{w^*}^{w}
	\left\{y_2(\tau)-y_1(\tau)\right\}
	M_2(\tau)\,{\rm d}\tau
	\\
	={}&
	M_1(w)
	+
	\int_{w^*}^{w}
	z(\tau)M_2(\tau)\,{\rm d}\tau,
\end{aligned}
\]
where the functions $M_1(w)$ and $M_2(w)$ are defined as
\[
M_1(w)
=
\int_{w^*}^{w}
\frac{
	\tau^k\bigl(F_1(\tau)-F_2(\tau)\bigr)
}{
	\bigl(F_2(\tau)-y_2(\tau)\bigr)
	\bigl(F_1(\tau)-y_1(\tau)\bigr)
}
{\rm d}\tau
+
(y_{A^*}-y_{C^*}),
\]
and
\[
M_2(w)
=
\frac{
	w^k
}{
	\bigl(F_2(w)-y_2(w)\bigr)
	\bigl(F_1(w)-y_1(w)\bigr)
}.
\]
From the integrated form, we obtain the relation
	\begin{eqnarray}
		M_2(w)z(w)=M_1(w)M_2(w)+M_2(w)\int_{w^*}^{w}z(s)M_{2}(s){\rm d}s.
		\label{M2}
	\end{eqnarray}
To solve this integral equation, we introduce an auxiliary function 
\begin{eqnarray*}
	M_{3}(w)=\int_{w^*}^{w} z(s) M_{2}(s){\rm d}s.
\end{eqnarray*}
	Differentiating $M_3(w)$ with respect to $w$ yields
	\begin{eqnarray}
		\frac{{\rm d}M_3(w)}{{\rm d}w}-M_2(w)M_3(w)=M_1(w)M_2(w),
		\label{M3}
	\end{eqnarray}
We multiply through by the exponential factor
$\exp\left\{-\int_{w^{*}}^{w} M_{2}(s) {\rm d}s  \right\},$ for  equation \eqref{M3} leading to
	\begin{eqnarray*}
	\frac{{\rm d}}{{\rm d}w}\left\{M_3(w)\exp\left\{-\int_{w^*}^{w} M_{2}(s) {\rm d}s \right\}\right\}
	=M_1(w)M_2(w)\exp\left\{-\int_{w^*}^{w} M_{2}(s) {\rm d}s  \right\},
\end{eqnarray*}
Integrating both sides from $w^{*}$ to $w$, we solve for $M_3(w)$
\begin{eqnarray*}
	M_{3}(w)=\int_{w^{*}}^{w} M_{1}(s) M_{2}(s)
	\exp \left\{\int_{s}^{w} M_{2}(\tau) {\rm d}\tau\right\} {\rm d}s.
\end{eqnarray*}
Substituting this expression back into the original equation for $z(w)$, we obtain the form
	\begin{eqnarray*}
		\begin{aligned}
			z(w) &=M_{1}(w)+\int_{w^*}^{w}M_{1}(s)M_{2}(s)\exp\left\{\int_{s}^{w}M_{2}(\tau){\rm d}\tau\right\}ds
			\\
			&=M_{1}(w^*)\exp\left\{\int_{w^*}^{w} M_{2}(\tau){\rm d}\tau\right\}+\int_{w^*}^{w}M_{1}^{\prime}(s)\exp \left\{\int_{s}^{w}M_{2}(\tau){\rm d}\tau\right\}ds \\
			&=(M_{1}(w^*)+\int_{w^*}^{w}M_{1}^{\prime}(s)\exp \left\{\int_{s}^{w^*}M_{2}(\tau){\rm d}\tau\right\}ds)\exp\left\{\int_{w^*}^{w} M_{2}(\tau){\rm d}\tau\right\}\\
			&: =M(w)\exp\left\{\int_{w^*}^{w} M_{2}(\tau){\rm d}\tau\right\}.
		\end{aligned}
		\label{z}
	\end{eqnarray*}
	\begin{figure}[!htb]
		\centering
		\subfloat[$w_G<w_P$]
		{\includegraphics[scale=0.55]{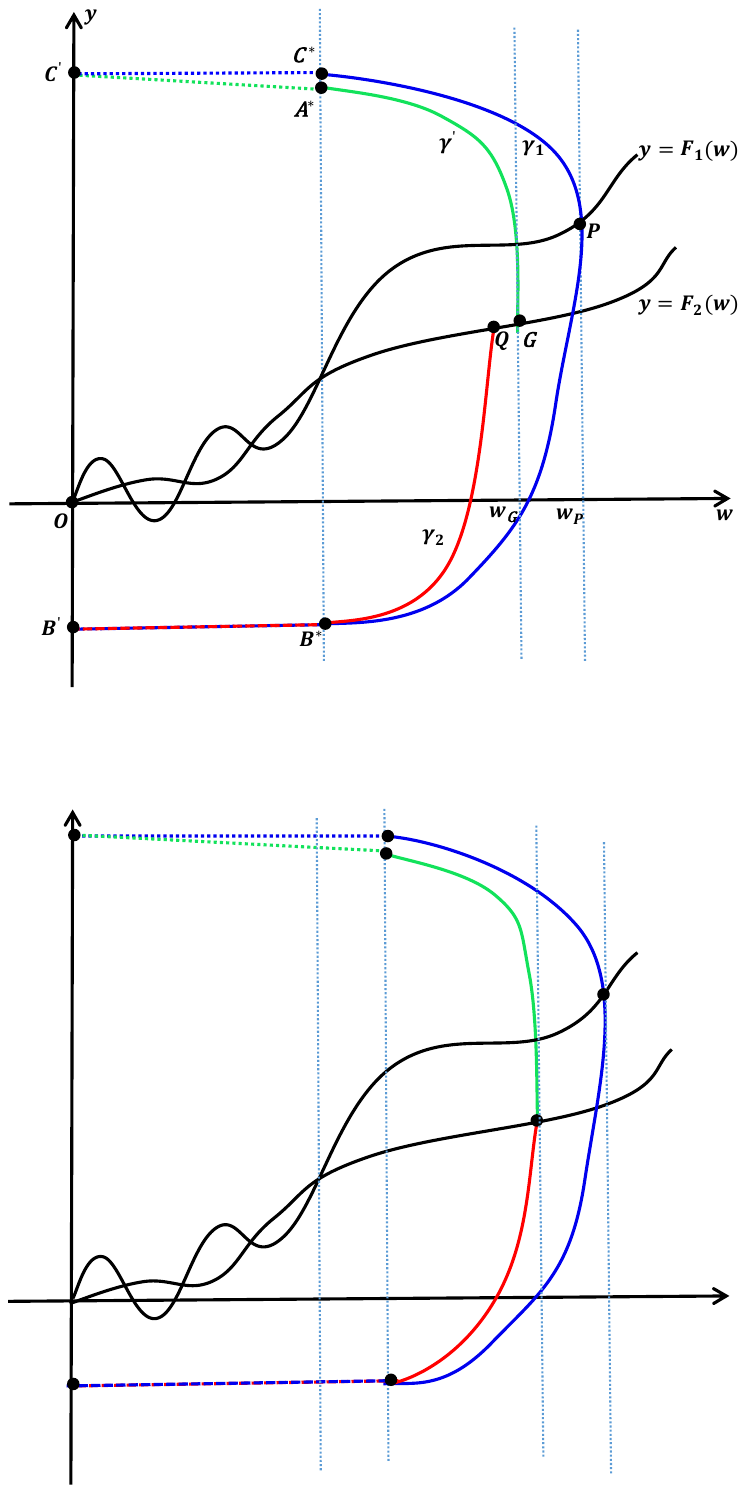}}\hspace{10pt}
		\subfloat[$w_G\geq w_P$]
		{\includegraphics[scale=0.55]{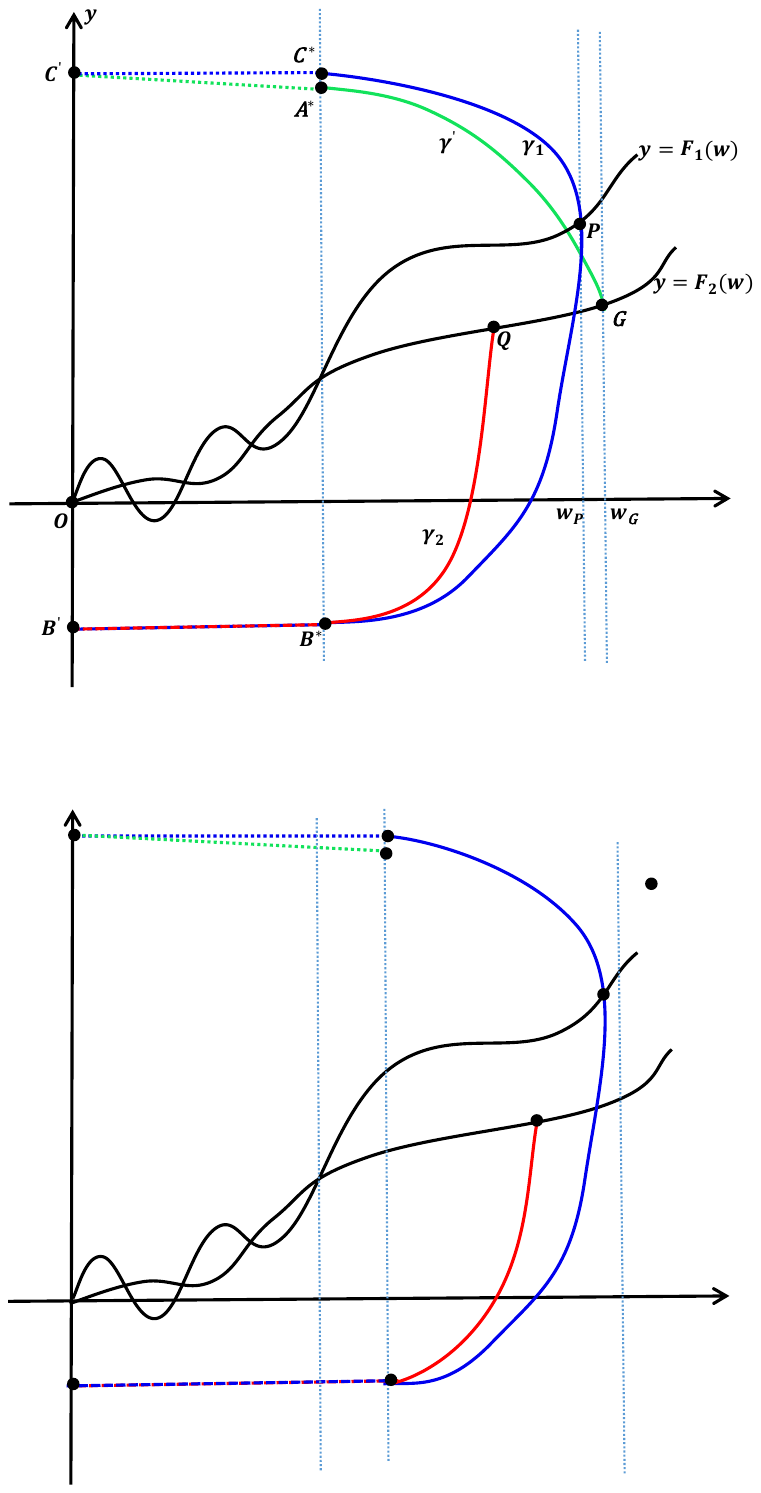}}\hspace{10pt}
		\caption{{\footnotesize $z(w)<0$ when $y_{A^{*}}<y_{C^{*}}$ in case {\bf(C1)}.}}
		\label{c1d}
	\end{figure}
	It can be easily verified that $M(w)$ is  monotonically increasing since ${\rm d}M(w)/{\rm d}w>0$.
	Assume that $z(w)<0$ for all $w\in\{w|w^*\leq w<\min\{w_G,w_P\}\}$, as shown in Fig. \ref{c1d}. 	 
	Then, we can obtain that $M(w)<0$ for all $w\in\{w|w^*\leq w <\min\{w_G,w_P\}\}$. 
	However, when $w_G<w_P$, as shown in Fig. \ref{c1d}(a), it becomes evident that the relationship $y_2(w)=F_2(w)$ holds  at $w=w_G$, which implies that $M(w)$ tends to $+\infty$. This contradicts the  fact that $M(w)<0$.
	Similarly, when $w_G\geq w_P$, as shown in Fig. \ref{c1d}(b), it becomes apparent that  the equation $y_1(w)=F_1(w)$ is satisfied at $w=w_P$, resulting in $M(w)$  tending to $+\infty$. This contradicts the fact that $M(w)<0$.
	Furthermore, it can be concluded that  there exists a $\bar{w}\in\{w|w^*\leq w \leq \min\{w_G,w_P\}\}$ satisfying  $z(\bar{w})=0$.
	
	If $\bar{w}=w_P$,  $\gamma^{'}$ must intersect the line $w=w_{P}$ at the point $P$, see Fig. \ref{c1}(c).
	If $w^*<\bar{w}<w_P$, we can deduce that the integral curve $\gamma^{'}\cap\{(w,y)|w\in(\bar{w},w_P)\}$ of \eqref{f2} lies on the right-hand side of the integral curve $\widehat{C^{*}P}$ of \eqref{f1}  by  applying the comparison theorem to equations \eqref{f1} and \eqref{f2} again. 
	Then, $\gamma^{'}$ must intersect the line $w=w_{P}$ at the point $D=(w_P,y_D)$, see  Fig. \ref{c1}(d).

	{\bf Step II(iv).} 
	Let $\widehat{B'A'}$ cross $\bar B^{*}=(w^*, y_{B^*})$. It is evident that there are the following three types of relationships:
	(a) $y_{\bar B^{*}}>y_{B^{*}}$;~~
(b) $y_{\bar B^{*}}=y_{B^{*}}$;~~
(c) $y_{\bar B^{*}}<y_{B^{*}}$.

	By a similar analysis of  {\bf Step II(iii)}, 
	we conclude that $w_Q < w_P$,  where $Q=(w_Q, F_2(w_Q)$)  represents the intersection of $\widehat{B^{'}A^{'}}$ and  the curve $y = F_2(w)$.
	Furthermore, $Q$ represents the right-most point on $\gamma_2$ since $y = F_2(w)$ is the vertical isocline of equation \eqref{f2}. Hence, $D$ cannot lie on $\gamma_2$ due to the inequality $w_Q < w_P$, which implies that $\gamma_2$ and $\gamma'$ are distinct.
	Thus, based on the uniqueness of  solutions, $A^*$ is not an intersection point of $\gamma_2$ and $w = w^*$. In other words, $A'$ and $C'$ do not coincide, which is a contradiction. Thus, the orbit arc $\widehat{ABC}$ is not closed.
\end{proof}

Secondly, we consider case {\bf (C2)}. According to Lemmas \ref{index}-\ref{index1}, we can conclude that system \eqref{lst} must have $2N-1$ equilibria  with index $-1$ and $2N$ equilibria with index 1, while the index of all other equilibria is $0$. 
We can label these $4N-1$ equilibria  with index $1$ or $-1$ as  $\bar{E}_1 =(\bar{x}_1,\bar{y}_1), \bar{E}_2 =(\bar{x}_2,\bar{y}_2), \cdots, \bar{E}_{4N-1} =(\bar{x}_{4N-1},\bar{y}_{4N-1})$, where $\bar{x}_1<\bar{x}_2<\cdots<\bar{x}_{4N-1}$
Therefore, without loss of generality,  we can move $\bar{E}_{2N}$  to the origin. 
Assume that $E^{*}=(x^*, y^*)$ is an equilibrium  of system \eqref{lst}.
By Lemma \ref{index},  we  can obtain that $q$ is odd and $a_q>0$ when the index of $E^{*}$ is $-1$, $q$ is odd and $a_q<0$ when the index of $E^{*}$ is $1$ and $q$ is even when the index of $E^{*}$ is $0$.  
Furthermore, by Lemma \ref{index1},  we can conclude that for any two equilibria with index $1$, there exists an equilibrium with index $-1$ whose abscissa lies between their abscissas. And for any two equilibria with index $-1$, there exists an equilibrium with index $1$ whose abscissa lies between their abscissas.
In this case, there will be $N$ equilibria with  index  $1$ and $N-1$ saddles on the left (resp., right) side of the $y$-axis. We can label these $2N-1$ equilibria on the left (resp., right) side of the $y$-axis as  $E_{l(2N-1)}=(x_{l(2N-1)}, y_{l(2N-1)}), E_{l(2N-2)}=(x_{l(2N-2)}, y_{l(2N-2)}), \cdots, E_{l1}=(x_{l1}, y_{l1})$, where $x_{l(2N-1)}<x_{l(2N-2)}<\cdots<x_{l1}$ (resp.,  $E_{r(2N-1)}=(x_{r(2N-1)}, y_{r(2N-1)})$, $E_{r(2N-2)}=(x_{r(2N-2)}, y_{r(2N-2)})$, $\cdots$, $E_{r1}=(x_{r1}, y_{r1})$, where $x_{r(2N-1)}>x_{r(2N-2)}>\cdots>x_{r1}$).
We  have that $g(x)\geq0$ when $$x\in\bigcup_{j=2}^{N}(x_{l(2j-1)}, x_{l(2j-2)})\cup(x_{l1}, 0)\cup\bigcup_{j=2}^{N}(x_{r(2j-3)}, x_{r(2j-2)})\cup(x_{r(2N-1)}, +\infty)$$ and  $g(x)\leq0$ when $$x\in(-\infty, x_{l(2N-1)})\cup\bigcup_{j=2}^{N}(x_{l(2j-2)}, x_{l(2j-3)})\cup(0, x_{r1})\cup\bigcup_{j=2}^{N}(x_{r(2j-2)}, x_{r(2j-1)}).$$

\begin{lm}
	In case {\bf (C2)}, the set $\mathcal{G}$ is a closed-orbit set if and only if 
\[
	F_1(w)\equiv F_2(w)~~{\text{for}}~~0\leq w\leq +\infty.
    \]
	\label{C2}
\end{lm}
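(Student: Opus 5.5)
The plan is to reduce case {\bf (C2)} to the mechanism of Lemma~\ref{C1}. The only place where (C1) used the absence of saddles was the identity $-\operatorname{sgn}g_1(w)=\operatorname{sgn}g_2(w)=1$ on regular components. In (C2) this identity fails on bounded $w$-intervals, but the orbit arcs in $\mathcal G$ are treated only in two ways: through the folded traces near the outer boundary, and through the comparison performed on the rightmost component of $\{F_1\neq F_2\}$.

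The first step is to record the sign structure in $w$. Because $\bar E_{2N}$ has been moved to the origin, the configuration is symmetric in count: $N$ index-one equilibria and $N-1$ saddles lie on each side of the $y$-axis. Let $w_0:=\max\{w(x_{l(2N-1)}),w(x_{r(2N-1)})\}$. For $w>w_0$ the sign pattern listed before the lemma gives $g_1(w)<0$ and $g_2(w)>0$. Hence \eqref{f1} and \eqref{f2} have the same form $\mathrm dw/\mathrm dy=(F_i(w)-y)/w^k$ there.

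On $[0,w_0]$ the factors $\pm\operatorname{sgn} g_i$ may differ. The key observation is that every arc in $\mathcal G$ encloses all equilibria. Its folded traces therefore cross the region $w\le w_0$ with $|y|$ large, and \eqref{f1}--\eqref{f2} are never evaluated near the vertical isoclines $y=F_i(w)$ for $w\le w_0$, since the isocline crossing points $P,Q$ lie at $w>w_0$ for outer orbits.

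\textbf{Sufficiency.} Assume $F_1\equiv F_2$. For sufficiently outer orbits, the two traces coincide for $w\ge w_0$ by uniqueness, exactly as in Step I of Lemma~\ref{C1}. On $[0,w_0]$ I will avoid using the folded equation and argue directly in the original plane. Condition $F_1\equiv F_2$ means $F(x_1(w))=F(x_2(w))$, so the map $x_1(w)\mapsto x_2(w)$ is a reflection-type involution $\iota$ preserving $G$ and $F$. The energy-like quantity $\tfrac12y^2+\int_0^x g$ combined with this involution then yields a first integral of \eqref{lst} in the outer region, which gives closedness. If this step is awkward, I would instead show that the $|y|\to\infty$ estimate $|y_{C'}-y_{C^*}|=O(|y_{C^*}|^{-1})$ holds with matching leading terms on both sheets; matching uniqueness from $w\ge w_0$ then forces $A'=C'$.

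\textbf{Necessity.} I will copy Steps II(i)--(iv) of Lemma~\ref{C1} verbatim, with one modification. Take the rightmost component $I_*=(\alpha,\beta)$ of $\{D\ne0\}$.
\begin{itemize}
\item[(a)] If $\beta=+\infty$, choose $w^*>\max(\alpha,w_0)$. The comparison arguments of II(iii)--(iv), including the integral-equation representation $z=M\exp\int M_2$, then apply unchanged, since on $[w^*,\infty)$ the sign factors agree.
\item[(b)] If $\beta<\infty$ and $\beta\ge w_0$, the strict separation at $\beta$ propagates outward by uniqueness, giving non-closure.
\item[(c)] If $\beta<w_0$, this is the main obstacle: the disagreement of $F_1,F_2$ is buried inside the region where the sign factors differ.
\end{itemize}
For case (c), I would try the large-$|y|$ perturbative estimate. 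On $[0,w^*]$ with $|y|\sim Y$, the increment of $y$ along each sheet expands as
\[
\pm\frac{1}{Y}\int w^k\,\mathrm dw+\frac{1}{Y^2}\int w^kF_i(w)\,\mathrm dw+O(Y^{-3}).
\]
The second-order term detects $D$ through $\int w^k D(w)\,\mathrm dw\neq0$ on the relevant sign-adapted interval. To exclude cancellation, I would pick the orbit level appropriately and use that $D$ has a fixed sign on $I_*$ while $D\equiv0$ to its right. If cancellation cannot be excluded this way, I would fall back on a Poincaré--Bendixson/index argument: the annulus between a closed outer orbit and the saddle separatrix structure would then be forced to contain closed orbits crossing $I_*$, and these return to case (b).
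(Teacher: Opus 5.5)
Your case (a) of the necessity part is essentially the paper's Steps II(ii)--(iv), where the paper compares at $w^{**}=\max\{w^*,\max\Sigma,w(x_{l(2N-1)}),w(x_{r(2N-1)})\}$. The rest of the proposal has a genuine gap, and it is the same missing ingredient in both directions. Write $\sigma_1:=-\operatorname{sgn}g_1$ and $\sigma_2:=\operatorname{sgn}g_2$. You never show that $F_1\equiv F_2$ forces $\sigma_1=\sigma_2$ below $w_0$; you explicitly allow them to differ.

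For sufficiency this is fatal. With $F_1=F_2=\Phi$, \eqref{f1}--\eqref{f2} read $\mathrm dy/\mathrm dw=\sigma_i(w)w^k/(\Phi(w)-y)$, and these are different ODEs wherever $\sigma_1\neq\sigma_2$. Both folded traces start at $B'$ on $w=0$ and must cross $[0,w_0]$ before they reach the region where the equations agree. So ``the traces coincide for $w\ge w_0$ by uniqueness'' presupposes the conclusion.

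The involution does not rescue this. The map $\iota:x_1(w)\mapsto x_2(w)$ preserves $G$ and $F$, but $\frac{\mathrm d}{\mathrm dw}\int_0^{x_1(w)}g=\sigma_1(w)w^k$ and $\frac{\mathrm d}{\mathrm dw}\int_0^{x_2(w)}g=\sigma_2(w)w^k$. Hence $\iota$ preserves $\int_0^xg$ exactly when $\sigma_1\equiv\sigma_2$, and preserving $\int_0^xg$ is what makes $(x,y)\mapsto(\iota x,y)$ a reversing symmetry of \eqref{lst}. Note also that $\frac12y^2+\int_0^xg$ is not a first integral, since its derivative is $-gF$. Your fallback through the $O(|y|^{-1})$ estimate can only make $A'$ and $C'$ asymptotically close, never equal.

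The missing idea is the paper's Step I(i). Differentiating $F_1\equiv F_2$ with $F_i'(w)=f(x_i)w^k/(\operatorname{sgn}(x_i)|g(x_i)|)$ gives $f(x_1)/(-|g(x_1)|)=f(x_2)/|g(x_2)|$. From this the paper deduces that $x_1(w)=x_{lj}$ iff $x_2(w)=x_{rj}$. Then $\sigma_1=\sigma_2$ on every regular component, \eqref{f1} and \eqref{f2} coincide on all of $[0,\infty)$, and closure follows by uniqueness and continuation through $\Sigma$, as in (C1).

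The same pairing, run from $w=+\infty$ downward on the tail $(\beta,\infty)$ where $F_1=F_2$, gives $\sigma_1=\sigma_2$ for all $w>\beta$. This is how the paper disposes of every finite $\beta$, and it collapses your case (c) into case (b). As written, (c) is open for two reasons. First, in your expansion the $Y^{-1}$ term is really $\mp Y^{-1}\int\sigma_iw^k\,\mathrm dw$, which already differs between the sheets on the mismatch intervals, so the $Y^{-2}$ term carrying $D$ is not the leading discrepancy. Second, the Poincar\'e--Bendixson fallback cannot move $\beta$ relative to $w_0$.

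One caution when you supply the pairing. At a level $\bar w>0$ one has $|w-\bar w|\asymp|x_i-\bar x_i|^{r_i+1}$, where $p_i$ and $r_i$ are the orders of $f$ and $g$ at $\bar x_i$. Comparing orders therefore gives only $(p_1+1)/(r_1+1)=(p_2+1)/(r_2+1)$, which permits $(p_1,r_1)=(1,0)$ against $(p_2,r_2)=(3,1)$. So the parity conclusion does not follow from the local expansion alone, including the paper's own parity step. It needs a global input, e.g.\ analytic continuation of $\Phi$ in $w$ together with the unboundedness of $F$.
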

\begin{proof}
	The proof is divided into  two steps as follows.
	\begin{description}
		\item[(I)] We first prove the sufficiency. And the proof will be divided  into the following two parts.
		\begin{description}
			\item[(i)] We first prove that  $w$ satisfies $x_1(w)=x_{lj}$ if and only if $x_2(w) = x_{rj}$ for any $j=1, 2, \cdots, 2N-1$.
			\item[(ii)] We can apply a similar proof of {\bf Step I} in Lemma \ref{C1} to prove that the orbit arc $\widehat{ABC}$ is closed.
		\end{description}
		\item[(II)] Next, we will use a proof by contradiction to demonstrate its necessity.  Due to the complexity of the proof, we have divided it into four parts.
		\begin{description}
			\item[(i)] If $F_1\not\equiv F_2$, we use the rightmost connected
			component of $\{F_1-F_2\neq0\}$ supplied by
			the semialgebraic observation above.
			\item[(ii)] On this component the difference has a fixed strict
			sign. After the symmetry $(y,t)\mapsto(-y,-t)$ if necessary, we
			take $F_1>F_2$. The bounded rightmost component is excluded by the
			same separation-and-uniqueness argument as in Lemma~\ref{C1};
			hence the detailed comparison only needs the unbounded case.
			\item[(iii)] Assume that  there exists an orbit arc $\widehat{ABC}$ in  $\mathcal{G}$.
			Let $\widehat{B^{'}C^{'}}$ cross $B^{**}=(w^{**}, y_{B^{**}})$ and $C^{**}=(w^{**}, y_{C^{**}})$  where $w^{**}=\max\{w^*,\max\Sigma,w(x_{l(2N-1)}),w(x_{r(2N-1)})\}$, $y_{B^{**}}$ is a number sufficiently small and tending to $-\infty$  and $y_{C^{**}}$ is a number  sufficiently large and tending to $+\infty$.
			And the intersection of $\widehat{B^{'}C^{'}}$ and $F_1(w)$ is $P^{*}=(w_{P^{*}},F_1(w_{P^{*}}))$.
			Clearly, there are three possible types of
			relationships between $y_{A^{**}}$ and $y_{C^{**}}$ where $A^{**}$ represents another intersection of $\widehat{B^{'}A^{'}}$ and the line $w=w^{**}$.
			Define $\gamma^{''}$ as the integral curve of equation \eqref{f2} starting from $A^{**}$. 
			Then, we will prove that $D^*$ cannot lie on $\gamma_2$ where    
			$D^{*}=(w_{P^{*}},y_{D^{*}})$ represents  the intersection of $\gamma^{''}$ and the line $w=w_{P^{*}}$ in each of these three cases.
			\item[(iv)]  
			Last, we will prove that  $w_{Q^*} < w_{P^*}$ where 
			$\widehat{B^{**}P^{*}}$ is the integral curve of \eqref{f1} and $Q^{*}=(w_{Q^{*}}, F_2(w_{Q^{*}})$)  is located on the curve $y = F_2(w)$, implying that $A'$ and $C'$ do not coincide.
		\end{description}
	\end{description}
	The comprehensive four-step contradiction argument establishes that if $F_1(w)\not\equiv F_2(w)$ no orbit arc  
	$\widehat{ABC}\in \mathcal{G}$ can be closed. Combined with the sufficiency proof, this completes the demonstration that $F_1(w)\equiv F_2(w)$ is both necessary and sufficient for $\mathcal{G}$ to be a closed-orbit set in case {\bf(C2)}.

	{\bf Step I: Sufficiency.}  
	
	{\bf Step I(i).} 
	A straight calculation shows that
	\begin{equation}
		\frac{{\rm d}F_{i}(w)}{{\rm d}w}=\frac{f(x_i(w))w^k}{\text{sgn}(x_i(w))|g(x_i(w))|}.
		\label{dF/dw}
	\end{equation}
	It follows from  $F_1(w)\equiv F_2(w)$ and \eqref{dF/dw} that $F_1^{'}(w)\equiv F_2^{'}(w)$, which means 
	\begin{equation}
		\frac{f(x_1(w))}{-|g(x_1(w))|}=\frac{f(x_2(w))}{|g(x_2(w))|}\qquad{\text{for}}\quad w\neq0.
		\label{eq}
	\end{equation}
	We claim that  $w$ satisfies $x_1(w)=x_{lj}$ if and only if $x_2(w) = x_{rj}$ for any $j=1, 2, \cdots, 2N-1$.
	For any $\bar{w}> 0$, we can expand both sides of equation \eqref{eq} around $x_1(\bar{w})$ and $x_2(\bar{w})$ respectively, resulting in:
	\begin{equation}
		\frac{f(x_1(w))}{-|g(x_1(w))|}=\frac{a_{k_1}(x_1(w)-\bar{x}_1)^{k_1}+o((x_1(w)-\bar{x}_1)^{k_1})}{-|a_{r_1}(x_1(w)-\bar{x}_1)^{r_1}+o((x_1(w)-\bar{x}_1)^{r_1})|},
		\label{eq1}
	\end{equation}
	\begin{equation}
		\frac{f(x_2(w))}{|g(x_2(w))|}=\frac{a_{k_2}(x_2(w)-\bar{x}_2)^{k_2}+o((x_2(w)-\bar{x}_2)^{k_2})}{|a_{r_2}(x_2(w)-\bar{x}_2)^{r_2}+o((x_2(w)-\bar{x}_2)^{r_2})|},
		\label{eq2}
	\end{equation}
	where $\bar{x}_1=x_1(\bar{w})$, $\bar{x}_2=x_2(\bar{w})$  and $k_1,k_2,r_1,r_2\in\mathbb{N}$.
	It is evident that there exists $\delta_1 > 0$ such that 
	\[
	\text{sgn}(a_{k_1}(x_1(w)-\bar{x}_1)^{k_1}+o((x_1(w)-\bar{x}_1)^{k_1}))= \text{sgn}(a_{k_1}(x_1(w)-\bar{x}_1)^{k_1})
	\]
	holds when $x_1 \in B_{\delta_1}(\bar{x}_1)$. Similarly, there exists $\delta_2 > 0$ such that
	\[
	\text{sgn}(a_{k_2}(x_2(w)-\bar{x}_2)^{k_2}+o((x_2(w)-\bar{x}_2)^{k_2}))= \text{sgn}(a_{k_2}(x_2(w)-\bar{x}_2)^{k_2})
	\]
	holds when $x_2 \in B_{\delta_2}(\bar{x}_2)$.
	Since $x_1$ and $x_2$ are continuous functions and $x_1$ is monotonically decreasing while $x_2$ is monotonically increasing with respect to $w$,  there exists $\epsilon > 0$ such that $x_1 \in B_{\delta_1}(\bar{x}_1)$ and $x_2 \in B_{\delta_2}(\bar{x}_2)$ when $w \in B_{\epsilon}(\bar{w})$.
	Then, combined with \eqref{eq}, we can conclude that
	\[
	\text{sgn}\left(-a_{k_1}(x_1(w)-\bar{x}_1)^{k_1}\right) = \text{sgn}\left(a_{k_2}(x_2(w)-\bar{x}_2)^{k_2}\right)
	\]
	holds when $w\in B_{\epsilon}(\bar{w})$.
	Therefore, we can infer that $k_1$ and $k_2$ have the same parity.
	Moreover, since equations (\ref{eq}-\ref{eq2}) hold, it follows that $k_1 - r_1 = k_2 - r_2$, which implies that $r_1$ and $r_2$ have the same parity.
	If $x_1(\bar{w})=x_{lj}$ where $1\leq j\leq 2N-1$, it implies that $r_1$ is odd. Consequently, we can deduce that $r_2$ is also odd.
	Thus, $x_2(\bar{w})$ needs to satisfy the condition that $g(x_2(\bar{w}))=0$ and $g(x_2(w))=a_{r_2}(x_2(w)-\bar{x}_2)^{r_2}+o((x_2(w)-\bar{x}_2)^{r_2})$ where $r_2$ is odd.
	In this case, the  value of $x_2(\bar{w})$ that satisfies this condition is $x_{ri}$ where $1\leq i\leq2N-1$ and $i$ can be equal to $j$ or different from $j$.
	Similarly, when $x_2(\bar{w})=x_{rj}$, the  value of $x_1(\bar{w})$ that satisfies the condition is $x_{li}$ where $1\leq i\leq2N-1$ and $i$ can be equal to $j$ or different from $j$.
	We will now prove that $i=j$.
	Assume that $x_1(\bar{w})=x_{l1}$ and $x_2(\bar{w})=x_{ri}$ where $i>1$. 
	Since $w(x)$ is monotonically decreasing in $(-\infty,0)$ and monotonically increasing in $(0,+\infty)$,  we have  $w(x_{l(2N-1)})>w(x_{l(2N-2)})>\cdots>w(x_{l1})$ and $w(x_{r1})<w(x_{r2})<\cdots<w(x_{r(2N-1)})$.
	Hence, we obtain $w(x_{r1})<w(x_{ri})=w(x_{l1})$.
	Therefore, when $x_2(\bar{w})=x_{r1}$, there is no value of $x_1(\bar{w})$ that satisfies \eqref{eq}, which is a contradiction. This implies that $i=1$.
	Similarly, we can prove that $i=j$ for any $1< j\leq 2N-1$.
	
	{\bf Step I(ii).} 
	Therefore,
$\operatorname{sgn}(g_1(w))=-\operatorname{sgn}(g_2(w))$ whenever both
values are nonzero, and equations \eqref{f1} and \eqref{f2} are identical
on every regular component. Ordinary uniqueness applies there, while
the continuation argument above gives the continuation through the finitely
many critical levels. Hence the orbit arc $\widehat{ABC}$ is closed.
	
	{\bf Step II: Necessity.} 
	
\noindent{\bf Step II(i).}
Let $I_*=(\alpha,\beta)$ be the rightmost component of
$\{F_1-F_2\neq0\}$ supplied by the semialgebraic observation above, and choose
$w^*\in I_*$. After the transformation $(y,t)\mapsto(-y,-t)$ if necessary,
assume $F_1>F_2$ on $I_*$.

If $\beta<+\infty$, then $F_1=F_2$ on $(\beta,+\infty)$. Applying the
local root-pairing argument of Step~I(i) successively to the critical levels
in this tail gives
\[
-\operatorname{sgn}g_1(w)=\operatorname{sgn}g_2(w)
\]
on every regular tail component. Hence the two folded equations coincide
for $w>\beta$. The strict separation produced by the comparison theorem on
$I_*$ cannot disappear under the common tail flow, and
the continuation argument above gives the continuation through the critical
levels. Thus the orbit cannot close.

\noindent{\bf Step II(ii).}
We may therefore assume $\beta=+\infty$. Increasing $w^*$ inside $I_*$
if necessary, we have
\[
F_1(w)>F_2(w),\qquad w>w^*,
\]
which is precisely the hypothesis needed in the remaining comparison.
	
	{\bf Step II(iii).} 
	We now assume, for the sake of contradiction, that there exists a closed orbit arc $\widehat{ABC}$ in  $\mathcal{G}$ even though $F_1(w)\not \equiv F_2(w)$.
	 Let
	 \[
	 w^{**}=\max\{w^*,\max\Sigma,w(x_{l(2N-1)}),w(x_{r(2N-1)})\}.
	 \]
	 Consider the intersection points of the integral curve  $\widehat{B^{'}C^{'}}$ with the vertical line $w=w^{**}$, denoted as  $B^{**}=(w^{**}, y_{B^{**}})$ and $C^{**}=(w^{**}, y_{C^{**}})$, where  $y_{B^{**}}$ is a sufficiently large negative number and $y_{C^{**}}$ is a sufficiently large positive number.
Let  $P^{*}=(w_{P^{*}},F_1(w_{P^{*}}))$ be the intersection point of $\widehat{B^{'}C^{'}}$ with the curve $y=F_1(w)$.
	Since $w^{**}$ is fixed, the same estimate used in case {\bf(C1)}, with
$W=w^{**}$, gives
	\[
	|y_{C'}-y_{C^{**}}|=O(|y_{C^{**}}|^{-1}),
	\qquad
	|y_{A'}-y_{A^{**}}|=O(|y_{A^{**}}|^{-1}),
	\]
	where $A^{**}$ represents another intersection of $\widehat{B^{'}A^{'}}$ and the line $w=w^{**}$. Thus the two errors can be made smaller than any fixed strict separation used in the comparison argument below.
For the orbit arc to be closed (i.e., for $A'$ and $C'$ to coincide in the $w$-$y$ plane), the vertical coordinates of $A^{**}$  and $C^{**}$ 
must satisfy one of the following three possible relationships:
		(a) $y_{A^{**}}>y_{C^{**}}$;~~
		(b) $y_{A^{**}}=y_{C^{**}}$;~~
		(c) $y_{A^{**}}<y_{C^{**}}$.
	
	\begin{figure}[!htb]
		\centering
		\subfloat[$y_{A^{**}}>y_{C^{**}}$]
		{\includegraphics[scale=0.55]{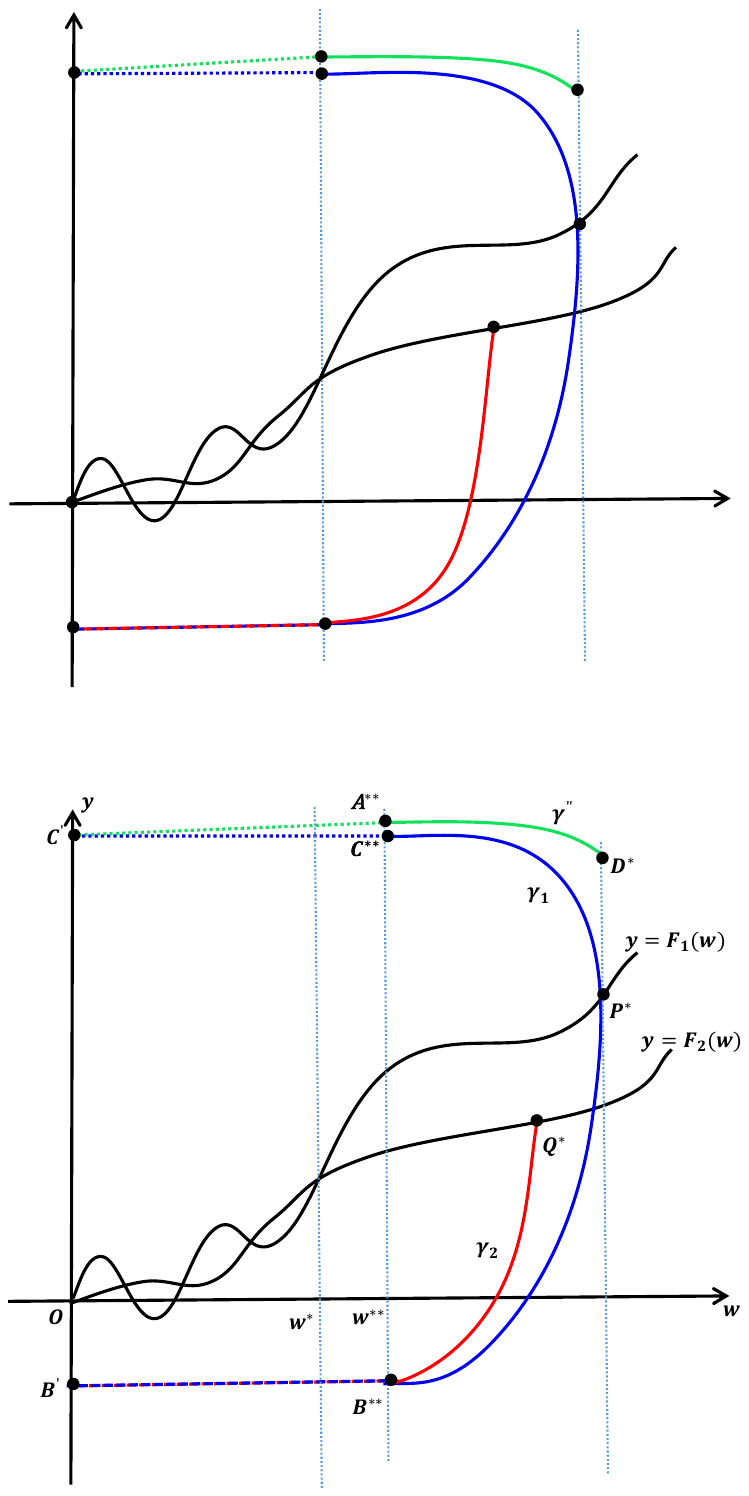}}\hspace{10pt}
		\subfloat[$y_{A^{**}}=y_{C^{**}}$]
		{\includegraphics[scale=0.55]{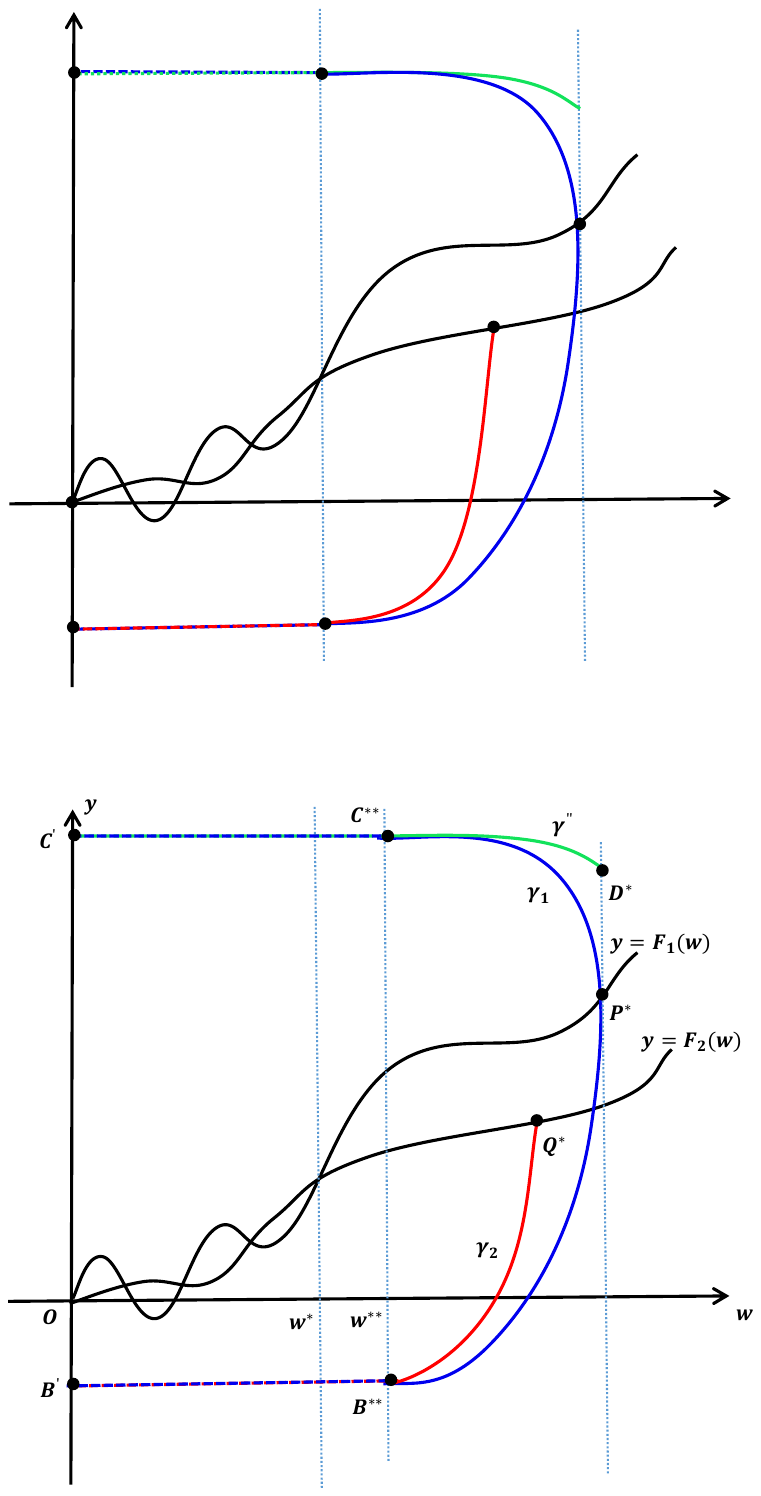}}\hspace{10pt}
		\subfloat[$y_{A^{**}}<y_{C^{**}}$ and $\bar{w}=w_{P^*}$]
		{\includegraphics[scale=0.55]{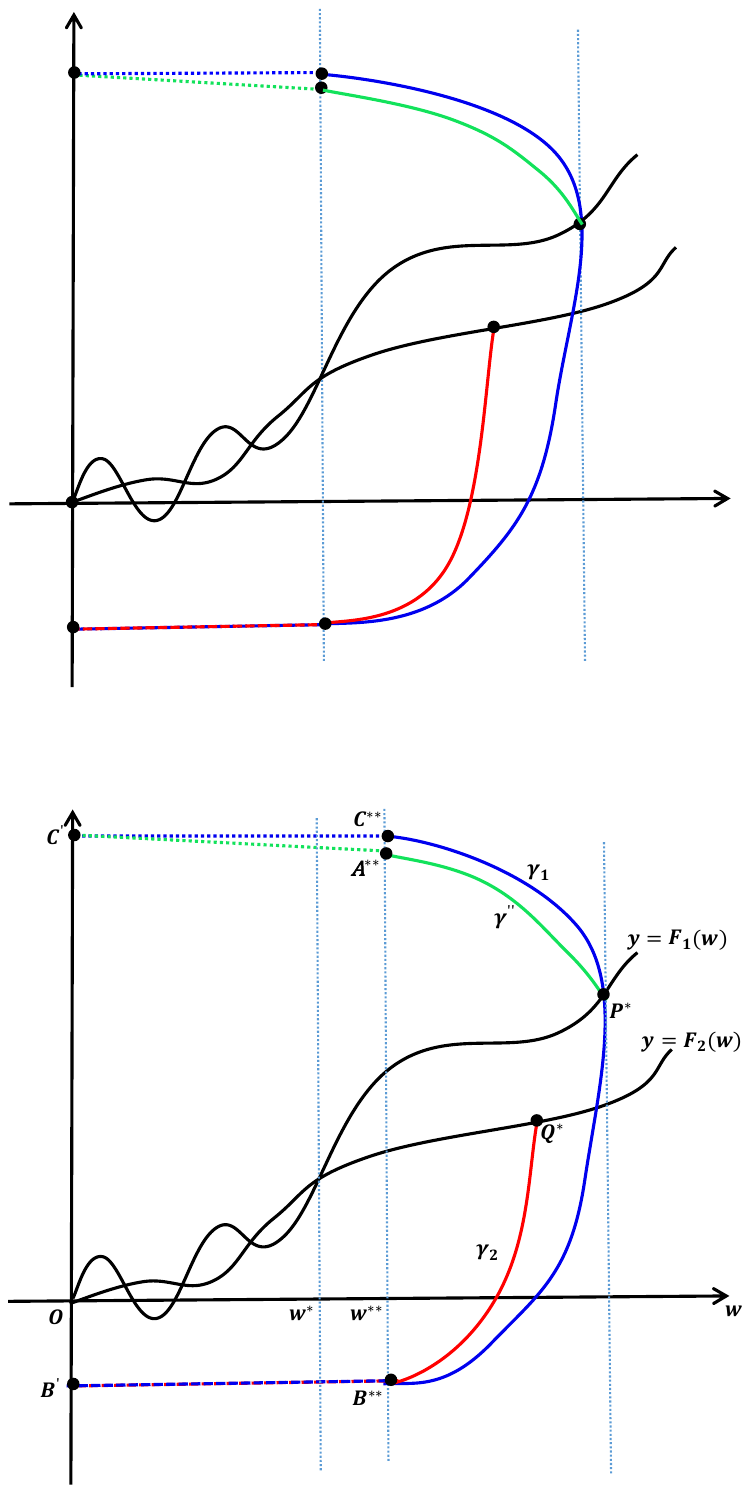}}\hspace{10pt}	
		\subfloat[$y_{A^{**}}<y_{C^{**}}$ and $\bar{w}<w_{P^*}$]
		{\includegraphics[scale=0.55]{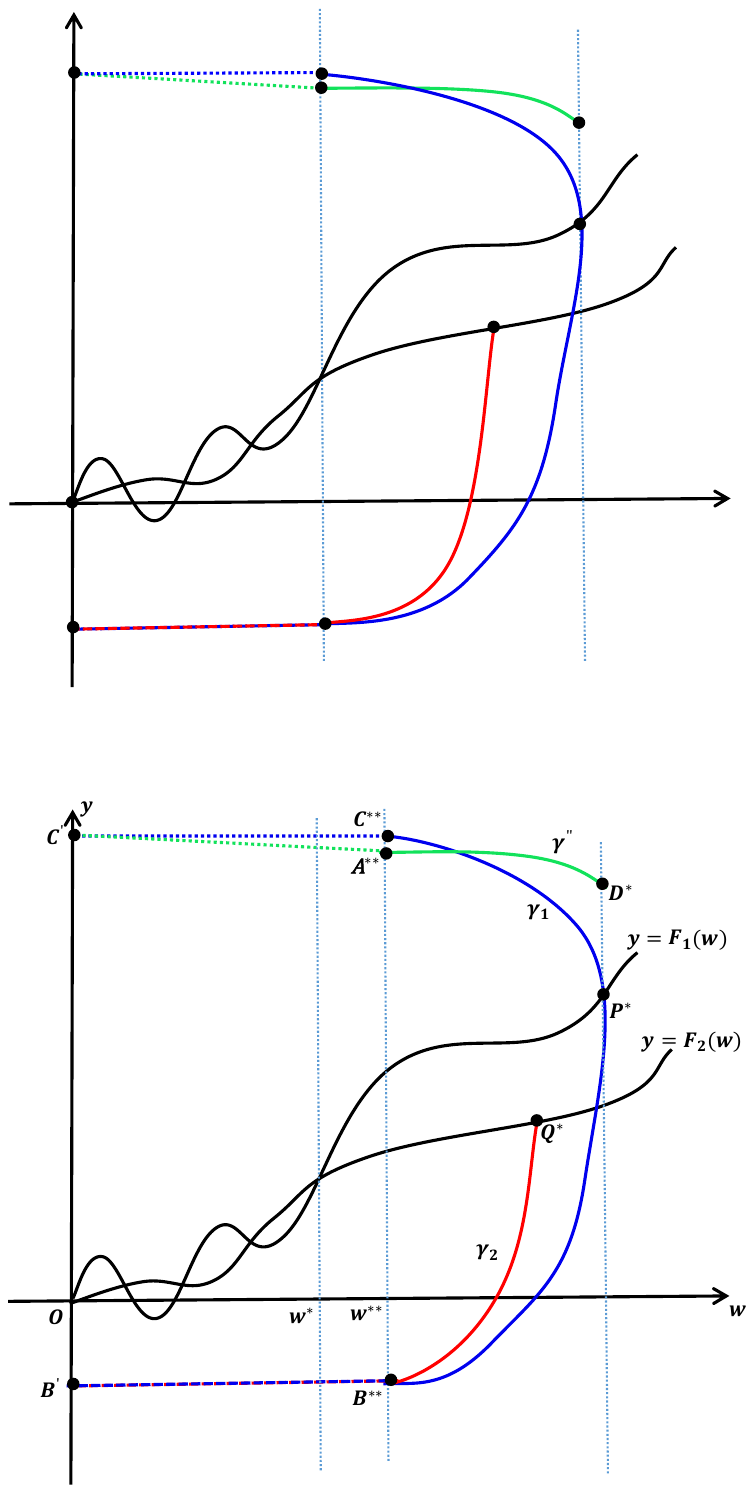}}\hspace{10pt}	
		\caption{{\footnotesize $\widehat{ABC}$ is not a closed orbit in case {\bf(C2-3)}.}}
		\label{c2}
	\end{figure}

We define an auxiliary integral curve $\gamma^{''}$ as the solution of equation \eqref{f2} starting from point $A^{**}$.  Applying the comparison theorem to the differential equations  \eqref{f1} and \eqref{f2} when  (a) holds, we determine that $\gamma^{''}\cap\{(w,y)|w\in(w^{**},w_{P^{*}})\}$ lies on the right-hand side  of the integral curve $\widehat{C^{**}P^*}$.
	Therefore, $\gamma^{''}$ must intersect the line $w=w_{P^{*}}$ at the point $D^{*}=(w_{P^{*}},y_{D^{*}})$, see Fig. \ref{c2}(a).

	When (b) holds,  a similar application of the comparison theorem confirms that $\gamma^{''}\cap\{(w,y)|w\in(w^{**},w_{P^{*}})\}$ of \eqref{f2} lies on the right-hand side of the integral curve $\widehat{C^{**}P^{*}}$ of \eqref{f1}.
	Then, $\gamma^{''}$ must intersect the line $w=w_{P^{*}}$ at the point $D^{*}=(w_{P^{*}},y_{D^{*}})$, see Fig. \ref{c2}(b).

Let $G^*=(w_{G^*}, y_{G^*})$ be the intersection point of $\gamma^{''}$ and $y=F_2(w)$ when (c) holds. The points on 
	$\widehat{C^{**}P^*}\cap\{(w,y)|w^{**}\leq w <\min\{w_{G^*},w_{P^*}\}\}$ and $\widehat{A^{**}G^*}\cap\{(w,y)|w^{**}\leq w <\min\{w_{G^*},w_{P^*}\}\}$ can be parameterized by $(w,y_1(w))$ and  $(w,y_2(w))$ respectively. Denote the vertical distance function
	$z(w): =y_2(w)-y_1(w)$. Using a method analogous to that in Step II(iii) of Lemma \ref{C1},
	with $w^{**}$ as the reference point, we obtain
	\begin{eqnarray*}
		\begin{aligned}
			z(w)=M(w)\exp\left\{\int_{w^{**}}^{w}\frac{\tau^k}{\left(F_2(\tau)-y_2(\tau)\right)\left(F_1(\tau)-y_1(\tau)\right)}{\rm d}\tau\right\},
		\end{aligned}
		\label{z2}
	\end{eqnarray*}
where the function $M(w)$ is  monotonically increasing. Similarly to the case {\bf(C1)}, if we assume $z(w) < 0$ for all $w \in \left[w^{**}, \min\{w_{G^{*}}, w_{P^{*}}\}\right)$, then $M(w) < 0$. However, this leads to a contradiction
\begin{itemize}
	\item[(\romannumeral1)] If $w_{G^{*}} < w_{P^{*}}$ (Fig. \ref{c2d}(a)), as $w$ approaches $w_{G^{*}}$ from the left, $y_2(w)$ approaches $F_2(w)$, causing $M(w)$ to tend to $+\infty$.
	\item [(\romannumeral2)] If $w_{G^{*}} \geq w_{P^{*}}$ (Fig. \ref{c2d}(b)), as $w$ approaches $w_{P^{*}}$ from the left, $y_1(w)$ approaches $F_1(w)$, again causing $M(w)$ to tend to $+\infty$.
\end{itemize}

Both scenarios contradict $M(w) < 0$. Therefore, there must exist a point $$\bar{w} \in (w^{**}, \min\{w_{G^{*}}, w_{P^{*}}\}),$$ where $z(\bar{w}) = 0$.
\begin{itemize}
	\item[(\romannumeral1)] If $\bar{w} = w_{P^{*}}$, then $\gamma^{\prime\prime}$ intersects the line $w = w_{P^{*}}$ at point $P^{*}$, as shown in Fig. \ref{c2}(c).
	\item[(\romannumeral2)] If $w^{**} < \bar{w} < w_{P^{*}}$, then for $w \in (\bar{w}, w_{P^{*}})$, the comparison theorem can be applied again, showing that $\gamma^{\prime\prime}$ lies to the right of $\widehat{C^{**}P^{*}}$. Thus, $\gamma^{\prime\prime}$ must intersect the line $w = w_{P^{*}}$ at a point $D^{*}$ above $P^{*}$, as shown in Fig. \ref{c2}(d).
\end{itemize}

In both subcases, the auxiliary curve $\gamma^{\prime\prime}$ fails to connect to the required point to close the orbit arc $\widehat{B'C'}$, completing the contradiction for Case (c).
		\begin{figure}[!htb]
	\centering
	\subfloat[$w_{G^*}<w_{P^*}$]
	{\includegraphics[scale=0.55]{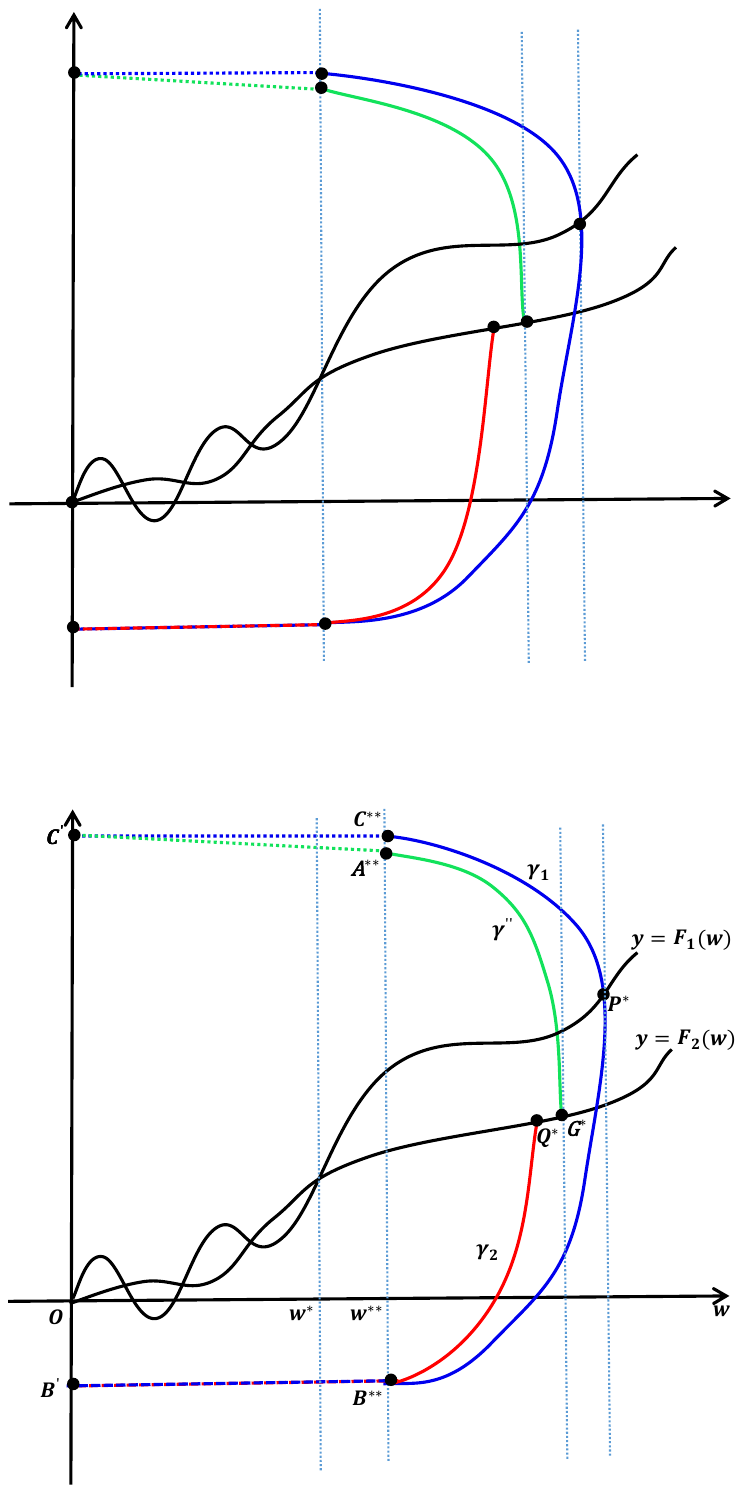}}\hspace{10pt}
	\subfloat[$w_{G^*}\geq w_{P^*}$]
	{\includegraphics[scale=0.55]{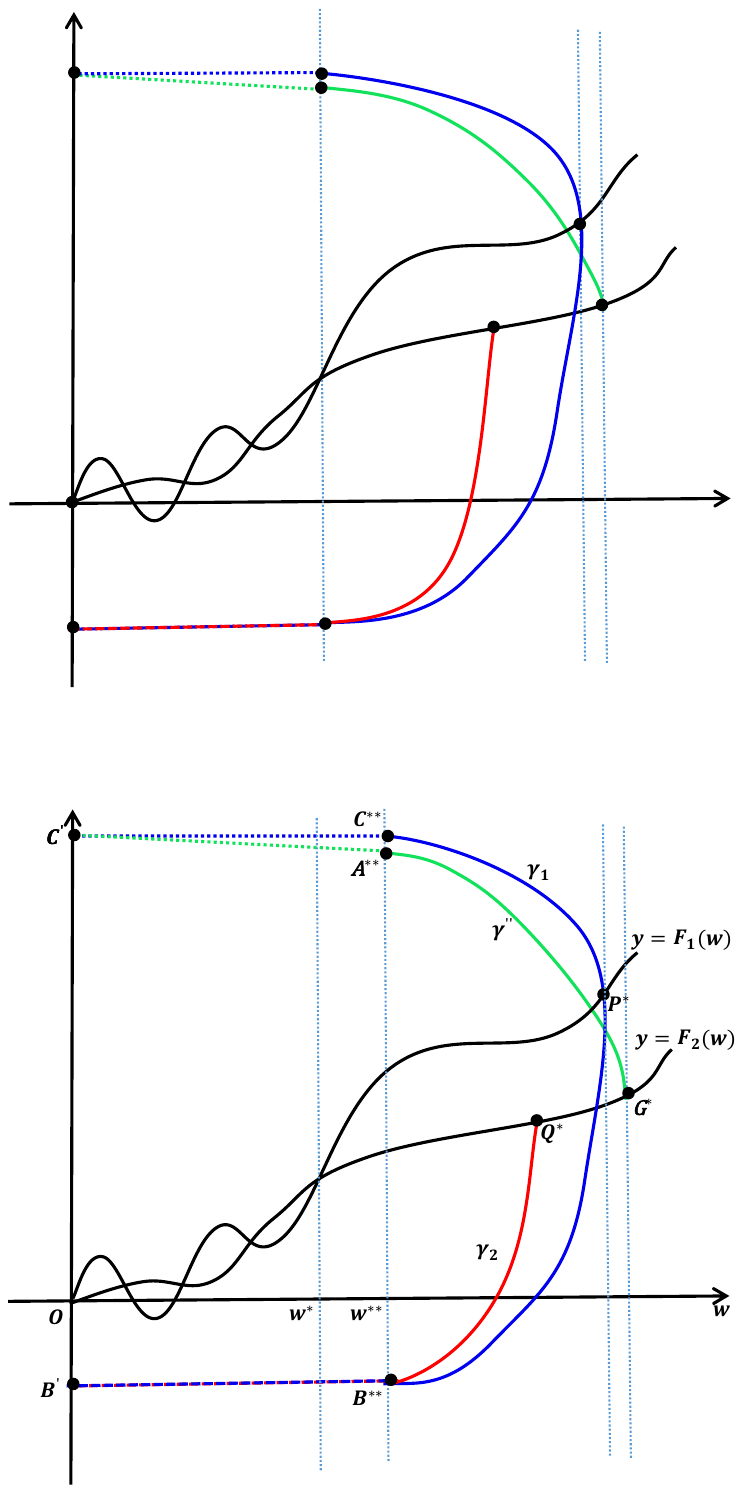}}\hspace{10pt}
	\caption{{\footnotesize $z(w)<0$ when $y_{A^{**}}<y_{C^{**}}$ in case {\bf(C2-3)}.}}
	\label{c2d}
\end{figure}

	{\bf Step II(iv).}
	Let $\widehat{B'A'}$ cross $\bar B^{**}=(w^{**}, y_{\bar B^{**}})$. It is evident that there are the following three types of relationships:
	\begin{center}
			(a) $y_{\bar B^{**}}>y_{B^{**}}$;~~
		(b) $y_{\bar B^{**}}=y_{B^{**}}$;~~
		(c) $y_{\bar B^{**}}<y_{B^{**}}$.
		\end{center}

	By a similar analysis of   {\bf Step II(iii)}, 
	we can conclude that $w_{Q^*} < w_{P^*}$,  where $Q^*=(w_{Q^*},F_2(w_{Q^*}))$  represents the intersection of $\widehat{B^{'}A^{'}}$ and  the curve $y = F_2(w)$.
	Furthermore, $Q^{*}$ represents the right-most point on $\gamma_2$ since $y = F_2(w)$ is the vertical isocline of equation \eqref{f2}. As a result,  $D^*$ cannot lie on $\gamma_2$ due to the inequality $w_{Q^{*}} < w_{P^{*}}$, indicating that $\gamma_2$ and $\gamma^{''}$ are distinct.
	Thus, based on the uniqueness of solutions, the intersection point between $\gamma_2$ and $w = w^{**}$ cannot coincide at $C^{**}$. In other words, $A^{**}$ and $C^{**}$ do not coincide and the orbit arc $\widehat{ABC}$ is not closed.
\end{proof}

Thirdly, we consider case {\bf (C3)}. According to Lemmas \ref{index}-\ref{index1}, we can conclude that system \eqref{lst} must have $2N$ equilibria  with index -1 and $2N+1$ equilibria with index 1, while the index of all other equilibria is $0$.
We can label these $4N+1$ equilibria  with index 1 or -1 as  $\bar{E}_1 =(\bar{x}_1,\bar{y}_1), \bar{E}_2=(\bar{x}_2,\bar{y}_2), \cdots, \bar{E}_{4N+1}=(\bar{x}_{4N+1},\bar{y}_{4N+1})$, where $\bar{x}_1<\bar{x}_2<\cdots<\bar{x}_{4N+1}$
Therefore, without loss of generality, we can move $\bar{E}_{2N+1}$  to the origin. 
Assume that $E^{*}=(x^*, y^*)$ is an equilibrium  of system \eqref{lst}.
By Lemma \ref{index},  we  can obtain that $q$ is odd and $a_q>0$ when the index of $E^{*}$ is $-1$, $q$ is odd and $a_q<0$ when the index of $E^{*}$ is $1$ and $q$ is even when the index of $E^{*}$ is $0$.  
Furthermore, by Lemma \ref{index1},  we can conclude that for any two equilibria with index $1$, there exists an equilibrium with index $-1$ whose abscissa lies between their abscissas. And for any two equilibria with index $-1$, there exists an equilibrium with index $1$ whose abscissa lies between their abscissas.
In this case, there will be $N$ equilibria with  index  $1$ and $N$ saddles on the left (resp., right) side of the $y$-axis. We can label the $2N$ equilibria on the left (resp., right) side of the $y$-axis as  $E_{l(2N)}=(x_{l(2N)}, y_{l(2N)}), E_{l(2N-1)}=(x_{l(2N-1)}, y_{l(2N-1)}), \cdots, E_{l1}=(x_{l1}, y_{l1})$, where $x_{l(2N)}<x_{l(2N-1)}<\cdots<x_{l1}$ (resp.,  $E_{r(2N)}=(x_{r(2N)}, y_{r(2N)}), E_{r(2N-1)}=(x_{r(2N-1)}, y_{r(2N-1)}), \cdots, E_{r1}=(x_{r1}, y_{r1})$, where $x_{r(2N)}>x_{r(2N-1)}>\cdots>x_{r1}$).
We have that $g(x)\geq0$ when $$x\in\bigcup_{j=1}^{N}(x_{l(2j)}, x_{l(2j-1)})\cup(0, x_{r1})\cup\bigcup_{j=2}^{N}(x_{r(2j-2)}, x_{r(2j-1)})\cup(x_{r(2N)}, +\infty)$$ and  $g(x)\leq0$ when $$x\in(-\infty, x_{l(2N)})\cup\bigcup_{j=2}^{N}(x_{l(2j-1)}, x_{l(2j-2)})\cup(x_{l1}, 0)\cup\bigcup_{j=1}^{N}(x_{r(2j-1)}, x_{r(2j)}).$$

\begin{lm}
	In case {\bf (C3)}, the set $\mathcal{G}$ is a closed-orbit set if and only if 
\[
	F_1(w)\equiv F_2(w)~~{\text{for}}~~0\leq w\leq +\infty.
\]
	\label{C3}
\end{lm}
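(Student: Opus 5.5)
The plan follows the two-step scheme of Lemmas~\ref{C1} and~\ref{C2}. The only new ingredient in case \textbf{(C3)} is the sign pattern of $g$. Here the origin has index $1$ (it is $\bar E_{2N+1}$), and each half-line carries $N$ anti-saddles and $N$ saddles. From the sign lists above, $g\le0$ on $(x_{l1},0)$ and $g\ge0$ on $(0,x_{r1})$, so near $w=0$ we have $-\operatorname{sgn}g_1=\operatorname{sgn}g_2=1$, exactly as in \textbf{(C1)}. The sign then alternates at the odd-multiplicity zeros $x_{lj}$ and $x_{rj}$. So the work is to show that under $F_1\equiv F_2$ these sign switches happen at the same levels $w$ on both sheets.

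\textbf{Sufficiency.} First I will repeat Step~I(i) of Lemma~\ref{C2}. From $F_1\equiv F_2$ and \eqref{dF/dw} we get identity \eqref{eq}. Expanding both sides near any $\bar w>0$ as in \eqref{eq1}--\eqref{eq2} gives $k_1\equiv k_2$ and $r_1\equiv r_2 \pmod 2$. Hence $x_1(\bar w)$ is an odd-multiplicity zero of $g$ if and only if $x_2(\bar w)$ is one. Since $w(x_{l1})<\dots<w(x_{l(2N)})$ and $w(x_{r1})<\dots<w(x_{r(2N)})$, induction on $j$ gives $w(x_{lj})=w(x_{rj})$ for $j=1,\dots,2N$. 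The induction uses the same ``smallest unmatched level'' argument as in Lemma~\ref{C2}: if $w(x_{l1})=w(x_{ri})$ with $i>1$, then the level $w(x_{r1})$ has no partner. Between consecutive matched levels, the signs of $g_1$ and $g_2$ start at $-1$ and $+1$ and flip simultaneously. So $-\operatorname{sgn}g_1=\operatorname{sgn}g_2$ on every regular component, and \eqref{f1} and \eqref{f2} coincide there. Uniqueness, together with the continuation argument through the finite set $\Sigma$, then gives $A'=C'$, so every arc in $\mathcal G$ is closed.

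\textbf{Necessity.} Suppose $F_1\not\equiv F_2$. Take the rightmost component $I_*=(\alpha,\beta)$ of $\{F_1-F_2\neq0\}$, and use $(y,t)\mapsto(-y,-t)$ to arrange $F_1>F_2$ on $I_*$.

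If $\beta<\infty$, apply the root-pairing argument of Step~I(i) on the tail $(\beta,\infty)$, where $F_1=F_2$. It shows the tail equations coincide, so the strict separation created on $I_*$ persists and the orbit cannot close.

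If $\beta=\infty$, set
\[
w^{**}=\max\{w^*,\max\Sigma,w(x_{l(2N)}),w(x_{r(2N)})\}.
\]
Beyond $w^{**}$ both branch equations are regular and carry the sign factors $-\operatorname{sgn}g_1=\operatorname{sgn}g_2=1$. This holds because $g<0$ on $(-\infty,x_{l(2N)})$ and $g>0$ on $(x_{r(2N)},\infty)$, which matches the \textbf{(C1)} configuration. The a priori estimate $|y_{C'}-y_{C^{**}}|=O(|y_{C^{**}}|^{-1})$ carries over verbatim. I then run the trichotomy $y_{A^{**}}\gtreqless y_{C^{**}}$:
\begin{itemize}
\item in cases (a) and (b), use the comparison theorem;
\item in case (c), use the variation-of-constants representation $z=M\exp\int M_2$ with $M$ increasing, which blows up at $w_{G^*}$ or $w_{P^*}$ and forces $z(\bar w)=0$.
\end{itemize}
Together with the analogue of Step~II(iv) ($w_{Q^*}<w_{P^*}$), this gives $A'\ne C'$.

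The main obstacle I expect is the pairing step for an even number $2N$ of sign-changing zeros per side. I must check that the induction cannot pair $x_{lj}$ with $x_{r i}$, $i\neq j$, at the top level $j=2N$, which is now a saddle rather than an anti-saddle. I would handle this by noting that the ordering argument uses only monotonicity of $w$ and the equal counts $2N$ on both sides, so it goes through unchanged.
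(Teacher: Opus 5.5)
You reproduce the paper's proof of Lemma~\ref{C3} almost line by line. For sufficiency that means pairing the critical levels by parity and ordering. For necessity it means the rightmost component of $\{F_1\neq F_2\}$, the tail argument, and the $w^{**}$ trichotomy of Lemma~\ref{C2}. You therefore also inherit the paper's gaps.

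\textbf{Pairing step (sufficiency).} Near a level $\bar w>0$ one has $|w-\bar w|\asymp|x_i-\bar x_i|^{r_i+1}$. So \eqref{eq} gives only $k_1\equiv k_2\pmod 2$ and $(k_1+1)/(r_1+1)=(k_2+1)/(r_2+1)$, not $k_1-r_1=k_2-r_2$. For odd $k_1,k_2$ this allows $r_1\not\equiv r_2\pmod 2$, e.g.\ $(k_1,r_1,k_2,r_2)=(1,0,3,1)$. Hence ``$x_1(\bar w)$ is an odd-multiplicity zero iff $x_2(\bar w)$ is'' is not established. Neither is the coincidence of \eqref{f1} and \eqref{f2} on every regular component. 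The top level $j=2N$ that you single out is not where the difficulty lies.

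\textbf{Necessity.} This half has a more serious gap.
For $\beta=+\infty$, the separation produced by the comparison beyond $w^{**}$ is not bounded below uniformly in the orbit. It tends to zero as the orbit grows, just as the $O(|y|^{-1})$ discrepancy on $[0,w^{**}]$ does. On $[0,w^{**}]$ the two folded equations genuinely differ: the $F_i$ differ, and the sign factors are opposite between non-matching critical levels. So ``the errors are smaller than every fixed strict separation'' is unavailable, and the inner discrepancy can cancel the outer one. Similarly, for bounded $\beta$ the lower halves of the traces cross $I_*$ and $[0,\alpha]$ again, so a separation at $w=\beta$ does not prevent closure.

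\textbf{A counterexample.} In fact the implication itself fails. Let $\rho$ be a degree-six polynomial with positive leading coefficient, $\rho(0)=\rho'(0)=0$, simple critical points $p_2<p_1<0<q_1<q_2$, and $\rho>0$ on $\mathbb{R}\setminus\{0\}$. Fix $\rho^*<0$, $c>1/8$, $\Xi(s)=c(s-\rho^*)^2$, and take $f=\rho'$, $g=(\Xi\circ\rho)'=2c(\rho-\rho^*)\rho'$.
\begin{itemize}
\item We have $n=5$, $m=11=2n+1$ and $4(n+1)a_mb_n^{-2}=8c>1$, so condition~(i) holds.
\item The real zeros of $g$ are exactly $p_2,p_1,0,q_1,q_2$, all simple. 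This is case (C3) with $N=1$ and the origin in the middle.
\item Along solutions of \eqref{lst} (here $F=\rho$) one has $\frac{d}{dt}\bigl(\rho(x),y\bigr)=\rho'(x)\bigl(y-\rho,\,-\Xi'(\rho)\bigr)$. So both halves of the orbit through $B=(0,y_B)$, $y_B>0$, project into the same arc of one integral curve of $(y-\rho,-\Xi'(\rho))$.
\item That arc lies in $\{\rho\ge0\}$ and meets $\{\rho=0\}$ only at its endpoints. Since $\rho(x)=0$ only at $x=0$, both halves return to the $y$-axis at the far endpoint of the arc. Thus every outer orbit is closed and $P_\infty$ is a center.
\item Put $\Phi=\Xi\circ\rho-\Xi(0)$. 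For $x_1<p_2$ and $x_2>q_2$ one computes $G(x_1)=\Phi(x_1)+2[\Phi(p_1)-\Phi(p_2)]$ and $G(x_2)=\Phi(x_2)+2[\Phi(q_1)-\Phi(q_2)]$.
\item For generic $\rho$ these two well depths differ. Then $G(x_1)=G(x_2)$ forces $\Phi(x_1)\neq\Phi(x_2)$, hence $F(x_1)\neq F(x_2)$, so $F_1\not\equiv F_2$.
\end{itemize}
Closure here is governed by pairing points of equal true potential $\int_0^x g$, not of equal $G$. So no refinement of this comparison argument can prove the necessity direction as stated.
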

\begin{proof}
	The proof employs a structured two-step approach that builds upon the methodologies developed for cases {\bf(C1)} and {\bf(C2)}, while addressing the specific topological features of the even saddle configuration.
	\begin{description}
		\item[(I)] We first prove the sufficiency. 
		And the proof will be divided  into the following two parts.
		\begin{description}
			\item[(i)] We first prove that  $w$ satisfies $x_1(w)=x_{lj}$ if and only if $x_2(w) = x_{rj}$ for any $j=1, 2, \cdots, 2N$.
			\item[(ii)] We can apply a similar proof of {\bf Step I} in Lemma \ref{C1} to prove that the orbit arc $\widehat{ABC}$ is closed.
		\end{description}
		\item[(II)] Next, we will use a proof by contradiction to demonstrate its necessity.  Due to the complexity of the proof, we have divided it into two parts.
		\begin{description}
			\item[(i)] If $F_1\not\equiv F_2$, the semialgebraic observation above
			shows that $\{F_1-F_2\neq0\}$ has a rightmost connected component
			$I_*=(\alpha,\beta)$, on which $F_1-F_2$ has a fixed strict sign.
			As in {\bf Step II(i)-(ii)} of Lemma~\ref{C2}, a bounded right
			endpoint $\beta$ is incompatible with closure of every outer orbit:
			on the common tail the two folded equations have the same continuation,
			whereas the strict separation created on $I_*$ cannot disappear.
			Hence, in the only remaining case, $\beta=+\infty$.
			After interchanging the two folded branches if necessary, choose
			$w^*\in I_*$ so that
			$F_1(w)>F_2(w)$ for all $w>w^*$.
			\item[(ii)] Assume that  there exists an orbit arc $\widehat{ABC}$ in  $\mathcal{G}$.
			Let $\widehat{B^{'}C^{'}}$ cross $B^{**}=(w^{**}, y_{B^{**}})$ and $C^{**}=(w^{**}, y_{C^{**}})$  where $w^{**}=\max\{w^*,\max\Sigma,w(x_{l(2N)}),w(x_{r(2N)})\}$, $y_{B^{**}}$ is a number sufficiently small and tending to $-\infty$  and $y_{C^{**}}$ is a number  sufficiently large and tending to $+\infty$.
			Clearly, there are three possible types of
			relationships between $y_{A^{**}}$ and $y_{C^{**}}$ where $A^{**}$ represents another intersection of $\widehat{B^{'}A^{'}}$ and the line $w=w^{**}$.
			Define $\gamma^{''}$ as the integral curve of equation \eqref{f2} starting from $A^{**}$. 
			Like the proof  of {\bf Step II(iii-iv)} in Lemma \ref{C2}, we next will prove that $\widehat{ABC}$ is not closed
			in each of these three cases.
		\end{description}
	\end{description}

	{\bf Step I: Sufficiency.}  
	
	{\bf Step I(i).} 
	The foundation of the sufficiency argument lies in establishing a precise correspondence between equilibrium points on the left and right halves of the phase space. Consider equations (\ref{dF/dw}-\ref{eq2}) from the previous analysis.
	If $x_1(\bar{w})=x_{lj}$ where $1\leq j\leq 2N$, the odd multiplicity $r_1$ implies that the corresponding point $x_2(\bar{w})$ must also satisfy $g(x_2(\bar{w}))=0$ and $g(x_2(w))=a_{r_2}(x_2(w)-\bar{x}_2)^{r_2}+o((x_2(w)-\bar{x}_2)^{r_2})$ with odd multiplicity $r_2$.
	The possible candidates are  $x_{ri}$, where $1\leq i\leq2N$.
	We will now prove that $i=j$.
	Assume by contradiction that $x_1(\bar{w})=x_{l1}$ and $x_2(\bar{w})=x_{ri}$ with $i>1$.  The monotonicity properties of 
	$w(x)$ (monotonically decreasing on $(-\infty,0)$ and monotonically increasing on $(0,+\infty)$) yield  the ordering
	$$w(x_{l(2N)})>w(x_{l(2N-1)})>\cdots>w(x_{l1}),~~
	w(x_{r1})<w(x_{r2})<\cdots<w(x_{r(2N)}).$$
	This implies  $w(x_{r1})<w(x_{ri})=w(x_{l1})$.
	However, when $x_2(\bar{w})=x_{r1}$, no corresponding $x_1(\bar{w})$ satisfies equation \eqref{eq}, creating a contradiction. Thus, $i=1$.
	This reasoning extends to all  $1< j\leq 2N$, establishing the one-to-one correspondence.
	
	{\bf Step I(ii).} 
	The established correspondence implies that $\text{sgn}(g_1(w))=-\text{sgn}(g_2(w))$ whenever $g_1(w)\neq0$ and $g_2(w)\neq0$. Consequently, equations \eqref{f1} and \eqref{f2} are identical on every regular component of $[0,+\infty)\setminus\Sigma$.
	Ordinary uniqueness gives coincidence of the two folded traces on each
	regular component, and the continuation argument above carries this coincidence through the finitely many critical levels. Therefore the orbit
	arc $\widehat{ABC}$ is closed.
	
	{\bf Step II: Necessity.}
	
	{\bf Step II(i).}
	Set \(D=F_1-F_2\). By the semialgebraic observation above, if
	\(D\not\equiv0\), then \(\{D\neq0\}\) has a rightmost connected component
	\(I_*=(\alpha,\beta)\), on which \(D\) has a fixed strict sign.
	If \(\beta<+\infty\), then \(D=0\) for all \(w>\beta\).
	Applying the root-pairing argument of {\bf Step I(i)} successively on this
	common tail shows that
	\[
	-\operatorname{sgn}g_1(w)=\operatorname{sgn}g_2(w)
	\]
	on every regular component of the tail. Hence the two folded equations
	coincide there. On the other hand, the standard strict comparison theorem,
	applied on \(I_*\), produces a strict separation of the two traces issuing
	from the same intersection with the \(y\)-axis. Such a separation cannot
	disappear under the common tail flow by uniqueness, and
	the continuation argument above carries the separation through all critical levels. Therefore an outer orbit crossing \(I_*\) cannot close, contrary to
	the definition of \(\mathcal G\).

	Consequently \(\beta=+\infty\). Interchanging the two folded branches, or
	equivalently applying \((x,y,t)\mapsto(x,-y,-t)\), if necessary, we may
	choose \(w^*\in I_*\) such that
	\[
	F_1(w)>F_2(w),\qquad w>w^*.
	\]

	{\bf Step II(ii).}
	We demonstrate that no orbit arc \(\widehat{ABC}\in\mathcal{G}\) can be
	closed under this strict inequality. Let
	\(\widehat{B'C'}\) cross
	\(B^{**}=(w^{**},y_{B^{**}})\) and
	\(C^{**}=(w^{**},y_{C^{**}})\), where
	\[
	w^{**}=\max\{w^*,\max\Sigma,w(x_{l(2N)}),w(x_{r(2N)})\}.
	\]
	Here \(y_{B^{**}}\to-\infty\) and \(y_{C^{**}}\to+\infty\) as the chosen
	outer orbit tends to infinity. Let
	\(P^*=(w_{P^*},F_1(w_{P^*}))\) be the intersection of
	\(\widehat{B'C'}\) with \(y=F_1(w)\).
	Since $w^{**}$ is fixed, the same estimate used in case {\bf(C1)}, with
$W=w^{**}$, gives
	\[
	|y_{C'}-y_{C^{**}}|=O(|y_{C^{**}}|^{-1}),
	\qquad
	|y_{A'}-y_{A^{**}}|=O(|y_{A^{**}}|^{-1}),
	\]
	where \(A^{**}\) is the other intersection of \(\widehat{B'A'}\) with
	\(w=w^{**}\). These errors can therefore be made smaller than every fixed
	strict separation occurring in the comparison below.
	It is evident that there are the following three types of relationships:
		(a) $y_{A^{**}}>y_{C^{**}}$;~~
		(b) $y_{A^{**}}=y_{C^{**}}$;~~
		(c) $y_{A^{**}}<y_{C^{**}}$.
	
	For each case, by applying the detailed methodology developed in {\bf Step II(iii-iv)} of Lemma \ref{C2},  we can obtain that the orbit arc $\widehat{ABC}$ is not closed in the above three scenarios, as shown in Fig. \ref{c2}.
\end{proof}
\begin{proof}[{\bf Proof of Theorem \ref{cf-thm1}}]
	The proof of Theorem \ref{cf-thm1} employs a comprehensive two-step approach that establishes both the sufficiency and 
	necessity of the stated conditions for $P_{\infty}$ to be a center at infinity.
	
	{\bf Step 1: Sufficiency.} 
	The proof begins by invoking Lemma  \ref{inf}, which guarantees that when condition {\bf(i)} is satisfied, system \eqref{ls} exhibits no orbits connecting equilibria at infinity within the Poincar\'{e} disc. 
	This topological constraint forces the orbits near infinity to adopt specific configurations:
	\begin{enumerate}
		\item They must either form spirals approaching infinity.
		\item Or constitute closed orbits surrounding the infinite region.
	\end{enumerate}
	This fundamental observation ensures the existence of the set
	$\mathcal{G}$ at infinity, which serves as the domain for our subsequent analysis.
	
	We next analyze system \eqref{lst}. The proper Li\'{e}nard shear preserves the exterior closed-orbit property, which is the only equivalence at infinity used in this proof.
	As explained in Section \ref{sec:pre}, for every $w_0>0$ there is a unique pair
	\[
	x_1(w_0)<0<x_2(w_0)
	\]
	on the two inverse branches such that
	\[
	G(x_1(w_0))=G(x_2(w_0)),\qquad
	w(x_1(w_0))=w(x_2(w_0))=w_0.
	\]
	At $w_0=0$ the two branches meet at $x=0$. When statement {\bf(ii)} holds,
	$F(x_1(w_0))=F(x_2(w_0))$ for every $w_0>0$, and continuity gives the same equality at $w_0=0$. Hence
	\[
	F_1(w)\equiv F_2(w),\qquad w\geq0.
	\]
	
	By combining Lemma \ref{index} and Lemma \ref{index1}, we determine that the equilibria of system  \eqref{lst} can only possess indices of $1$, $-1$, or $0$. Under the condition that $a_m > 0$ and $m$ is odd (included in statement {\bf(i)}), the sum of the indices of the equilibria of system \eqref{lst} equals $1$. In this case, to determine whether all orbits of system \eqref{lst} in $\mathcal{G}$ are closed, we can divide our analysis into three parts: {\bf(C1)-(C3)}. Applying Lemmas \ref{C1}-\ref{C3} to each respective case confirms that under the identity
	$F_1(w)\equiv F_2(w),$
	all orbits of system \eqref{lst} within  $\mathcal{G}$ are closed. Applying the inverse proper Li\'{e}nard shear transfers this exterior period annulus to system \eqref{ls}, thereby establishing $P_{\infty}$ as a center at infinity.
	\\
	{\bf Step 2: Necessity.}
	We begin with the premise that $P_{\infty}$ is a center at infinity  of  system \eqref{ls}. Thus, there is an exterior period annulus and, in particular, no orbit connecting the two equilibria at infinity. Lemma~\ref{inf} therefore yields statement {\bf(i)}. The proper Li\'{e}nard shear maps a sufficiently far-out subannulus into an exterior closed-orbit set $\mathcal G$ of system \eqref{lst}.

	Lemma \ref{index} implies that the equilibria of system \eqref{lst} must have indices of 1, -1, or 0. Moreover, by applying Lemma \ref{index1} and noting that $a_m > 0$ and $m$ is odd, we can conclude that the sum of the indices of these equilibria is 1. Therefore, we only need to consider the following three cases: {\bf(C1)-(C3)}.
	Since all orbits in $\mathcal{G}$ are closed, we can apply Lemma \ref{C1}-\ref{C3} to show that $F_1(w)\equiv F_2(w)$ for all $0\leq w\leq+\infty$. Returning to \eqref{w}, we can obtain the form of $w$ and conclude that if $G(x_1) = G(x_2)$ for all $x_1<0<x_2$, then $w(x_1)=w(x_2)$ holds. Thus, $F_1(w(x_1))=F_2(w(x_2))$, i.e., $F(x_1)=F(x_2)$. Statement {\bf(ii)} holds, and the proof of Theorem \ref{cf-thm1} is complete.
\end{proof}
{
\begin{proof}[Proof of Corollary~\ref{cf-cor1}]
Assume first that $f$ is odd.  Then $F$ is even.  Since $g$ is odd, $G$ is
even and strictly increasing as a function of $|x|$ on the relevant branches.
Hence $G(x_1)=G(x_2)$ with $x_1<0<x_2$ implies $x_2=-x_1$, and therefore
$F(x_1)=F(x_2)$.  Thus condition {\bf(ii)} of Theorem~\ref{cf-thm1} holds.

Conversely, assume that $P_\infty$ is a center.  Condition {\bf(ii)} of
Theorem~\ref{cf-thm1} holds.  Since $G(-x)=G(x)$, it follows that
$F(-x)=F(x)$; hence $F$ is even and $f=F'$ is odd.  The conclusion follows
from Theorem~\ref{cf-thm1}.
\end{proof}
}

\subsection{Proofs of Theorem \ref{cf-thm2} and Corollaries \ref{cf-cor2}-\ref{cf-cor3}}
\begin{proof}[{\bf Proof of Theorem \ref{cf-thm2}}]
The proof will be divided into the following two steps.\\
{\bf Step 1: Sufficiency.} 
Assume the system is in the specified rescaled form:  
$\dot{x}=y,~~ \dot{y}=-g(x)-bxy$, with $g(x)$ being an odd function. Under this condition, the function $f(x)=bx$ is also odd. Furthermore, the parameter condition $m=3, a_m>b^2/8$ or $m>3, a_m>0$ ensures that condition {\bf(i)}  of Theorem \ref{cf-thm1} is satisfied. Since both $f(x)$ and $g(x)$ are odd, the simplified criterion of Corollary  \ref{cf-cor1} applies directly. Therefore, by Corollary  \ref{cf-cor1}, we conclude that $P_{\infty}$ is a center at infinity.
\\
{\bf Step 2: Necessity.}
Assume that \(P_{\infty}\) is a center at infinity for system \eqref{ls}
with \(n=1\). Then
\[
f(x)=b_0+b_1x,\qquad
F(x)=b_0x+\frac{b_1}{2}x^2,
\]
where \(b_1\neq0\). By statement {\bf(ii)} of
Theorem~\ref{cf-thm1}, for every pair \(x_1<0<x_2\) satisfying
\(G(x_1)=G(x_2)\), one has \(F(x_1)=F(x_2)\). Since \(x_1\neq x_2\),
\[
0=F(x_1)-F(x_2)
=(x_1-x_2)\left(b_0+\frac{b_1}{2}(x_1+x_2)\right),
\]
and hence
\[
x_1+x_2=-\frac{2b_0}{b_1}.
\]
Now let \(x_1\to0^-\). Since \(G(x_1)\to0\) and the restriction of \(G\)
to \([0,+\infty)\) is continuous and strictly increasing with \(G(0)=0\),
the equality \(G(x_1)=G(x_2)\) implies \(x_2\to0^+\). Passing to the
limit in the preceding identity gives \(b_0=0\).

Consequently \(F(x)=b_1x^2/2\), so condition {\bf(ii)} gives
\(x_2=-x_1\) for every paired pair. Therefore
\[
G(-x)=G(x),\qquad x>0.
\]
Differentiating this identity for \(x>0\) away from the finitely many
zeros of \(g\), and then using continuity, yields
\[
|g(-x)|=|g(x)|,\qquad x\in\mathbb R.
\]
Thus the polynomial identity
\[
g(-x)^2\equiv g(x)^2
\]
holds. Since \(\mathbb R[x]\) is an integral domain,
either \(g(-x)\equiv g(x)\) or \(g(-x)\equiv-g(x)\).
Condition {\bf(i)} of Theorem~\ref{cf-thm1} gives that \(m=\deg g\) is odd,
so the first alternative is impossible. Hence \(g\) is odd.
This proves the necessity.
\end{proof}
To give the global phase portraits in the Poincar\'{e} disc of system \eqref{m3n1}, some lemmas are presented. We can define the functions $\bar{g}(x)=ax+x^3$ and $\bar{f}(x)=bx$ to aid our analysis.
\begin{lm}
 System \eqref{m3n1} has a global center at the origin when $a\geq0$. 
 \label{lm:m3n1gc}
\end{lm}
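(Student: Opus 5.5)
The plan is to apply Corollary~\ref{CLZ}, which was already stated for odd $g$, directly to system \eqref{m3n1}. Here
\[
\bar g(x)=ax+x^3,\qquad \bar f(x)=bx,
\]
so $g$ is odd, $m=3$ and $n=1$. I will separate the case $a>0$, which should give a linear type center, from the case $a=0$, which should give a nilpotent type center. In each case the task is only to check the listed conditions (i), (ii) or (ii$^*$), and (iii), together with the oddness of $f$.

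\textbf{Case $a>0$.} Condition (i) holds because $x\bar g(x)=x^2(a+x^2)>0$ for $x\neq0$. For condition (ii), the lowest-order term of $\bar g$ is $ax$, so $k=1$ with $a_k=a>0$. The lowest-order term of $\bar f$ is $bx$, so $s=1\ge1$. For condition (iii), we have $m=3=2n+1$ with $n=1$, and we need
\[
4(n+1)a_m b_n^{-2}=8/b^2>1,
\]
which is exactly $0<b<2\sqrt2$, the standing assumption of Corollary~\ref{cf-cor2}. Finally, $f(x)=bx$ is odd. The first part of Corollary~\ref{CLZ} therefore gives a global center of linear type.

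\textbf{Case $a=0$.} Now $\bar g(x)=x^3$, so $k=3$, $a_k=1$ and $s=1$. Since $k=2s+1=3\ge3$, condition (ii$^*$) reduces to
\[
b_s^2-2(k+1)a_k=b^2-8<0,
\]
which again holds because $b<2\sqrt2$. Conditions (i) and (iii) are checked exactly as in the case $a>0$, and $f$ is still odd. The second part of Corollary~\ref{CLZ} then gives a global center of nilpotent type.

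The only delicate point is bookkeeping. Corollary~\ref{CLZ} indexes $k$ and $s$ as the lowest degrees of $g$ and $f$, and the case $a=0$ must be routed to the nilpotent criterion rather than the linear one. Beyond that, I must make sure the inequality $b^2<8$ serves both condition (ii$^*$) and condition (iii). No further analysis should be needed. If desired, one could also confirm the claim directly: for $a\ge0$, Corollary~\ref{cf-cor1} gives a center at infinity, and the origin is the unique equilibrium, with
\[
\dot H=-bx y^2\cdot(\text{sign analysis})
\]
for $H=y^2/2+\int\bar g$. However, that direct check would not by itself exclude limit cycles, so the appeal to Corollary~\ref{CLZ} is the intended route.
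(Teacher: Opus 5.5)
Your proposal is correct and follows the paper's own proof. Both verify the hypotheses of Corollary~\ref{CLZ} for $\bar g(x)=ax+x^3$ and $\bar f(x)=bx$, using condition (ii) when $a>0$ and condition (ii$^*$) when $a=0$, with the single inequality $b^2<8$ supplying both (ii$^*$) and (iii); your closing aside is unnecessary (in fact $\dot H=-bxy^2$ exactly) but does not affect the argument.
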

\begin{proof}
   We will  use \cite[Corollary 3]{CLZ} (see Appendix) to demonstrate that  system \eqref{m3n1} has a global center at the origin. We next verify each condition in this result one by one.
    It is worth noting that $x\bar{g}(x)>0$ for $x\neq0$ when $a\geq0$. 
    Then, statement {\bf(i)} holds.
    When $a>0$, it is apparent that statement {\bf(ii)} holds, and when $a=0$, statement {\bf(ii*)} holds because $b^2<8$.
 Additionally, $b^2<8$, which also  implies that statement {\bf(iii)} holds. Furthermore, $\bar{f}(x)$ and $\bar{g}(x)$ are both odd. Therefore,  system \eqref{m3n1} satisfies statements {\bf(i)-(iii)} of \cite[Corollary 3]{CLZ}. Consequently,  system \eqref{m3n1} has a global center at the origin when $a\geq0$, as shown in Fig. \ref{fig:m3n1}(a).
\end{proof}
\begin{lm}
System \eqref{m3n1} exhibits three equilibria $E_0=(0,0)$, $E_l=(-\sqrt{-a},0)$ and $E_r=(\sqrt{-a},0)$ when $a<0$. Moreover, $E_0$ is a saddle, $E_l$ is an unstable focus and $E_r$ is a stable focus.
\label{lm:m3n1e}
\end{lm}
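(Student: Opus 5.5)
The plan is to locate the equilibria from $\bar g(x)=x(a+x^2)$ and then classify each one by its Jacobian. This is a direct local linear analysis.

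For $a<0$, the zeros of $\bar g$ are $x=0$ and $x=\pm\sqrt{-a}$, all simple. Every equilibrium of \eqref{m3n1} lies on $y=0$, so the equilibria are exactly $E_0$, $E_l$ and $E_r$. At a point $(x_*,0)$ the Jacobian is
\[
J(x_*)=\begin{pmatrix}0&1\\ -\bar g'(x_*)&-bx_*\end{pmatrix},
\qquad \det J=\bar g'(x_*),\qquad \operatorname{tr}J=-bx_*,
\]
with $\bar g'(x)=a+3x^2$.

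At $E_0$ we have $\det J=a<0$, so $E_0$ is a hyperbolic saddle. This is consistent with Lemma~\ref{index}, since $-\bar g$ has leading coefficient $-a>0$ at $0$.

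At $E_{l}$ and $E_r$ we have $x_*^2=-a$, so $\det J=a+3(-a)=-2a>0$. These two points are therefore anti-saddles, and their traces are $\operatorname{tr}J=\pm b\sqrt{-a}$. At $E_r$ the trace is $-b\sqrt{-a}<0$, so $E_r$ is stable. At $E_l$ the trace is $+b\sqrt{-a}>0$, so $E_l$ is unstable. Here I use $b>0$ from Corollary~\ref{cf-cor2}.

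To get foci rather than nodes, the discriminant must be negative:
\[
\operatorname{tr}^2-4\det=b^2(-a)-4(-2a)=(-a)(b^2-8).
\]
Since $-a>0$ and $0<b<2\sqrt2$, we have $b^2<8$, so the discriminant is strictly negative. The eigenvalues are then complex with nonzero real part, and $E_l$ and $E_r$ are hyperbolic foci with the stated stability.

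The only step needing care is this last discriminant sign. It is exactly where the hypothesis $b<2\sqrt2$, the same condition that gives the center at infinity, enters. No degenerate case arises, because all eigenvalues have nonzero real part.
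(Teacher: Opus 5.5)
Your proposal is correct and follows the paper's proof: Jacobian at each equilibrium, $\det J_0=a<0$ for the saddle, $\det=-2a>0$ with trace $\pm b\sqrt{-a}$ for the anti-saddles, and discriminant $(8-b^2)a<0$ for the foci. The one remark is that, as in the paper, you are using $0<b<2\sqrt2$ from the normal form \eqref{m3n1} in Corollary~\ref{cf-cor2}, which the lemma's statement leaves implicit.
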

\begin{proof}
It is clear that system \eqref{m3n1} has three equilibria $E_0=(0,0)$, $E_l= (-\sqrt{-a},0)$ and $E_r=(\sqrt{-a},0)$.
Due to the Jacobian matrix 
\begin{equation*}
	J_0=
	\left[\begin{array}{cc}
		0 & 1 \\
		-a & 0	
	\end{array}\right]
\end{equation*} 
at $E_0$ of system \eqref{m3n1}, obviously, $E_0$ is a saddle since $D_0=a<0$.

The Jacobian matrix at $E_l$ is 
\begin{equation*}
	J_l=
	\left[\begin{array}{cc}
		0 & 1 \\
		2a & b\sqrt{-a}	
	\end{array}\right]
\end{equation*}
implying that $T_l: ={\rm tr}J_l=b\sqrt{-a}>0$ and $D_l: ={\rm det}J_l=-2a>0$. It is clear that $E_l$ is an anti-saddle.  $E_l$ is an unstable focus since $\Delta_l: =T_l^2-4D_l=(8-b^2)a<0$.

The Jacobian matrix at $E_r$ is 
\begin{equation*}
	J_r=
	\left[\begin{array}{cc}
		0 & 1 \\
		2a & -b\sqrt{-a}	
	\end{array}\right]
\end{equation*}
implying that $T_r: ={\rm tr}J_r=-b\sqrt{-a}<0	$ and $D_r: ={\rm det}J_r=-2a>0$. It is clear that $E_r$ is an anti-saddle.  $E_r$ is a stable focus since $\Delta_r: =T_r^2-4D_r=(8-b^2)a<0$.
\end{proof}

\begin{lm}
\label{lm:halfplane-reversible}
For either system \eqref{m3n1} or \eqref{m5n1}, let \(\Gamma\) be a
periodic orbit or a Jordan homoclinic loop whose interior contains an
equilibrium \(E\) with positive \(x\)-coordinate but contains no equilibrium
with negative \(x\)-coordinate. Then the interior of \(\Gamma\) is contained
in \(x>0\). The analogous statement holds with left and right interchanged.
\end{lm}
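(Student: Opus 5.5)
The plan is to exploit the reversibility of both systems. Systems \eqref{m3n1} and \eqref{m5n1} have $g$ odd and $f(x)=bx$ odd, so they are invariant under
\[
\mathcal R:(x,y,t)\mapsto(-x,y,-t).
\]
Hence $\mathcal R$ maps orbits to orbits with reversed time, and maps periodic orbits and homoclinic loops to objects of the same kind. The argument itself only uses that the $y$-axis $\{x=0\}$ is transversal to the flow except at the origin: on $x=0$ one has $\dot x=y$, so orbits cross the $y$-axis from left to right for $y>0$ and from right to left for $y<0$.

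First, suppose $\Gamma$ meets the half-plane $x\le 0$. I will show that $\Gamma$ then crosses the $y$-axis at points other than the origin.

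\textbf{Case (a): $\Gamma$ does not pass through $O$.} Its interior contains $E$ with $x_E>0$. If $\Gamma$ had a point with $x<0$, then $\Gamma$, being a connected closed curve, intersects $\{x=0\}$ in at least two points, and transversally by the sign of $\dot x=y$ there.

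\textbf{Case (b): $\Gamma$ is a homoclinic loop through $O$.} Here $O$ must be a saddle, which happens when $a<0$, or in the degenerate cases. I will use the local separatrix directions at $O$ to see whether $\Gamma$ enters $x<0$.

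Second, I will use the crossing points to produce a contradiction. Let $\Gamma\cap\{x=0\}$ contain points $(0,y_1)$ with $y_1>0$ and $(0,y_2)$ with $y_2<0$. Then $\Gamma\cap\{x\le0\}$ consists of arcs. Each such arc leaves the left half-plane through the upper half of the $y$-axis and enters through the lower half. Consider one such arc $\sigma$ together with the segment $S$ of the $y$-axis joining its endpoints. The Jordan curve $\sigma\cup S$ bounds a region $U$ in $x\le0$. The key step is to show that $U$ contains an equilibrium $E'$ with $x<0$; this contradicts the hypothesis once I check that $U\subset\operatorname{int}\Gamma$.

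To find $E'$, I will use index theory. On $S$ the flow crosses in one direction in the upper half and in the other direction in the lower half. Consider the vector field along $\partial U$: it is tangent along $\sigma$, while along $S$ it is $(y,-g(0))=(y,0)$, which is horizontal. Computing the rotation of the field along $\partial U$ gives total index $+1$. Hence $U$ contains equilibria of total index $1$, and in particular $U$ contains an equilibrium. Since $g(0)=0$ makes $O$ an equilibrium, I must make sure $O\notin U$ except as a boundary point in case (b). Interior equilibria of $U$ lie on $\{y=0,\ x<0\}$, which gives the contradiction.

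Third, I will check that $U\subset\operatorname{int}\Gamma$. This follows because $S$ lies inside the closed region bounded by $\Gamma$. Indeed, the segment of the $y$-axis between two consecutive crossings of $\Gamma$ lies in $\operatorname{int}\Gamma$ when the crossings are adjacent along the axis. I will choose the pair of crossings adjacent on the axis so that this holds.

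The symmetric statement, with left and right interchanged, follows by applying $\mathcal R$.

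The main obstacle is the index computation along $\partial U$ with the corners at the endpoints of $S$, together with the homoclinic case where $O\in\Gamma$. In the homoclinic case, $S$ degenerates at $O$. I would first try to handle it by perturbing to a nearby orbit inside the loop: by the Poincar\'e--Bendixson structure there is such an orbit, or a periodic orbit arbitrarily close to it, surrounding $E$. If no such orbit exists, I would instead use the local saddle sector structure to show directly that the loop leaves $O$ into $x>0$ on both separatrices.
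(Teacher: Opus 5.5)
\textbf{The gap is in your index step, and it is essential.} Because $S$ joins a crossing with $y_1>0$ to one with $y_2<0$, the origin always lies on $S$. There $X=(y,0)$ vanishes, so the index of $X$ along $\partial U$ is undefined.

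Detour around $O$ by a small half-circle in $x<0$ and traverse $\partial U$ in the flow direction on $\sigma$ (clockwise). The rotation index theorem then gives the following turning of the field:
\begin{itemize}
\item along $\sigma$ the field turns by exactly $-\pi$;
\item on the two pieces of $S$ it is constant (east, resp.\ west);
\item on the half-circle it is vertical only at $(-\epsilon,0)$, where $X=(0,-g(-\epsilon))$.
\end{itemize}
So the modified boundary has index $1$ when $g>0$ just left of $0$. This is the saddle case: \eqref{m3n1} with $a<0$, and $\mathcal G_3\cup\mathcal G_4$.

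It has index $0$ when $g<0$ just left of $0$, i.e.\ when $O$ is a center, which is exactly $\mathcal G_5$ and $\mathcal G_6$. There your argument produces no equilibrium in $x<0$. Index theory alone cannot exclude, say, a cycle around $O,E_{r2},E_{r1}$ in $\mathcal G_6$, whose index sum is $1$.

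Your claim that the argument ``only uses that the $y$-axis is transversal except at the origin'' is in fact refuted by $\dot x=y$, $\dot y=-x(x-1)(x-2)$. For $h>1/4$ the level curve $y^2/2+x^2(x/2-1)^2=h$ is a periodic orbit whose interior contains $(1,0)$, $(2,0)$, $O$ and points with $x<0$, although no equilibrium has $x<0$. The statement is false without reversibility, so any proof must use it essentially.

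\textbf{The missing idea is to use $\mathcal R$ directly, which is what the paper does.}
\begin{itemize}
\item If $\Gamma$ meets $x=0$ at a regular point $z=(0,y_0)$, then $\mathcal R z=z$.
\item Uniqueness gives $\mathcal R\phi_t(z)=\phi_{-t}(z)$, so $\Gamma$ and its Jordan interior are $\mathcal R$-invariant.
\item Hence the interior contains $\mathcal R E=(-x_E,0)$, contradicting the hypothesis.
\item Therefore $\Gamma$ meets the $y$-axis at most at $O$. So $\Gamma\setminus\{O\}$, which is connected, lies in one open half-plane, necessarily $x>0$ since $E$ is inside; the interior then does too.
\end{itemize}

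This treats periodic orbits and homoclinic loops uniformly. It also replaces your case (b), whose fallback does not work as stated: orbits inside a homoclinic loop around a focus or node typically spiral and need not be periodic. So there is no nearby periodic orbit to pass to.
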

\begin{proof}
	Both systems \eqref{m3n1} and \eqref{m5n1} are odd in $x$, so the flow is reversible under the involution
	$$\mathcal R(x,y)=(-x,y).$$
	A regular intersection $z=(0,y_0)$ of $\Gamma$ with the $y$-axis is fixed by $\mathcal R$.
	By uniqueness $$\mathcal R\phi_t(z)=\phi_{-t}(z),$$ hence $\Gamma$ and its Jordan interior are $\mathcal R$-invariant.
	The interior would then contain $\mathcal RE=(-x_E,0)$ together with $E=(x_E,0)$, contradicting the hypothesis.
	Thus no regular point of $\Gamma$ lies on $x=0$. A homoclinic loop minus its saddle is connected and confined to $x>0$ (resp., $x<0$) depending on which side contains the equilibrium.
\end{proof}

\begin{lm}
System \eqref{m3n1} admits neither a limit cycle nor a homoclinic loop that encloses only the equilibrium $E_r$ $($resp., $E_l$ $)$.
\label{lm:m3n1lc}
\end{lm}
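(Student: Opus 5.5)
The natural tool here is a Dulac function (equivalently, the divergence integrated over the region bounded by the cycle), combined with the half-plane localization from Lemma~\ref{lm:halfplane-reversible}. We treat $E_r$; the statement for $E_l$ then follows from the reversibility $\mathcal R(x,y)=(-x,y)$ with time reversal, since $\mathcal R$ maps orbits around $E_r$ to orbits around $E_l$.

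Suppose, by contradiction, that $\Gamma$ is either a periodic orbit or a homoclinic loop (necessarily based at the saddle $E_0$, the only saddle by Lemma~\ref{lm:m3n1e}) whose interior $D$ contains $E_r$ but not $E_l$. By Lemma~\ref{lm:halfplane-reversible}, $D\subset\{x>0\}$. For a homoclinic loop through $E_0=(0,0)$, the closure of $D$ meets $x=0$ only at the saddle itself.

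The vector field of \eqref{m3n1} is $X=(y,\,-(ax+x^3)-bxy)$, with divergence
\[
\operatorname{div}X=-bx .
\]
Since $b>0$, this is strictly negative on $\{x>0\}$. By Green's theorem,
\[
\oint_\Gamma\bigl(P\,{\rm d}y-Q\,{\rm d}x\bigr)=\iint_D \operatorname{div}X\,{\rm d}x\,{\rm d}y .
\]
The left-hand side vanishes along an orbit, because $P\,{\rm d}y-Q\,{\rm d}x=(PQ-QP)\,{\rm d}t=0$. The right-hand side is strictly negative, since $D$ is a nonempty open subset of $\{x>0\}$. This contradiction rules out both cases.

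For the homoclinic loop, the boundary passes through an equilibrium. Green's theorem still applies, because $\Gamma$ is a piecewise-smooth Jordan curve: it is the closure of a single orbit together with the point $E_0$. The line integral over the regular part is zero pointwise, and the single point contributes nothing.

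The main point to check carefully is the localization $D\subset\{x>0\}$. Without it, the divergence changes sign and the argument collapses. Lemma~\ref{lm:halfplane-reversible} supplies exactly this, so the remaining work is only to confirm its hypotheses: $\Gamma$ is a Jordan curve whose interior contains $E_r$ and not $E_l$, and $E_0$ lies on $\Gamma$ rather than inside it. The last condition holds because an orbit enclosing the saddle $E_0$ would, by index considerations (indices must sum to $1$), have to enclose both foci as well.
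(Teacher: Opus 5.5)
Your proposal is correct and follows essentially the same route as the paper: you use Lemma~\ref{lm:halfplane-reversible} to confine the interior to $\{x>0\}$, then combine the fixed sign of $\operatorname{div}X=-bx$ there with Green's formula, observing that the boundary flux vanishes along an orbit. The differences are minor. The paper invokes Bendixson's criterion for periodic orbits, treats the homoclinic loop by puncturing a small disk around the saddle and passing to the limit, and handles $E_l$ directly via $\operatorname{div}X>0$ on $\{x<0\}$; you instead apply Green's theorem to the loop as a piecewise-smooth Jordan curve (justified by hyperbolicity of $E_0$) and obtain the $E_l$ case by reversibility, which is equally valid.
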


\begin{proof}
By Lemma~\ref{lm:halfplane-reversible}, a periodic orbit or Jordan
homoclinic loop enclosing only \(E_r\) has its interior in \(x>0\), while
the left-hand counterpart has its interior in \(x<0\). The divergence is
\[
{\rm div}(y,-\bar g(x)-\bar f(x)y)=-bx,
\]
which is strictly negative in \(x>0\) and strictly positive in \(x<0\).
The Bendixson criterion therefore excludes periodic orbits in either
half-plane.

For a Jordan homoclinic loop, let $D$ denote its bounded interior.
On $\partial D$ the vector field $X=(y,-\bar g-\bar f y)$ is tangent to the orbit,
so $X\cdot n=0$ on $\partial D$.
By Green's formula, applied to $D$ punctured by a small disk around the saddle
and passing to the limit, we obtain
\[
0=\int_{\partial D}X\cdot n\,{\rm d}s
=\iint_D \operatorname{div}X\,{\rm d}x\,{\rm d}y,
\]
which is impossible because $\operatorname{div}X$ is of constant sign and
nonvanishing in $D$. Hence no such homoclinic loop exists.
\end{proof}

\begin{proof}[{\bf Proof of Corollary  \ref{cf-cor2}}]
The proof will be divided into the following two steps.\\
{\bf Step 1: Sufficiency.} 
It is easy to check that $P_{\infty}$ is a center at infinity of system \eqref{m3n1} by applying Theorem \ref{cf-thm2}.
\\
{\bf Step 2: Necessity.}
Consider $(m,n)=(3,1)$. According to Theorem \ref{cf-thm2}, we can obtain that system \eqref{ls} where $P_{\infty}$ is a center at infinity can be written as
\begin{equation}
\dot{x}=y,\qquad
\dot{y}=-(a_1x+a_2x^3)-b_1xy,
\label{mn31}
\end{equation}  
where $a_2>b_1^2/8$, $b_1\neq0$ and $a_1,b_1\in\mathbb{R}$.
With a scaling 
\[
(x,y,t)\to\left(\left(\frac{a_2}{b_1^2}\right)^{\frac{1}{4}}b_1x, b_1y, \left(\frac{a_2}{b_1^2}\right)^{\frac{1}{4}}t\right),
\]
system \eqref{mn31} can be changed into \eqref{m3n1} where $a=a_1(b_1^2/a_2)^{1/2}$,  $b=(b_1^2/a_2)^{1/2}$ and $0<b<2\sqrt{2}$.
For simplicity, we will next divide our analysis of the global phase portraits in the Poincar\'{e} disc  into two parts.

First, consider $a\geq0$.
By Lemma \ref{lm:m3n1gc}, we can obtain that  system \eqref{m3n1} has a global center at the origin, as shown in Fig. \ref{fig:m3n1}(a).

Second, consider $a<0$.
 Lemma \ref{lm:m3n1e} tells us that there exist three equilibria, where $E_l$ is an unstable focus, $E_0$ is a saddle, and $E_r$ is a stable focus. Since the index of all equilibria surrounded by a closed orbit is 1, 
 our focus will be on investigating the existence and number of limit cycles surrounding only $E_l$, $E_r$, or all three equilibria. 
According to Lemma \ref{lm:m3n1lc}, there are no limit cycles or homoclinic loops surrounding  only $E_l$, or $E_r$. 
Moreover, since $P_\infty$ is a center at infinity, there must  exist closed orbits surrounding the three equilibria. 
Furthermore,  we can conclude that one of the unstable manifolds of $E_0$ connects $E_r$, and one of the stable manifolds  of $E_0$  connects $E_l$. 
By Lemma \ref{C3}, we can obtain that the orbit  of system \eqref{m3n1} that intersects both the positive and negative halves of the $y$-axis must be closed.
Then, the remaining unstable manifold must connect the remaining stable manifold.
Therefore,  we can obtain the global phase portraits in the Poincar\'{e} disc, as shown in Fig.\ref{fig:m3n1}(b).
\end{proof}
To give the global phase portraits in the Poincar\'{e} disc of system \eqref{m5n1}, some lemmas are presented.
%Then, we consider the local dynamics of system \eqref{m5n1}.
 We can define the functions $\hat{g}(x)=ax+cx^3+x^5$ and $\hat{f}(x)=bx$ to aid our analysis.
\begin{lm}
 In $\mathcal{G}_1$, system \eqref{m5n1} has a global center at the origin. 
 \label{lm:m5n1gc}
\end{lm}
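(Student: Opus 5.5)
The plan is to mirror the proof of Lemma~\ref{lm:m3n1gc} and verify the hypotheses of Corollary~\ref{CLZ} (Corollary 3 of \cite{CLZ}) for $\hat g(x)=ax+cx^3+x^5$ and $\hat f(x)=bx$. Both functions are odd. Condition {\bf(iii)} holds because $m=5>2n+1=3$ and $a_m=1>0$. The remaining conditions are {\bf(i)}, $x\hat g(x)>0$ for $x\neq0$, together with either {\bf(ii)} (linear type) or {\bf(ii*)} (nilpotent type) at the origin. I would check these on each of the four pieces that make up $\mathcal G_1$.

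For condition {\bf(i)}, note that $x\hat g(x)=x^2(a+cx^2+x^4)$. It therefore suffices that the quadratic $p(u)=u^2+cu+a$ be positive for $u>0$. I would check the four pieces as follows.
\begin{itemize}
\item If $c>0$ and $a>0$, every coefficient of $p$ is positive.
\item If $c=0$ and $a\ge0$, then $p(u)=u^2+a>0$ for $u>0$.
\item If $c<0$ and $a>c^2/4$, the discriminant $c^2-4a$ is negative, so $p$ has no real roots and $p>0$.
\item If $a=0$ and $c>b^2/8>0$, then $p(u)=u(u+c)>0$ for $u>0$.
\end{itemize}

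For the condition at the origin, I would separate the cases by whether $a$ vanishes. When $a>0$, the leading exponent is $k=1$, $s=1\ge1$ and $a_k=a>0$, so {\bf(ii)} holds. When $a=0$ and $c\neq0$ (which forces $c>b^2/8$), the leading exponent is $k=3=2s+1$ with $s=1$. Then {\bf(ii*)} requires $b_s^2-2(k+1)a_k=b^2-8c<0$, and this is exactly the hypothesis $c>b^2/8$.

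The main obstacle is the degenerate point $a=c=0$ that belongs to $\mathcal G_1$ through the piece ``$c=0,\ a\ge0$''. There $\hat g=x^5$ and $k=5>2s+1=3$, so neither {\bf(ii)} nor {\bf(ii*)} as quoted applies literally. For this point I would treat the origin directly. With $g=x^5$ and $f=bx$, I would show the origin is a monodromic nilpotent point, using the quasi-homogeneous weights for which $x^5$ dominates $bxy$, namely $x$ of weight $1$ and $y$ of weight $3$. The time-reversibility under $(x,y,t)\mapsto(-x,y,-t)$, which holds because $\hat f$ and $\hat g$ are odd, then turns monodromy into a center.

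To get a global center I would also combine this with the global closedness of orbits. Since $xg(x)>0$, we are in case {\bf(C1)}, and with $F$ and $G$ even the identity $F_1\equiv F_2$ holds. Lemma~\ref{C1} together with the same reversibility argument then shows every orbit crossing both half-axes of $y$ is closed. Alternatively, I would note that \cite{CLZ}'s nilpotent criterion with $k>2s+1$, $k$ odd and $a_k>0$ is meant to cover this case, the quoted statement being the range $2<k<2s+1$ or the boundary case. Having established {\bf(i)}–{\bf(iii)} and oddness in every case, Corollary~\ref{CLZ} yields the global center in $\mathcal G_1$.
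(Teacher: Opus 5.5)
Your verification on the generic pieces of $\mathcal G_1$ matches the paper's proof. That covers condition \textbf{(i)} from $x\hat g(x)=x^2(x^4+cx^2+a)$; \textbf{(ii)} when $a>0$; \textbf{(ii*)} with $k=3=2s+1$ and $b^2-8c<0$ when $a=0$, $c>b^2/8$; and \textbf{(iii)} from $m=5>3$, $a_5=1>0$. You are right that the point $a=c=0$, included via ``$c=0$, $a\ge0$'', is not covered. The paper's proof passes over it by asserting that \textbf{(ii*)} holds ``since $c>b^2/8$'' whenever $a=0$, which is false at $c=0$.

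Your repair fails, and no repair is possible. At $a=c=0$ the system is $\dot x=y$, $\dot y=-x^5-bxy$ with $b>0$, and its origin is not monodromic. With weights $(1,3)$, the term $bxy\,\partial_y$ has strictly lower quasi-homogeneous degree than $y\,\partial_x-x^5\,\partial_y$. Putting $x=\lambda X$, $y=\lambda^3Y$ and rescaling time gives $X'=Y$, $Y'=-X^5-b\lambda^{-1}XY$, so $bxy$ dominates instead of being dominated. More generally, for every weight pair $(p,3p)$ that balances $y\,\partial_x$ against $x^5\partial_y$, the term $xy\,\partial_y$ has degree $p<2p$. Andreev's classification of nilpotent points (the nilpotent classification the paper invokes in Lemma~\ref{lm:m5n1g2}) applies with $F(x)=-x^5$ and $G(x)=-bx$, where the exponent $1$ is odd and $5>2\cdot1+1$. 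It gives one elliptic and one hyperbolic sector at the origin. Without monodromy the reversibility step yields nothing. Lemma~\ref{C1} only closes the orbits crossing both half-axes far out, not the orbits of the elliptic sector, which are homoclinic to the origin.

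Your fallback reading of Corollary~\ref{CLZ} is also backwards. The range $k>2s+1$ is exactly the non-monodromic one. The origin is nilpotent here and the corollary is an equivalence, so the failure of \textbf{(ii*)} for $k=5$, $s=1$ shows that there is \emph{no} global center at this parameter. The statement is therefore true only on $\mathcal G_1\setminus\{a=c=0\}$. The piece ``$c=0$, $a\ge0$'' should read ``$c=0$, $a>0$''. The point $a=c=0$ has the phase portrait of $\mathcal G_2$, so $\mathcal G_2$ should be $\{a=0,\ 0\le c\le b^2/8\}$. On that corrected set your argument, like the paper's, is complete.
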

\begin{proof} 
  We will demonstrate that system \eqref{m5n1} has a global center at the origin using \cite[Corollary 3]{CLZ}. Firstly, we establish that system \eqref{m5n1} possesses a global center at the origin by verifying the sufficient conditions outlined in \cite[Corollary 3]{CLZ}. 
  
  Firstly, define the auxiliary function $\hat{g}_1(x)$ such that  $x\hat{g}(x): =x^2\hat{g}_1(x)$. Its derivative is given by ${{\rm d}\hat{g}_1(x)}/{{\rm d}x}=4x(x^2+{c}/{2})$. When $c\geq0$ and $a\geq0$, $\hat{g}_1(x)$ is monotonically decreasing on $(-\infty,0)$ and monotonically increasing on $(0,+\infty)$. Since $\hat{g}_1(0)=a\geq0$, it follows that $\hat{g}_1(x)>0$ for all $x\neq0$. Consequently, $x\hat{g}(x)>0$ for all $x\neq0$. 
  
  On the other hand, when $c<0$ and $c^2-4a<0$, $\hat{g}_1(x)$ exhibits more complex monotonicity: it decreases on $(-\infty,-\sqrt{-2c}/2)$, increases on $(-\sqrt{-2c}/2,0)$, decreases on $(0,\sqrt{-2c}/2)$, and  increases on $(\sqrt{-2c}/2,+\infty)$. Additionally, $\hat{g}_1(-\sqrt{-2c}/2)=a-c^2/4>0$, $\hat{g}_1(0)=a>0$, and $\hat{g}_1(\sqrt{-2c}/2)=a-c^2/4>0$. Hence, $\hat{g}_1(x)>0$ for all $x\in\mathbb{R}$, and again $x\hat{g}(x)>0$ for all $x\neq0$. In both cases, condition {\bf(i)} in \cite[Corollary 3]{CLZ} is satisfied. We now verify the remaining conditions.
  If $a>0$, condition {\bf(ii)} holds directly. If $a=0$, condition {\bf(ii*)} is satisfied since  $c>b^2/8$. Condition {\bf(iii)} is readily verified by inspection. Furthermore, both $\hat f(x)$ and $\hat g(x)$ are odd. Since all hypotheses of \cite[Corollary 3]{CLZ} are met, system \eqref{m5n1} admits a global center at the origin when the parameters lie in $\mathcal{G}_1$. The corresponding global phase portrait is illustrated in Fig. \ref{fig:m5n1}(a).      
\end{proof}
\begin{lm}
 In $\mathcal{G}_2$, system \eqref{m5n1} has a  unique equilibrium  at the origin  and the neighborhood of the origin consists of a hyperbolic sector and an elliptical sector. 
 \label{lm:m5n1g2}
\end{lm}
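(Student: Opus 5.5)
The plan is to reduce the statement to a standard classification of nilpotent singular points, namely Andreev's theorem as presented in Dumortier--Llibre--Art\'es, Theorem 3.5, after two elementary preliminary checks.

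\emph{Step 1: uniqueness of the equilibrium.} For parameters in $\mathcal G_2$ we have $a=0$ and $0<c\le b^2/8$. Hence
\[
\hat g(x)=cx^3+x^5=x^3(x^2+c).
\]
Since $c>0$, the only real zero of $\hat g$ is $x=0$. Because every finite equilibrium has the form $(x_*,0)$ with $\hat g(x_*)=0$, the origin is the unique equilibrium of \eqref{m5n1}.

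\emph{Step 2: the origin is nilpotent and already in normal form.} The Jacobian at the origin is $\begin{pmatrix}0&1\\0&0\end{pmatrix}$, so the origin is a nilpotent singular point. The system
\[
\dot x=y,\qquad \dot y=-cx^3-x^5-bxy
\]
is already in Andreev's normal form
\[
\dot x=y,\qquad \dot y=a_kx^k(1+\cdots)+b_nx^ny(1+\cdots)+y^2(\cdots).
\]
In this normal form the data are
\[
k=3,\qquad a_k=-c<0,\qquad n=1,\qquad b_n=-b\neq0.
\]
We are therefore in the case where $k$ is odd and $k=2n+1$. In that case Andreev's theorem is governed by the sign of
\[
\lambda=b_n^2+4(n+1)a_k=b^2-8c.
\]

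\emph{Step 3: apply Andreev's theorem.} The defining inequality $c\le b^2/8$ of $\mathcal G_2$ is exactly $\lambda\ge0$. Andreev's theorem says that if $k=2n+1$ is odd, $a_k<0$ and $\lambda\ge0$, then the origin is a degenerate point whose neighbourhood consists of exactly one hyperbolic sector and one elliptic sector. This is precisely the claim.

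For comparison, the opposite strict inequality $c>b^2/8$ (with $a=0$) gives $\lambda<0$, which is the center/focus case. That matches the inclusion of $\{a=0,\ c>b^2/8\}$ in $\mathcal G_1$ in Lemma~\ref{lm:m5n1gc}, where condition (ii*) of Corollary~\ref{CLZ} was used.

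\emph{Main obstacle.} The only delicate point is the boundary $\lambda=0$, that is $c=b^2/8$, because some formulations of the theorem state the elliptic--hyperbolic case only for $\lambda>0$. I would first confirm that the version cited includes $\lambda\ge0$. If it does not, I would do a direct quasi-homogeneous blow-up with weights $(1,2)$, namely $x=r\cos\theta$, $y=r^2\sin\theta$, or the directional charts.

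The principal part of the vector field is $(y,\,-cx^3-bxy)$. Its characteristic directions are given by the real roots of $u^2+bu+c=0$ for the slope $u=y/x^2$. At $\lambda=0$ this quadratic has a double root $u=-b/2$. That root produces a single semi-hyperbolic or saddle-node singularity on the blown-up circle. I would resolve it with one further blow-up, or with the center manifold theorem, using the higher-order term $-x^5$ to fix the side on which the node lies.

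Gluing the sectors then yields one elliptic sector, bounded by orbits tangent to $y=-\tfrac b2x^2$, and one hyperbolic sector, as in the case $\lambda>0$.
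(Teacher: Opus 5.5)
Your proposal is correct and is essentially the paper's argument: the paper notes that the equilibrium is unique and then invokes the nilpotent-point classification (Theorem 7.2 of Chapter 2 in \cite{ZDHD}, i.e.\ Andreev's theorem), whose elliptic--hyperbolic case covers the boundary $\lambda=b^2-8c=0$, so no extra blow-up is needed. Two small slips should be fixed: that case of the theorem also requires the exponent $n$ in $b_nx^ny$ to be odd (here $n=1$, so it holds; for even $n$ the theorem gives a node), and in your fallback the characteristic equation for $u=y/x^2$ is $2u^2+bu+c=0$, with double root $u=-b/4$ at $\lambda=0$ (the node side is then fixed by the principal part alone, because the center manifold of the resulting semi-hyperbolic point is the exceptional divisor itself).
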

\begin{proof}
It is easy to check that there is a  unique equilibrium at the origin and we can obtain that the neighborhood of the origin consists of a hyperbolic sector and an elliptical sector by applying \cite[Theorem 7.2 of Chapter 2]{ZDHD}. 
\end{proof}
\begin{lm}
System \eqref{m5n1} exhibits  three  equilibria $E_0=(0,0)$, $E_{l1}=(-\sqrt{(-c+\sqrt{c^2-4a})/2},0)$ and  $E_{r1}=(\sqrt{(-c+\sqrt{c^2-4a})/2},0)$ in $\mathcal{G}_3\cup\mathcal{G}_4\cup\mathcal{G}_5$.
Moreover, in $\mathcal{G}_3\cup\mathcal{G}_4$, $E_0$ is a saddle, $E_{l1}$ is a source and $E_{r1}$ is a sink.
In $\mathcal{G}_5$, $E_0$ is a center, $E_{l1}$ is a saddle-node with an unstable nodal part, and $E_{r1}$ is a saddle-node with a stable nodal part.
\label{lm:m5n1g345}
\end{lm}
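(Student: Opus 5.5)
The plan is a direct local analysis. First locate the equilibria; then classify each one through its Jacobian; and in the degenerate cases fall back on Lemma~\ref{index}, the semi-hyperbolic normal form theorem of \cite{ZDHD}, and the reversibility of \eqref{m5n1}.

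\textbf{Locating the equilibria.} Equilibria satisfy $y=0$ and $\hat g(x)=x(x^4+cx^2+a)=0$. Put $s=\sqrt{c^2-4a}$.
\begin{itemize}
\item In $\mathcal G_3$ we have $a<0$, so $s>|c|$. The quadratic $z^2+cz+a$ then has exactly one positive root, $z_+=(-c+s)/2$.
\item In $\mathcal G_4$ we have $\hat g=x^3(x^2+c)$, and $z_+=-c>0$.
\item In $\mathcal G_5$ we have $\hat g=x(x^2+c/2)^2$, and $z_+=-c/2>0$.
\end{itemize}
In every case $x=\pm\sqrt{z_+}$ together with $x=0$ are the only real zeros of $\hat g$. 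This gives exactly $E_0$, $E_{l1}$, $E_{r1}$. The Jacobian at $(x_*,0)$ is
\[
\begin{pmatrix}0&1\\-\hat g'(x_*)&-bx_*\end{pmatrix}.
\]

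\textbf{The cases $\mathcal G_3\cup\mathcal G_4$.} At a nonzero root, use $x_*^4=-cx_*^2-a$ to rewrite the derivative:
\[
\hat g'(x_*)=a+3cx_*^2+5x_*^4=-4a-2cx_*^2 .
\]
Substituting $x_*^2=(-c+s)/2$ gives $\hat g'(x_*)=s(s-c)$. This is positive in $\mathcal G_3\cup\mathcal G_4$, because there $s>0$ and $s\ge |c|$ with $s-c>0$. Hence the determinant is positive and the trace is $-bx_*$. It follows that $E_{r1}$ is hyperbolic with negative trace, so it is a sink. Likewise $E_{l1}$ is a source.

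At $E_0$ the determinant equals $a$. In $\mathcal G_3$, $a<0$, so $E_0$ is a hyperbolic saddle. In $\mathcal G_4$ we have $\hat g(x)=cx^3+x^5$ with $c<0$, so $-\hat g(x)=-cx^3+\cdots$. This is the case $r=3$, $q_r=-c>0$ of Lemma~\ref{index}, which gives index $-1$. The remark following that lemma then shows that $E_0$ is a saddle. (Alternatively, invoke the nilpotent classification of \cite[Theorem 7.2, Ch.~2]{ZDHD} with $f(x)=bx$ directly.)

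\textbf{The case $\mathcal G_5$.} At $E_0$ the determinant is $a=c^2/4>0$ and the trace is $0$, so $E_0$ is of center--focus type. Since $\hat f$ and $\hat g$ are odd, the system is reversible under $(x,y,t)\mapsto(-x,y,-t)$; this is the involution already used in Lemma~\ref{lm:halfplane-reversible}. Orbits near $E_0$ cross the $y$-axis and are symmetric, hence closed. Therefore $E_0$ is a center.

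At $E_{r1}$ and $E_{l1}$, $x_*$ is a double root, so $\hat g'(x_*)=0$. The Jacobian is then $\begin{pmatrix}0&1\\0&-bx_*\end{pmatrix}$, with eigenvalues $0$ and $-bx_*\neq0$. These equilibria are therefore semi-hyperbolic. Translate $u=x-x_*$ and diagonalize via $v=y+bx_*u$. Then apply \cite[Theorem 7.1, Ch.~2]{ZDHD}. The reduced equation on the center manifold has leading term proportional to $\hat g''(x_*)u^2$, where
\[
\hat g''(x_*)=2x_*\bigl(2x_*\bigr)^2\neq0 .
\]
An even leading power means a saddle-node. The nodal part has the stability of the nonzero eigenvalue $-bx_*$: stable at $E_{r1}$, unstable at $E_{l1}$.

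\textbf{Main obstacle.} The only step requiring care is this center-manifold computation. One must check that the quadratic coefficient survives the coordinate change, i.e.\ that it is a nonzero multiple of $\hat g''(x_*)$ divided by $bx_*$. The index check is a useful safeguard: Lemma~\ref{index} with $r=2$ gives index $0$, which is consistent with a saddle-node.
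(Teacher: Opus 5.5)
Your proposal is correct and follows essentially the same route as the paper. Both use the Jacobian trace and determinant at the three equilibria; your identity $\hat g'(x_*)=s(s-c)$ is exactly the paper's $D_{l1}=D_{r1}=c^2-4a-c\sqrt{c^2-4a}$, and you make explicit the positivity that the paper calls ``clear''. The remaining steps also match: the nilpotent classification \cite[Theorem 7.2, Ch.~2]{ZDHD} for $E_0$ in $\mathcal G_4$ (your route via Lemma~\ref{index} and the following remark is equivalent), reversibility under $(x,y,t)\mapsto(-x,y,-t)$ for the center in $\mathcal G_5$, and \cite[Theorem 7.1, Ch.~2]{ZDHD} for the two semi-hyperbolic points. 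The center-manifold step you flag is exactly what the paper carries out (translation plus shear onto the center direction, time rescaled by the nonzero eigenvalue, implicit function theorem), and its reduced coefficient $\mp2\sqrt{-2c}/b^{2}\neq0$ at $E_{l1}$, $E_{r1}$ is indeed the nonzero multiple $\hat g''(x_*)/(2b^2x_*^2)$ of $\hat g''(x_*)=8x_*^3$.
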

\begin{proof}
 It is clear that system \eqref{m5n1} has  three  equilibria $E_0=(0,0)$, $E_{l1}= (-\sqrt{(-c+\sqrt{c^2-4a})/2},0)$ and $E_{r1}= (\sqrt{(-c+\sqrt{c^2-4a})/2},0)$ in  $\mathcal{G}_3\cup\mathcal{G}_4\cup\mathcal{G}_5$.
Due to the Jacobian matrix 
 \begin{equation*}
 J_0=
 \left[\begin{array}{cc}
	0 & 1 \\
	-a & 0	
	\end{array}\right]
 \end{equation*} 
 at $E_0$ of system \eqref{m5n1}, we obtain that  $T_{0}: ={\rm tr}J_{0}=0$ and $D_{0}: ={\rm det}J_{0}=a$.
In $\mathcal{G}_3$, it is evident that $E_{0}$ is a saddle due to the fact that $D_{0}=a<0$. 
In $\mathcal{G}_4$, $E_0$ is  a saddle according to \cite[Theorem 7.2 of Chapter 2]{ZDHD}. 
In $\mathcal{G}_5$, $E_0$ can either be a center or a focus because $T_0=0$ and $D_0>0$. 
Since  system \eqref{m5n1} is symmetric with respect to the $y$-axis by the transformation $(x,y, t)\to(-x,y,-t)$, it follows that $E_0$ must be a center.
  
 The Jacobian matrix at $E_{l1}$ is 
  \begin{equation*}
 J_{l1}=
 \left[\begin{array}{cc}
	0 & 1 \\
	4a-c^2+c\sqrt{c^2-4a} & b\sqrt{(-c+\sqrt{c^2-4a})/2}	
	\end{array}\right]
 \end{equation*}
  implying that $T_{l1}: ={\rm tr}J_{l1}=b\sqrt{(-c+\sqrt{c^2-4a})/2}$ and $D_{l1}: ={\rm det}J_{l1}=-4a+c^2-c\sqrt{c^2-4a}$. It is clear that $E_{l1}$ is an anti-saddle because $D_{l1}>0$ and  $E_{l1}$ is a source since $T_{l1}>0$ in  $\mathcal{G}_3\cup\mathcal{G}_4$. In $\mathcal{G}_5$, $D_{l1}=0$.
  By the transformation $(x,y,t)\to(x-\sqrt{-c/2}+y/(b\sqrt{-c/2}), y, t/(b\sqrt{-c/2}))$, system \eqref{m5n1} becomes
  {\scriptsize
  \begin{equation}
  \begin{dcases}
  \dot{x}=(\frac{\sqrt{-2c}}{-bc})^2\left[(x+\frac{\sqrt{-2c}}{-bc}y)^2(x+\frac{\sqrt{-2c}}{-bc}y-\frac{\sqrt{-2c}}{2})(x+\frac{\sqrt{-2c}}{-bc}y-\sqrt{-2c})^2+b(x+\frac{\sqrt{-2c}}{-bc}y)y\right]: =P(x,y),\\
  \dot{y}=y-\frac{\sqrt{-2c}}{-bc}\left[(x+\frac{\sqrt{-2c}}{-bc}y)^2(x+\frac{\sqrt{-2c}}{-bc}y-\frac{\sqrt{-2c}}{2})(x+\frac{\sqrt{-2c}}{-bc}y-\sqrt{-2c})^2+b(x+\frac{\sqrt{-2c}}{-bc}y)y\right]: =y+Q(x,y).
  \end{dcases}
  \label{t1}
  \end{equation}}
 By the Implicit Function Theorem there is a unique function $y=\psi(x)$ such that $\psi(0)=0$ and $\psi(x)+Q(x, \psi(x))=0$,
where	 $\psi(x)=-2cx^{2}/b+o(x^{2})$.
	Substituting $y=\psi(x)$ into $P(x,y)$, we obtain that $P(x,y)=-2\sqrt{-2c}x^{2}/{b}^2+o(x^{2})$.
	By \cite[ Theorem 7.1 of Chapter 2]{ZDHD},
	the origin of \eqref{t1} is a saddle-node, so is $E_{l1}$ of \eqref{m5n1}.
	Since $T_{l1}>0$, $E_{l1}$ is a saddle-node with an unstable  nodal part.
	
  The Jacobian matrix at $E_{r1}$ is 
  \begin{equation*}
 J_{r1}=
 \left[\begin{array}{cc}
	0 & 1 \\
	4a-c^2+c\sqrt{c^2-4a} & -b\sqrt{(-c+\sqrt{c^2-4a})/2}	
	\end{array}\right]
 \end{equation*}
  implying that $T_{r1}: ={\rm tr}J_{r1}=-b\sqrt{(-c+\sqrt{c^2-4a})/2}$ and $D_{r1}: ={\rm det}J_{r1}=-4a+c^2-c\sqrt{c^2-4a}$. It is clear that $E_{r1}$ is an anti-saddle because $D_{r1}>0$ and  $E_{r1}$ is a sink since $T_{r1}<0$ in  $\mathcal{G}_3\cup\mathcal{G}_4$.
  In $\mathcal{G}_5$, $D_{r1}=0$.
   By the transformation $(x,y,t)\to(x+\sqrt{-c/2}+y/(-b\sqrt{-c/2}), y, t/(-b\sqrt{-c/2}))$, system \eqref{m5n1} becomes
{\scriptsize
  \begin{equation}
  \begin{dcases}
  \dot{x}=-(\frac{\sqrt{-2c}}{-bc})^2\left[(x-\frac{\sqrt{-2c}}{-bc}y)^2(x-\frac{\sqrt{-2c}}{-bc}y+\frac{\sqrt{-2c}}{2})(x-\frac{\sqrt{-2c}}{-bc}y+\sqrt{-2c})^2+b(x-\frac{\sqrt{-2c}}{-bc}y)y\right]: =P(x,y),\\
  \dot{y}=y+\frac{\sqrt{-2c}}{-bc}\left[(x-\frac{\sqrt{-2c}}{-bc}y)^2(x-\frac{\sqrt{-2c}}{-bc}y+\frac{\sqrt{-2c}}{2})(x-\frac{\sqrt{-2c}}{-bc}y+\sqrt{-2c})^2+b(x-\frac{\sqrt{-2c}}{-bc}y)y\right]: =y+Q(x,y).
  \end{dcases}
  \label{t2}
  \end{equation}}
   By the Implicit Function Theorem there is a unique function $y=\psi(x)$ such that $\psi(0)=0$ and $\psi(x)+Q(x, \psi(x))=0$,
where	 $\psi(x)=2cx^{2}/b+o(x^{2})$.
	Substituting $y=\psi(x)$ into $P(x,y)$, we obtain that $P(x,y)=2\sqrt{-2c}x^{2}/{b}^2+o(x^{2})$.
	By \cite[ Theorem 7.1 of Chapter 2]{ZDHD},
	the origin of \eqref{t2} is a saddle-node, so is $E_{r1}$ of \eqref{m5n1}.
	Since $T_{r1}<0$, $E_{r1}$ is a saddle-node with a stable nodal part.
\end{proof}
\begin{lm}
System \eqref{m5n1} exhibits   five equilibria $E_0=(0,0)$, $E_{l1}=(-\sqrt{(-c+\sqrt{c^2-4a})/2},0)$, $E_{l2}=(-\sqrt{(-c-\sqrt{c^2-4a})/2},0)$, $E_{r2}=(\sqrt{(-c-\sqrt{c^2-4a})/2},0)$ and $E_{r1}=(\sqrt{(-c+\sqrt{c^2-4a})/2},0)$ in $\mathcal{G}_6$.
Moreover, $E_0$ is a center, $E_{l2}$ and $E_{r2}$ are  saddles, $E_{l1}$ is a source and $E_{r1}$ is a sink.
\label{lm:m5n1g6}
\end{lm}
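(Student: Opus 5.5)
The plan is to mirror the proof of Lemma~\ref{lm:m5n1g345}, working with $\hat g(x)=x(x^4+cx^2+a)$ and $\hat f(x)=bx$.

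\textbf{Step 1: locating the equilibria.} In $\mathcal G_6$ we have $c<0$ and $0<a<c^2/4$. Hence the quadratic $u^2+cu+a$ has two distinct positive roots
\[
u_\pm=\frac{-c\pm\sqrt{c^2-4a}}{2}.
\]
It follows that $\hat g$ has exactly five simple real zeros, $0,\pm\sqrt{u_-},\pm\sqrt{u_+}$. These give the five equilibria $E_0,E_{l1},E_{l2},E_{r2},E_{r1}$ listed in the statement, all lying on the $x$-axis.

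\textbf{Step 2: the Jacobians.} At an equilibrium $(x_*,0)$ the Jacobian is
\[
\begin{pmatrix}0&1\\-\hat g'(x_*)&-bx_*\end{pmatrix},
\qquad
T=-bx_*,\qquad D=\hat g'(x_*).
\]
For $x_*^2=u$ a root, $\hat g'(x_*)=x_*^2\,(2u+c)\cdot 2=2u(2u+c)$, and $2u_\pm+c=\pm\sqrt{c^2-4a}$. This gives the following signs.
\begin{itemize}
\item At $E_{l2}$ and $E_{r2}$ (where $u=u_-$) we get $D<0$, so these are hyperbolic saddles.
\item At $E_{l1}$ and $E_{r1}$ (where $u=u_+$) we get $D>0$, so these are anti-saddles. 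At $E_{l1}$, $x_*<0$ gives $T=b\sqrt{u_+}>0$, so $E_{l1}$ is a source. At $E_{r1}$, $T<0$, so $E_{r1}$ is a sink.
\item At $E_0$, $T=0$ and $D=a>0$, so $E_0$ is either a linear center or a focus.
\end{itemize}

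\textbf{Step 3: center versus focus at $E_0$.} This is the only step that is not a routine computation. I would resolve it exactly as in Lemma~\ref{lm:m5n1g345}: system \eqref{m5n1} is invariant under the reversibility $(x,y,t)\mapsto(-x,y,-t)$, because $\hat g$ and $\hat f$ are odd. The mechanism is the one in the proof of Lemma~\ref{lm:halfplane-reversible}. Take an orbit starting at a point $(0,y_0)$ with $y_0$ small. Near $E_0$ the flow rotates, so this orbit meets the $y$-axis again. Its reflection under $\mathcal R(x,y)=(-x,y)$ is also an orbit, and this closes the curve. Hence every orbit near $E_0$ is periodic, and $E_0$ is a center. (Alternatively one could invoke \cite[Corollary 3]{CLZ} locally, since $x\hat g(x)>0$ near $0$, $a>0$, and $\hat f$ is odd.)
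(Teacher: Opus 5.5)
Your proposal is correct and follows the paper's proof essentially line for line. Both arguments locate the five simple zeros of $\hat g$ and read off saddle, source and sink from the signs of $T=-bx_*$ and $D=\hat g'(x_*)=2u(2u+c)$, which agree with the paper's $D_{l1},D_{l2}$. Both then use the reversibility $(x,y,t)\mapsto(-x,y,-t)$ to upgrade the linear center/focus at $E_0$ to a center.

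You should drop the parenthetical alternative. Corollary~\ref{CLZ} is a global criterion, and its hypothesis $x\hat g(x)>0$ for all $x\neq0$ fails in $\mathcal G_6$: the product is negative for $u_-<x^2<u_+$. So it cannot be invoked as stated.
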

\begin{proof}
 In $\mathcal{G}_6$, system \eqref{m5n1} has five equilibria $E_0=(0,0)$, $E_{l1}=(-\sqrt{(-c+\sqrt{c^2-4a})/2},0)$, $E_{l2}=(-\sqrt{(-c-\sqrt{c^2-4a})/2},0)$, $E_{r2}=(\sqrt{(-c-\sqrt{c^2-4a})/2},0)$ and $E_{r1}=(\sqrt{(-c+\sqrt{c^2-4a})/2},0)$.
Due to the Jacobian matrix 
 \begin{equation*}
 J_0=
 \left[\begin{array}{cc}
	0 & 1 \\
	-a & 0	
	\end{array}\right]
 \end{equation*} 
 at $E_0$ of system \eqref{m5n1}, we obtain that  $T_{0}: ={\rm tr}J_{0}=0$ and $D_{0}: ={\rm det}J_{0}=a$.
  $E_0$ is a center or a focus	because $T_0=0$ and $D_0>0$.
  Since system \eqref{m5n1} is reversible with respect to the $y$-axis under the transformation $(x,y,t)\to(-x,y,-t)$, $E_0$ must be a center.
 The Jacobian matrix at $E_{l1}$ is 
  \begin{equation*}
 J_{l1}=
 \left[\begin{array}{cc}
	0 & 1 \\
	4a-c^2+c\sqrt{c^2-4a} & b\sqrt{(-c+\sqrt{c^2-4a})/2}	
	\end{array}\right]
 \end{equation*}
  implying that $T_{l1}: ={\rm tr}J_{l1}=b\sqrt{(-c+\sqrt{c^2-4a})/2}$ and $D_{l1}: ={\rm det}J_{l1}=-4a+c^2-c\sqrt{c^2-4a}$. It is clear that $E_{l1}$ is an anti-saddle because $D_{l1}>0$.  $E_{l1}$ is a source since $T_{l1}>0$.
   The Jacobian matrix at $E_{l2}$ is 
  \begin{equation*}
 J_{l2}=
 \left[\begin{array}{cc}
	0 & 1 \\
	4a-c^2-c\sqrt{c^2-4a} & b\sqrt{(-c-\sqrt{c^2-4a})/2}	
	\end{array}\right]
 \end{equation*}
  implying that $T_{l2}: ={\rm tr}J_{l2}=b\sqrt{(-c-\sqrt{c^2-4a})/2}$ and $D_{l2}: ={\rm det}J_{l2}=-4a+c^2+c\sqrt{c^2-4a}$. It is clear that $E_{l2}$ is a saddle because $D_{l2}<0$. 
 The Jacobian matrix at $E_{r2}$ is 
  \begin{equation*}
 J_{r2}=
 \left[\begin{array}{cc}
	0 & 1 \\
	4a-c^2-c\sqrt{c^2-4a} & -b\sqrt{(-c-\sqrt{c^2-4a})/2}	
	\end{array}\right]
 \end{equation*}
  implying that $T_{r2}: ={\rm tr}J_{r2}=-b\sqrt{(-c-\sqrt{c^2-4a})/2}$ and $D_{r2}: ={\rm det}J_{r2}=-4a+c^2+c\sqrt{c^2-4a}$. It is clear that $E_{r2}$ is a saddle because $D_{r2}<0$. 
  The Jacobian matrix at $E_{r1}$ is 
  \begin{equation*}
 J_{r1}=
 \left[\begin{array}{cc}
	0 & 1 \\
	4a-c^2+c\sqrt{c^2-4a} & -b\sqrt{(-c+\sqrt{c^2-4a})/2}	
	\end{array}\right]
 \end{equation*}
  implying that $T_{r1}: ={\rm tr}J_{r1}=-b\sqrt{(-c+\sqrt{c^2-4a})/2}$ and $D_{r1}: ={\rm det}J_{r1}=-4a+c^2-c\sqrt{c^2-4a}$. It is clear that $E_{r1}$ is an anti-saddle because $D_{r1}>0$.  $E_{r1}$ is a sink since $T_{r1}<0$.
\end{proof}

\begin{lm}
In $\mathcal{G}_3\cup\mathcal{G}_4\cup\mathcal{G}_6$, system \eqref{m5n1} has neither limit cycle  nor homoclinic loop only surrounding  $E_{l1}$ (resp., $E_{r1}$).
\label{lm:m5n1lc}
\end{lm}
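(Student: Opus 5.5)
The plan is to copy the proof of Lemma~\ref{lm:m3n1lc}: first confine any such orbit to one half-plane, then rule it out with a Bendixson-type divergence argument. The vector field of \eqref{m5n1} is $X=(y,-\hat g(x)-\hat f(x)y)$, with divergence
\[
\operatorname{div}X=-bx .
\]
Since $b>0$, this is strictly negative in $\{x>0\}$ and strictly positive in $\{x<0\}$.

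\textbf{Step 1 (confinement).} Let $\Gamma$ be a periodic orbit or a Jordan homoclinic loop surrounding only $E_{r1}$; the case of $E_{l1}$ is symmetric. By Lemmas~\ref{lm:m5n1g345} and~\ref{lm:m5n1g6}, the equilibria in $\mathcal G_3\cup\mathcal G_4\cup\mathcal G_6$ are $E_0$, $E_{l1}$, $E_{r1}$, and in $\mathcal G_6$ also $E_{l2}$, $E_{r2}$.

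\begin{itemize}
\item ``Surrounding only $E_{r1}$'' means that the interior of $\Gamma$ contains $E_{r1}$ and no other equilibrium. In particular it contains no equilibrium with negative $x$-coordinate.
\item If $\Gamma$ is a homoclinic loop, its saddle must be $E_0$ or $E_{r2}$, since these are the saddles available on the relevant side.
\item Lemma~\ref{lm:halfplane-reversible} applies to \eqref{m5n1}, because $\hat f$ and $\hat g$ are odd and so the system is reversible under $(x,y,t)\mapsto(-x,y,-t)$. It gives that the interior $D$ of $\Gamma$ lies in $\{x>0\}$, apart possibly from the single boundary point $E_0$ in the homoclinic case.
\end{itemize}

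\textbf{Step 2 (periodic orbits).} For a periodic orbit, apply the Bendixson criterion to the simply connected region $\{x>0\}$. The divergence has a fixed sign there and does not vanish identically, so no closed orbit can lie in that half-plane.

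\textbf{Step 3 (homoclinic loops).} For a Jordan homoclinic loop, proceed as in Lemma~\ref{lm:m3n1lc}.
\begin{itemize}
\item The field is tangent to $\partial D$ away from the saddle.
\item Apply Green's formula on $D$ minus a small disk around the saddle, and let the radius tend to zero. The boundary contribution from the arc of radius $\rho$ is $O(\rho)$ because $X$ is bounded, so it vanishes in the limit.
\item This gives $0=\iint_D\operatorname{div}X\,dx\,dy=-b\iint_D x\,dx\,dy<0$, a contradiction.
\end{itemize}

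\textbf{Main obstacle.} The only delicate point is the homoclinic loop through $E_0$, which lies on the axis $x=0$. There, Lemma~\ref{lm:halfplane-reversible} only yields that the loop minus the saddle lies in $\{x>0\}$. This is still sufficient: $D$ is an open set contained in $\{x>0\}$ with positive area, so the integral is still strictly negative.

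I would also check that degenerate saddles, such as $E_0$ in $\mathcal G_4$, cause no trouble in Step~3. They do not, because the Green argument only uses continuity and boundedness of $X$ near the saddle.
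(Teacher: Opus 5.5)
Your proposal is correct and is essentially the paper's own proof. Both confine the orbit to one half-plane via Lemma~\ref{lm:halfplane-reversible}, then use $\operatorname{div}X=-bx$ with the Bendixson criterion to exclude periodic orbits and the same Green-formula flux argument as in Lemma~\ref{lm:m3n1lc} to exclude Jordan homoclinic loops. Your extra remarks (the loop through $E_0$ on the axis, the degenerate saddle in $\mathcal G_4$, and the fact that in $\mathcal G_6$ the relevant saddle is $E_{r2}$ because $E_0$ is a center there) only make explicit details the paper leaves implicit.
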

\begin{proof}
By Lemma~\ref{lm:halfplane-reversible}, any periodic orbit or Jordan
homoclinic loop enclosing only \(E_{r1}\) has its interior in \(x>0\);
the left-hand counterpart has its interior in \(x<0\). For every parameter
region under consideration,
\[
{\rm div}(y,-\hat g(x)-\hat f(x)y)=-bx,
\]
so the divergence is strictly negative in \(x>0\) and strictly positive in
\(x<0\). The Bendixson criterion excludes periodic orbits. The same Green
formula argument as in Lemma~\ref{lm:m3n1lc} excludes Jordan homoclinic
loops. Hence no such orbit can enclose only \(E_{r1}\) or only \(E_{l1}\).
\end{proof}

\begin{proof}[{\bf Proof of Corollary \ref{cf-cor3}}]
The proof will be divided into the following two steps.\\
{\bf Step 1: Sufficiency.} 
It is easy to check that $P_{\infty}$ is a center at infinity of system \eqref{m5n1} by applying Theorem \ref{cf-thm2}.
\\
{\bf Step 2: Necessity.}
Consider $(m,n)=(5,1)$. According to Theorem \ref{cf-thm2}, we can obtain that system \eqref{ls} where $P_{\infty}$ is a center at infinity can be written as
\begin{equation}
\dot{x}=y,\qquad
\dot{y}=-(a_1x+a_2x^3+a_3x^5)-b_1xy,
\label{mn51}
\end{equation} 
where $a_3>0$, $b_1\neq0$ and $a_1, a_2\in\mathbb{R}$.
With a scaling 
\[
(x,y,t)\to\left(\left(\frac{a_3}{b_1^4}\right)^{\frac{1}{6}}b_1x, b_1y, \left(\frac{a_3}{b_1^4}\right)^{\frac{1}{6}}t\right),
\]
system \eqref{mn51} can be changed into \eqref{m5n1} where 
\[
a=a_1\left(\frac{b_1^4}{a_3}\right)^{1/3},
\qquad
c=a_2\left(\frac{b_1^2}{a_3^2}\right)^{1/3},
\qquad
b=\left(\frac{b_1^4}{a_3}\right)^{1/3}.
\]
For simplicity, we will divide our analysis into five parts.

In $\mathcal{G}_1$, we can obtain that  system \eqref{m5n1} has a global center at the origin 
by Lemma \ref{lm:m5n1gc}, as shown in Fig. \ref{fig:m5n1}(a).
In $\mathcal{G}_2$, Lemma \ref{lm:m5n1g2} tells us that system \eqref{m5n1} has a unique equilibrium  located at the origin. The neighborhood of the origin consists of a hyperbolic sector and an elliptical sector. 
Since  $P_{\infty}$ is a center at infinity, there must exist a closed orbit surrounding the origin. 
By  Lemma \ref{C1},  we can obtain that the orbit  of system \eqref{m5n1} that intersects both the positive and negative halves of the $y$-axis must be closed. Consequently, the elliptical sector  must be bounded, as shown in   Fig. \ref{fig:m5n1}(b).

In $\mathcal{G}_3\cup\mathcal{G}_4$, we can obtain that there are three finite equilibria, where $E_{l1}$ is a source, $E_{0}$ is a saddle and $E_{r1}$ is a sink by Lemma  \ref{lm:m5n1g345}. 
Due to the fact that  the index of all equilibria surrounded by a closed orbit is $1$, 
our focus will be on investigating the existence and number of limit cycles surrounding only $E_{l1}$, $E_{r1}$, or all three equilibria. 
According to Lemma \ref{lm:m5n1lc}, there are no limit cycles or homoclinic loops surrounding  only $E_{l1}$, or $E_{r1}$. 
Moreover, since $P_\infty$ is a center at infinity, there must  exist closed orbits surrounding the three equilibria. 
Furthermore,  we can conclude that one of the unstable manifolds of $E_0$ connects $E_{r1}$, and one of the stable manifolds  of $E_0$  connects $E_{l1}$. 
By Lemma \ref{C3}, we can obtain that the orbit  of system \eqref{m5n1} that intersects both the positive and negative halves of the $y$-axis must be closed.
Then, the remaining unstable manifold must connect  the remaining stable manifold.
Therefore,  we can obtain the global phase portraits in the Poincar\'{e} disc, as shown in Fig. \ref{fig:m5n1}(c).

In $\mathcal{G}_5$, we can obtain that there are three finite equilibria, where $E_0$ is  a center, $E_{r1}$ is a saddle-node with stable nodal part, and $E_{l1}$ is a saddle-node with unstable nodal part  by Lemma  \ref{lm:m5n1g345}. 
Moreover, since $P_\infty$ is a center at infinity, there must  exist closed orbits surrounding the three equilibria.     
By Lemma \ref{C3}, we can obtain that the orbit  of system \eqref{m5n1} that intersects both the positive and negative halves of the $y$-axis must be closed.      
Then, we can obtain the global phase portraits in the Poincar\'{e} disc, as shown in Fig. \ref{fig:m5n1}(d).

In $\mathcal{G}_6$, we can obtain that there are five finite equilibria, 
where $E_0$ is a center, $E_{l2}$ and $E_{r2}$ are saddles, $E_{l1}$ is a source and  $E_{r1}$ is a sink by Lemma  \ref{lm:m5n1g6}. 
Due to the fact that the index of all equilibria surrounded by a closed orbit is 1, we first investigate the existence and number of limit cycles surrounding only $E_{l1}$ and $E_{r1}$.
According to Lemma \ref{lm:m5n1lc}, there are no limit cycles or homoclinic loops surrounding  only $E_{l1}$ or $E_{r1}$. 
We next investigate the existence and number of limit cycles  surrounding  $E_{l1}$, $E_{l2}$ and $E_0$ (resp., $E_{r1}$, $E_{r2}$ and $E_0$). 
By Lemma \ref{C3}, we can obtain that the orbit  of system \eqref{m5n1} that intersects both the positive and negative halves of the $y$-axis must be closed.
Then, two stable (unstable) manifolds of $E_{r2}$ (resp., $E_{l2}$) would have to form a closed orbit, which is impossible. Therefore, there cannot exist a closed orbit containing three equilibria.
Since $P_{\infty}$ is a center at infinity, we can obtain the global phase portraits in the Poincar\'{e} disc, as shown in Fig. \ref{fig:m5n1}(e). 
      \end{proof}
     \subsection{Concluding remarks}\label{sec:conclusion}
       In \cite{DH}, the transformation $x={\rm Cs}\theta/s, y={\rm Sn}\theta/s^{l}$ (\({\rm Cs}\) and \({\rm Sn}\) are functions with a period of $2\pi$ and \({\rm Cs}^2\theta+{\rm Sn}^2\theta=1\) ) is used to convert the center--focus problem at infinity for system \eqref{ls} into the center--focus problem at the origin of system \eqref{DHS}, where
        \begin{equation}
        \begin{dcases}
        \dot{s}={\rm Sn}\theta\sum_{k=0}^{2l-2}\bar{a}_k{\rm Cs}^k\theta s^{2l-k}+{\rm Sn}^2\theta {\rm Cs}^n\theta s^{l-n}+{\rm Sn}^2\theta\sum_{k=0}^{n-1}\bar{b}_k{\rm Cs}^k\theta s^{l-k},\\
        \dot{\theta}=-\left(1+{\rm Cs}\theta \sum_{k=0}^{2l-2}\bar{a}_k{\rm Cs}^k\theta s^{2l-k-1}+{\rm Sn}\theta {\rm Cs}^{n+1}\theta s^{l-n-1}+{\rm Cs}\theta {\rm Sn}\theta\sum_{k=0}^{n-1}\bar{b}_k{\rm Cs}^k\theta s^{l-k-1}\right).
        \end{dcases}
        \label{DHS}
        \end{equation}
         As it was emphasized by Poincar\'{e}, it is the matter of great importance to find typical situations for which all equations of the system determining centers are satisfied.
          Therefore, while this method might be  useful in determining specific systems, it still poses significant challenges for studying system \eqref{ls}.

We provide a necessary and sufficient condition for a generalized polynomial Li\'{e}nard system to have a center at infinity, thereby solving its center--focus problem at infinity.
However, verifying this condition remains challenging.

%%%%%%%%%%%%%%%%%%%%%%%%%%%%%%%%%%%
 %%%%%%%%%%%%%%%%%%%%%%%%%%%%%%%%%%%
\section{The cyclicity of Ilyashenko-Kotova lips}
In this section, we investigate the cyclicity of system \eqref{or}. 
The analytical strategy employs multiple techniques to specific regions of the phase space. Near the origin ($H\rightarrow0$), we utilize polar coordinate transformations and asymptotic expansions of the first return map. For the regime near infinity ($H\rightarrow\infty$), we employ compactification techniques to transform the system into a form amenable to standard perturbative analysis. Near the lips ($H\rightarrow1/6$), we generalize the Melnikov function expansions from the one-cuspidal loop to the two-cuspidal loop configuration.
%%%%%%%%%%%%%%%%%%

\subsection{Limit cycles near lips}\label{subsec:limit-cycles-lips}
Based on the topological structure of Ilyashenko-Kotova lips, this subsection investigates limit cycle bifurcations in two-cuspidal loop. By establishing asymptotic expansions of Melnikov functions, we analyze the generation mechanism of limit cycles under small parameter perturbations. 

From \eqref{H}, near $H=1/6$,
there are two families of periodic orbits denoted by $\left\{L_h^{ \pm}\right\}$
\begin{eqnarray}
	\begin{aligned}
		& L_h^{-}: \frac{1}{2} y^2+\frac{1}{2} x^2-\frac{1}{2} x^4+\frac{1}{6}x^6=h \quad{\text{if}}~~ 0<h-\frac{1}{6}\ll1, \\
		& L_h^{+}: \frac{1}{2} y^2+\frac{1}{2} x^2-\frac{1}{2} x^4+\frac{1}{6}x^6=h \quad {\text{if}}~~0<\frac{1}{6}-h\ll1.
	\end{aligned}
	\label{3.1}
\end{eqnarray}
By symmetry considerations, the following integrals vanish
$$\oint_{L_h^{ \pm}} x^{2i+1}y{\rm{d}}x=0\quad \text{for all } i \in \mathbb{Z}.$$
All periodic orbits \(L_h^\pm\) are oriented according to the Hamiltonian
flow of the unperturbed system.
Consequently, the Melnikov functions for system \eqref{or} can be simplified to
\begin{eqnarray}
	M^{\pm}(h)=\oint_{L_h^{ \pm}} \sum_{i=0}^n a_i x^{2i} y {\rm d}x=\sum_{i=0}^n a_i I_i^{ \pm}(h),
	\label{M}
\end{eqnarray}
where
$$
I_i^{ \pm}(h)=\oint_{L_h^{ \pm}} x^{2i} y {\rm d}x.
$$
To precisely count the number of limit cycles that can bifurcate from the lips, we now derive a specific structure for the Melnikov functions associated with our perturbed system.

\begin{lm}
\label{MM}
The Melnikov functions \(M^\pm(h)\) in \eqref{M} can be rewritten as
\[
M^\pm(h)=
\begin{dcases}
a_0I_0^\pm(h)+a_1I_1^\pm(h), & n=1,\\[1mm]
a_0I_0^\pm(h)+a_1I_1^\pm(h)+a_2I_2^\pm(h), & n=2,\\[1mm]
\displaystyle
\sum_{j=0}^{[ n/3]}A_{0j}^n h^j I_0^\pm(h)
+
\sum_{j=0}^{[ (n-1)/3]}A_{1j}^n h^j I_1^\pm(h)
+
\sum_{j=0}^{[ (n-2)/3]}A_{2j}^n h^j I_2^\pm(h),
& n\geq3.
\end{dcases}
\]
Here \(A_{\nu j}^n\), \(\nu=0,1,2\), are linear functions of
\(a_0,\ldots,a_n\). Moreover, all the coefficients \(A_{\nu j}^n\) appearing
above are independent linear combinations of \(a_0,\ldots,a_n\).
\end{lm}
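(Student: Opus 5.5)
The plan is to derive a recursion that expresses every integral $I_i^\pm(h)$ with $i\ge3$ through $I_0^\pm,I_1^\pm,I_2^\pm$ with polynomial-in-$h$ coefficients. On $L_h^\pm$ we have $y^2=2h-x^2+x^4-\tfrac13x^6$, so
\[
y\,\frac{{\rm d}y}{{\rm d}x}=-x(x^2-1)^2=-(x-2x^3+x^5).
\]
The first relation comes from the identity
\[
\oint x^{2i}y\cdot y^2\,{\rm d}x=\oint x^{2i}y\bigl(2h-x^2+x^4-\tfrac13x^6\bigr){\rm d}x .
\]
The left-hand side will be rewritten by integrating by parts along the closed orbit:
\[
\oint x^{2i}y^3\,{\rm d}x=-\frac{3}{2i+1}\oint x^{2i+1}y^2\,{\rm d}y=\frac{3}{2i+1}\oint x^{2i+1}y\bigl(x-2x^3+x^5\bigr){\rm d}x .
\]
Equating the two expressions gives a linear relation among $I_i,I_{i+1},I_{i+2},I_{i+3}$ with coefficients that are affine in $h$:
\[
\Bigl(\tfrac13+\tfrac{3}{2i+1}\Bigr)I_{i+3}=2hI_i-\Bigl(1+\tfrac{3}{2i+1}\Bigr)I_{i+1}+\Bigl(1+\tfrac{6}{2i+1}\Bigr)I_{i+2}.
\]
The exact constants are to be checked, but the coefficient of $I_{i+3}$ is manifestly positive. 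This relation is a three-step recursion in which only the term $2hI_i$ raises the power of $h$.

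Next, by induction on $i$, I will show that
\[
I_i=\alpha_i(h)I_0+\beta_i(h)I_1+\gamma_i(h)I_2,
\]
where $\deg\alpha_i\le[i/3]$, $\deg\beta_i\le[(i-1)/3]$ and $\deg\gamma_i\le[(i-2)/3]$, with the convention that a negative bracket means the polynomial vanishes. I also need the leading coefficients $\alpha_{3j}$ of $h^j$, $\beta_{3j+1}$ of $h^j$ and $\gamma_{3j+2}$ of $h^j$ to be nonzero. Both follow from the recursion: $I_{3j}$ inherits its top power $h^j$ only through $2hI_{3j-3}$, and this contributes a nonzero factor $2/(\tfrac13+\tfrac{3}{6j-5})$ at each step. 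The analogous statement holds in the other residue classes mod $3$, while the terms $I_{i+1}$ and $I_{i+2}$ do not raise the degree in the relevant slot. Substituting into $M^\pm=\sum a_iI_i^\pm$ then gives the stated form, with $A^n_{\nu j}$ linear in $a$. The cases $n=1,2$ are trivial.

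For independence, I count the coefficients: there are $([n/3]+1)+([(n-1)/3]+1)+([(n-2)/3]+1)=n+1$ of them. Hence it suffices to show that the linear map $a\mapsto(A^n_{\nu j})$ is triangular with nonzero diagonal. To see this, order the targets so that $A^n_{\nu j}$ is paired with $a_{3j+\nu}$. Then $a_{3j+\nu}$ contributes to $h^jI_\nu$ with a nonzero leading coefficient, and every $a_i$ with $i<3j+\nu$ contributes nothing to the coefficient of $h^jI_\nu$, by the degree bounds. Ordering the indices $3j+\nu$ increasingly therefore yields a triangular matrix with nonzero diagonal, which is invertible.

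The main obstacle is the degree bookkeeping. I must verify that for $i<3j+\nu$ the polynomial coefficient of $I_\nu$ in $I_i$ has degree strictly less than $j$ in every residue class, including the mixed contributions coming through the $I_{i+1}$ and $I_{i+2}$ terms. I will handle this with a careful strong induction on the three sequences $\alpha_i,\beta_i,\gamma_i$ simultaneously.
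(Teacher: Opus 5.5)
Your proposal is correct and follows the paper's proof. Your relation is exactly the paper's recursion $I_i=\alpha_i hI_{i-3}+\beta_iI_{i-2}+\gamma_iI_{i-1}$ after the shift $i\mapsto i+3$, with the constants right (e.g.\ $2/(\frac13+\frac{3}{6j-5})=\frac{36j-30}{6j+4}=\alpha_{3j}$), and your independence argument is the paper's upper-triangular Jacobian with nonzero diagonal. Your degree bounds amount to the paper's bookkeeping that each term $h^sI_\mu$ in $I_i$ has reduced index $3s+\mu\le i$, with equality only for the leading term, so the strong induction you flag as the main obstacle is a one-line check.
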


\begin{proof}
The cases \(n=1,2\) are immediate. We now assume \(n\geq3\).

On the level curve \(H=h\), one has
\[
y\,dy+(x-2x^3+x^5)\,dx=0,
\qquad
y^2=2h-x^2+x^4-\frac13x^6.
\]
Multiplying the first identity by \(x^{2i-5}y\) and integrating along
\(L_h^\pm\), we obtain, for \(i\geq3\),
\[
\int_{L_h^\pm}x^{2i-5}y^2\,dy
+
I_{i-2}^\pm(h)-2I_{i-1}^\pm(h)+I_i^\pm(h)=0.
\]
By integration by parts,
\[
\int_{L_h^\pm}x^{2i-5}y^2\,dy
=
-\frac{2i-5}{3}
\int_{L_h^\pm}x^{2i-6}y^3\,dx.
\]
Using
\[
y^2=2h-x^2+x^4-\frac13x^6,
\]
we get
\[
\int_{L_h^\pm}x^{2i-6}y^3\,dx
=
2hI_{i-3}^\pm(h)-I_{i-2}^\pm(h)
+I_{i-1}^\pm(h)-\frac13I_i^\pm(h).
\]
Substituting this into the preceding identity gives
\[
I_i^\pm(h)
=
\alpha_i h I_{i-3}^\pm(h)
+
\beta_i I_{i-2}^\pm(h)
+
\gamma_i I_{i-1}^\pm(h),
\qquad i\geq3,
\]
where
\[
\alpha_i=\frac{12i-30}{2i+4},\qquad
\beta_i=-\frac{6i-6}{2i+4},\qquad
\gamma_i=\frac{6i+3}{2i+4}.
\]
Notice that
\(\alpha_i\neq0\) and \(i\geq3.\)

We now record the leading reduced term generated by this recurrence.  Set
\[
\lambda_0=\lambda_1=\lambda_2=1,
\qquad
\lambda_i=\alpha_i\lambda_{i-3},\quad i\geq3.
\]
Since \(\alpha_i\neq0\), we have
\(\lambda_i\neq0\), \(i=0,1,\ldots,n.\)

We claim that, for every \(i=3m+r\), \(r\in\{0,1,2\}\),
\[
I_i^\pm(h)
=
\lambda_i h^m I_r^\pm(h)
+
\sum_{\substack{0\leq \mu\leq2,\ s\geq0\\ 3s+\mu<i}}
\lambda_{i,\mu s}h^s I_\mu^\pm(h),
\]
where the constants \(\lambda_{i,\mu s}\) are independent of the perturbation
parameters.

Indeed, the assertion is obvious for \(i=0,1,2\). Assume it holds for all
indices smaller than \(i\). Since \(i=3m+r\), the term
\(\alpha_i hI_{i-3}^\pm\) contributes
\[
\alpha_i h\lambda_{i-3}h^{m-1}I_r^\pm
=
\lambda_i h^m I_r^\pm.
\]
All other terms produced by
\(
hI_{i-3}^\pm\), \(I_{i-2}^\pm\),\(I_{i-1}^\pm
\)
have lower reduced index \(3s+\mu<i\). Hence the claimed formula follows by
induction.

Now substitute this formula into
\[
M^\pm(h)=\sum_{i=0}^n a_iI_i^\pm(h).
\]
Collecting the coefficients of \(h^jI_\nu^\pm(h)\), we obtain
\[
M^\pm(h)
=
A_0^n(h)I_0^\pm(h)
+
A_1^n(h)I_1^\pm(h)
+
A_2^n(h)I_2^\pm(h),
\]
where
\[
A_\nu^n(h)=\sum_{j\geq0}A_{\nu j}^n h^j,
\qquad \nu=0,1,2.
\]
By the leading-term formula above, the degrees satisfy
\[
\deg A_0^n\leq \left[\frac n3\right],\qquad
\deg A_1^n\leq \left[\frac{n-1}{3}\right],\qquad
\deg A_2^n\leq \left[\frac{n-2}{3}\right].
\]
Thus the representation in the statement follows.

It remains to prove the independence of the coefficients \(A_{\nu j}^n\).
Order the coefficients according to the single index
\(
k=3j+\nu\), \(\nu=0,1,2.
\)
That is, we use the order
\[
A_{00}^n,\ A_{10}^n,\ A_{20}^n,\ A_{01}^n,\ A_{11}^n,\ A_{21}^n,\ldots .
\]
For \(k=3j+\nu\leq n\), the coefficient \(A_{\nu j}^n\) has the triangular
form
\[
A_{\nu j}^n
=
\lambda_{3j+\nu}a_{3j+\nu}
+
\mathcal L(a_{3j+\nu+1},a_{3j+\nu+2},\ldots,a_n),
\]
where \(\mathcal L\) denotes a linear combination of the indicated variables.
Indeed, the term \(a_{3j+\nu}I_{3j+\nu}^\pm\) contributes the non-zero leading
term
\[
\lambda_{3j+\nu}a_{3j+\nu}h^jI_\nu^\pm(h),
\]
whereas no \(a_iI_i^\pm\) with \(i<3j+\nu\) can contribute to
\(h^jI_\nu^\pm(h)\).

Therefore the Jacobian matrix
\[
\frac{
\partial(A_{00}^n,A_{10}^n,A_{20}^n,A_{01}^n,A_{11}^n,A_{21}^n,\ldots)
}{
\partial(a_0,a_1,a_2,a_3,a_4,a_5,\ldots,a_n)
}
\]
is upper triangular, with diagonal entries
\(
\lambda_0,\lambda_1,\ldots,\lambda_n.
\)
Consequently,
\[
\det
\frac{
\partial(A_{\nu j}^n)
}{
\partial(a_0,a_1,\ldots,a_n)
}
=
\prod_{i=0}^n\lambda_i\neq0.
\]
Hence the coefficients \(A_{\nu j}^n\) are independent linear combinations of
the original parameters \(a_0,\ldots,a_n\).
\end{proof}

Having established the structure of the Melnikov functions, we now proceed to
the core result of this subsection: the number of zeros that can be generated
near the two-cuspidal loop.

Let
\[
h_0=\frac16,\qquad
\tau_-=h-h_0>0,\qquad
\tau_+=h_0-h>0.
\]
Thus
\[
L_h^-:\ h>h_0,\qquad
L_h^+:\ h<h_0.
\]
We use the  expansions
\[
M^-(h)
=
\sum_{j\ge0}C_j^-\tau_-^j
+
\sum_{j\ge0}D_j^-\tau_-^{j+5/6}
+
\sum_{j\ge0}E_j^-\tau_-^{j+7/6},
\]
and
\[
M^+(h)
=
\sum_{j\ge0}C_j^+\tau_+^j
+
\sum_{j\ge0}D_j^+\tau_+^{j+5/6}
+
\sum_{j\ge0}E_j^+\tau_+^{j+7/6}.
\]

For both sides, define the selected  coefficients by
\(
\Lambda_0^\pm=C_0^\pm,
\)
and, for \(j\ge0\),
\[
\Lambda_{3j+1}^\pm=D_j^\pm,\qquad
\Lambda_{3j+2}^\pm=C_{j+1}^\pm,\qquad
\Lambda_{3j+3}^\pm=E_j^\pm.
\]
Thus the selected sequence is
\[
C_0^\pm,\ D_0^\pm,\ C_1^\pm,\ E_0^\pm,\ D_1^\pm,\ C_2^\pm,\ldots .
\]

For $i=0,1,2$, it is clear, and, for \(i\ge3\),
\[
P_{i\nu}(h)
=
\alpha_i hP_{i-3,\nu}(h)
+
\beta_iP_{i-2,\nu}(h)
+
\gamma_iP_{i-1,\nu}(h),
\qquad \nu=0,1,2,
\]
where
\[
\alpha_i=\frac{12i-30}{2i+4},\qquad
\beta_i=-\frac{6i-6}{2i+4},\qquad
\gamma_i=\frac{6i+3}{2i+4}.
\]

We write
\(
h=h_0-\tau_+\),\(\tau_+>0.
\)
The plus-side expansions of the three basic Abelian integrals are written as
\[
I_\nu^+(h_0-\tau_+)
=
\sum_{k\ge0}u_{\nu k}^+\tau_+^k
+
\sum_{k\ge0}v_{\nu k}^+\tau_+^{k+5/6}
+
\sum_{k\ge0}w_{\nu k}^+\tau_+^{k+7/6},
\qquad \nu=0,1,2.
\]
The coefficients
\(u_{\nu k}^+\), \(v_{\nu k}^+\), \(w_{\nu k}^+\)
depend only on the unperturbed Hamiltonian and on the chosen orientation
of the ovals. They can be given by Corollary \ref{mel-lips}.
Moreover,  we have \(C_1^+\) can be replaced by
\(
(-3\sqrt3\pi,\,-\sqrt3\pi,\,0).
\)

For \(m\ge0\), denote
\[
P_{i\nu}^{(m)}(h_0)
=
\left.
\frac{\rm d^m}{\rm dh^m}P_{i\nu}(h)
\right|_{h=h_0}.
\]
Since
\[
P_{i\nu}(h_0-\tau_+)
=
\sum_{m\ge0}
\frac{(-1)^m}{m!}P_{i\nu}^{(m)}(h_0)\tau_+^m,
\]
the selected coefficients \(\Lambda_r^+\) are linear functions of
\(a_0,\ldots,a_n\). 

Let
\[
\boldsymbol a
=
(a_0,\ldots,a_n)^{\mathsf T},
\qquad
\boldsymbol\Lambda_n^+(a)
=
\bigl(
\Lambda_0^+(a),\ldots,\Lambda_n^+(a)
\bigr)^{\mathsf T}.
\]
Since each \(\Lambda_r^+\) is linear in \(a_0,\ldots,a_n\), write
\[
\Lambda_r^+(a)
=
\sum_{i=0}^n\ell_{ri}^+a_i,
\qquad r=0,\ldots,n.
\]
Define the plus-side coefficient matrix by
\[
L_n^+
:=
(\ell_{ri}^+)_{0\leq r,i\leq n}
=
\left(
\frac{\partial\Lambda_r^+}{\partial a_i}
\right)_{0\leq r,i\leq n}.
\]
Then
\[
\boldsymbol\Lambda_n^+(a)
=
L_n^+\boldsymbol a.
\]
Consequently, the selected coefficient map is a linear isomorphism if and
only if
\(
\det L_n^+\neq0.
\)

The entries of \(L_n^+\) are as follows. For \(i=0,\ldots,n\),
\[
\ell_{0i}^+
=
\sum_{\nu=0}^2u_{\nu0}^+P_{i\nu}(h_0).
\]
For \(j\ge0\), whenever the corresponding row index is not larger than \(n\),
\[
\ell_{3j+1,i}^+
=
\sum_{\nu=0}^2
\sum_{m=0}^{j}
\frac{(-1)^m}{m!}
P_{i\nu}^{(m)}(h_0)v_{\nu,j-m}^+,
\]
\[
\ell_{3j+2,i}^+
=
\sum_{\nu=0}^2
\sum_{m=0}^{j+1}
\frac{(-1)^m}{m!}
P_{i\nu}^{(m)}(h_0)u_{\nu,j+1-m}^+,
\]
and
\[
\ell_{3j+3,i}^+
=
\sum_{\nu=0}^2
\sum_{m=0}^{j}
\frac{(-1)^m}{m!}
P_{i\nu}^{(m)}(h_0)w_{\nu,j-m}^+.
\]
These formulas define all rows of \(L_n^+\).

For later determinant computations, the first four rows may be written, up to
adding a multiple of a preceding row, as
\[
\begin{aligned}
&\ell_{0i}^+
=
-\sqrt3\pi
\left(
\frac14P_{i0}(h_0)
+
\frac1{24}P_{i1}(h_0)
+
\frac1{64}P_{i2}(h_0)
\right),
\\
&\ell_{1i}^+
=
V
\left(
P_{i0}(h_0)+P_{i1}(h_0)+P_{i2}(h_0)
\right),\\
&\ell_{2i}^+
=
-3\sqrt3\pi\,P_{i0}(h_0)
-\sqrt3\pi\,P_{i1}(h_0)+
\frac{\sqrt3\pi}{4}P_{i0}'(h_0)
+
\frac{\sqrt3\pi}{24}P_{i1}'(h_0)
+
\frac{\sqrt3\pi}{64}P_{i2}'(h_0),
\\
&\ell_{3i}^+
=
W\left(P_{i1}(h_0)+3P_{i2}(h_0)\right).
\end{aligned}
\]

\begin{lm}
\label{lm:Ln-plus-nondegenerate}
 We have
\(
\det L_n^+\neq0.
\)
\end{lm}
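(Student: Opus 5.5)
Our plan is to factor $L_n^+$ through the reduction of Lemma~\ref{MM} and so reduce the claim to a finite-dimensional statement about the three basic integrals $I_0^+,I_1^+,I_2^+$. By Lemma~\ref{MM},
\[
M^+(h)=\sum_{\nu=0}^2 A_\nu^n(h)\,I_\nu^+(h),\qquad A_\nu^n(h)=\sum_{j}A_{\nu j}^n h^j,
\]
and the map $a\mapsto (A_{\nu j}^n)$ is a linear isomorphism, since its Jacobian is triangular with diagonal $\prod_i\lambda_i\neq0$. Re-expand each $h^j=(h_0-\tau_+)^j$; this is again a triangular change of coordinates, from $(A_{\nu j}^n)_{j}$ to the coefficients $(\tilde A_{\nu j})_{j}$ of $A_\nu^n$ in powers of $\tau_+$, with diagonal entries $(-1)^j$. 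Hence $L_n^+=T\cdot K$, where $K$ is invertible. Here $T$ is the matrix sending the coefficients $\tilde A_{\nu j}$, ordered by $k=3j+\nu$, to the selected coefficients $\Lambda_0^+,\dots,\Lambda_n^+$ of
\[
\sum_\nu\Big(\sum_j\tilde A_{\nu j}\tau_+^j\Big)\Big(\sum_k u_{\nu k}^+\tau_+^k+\sum_k v_{\nu k}^+\tau_+^{k+5/6}+\sum_k w_{\nu k}^+\tau_+^{k+7/6}\Big).
\]
It therefore suffices to prove $\det T\neq0$.

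Next we would exploit the graded structure of $T$. Assign the exponent $0$ to $C_0$, $5/6$ to $D_0$, $1$ to $C_1$, $7/6$ to $E_0$, and so on. A coefficient $\tilde A_{\nu j}$ can contribute to $C_{j'}$, $D_{j'}$ and $E_{j'}$ only for $j'\ge j$. Grouping rows and columns in blocks of three, with rows $(D_j,C_{j+1},E_j)$ and columns $(\tilde A_{0j},\tilde A_{1j},\tilde A_{2j})$, and taking the first row $C_0$ separately, we expect $T$ to be block lower triangular up to a bookkeeping shift. Each diagonal block is then the $3\times3$ matrix built from the leading data $(v_{\nu0}^+,u_{\nu0}^+ \text{ or } u_{\nu1}^+,w_{\nu0}^+)_{\nu=0,1,2}$. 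Concretely, after the row operations already recorded for $\ell^+_{0i},\dots,\ell^+_{3i}$, the relevant vectors are
\[
u_{\cdot0}\propto(\tfrac14,\tfrac1{24},\tfrac1{64}),\quad v_{\cdot0}\propto(1,1,1),\quad u_{\cdot1}\propto(-3,-1,0)\ (\text{modulo }u_{\cdot0}),\quad w_{\cdot0}\propto(0,1,3).
\]
So the determinant of the whole matrix reduces to a product of determinants of $3\times3$ (or truncated $1\times1$, $2\times2$) matrices formed from consecutive triples of these rows, restricted to the available columns $\nu$. These determinants are explicit numbers. For instance, $\det\begin{pmatrix}1&1&1\\-3&-1&0\\0&1&3\end{pmatrix}=1\cdot(-3)-1\cdot(-9)+1\cdot(-3)=3\neq0$. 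Each diagonal block is invertible once we also check that $V,W\neq0$ and that the truncated minors for $n\not\equiv 2 \pmod 3$ are nonzero, for example $\det\begin{pmatrix}1/4&1/24\\1&1\end{pmatrix}\neq0$.

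The main obstacle is the claim of block triangularity. The issue is that $\tilde A_{\nu j}$ with $j\ge1$ multiplies $\tau_+^j$, which shifts the singular terms $\tau_+^{k+5/6}$ into the rows $D_{j+k}$ and $E_{j+k}$. We must verify that every such contribution lands only in rows that are later in the chosen order than the diagonal row for that column. The tail block could also mix the rows $C_0$ and $C_1$ through $u_{\nu0}$ against $u_{\nu1}$. To handle this, we would first perform the column elimination that removes the $u_{\nu0}$ component, which is exactly the ``adding a multiple of a preceding row'' already used in the displayed rows. If the ordering fails to give strict triangularity in some residue class of $n$ modulo $3$, we would fall back on computing the first few determinants $\det L_n^+$ for $n\le5$ explicitly and then using the recursion for $P_{i\nu}$ to show that passing from $n$ to $n+3$ multiplies the determinant by the fixed nonzero $3\times3$ block determinant times $\prod\alpha_i$.
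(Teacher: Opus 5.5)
Your reduction $L_n^+=T\cdot K$ with $K$ invertible (via Lemma~\ref{MM} and re-expansion in $\tau_+$) is how the paper begins. Your observation that $\tilde A_{\nu j}$ reaches only $C_{j'},D_{j'},E_{j'}$ with $j'\ge j$ is also the right structural fact. The gap is in how you then group the rows. With row blocks $(D_j,C_{j+1},E_j)$ against column blocks $(\tilde A_{0j},\tilde A_{1j},\tilde A_{2j})$, and $C_0$ set aside, $T$ is neither block lower nor block upper triangular:
\begin{itemize}
\item the row $C_{j+1}$ contains $\tilde A_{\nu,j+1}u^+_{\nu0}$, so it reaches into the \emph{next} column block;
\item the rows $D_j$ and $E_j$ reach back into earlier column blocks;
\item the isolated row $C_0$ leaves the row and column blocks misaligned by one.
\end{itemize}
Consequently $(v_{\cdot0};u_{\cdot1};w_{\cdot0})$, whose determinant you compute, is not a diagonal block and is not a factor of $\det L_n^+$. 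Take $n=3$, with rows $C_0,D_0,C_1,E_0$ and columns $\tilde A_{00},\tilde A_{10},\tilde A_{20},\tilde A_{01}$. The only nonzero entry in the $\tilde A_{01}$ column is $u^+_{00}$, in the $C_1$ row, so $\det T=\pm u^+_{00}\det(u_{\cdot0};v_{\cdot0};w_{\cdot0})$. Your factor $\det(v_{\cdot0};u_{\cdot1};w_{\cdot0})=3(\cdots)$ plays no role, and its nonvanishing proves nothing.

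The fix, which is the paper's argument, is to group by Taylor level: rows $(C_j,D_j,E_j)$ against columns $(\tilde A_{0j},\tilde A_{1j},\tilde A_{2j})$. Since $C_j,D_j,E_j$ involve only $\tilde A_{\nu s}$ with $s\le j$, after a row permutation $T$ is block lower triangular. Its diagonal blocks are $\mathcal M^+=(u_{\cdot0};v_{\cdot0};w_{\cdot0})$, with $\det\mathcal M^+=VW(2u^+_{00}-3u^+_{10}+u^+_{20})=-\tfrac{25\sqrt3\pi}{64}VW$. The last block depends on $n \pmod 3$:
\begin{itemize}
\item for $n=3q$ it is $u^+_{00}$;
\item for $n=3q+1$ it is $(u_{\cdot0};v_{\cdot0})$ on the two columns $\tilde A_{0q},\tilde A_{1q}$;
\item for $n=3q+2$ it has rows $C_q,D_q,C_{q+1}$. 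Here $C_{q+1}$ meets the level-$q$ columns only through $u^+_{\nu1}$, because no level-$(q+1)$ column exists. The block is $(u_{\cdot0};v_{\cdot0};u_{\cdot1})$, with determinant $-\tfrac{15}{32}\pi^2V$.
\end{itemize}
Your truncated $1\times1$ and $2\times2$ minors already match this level grouping. What is wrong are your full $3\times3$ blocks and the final block for $n\equiv2 \pmod 3$. Your fallback (small $n$ plus an $n\mapsto n+3$ recursion) needs exactly this level-wise structure to identify the correct multiplier. With the block $(v_{\cdot0};u_{\cdot1};w_{\cdot0})$ it would propagate the wrong factor, so as stated it does not close the gap.
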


\begin{proof}
By Lemma \ref{MM}, the map
\[
a\longmapsto
(A_{00}^n,A_{10}^n,A_{20}^n,A_{01}^n,A_{11}^n,A_{21}^n,\ldots)
\]
is a linear isomorphism.  Therefore it is enough to prove that the selected
 coefficients are independent with respect to these reduced
coordinates.

Write
\[
A_\nu^n(h_0-\tau_+)
=
\sum_{j=0}^{d_\nu}B_{\nu j}^+\tau_+^j,
\qquad \nu=0,1,2,
\]
where
\[
d_0=\left[\frac n3\right],\qquad
d_1=\left[\frac{n-1}{3}\right],\qquad
d_2=\left[\frac{n-2}{3}\right].
\]
This identity uniquely defines the coefficients \(B_{\nu j}^+\).
The change of coordinates
\[
(A_{\nu j}^n)\longmapsto (B_{\nu j}^+)
\]
is triangular with non-zero diagonal entries.  Hence it is invertible.

From
\[
M^+(h)
=
\sum_{\nu=0}^2
A_\nu^n(h_0-\tau_+)I_\nu^+(h_0-\tau_+),
\]
we have
\[
C_j^+
=
\sum_{\nu=0}^2\sum_{s=0}^{j}B_{\nu s}^+u_{\nu,j-s}^+,
\quad
D_j^+
=
\sum_{\nu=0}^2\sum_{s=0}^{j}B_{\nu s}^+v_{\nu,j-s}^+,
\quad 
E_j^+
=
\sum_{\nu=0}^2\sum_{s=0}^{j}B_{\nu s}^+w_{\nu,j-s}^+.
\]

For every complete level \(j\), the principal block of the map
\[
(B_{0j}^+,B_{1j}^+,B_{2j}^+)
\longmapsto
(C_j^+,D_j^+,E_j^+)
\]
is
\[
\mathcal M^+
=
\begin{pmatrix}
u_{00}^+&u_{10}^+&u_{20}^+\\
v_{00}^+&v_{10}^+&v_{20}^+\\
w_{00}^+&w_{10}^+&w_{20}^+
\end{pmatrix}
=
\begin{pmatrix}
-\dfrac{\sqrt3\pi}{4}
&
-\dfrac{\sqrt3\pi}{24}
&
-\dfrac{\sqrt3\pi}{64}
\\[2mm]
V&V&V
\\[1mm]
0&W&3W
\end{pmatrix}.
\]
A direct calculation gives
\[
\begin{aligned}
\det\mathcal M^+
=
VW\left(2u_{00}^+-3u_{10}^+ +u_{20}^+\right)=
-\frac{25\sqrt3\pi}{64}VW.
\end{aligned}
\]
Since $\sigma_0,\sigma_1\neq0$ given in the Corollary \ref{mel-lips} and
\[
V=2^{11/6}3^{1/3}\sigma_0\neq0,\qquad
W=2^{7/6}3^{2/3}\sigma_1\neq0,
\]
we have
\(
\det\mathcal M^+\neq0.
\)

We now compute the Jacobian determinant in the three residue classes of \(n\).

\medskip
\noindent
\textbf{Case 1. \(n=3q\).}

The independent Taylor coefficients are
\[
B_{00}^+,B_{10}^+,B_{20}^+;\
B_{01}^+,B_{11}^+,B_{21}^+;\ \ldots ;\
B_{0,q-1}^+,B_{1,q-1}^+,B_{2,q-1}^+;\
B_{0q}^+ .
\]
After a finite permutation of rows, the selected coefficients are ordered as
\[
C_0^+,D_0^+,E_0^+;\
C_1^+,D_1^+,E_1^+;\ \ldots ;\
C_{q-1}^+,D_{q-1}^+,E_{q-1}^+;\
C_q^+ .
\]
Thus the corresponding Jacobian is block triangular with diagonal blocks
\[
\mathcal M^+,\ldots,\mathcal M^+,\ u_{00}^+.
\]
Therefore, up to a non-zero sign coming from the row permutation,
\[
\det L_{3q}^+
=
(\det\mathcal M^+)^q u_{00}^+
\neq0.
\]

\medskip
\noindent
\textbf{Case 2. \(n=3q+1\).}

The independent Taylor coefficients are
\[
B_{00}^+,B_{10}^+,B_{20}^+;\
B_{01}^+,B_{11}^+,B_{21}^+;\ \ldots ;\
B_{0,q-1}^+,B_{1,q-1}^+,B_{2,q-1}^+;\
B_{0q}^+,B_{1q}^+ .
\]
After a finite permutation of rows, the selected coefficients are ordered as
\[
C_0^+,D_0^+,E_0^+;\
C_1^+,D_1^+,E_1^+;\ \ldots ;\
C_{q-1}^+,D_{q-1}^+,E_{q-1}^+;\
C_q^+,D_q^+ .
\]
The final block is
\[
\mathcal M_1^+
=
\begin{pmatrix}
u_{00}^+&u_{10}^+\\
V&V
\end{pmatrix}.
\]
Hence
\[
\det\mathcal M_1^+
=
V(u_{00}^+-u_{10}^+)
=
-\frac{5\sqrt3\pi}{24}V
\neq0.
\]
Thus, up to a non-zero sign,
\[
\det L_{3q+1}^+
=
(\det\mathcal M^+)^q\det\mathcal M_1^+
\neq0.
\]

\medskip
\noindent
\textbf{Case 3. \(n=3q+2\).}

The independent Taylor coefficients are
\[
B_{00}^+,B_{10}^+,B_{20}^+;\
B_{01}^+,B_{11}^+,B_{21}^+;\ \ldots ;\
B_{0,q-1}^+,B_{1,q-1}^+,B_{2,q-1}^+;\
B_{0q}^+,B_{1q}^+,B_{2q}^+ .
\]
After a finite permutation of rows, the selected coefficients are ordered as
\[
C_0^+,D_0^+,E_0^+;\
C_1^+,D_1^+,E_1^+;\ \ldots ;\
C_{q-1}^+,D_{q-1}^+,E_{q-1}^+;\
C_q^+,D_q^+,C_{q+1}^+ .
\]
For the last block, the \(C_{q+1}^+\)-row may be replaced, without changing
the determinant, by the normalized row obtained from Corollary
\ref{mel-lips}.  Hence the final block is
\[
\mathcal M_2^+
=
\begin{pmatrix}
-\dfrac{\sqrt3\pi}{4}
&
-\dfrac{\sqrt3\pi}{24}
&
-\dfrac{\sqrt3\pi}{64}
\\[2mm]
V&V&V
\\[1mm]
-3\sqrt3\pi&-\sqrt3\pi&0
\end{pmatrix}.
\]
A direct calculation gives
\[
\det\mathcal M_2^+
=
-\frac{5}{32}(\sqrt3\pi)^2V
=
-\frac{15}{32}\pi^2V.
\]
Since \(V\neq0\), we have
\(
\det\mathcal M_2^+\neq0.
\)
Consequently, up to a non-zero sign,
\[
\det L_{3q+2}^+
=
(\det\mathcal M^+)^q\det\mathcal M_2^+
\neq0.
\]

In all three cases, the map
\(
(B_{\nu j}^+)
\longmapsto
(\Lambda_0^+,\Lambda_1^+,\ldots,\Lambda_n^+)
\)
has non-zero Jacobian.  Since the coordinate changes
\(
a\longmapsto(A_{\nu j}^n),
\)
\((A_{\nu j}^n)\longmapsto(B_{\nu j}^+)
\)
are also invertible, the selected coefficient map
\(
a\mapsto(\Lambda_0^+(a),\ldots,\Lambda_n^+(a))
\)
is non-degenerate.  Hence
\(
\det L_n^+\neq0.
\)
\end{proof}

Define
\[
{
\mathcal C_{\rm lip}^a
=
\left\{
a\in\mathbb R^{n+1}:
\begin{array}{l}
\Lambda_j^+(a)\Lambda_{j+1}^+(a)<0,
\qquad j=0,\ldots,n-1,\\[1mm]
0<|\Lambda_0^+(a)|\ll|\Lambda_1^+(a)|\ll\cdots\ll|\Lambda_n^+(a)|
\end{array}
\right\}.
}
\]
By Lemma \ref{lm:Ln-plus-nondegenerate}, the map
\[
a\longmapsto
(\Lambda_0^+(a),\ldots,\Lambda_n^+(a))
\]
is a linear isomorphism.  Hence $\mathcal C_{\rm lip}^a$ is a nonempty
open cone.

\begin{lm}
\label{hn5l}
{If $a\in\mathcal C_{\rm lip}^a$,}
$M^+$ and $M^-$
have altogether at least
\[
\eta_{\rm lip}(n)
=n+\left[\dfrac{n}{3}\right]+\left[\dfrac {n+1}3\right]
\]
simple zeros near the two-cuspidal loop.
\end{lm}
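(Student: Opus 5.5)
The plan is to count zeros on the two sides separately. On the plus side we use the alternating-sign, strongly dominant hierarchy of the selected coefficients $\Lambda_0^+,\dots,\Lambda_n^+$. On the minus side we use the sign relations this hierarchy induces on $C_j^-,D_j^-,E_j^-$.

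\textbf{Plus side.} Fix $a\in\mathcal C_{\rm lip}^a$. Order the exponents of $M^+$ as
\[
0<\tfrac56<1<\tfrac76<\tfrac{11}{6}<2<\cdots,
\]
which is exactly the order $\Lambda_0^+,\Lambda_1^+,\dots$. By Lemma~\ref{lm:Ln-plus-nondegenerate} the first $n+1$ selected coefficients are free linear coordinates, so we may prescribe them. With $0<|\Lambda_0^+|\ll|\Lambda_1^+|\ll\cdots\ll|\Lambda_n^+|$ and consecutive sign changes, the standard dominance argument applies: choose successive scales
\[
\tau_+^{(n)}\gg\tau_+^{(n-1)}\gg\cdots\gg\tau_+^{(0)}>0
\]
at which the term $\Lambda_r^+\tau_+^{e_r}$ dominates. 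This forces $n$ sign changes of $M^+$ on $(0,\epsilon)$. The unselected higher terms have larger exponents and coefficients bounded by $O(|\Lambda_n^+|)$, so they are negligible for small $\tau_+$. We then obtain $n$ simple zeros, with simplicity coming either from the standard generalized Rolle (derivation–division) argument for the Chebyshev system of monomials $\tau^{e_r}$, or from choosing generic coefficients.

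\textbf{Minus side.} We use the sign relations stated in the introduction,
\[
\operatorname{sgn}C_j^-=(-1)^j\operatorname{sgn}C_j^+,\qquad \operatorname{sgn}D_j^-=(-1)^{j+1}\operatorname{sgn}D_j^+,\qquad \operatorname{sgn}E_j^-=(-1)^{j+1}\operatorname{sgn}E_j^+.
\]
We derive them from Corollary~\ref{mel-lips}. The $c_i$ versus $\bar c_i$ formulas differ by the factors $(-1)^i\sigma_\cdot$ versus $\kappa_\cdot$, and the integer-power parts come from the same analytic functions $A_\nu^n(h)I_\nu$ evaluated with $h-h_0=\pm\tau$. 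Since $M^-$ and $M^+$ share the same Abelian-integral building blocks, the dominant selected coefficients on the minus side are the same linear forms in $a$, up to these signs, so the same dominance hierarchy holds there. We then count sign changes between consecutive minus-side coefficients in the order $C_0^-,D_0^-,C_1^-,E_0^-,D_1^-,C_2^-,\dots$.

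The plus signs alternate, $\operatorname{sgn}\Lambda_r^+=(-1)^r s$. A consecutive pair preserves its alternation on the minus side exactly when the two extra factors $(-1)^j$ versus $(-1)^{j+1}$ agree. Going through each residue class of $r \bmod 3$ should show that the sign change survives for exactly $[n/3]+[(n+1)/3]$ of the $n$ consecutive pairs. Concretely, these are the pairs $(C_j,E_{j-1})$ and $(E_j,D_{j+1})$ type transitions, while the $(D_j,C_{j+1})$ transitions lose their sign change. The same dominance argument on the minus side then gives $[n/3]+[(n+1)/3]$ simple zeros of $M^-$. Adding the two sides gives $\eta_{\rm lip}(n)$.

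\textbf{Main obstacle.} The main difficulty is the bookkeeping of the minus-side signs. Specifically, we must justify that the integer-power coefficients $C_j^-$ really carry the factor $(-1)^j$ relative to $C_j^+$ (from $\tau_-=-\tau_+$ in the Taylor part) and the half-integer ones carry the stated factors. We must also check that the $+$ hierarchy transfers, i.e.\ that $|C_j^-|,|D_j^-|,|E_j^-|$ remain comparable to their plus counterparts up to nonzero constants such as $\sigma_i/\kappa_i$. This comparability holds because, level by level, the contributions are the same triangular blocks $\mathcal M^\pm$ with nonzero determinants.

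If this transfer fails for the lower-order contamination terms, the fallback is to choose the dominance ratios even smaller so that each selected coefficient is dominated by its own leading linear form. After that, the sign pattern is counted exactly as above, and the residue-class count for $n=3q,3q+1,3q+2$ yields $[n/3]+[(n+1)/3]$ in each case.
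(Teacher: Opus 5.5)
Your route is the paper's: $n$ zeros of $M^+$ from the alternating, strongly dominant hierarchy $\Lambda_0^+,\dots,\Lambda_n^+$, then the sign relations of Corollary~\ref{mel-lips} transferred to the minus side, and a count of the surviving sign changes. The total you state is right, but the concrete identification you give for it is wrong, and that identification is the whole content of the minus-side step.

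In the Puiseux order $\dots,D_{j-1},C_j,E_{j-1},D_j,C_{j+1},E_j,\dots$ the extra factors are $(-1)^j$ for $C_j$ and $(-1)^{j+1}$ for both $D_j$ and $E_j$. By your own rule, $(D_j,C_{j+1})$ therefore keeps its sign change, since both factors equal $(-1)^{j+1}$. The pair $(C_{j+1},E_j)$ also keeps it. The pairs $(E_{j-1},D_j)$ and the initial pair $(C_0,D_0)$ lose it.

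Equivalently, write $\operatorname{sgn}\Lambda_r^-=\operatorname{sgn}(\eta_r)(-1)^r s$, where $s=\operatorname{sgn}\Lambda_0^+$, $\operatorname{sgn}\eta_0=1$ and $\operatorname{sgn}\eta_{3j+1}=\operatorname{sgn}\eta_{3j+2}=\operatorname{sgn}\eta_{3j+3}=(-1)^{j+1}$. Then the minus sequence has the pattern $+,+,-,+,+,-,\dots$ (times $s$), and $(\Lambda_r^-,\Lambda_{r+1}^-)$ changes sign exactly when $r\equiv1,2\pmod 3$. Counting $0\le r\le n-1$ gives $\left[\frac{n+1}{3}\right]+\left[\frac{n}{3}\right]$.

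Your survivors $(C_j,E_{j-1})$ and $(E_j,D_{j+1})$ correspond to $r\equiv2$ and to $r\equiv0$ with $r\ge3$. Taken literally they give only $\left[\frac{n}{3}\right]+\left[\frac{n-1}{3}\right]$, which is one too few whenever $n\not\equiv1\pmod 3$. For $n=2$ your list gives no zero of $M^-$, whereas $C_0^-,D_0^-,C_1^-$ have signs $+,+,-$ (up to a common sign) and give one.

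Your justification of the transfer of the hierarchy also points to the wrong mechanism. Invertibility of $\mathcal M^\pm$ says nothing about how $\Lambda^-$ sits relative to the $\Lambda^+$-hierarchy. What is needed, and what the paper uses, is the triangular relation $\Lambda_r^-=\eta_r\Lambda_r^++\sum_{q<r}t_{rq}\Lambda_q^+$ with $\eta_r\neq0$. It involves only plus coefficients of smaller Puiseux exponent.

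This relation comes from the per-cusp formulas. There the half-integer coefficients on the two sides are exact multiples of each other, with ratios $\pm\sigma_0/\kappa_0$ and $\pm\sigma_1/\kappa_1$. The integer coefficients carry the factor $(-1)^j$ up to contributions of smaller exponent (cf.\ Lemma~\ref{lmy}). Your fallback of sharpening the dominance ratios works only because the contamination comes from lower indices. A component along some $\Lambda_q^+$ with $q>r$ would be amplified by stronger dominance, not suppressed.
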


\begin{proof}
For $a\in\mathcal C_{\rm lip}^a$, the standard dominance argument for
 series gives at least \(n\) simple zeros of \(M^+(h)\).

It remains to count the zeros of \(M^-(h)\).  The minus-side coefficients are
not independent of the plus-side ones, since both are produced by the same
parameter vector \(a\).  Comparing the two  expansions in Corollary
\ref{mel-lips}, and using
\(
\sigma_0>0,\) \(\sigma_1>0,\)
\(\kappa_0<0\), \(\kappa_1<0,
\)
we obtain the leading sign relations
\[
\operatorname{sgn}C_j^-=(-1)^j\operatorname{sgn}C_j^+,
\quad
\operatorname{sgn}D_j^-=(-1)^{j+1}\operatorname{sgn}D_j^+,
\quad
\operatorname{sgn}E_j^-=(-1)^{j+1}\operatorname{sgn}E_j^+.
\]
More precisely, ordering the coefficients by the increasing Puiseux
exponents used in the definition of \(\Lambda_r^\pm\), the comparison of
the two cusp expansions is triangular:
\[
\Lambda_r^-
=
\eta_r\Lambda_r^+
+
\sum_{q=0}^{r-1}t_{rq}\Lambda_q^+,
\qquad r=0,\ldots,n,
\]
where the constants \(t_{rq}\) depend only on the unperturbed Hamiltonian
and \(\eta_r\neq0\). Their signs are
\(
\operatorname{sgn}\eta_0=1,
\)
and, for \(j\ge0\),
\[
\operatorname{sgn}\eta_{3j+1}
=
\operatorname{sgn}\eta_{3j+2}
=
\operatorname{sgn}\eta_{3j+3}
=
(-1)^{j+1},
\]
whenever the corresponding index does not exceed \(n\). Indeed, these are
exactly the diagonal signs in the three relations for \(C_j^\pm\),
\(D_j^\pm\), and \(E_j^\pm\) above; all terms with smaller Puiseux exponent
have already occurred among \(\Lambda_0^+,\ldots,\Lambda_{r-1}^+\).
Under the strong dominance condition in
\(\mathcal C_{\rm lip}^a\), the lower selected coefficients are negligible
relative to the diagonal term and therefore do not change its sign.

Since the plus-side selected sequence
\(
\Lambda_0^+,\Lambda_1^+,\ldots,\Lambda_n^+
\)
is alternating, the induced minus-side selected sequence
\(
\Lambda_0^-,\Lambda_1^-,\ldots,\Lambda_n^-
\)
has, up to the initial sign, the repeated pattern
\[
+,+,-,\ +,+,-,\ +,+,-,\ldots .
\]
Therefore the number of sign changes among the first \(n+1\) minus-side
selected coefficients is
\[
\left[\dfrac{n}{3}\right]+\left[\dfrac{n+1}{3}\right]
\]
Applying the same  dominance argument to \(M^-(h)\), we get the stated
number of simple zeros of \(M^-(h)\).

Adding the \(n\) zeros of \(M^+(h)\) gives
\[
\eta_{\rm lip}(n)
=n+\left[\dfrac{n}{3}\right]+\left[\dfrac{n+1}{3}\right]
\]
This proves the lemma.
\end{proof}

\subsection{Limit cycles near the origin}
\label{subsec:origin-analysis}

We now investigate limit cycles bifurcating from the period annulus near the
origin.  For convenience, we work with the equivalent system \eqref{uy}.
Introduce the polar coordinates
\(
u=r\cos\theta\),\(y=r\sin\theta\).
Then system \eqref{uy} is transformed into
\begin{equation}
\frac{{\rm d}r}{{\rm d}\theta}
=
\frac{
r^2\sin^2\theta\,Q(r,\theta)
}{
1+r\sin\theta\cos\theta\,Q(r,\theta)
},
\label{polar}
\end{equation}
where
\[
Q(r,\theta)
=
\frac{
\varepsilon_a|\cos\theta|
\sum_{i=0}^{n}a_i\omega^{i-\frac12}
+
\varepsilon_b\cos\theta
\sum_{i=0}^{n}b_i\omega^i
}{
(\omega-1)^2
}.
\]
Here \(\omega=\omega(r,\theta)\) is determined by
\begin{equation}
r^2\cos^2\theta
=
\frac{(\omega-1)^3+1}{3}.
\label{om}
\end{equation}

For \(0<r\ll1\), the branch satisfying \(\omega\to0\) as \(r\to0\) has the
expansion
\[
\omega
=
\sum_{m\ge1}e_m r^{2m}\cos^{2m}\theta,
\qquad
e_1=e_2=1.
\]
Moreover,
\[
\frac{1}{(\omega-1)^2}
=
\sum_{m\ge0}q_m r^{2m}\cos^{2m}\theta,
\qquad
\omega^i
=
\sum_{m\ge i}r_{i,m}r^{2m}\cos^{2m}\theta,
\]
and
\[
|\cos\theta|\,\omega^{i-\frac12}
=
\sum_{m\ge0}
l_{i,m}r^{2(i+m)-1}\cos^{2(i+m)}\theta .
\]
The factor \(|\cos\theta|\) in the exact equation cancels the half-power
coming from \(\omega^{i-\frac12}\); hence the resulting local expansion is
analytic in \(r\) and contains ordinary even powers of \(\cos\theta\).
Substituting these expansions into \eqref{polar}, we obtain an analytic
equation of the form
\begin{equation}
\frac{{\rm d}r}{{\rm d}\theta}
=
\sum_{k\ge0}\mathcal A_k(\theta)r^{k+1},
\label{origin-radial-expanded}
\end{equation}
where, up to terms at least quadratic in the perturbation coefficients,
\[
\mathcal A_k(\theta)
=
A_k\sin^2\theta\cos^k\theta .
\]
Let
\[
\delta=
\max_{0\le i\le n}
\left\{
|\varepsilon_a a_i|,
|\varepsilon_b b_i|
\right\}.
\]
All terms which are at least quadratic in the perturbation coefficients will
be included in \(O(\delta^2)\) below.

The leading coefficients \(A_k\) have the triangular form
\begin{equation}
A_{2j}
=
\varepsilon_a
\left(
\lambda_j a_j+\mathcal L_j^a(a_0,\ldots,a_{j-1})
\right),
\qquad
\lambda_j\neq0,
\label{A-even-origin}
\end{equation}
and
\begin{equation}
A_{2j+1}
=
\varepsilon_b
\left(
\mu_j b_j+\mathcal L_j^b(b_0,\ldots,b_{j-1})
\right),
\qquad
\mu_j\neq0,
\label{A-odd-origin}
\end{equation}
where \(\mathcal L_j^a\) and \(\mathcal L_j^b\) are linear forms in the
indicated preceding parameters.
In particular, we obtain
\[
\begin{aligned}
A_0 &=\varepsilon_a a_0, \quad
A_1 =\varepsilon_b b_0, \quad
A_2 =\varepsilon_a\left(\tfrac{3}{2} a_0+a_1\right), \quad
A_3 =\varepsilon_b\left(2 b_0+b_1\right), \\
A_4 &=\varepsilon_a\left(\tfrac{85}{24} a_0+\tfrac{5}{2} a_1+a_2\right), \quad
A_5 =\varepsilon_b\left(5 b_0+3 b_1+b_2\right),\\
A_6 &=\varepsilon_a\left(\tfrac{147}{16} a_0+\tfrac{161}{24} a_1+\tfrac{7}{2} a_2+a_3\right).
\end{aligned}
\]

Let \(P_0(r)\) denote the first return map near the origin and write
\[
P_0(r)-r=\sum_{m\ge0}C_m^0r^{m+1}.
\]
The selected coefficients near the origin are the even coefficients
\[
C_0^0,\ C_2^0,\ C_4^0,\ldots,C_{2n}^0.
\]
Their first-order parts are triangular in the \(a\)-parameters:
\[
C_{2j}^0
=
\varepsilon_a
\left(
\gamma_j a_j+\mathcal L_j(a_0,\ldots,a_{j-1})
\right)
+O(\delta^2),
\qquad
j=0,\ldots,n,
\]
where \(\gamma_j\neq0\).  Hence \(a_j\) appears for the first time in
\(C_{2j}^0\), with a nonzero coefficient.

The odd coefficients are passive
\[
C_{2j+1}^0=O(\delta^2),\qquad j\ge0.
\]
Indeed, the one-fold contribution to \(C_{2j+1}^0\) contains
\[
\int_0^{2\pi}\sin^2\theta\cos^{2j+1}\theta\,d\theta=0,
\]
and all remaining Picard--Brudnyi terms are at least quadratic in the
perturbation coefficients.

We define the origin admissible set by
\[
\mathcal C_{\rm org}
=
\left\{
(a,b,\varepsilon_a,\varepsilon_b):
\begin{array}{l}
0<|C_0^0|
\ll
|C_2^0|
\ll
\cdots
\ll
|C_{2n}^0|\ll1,\\[1mm]
C_0^0C_2^0>0,\quad
C_{2j}^0C_{2j+2}^0<0,\quad j=1,\ldots,n-1,\\[1mm]
C_{2j+1}^0=O(\delta^2)\ \text{is dominated by the selected even terms.}
\end{array}
\right\}.
\]
Equivalently, using the triangular formula above, this set is nonempty:
one chooses \(a_0,a_1,\ldots,a_n\) recursively so that the selected even
coefficients satisfy the required sign and dominance conditions, and then
chooses the effective perturbation size sufficiently small so that the
\(O(\delta^2)\)-terms are dominated.  The \(b\)-parameters do not enter the
first-order origin construction.

\begin{lm}
\label{ln5o}
If
\(
(a,b,\varepsilon_a,\varepsilon_b)\in\mathcal C_{\rm org},
\)
then the first return map \(P_0\) has at least \(n-1\) positive fixed points
near the origin.  Consequently, system \eqref{or} has at least \(n-1\) limit
cycles near the origin.
\end{lm}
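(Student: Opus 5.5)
The argument is the standard sign-change and dominance count applied to the displacement series of the return map. By Lemma~\ref{by}, applied to the analytic radial equation \eqref{origin-radial-expanded}, the displacement
\[
d(r):=P_0(r)-r=\sum_{m\ge0}C_m^0r^{m+1}
\]
is an absolutely convergent series for small \(r\). Its positive zeros are exactly the positive fixed points of \(P_0\), and each of them corresponds to a periodic orbit of \eqref{uy}, hence of \eqref{or}. Since \(r>0\), it suffices to count the sign changes of
\[
\tilde d(r)=d(r)/r=\sum_m C_m^0r^m .
\]
In \(\tilde d\) I will treat the even part \(E(r)=\sum_{j=0}^n C_{2j}^0r^{2j}\) as the principal part. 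The odd coefficients and the tail \(\sum_{m>2n}C_m^0r^m\) will be treated as perturbations.

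\textbf{Step 1: test radii.} Choose the selected coefficients recursively, as allowed by the triangular formula \eqref{C-even-origin}, with the hierarchy \(|C_0^0|\ll|C_2^0|\ll\cdots\ll|C_{2n}^0|\). Define \(\rho_j=\sqrt{|C_{2j-2}^0/C_{2j}^0|}\), so that \(\rho_1\ll\rho_2\ll\cdots\ll\rho_n\ll1\). Pick test radii \(r_0<r_1<\cdots<r_n\) interlacing these: \(r_0\ll\rho_1\), then \(\rho_j\ll r_j\ll\rho_{j+1}\), and finally \(\rho_n\ll r_n\ll1\).

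At the radius \(r_j\), the single term \(C_{2j}^0r_j^{2j}\) dominates all other even terms. For \(i<j\), the ratio of the \(i\)-th term to the \(j\)-th is a product of factors \((\rho_k/r_j)^2\), each of which is small. For \(i>j\), it is a product of factors \((r_j/\rho_k)^2\), again each small. The tail beyond index \(2n\) is controlled by the convergence of the Brudnyi series together with \(r_n\ll1\). Hence
\[
\operatorname{sgn}\tilde d(r_j)=\operatorname{sgn}C_{2j}^0 .
\]

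\textbf{Step 2: the odd passive terms.} This is the main obstacle. We have \(C_{2j+1}^0=O(\delta^2)\), while \(C_{2j}^0\) is of order \(\varepsilon_a\) times a linear form. The set \(\mathcal C_{\rm org}\) simply postulates that the odd coefficients are dominated by the selected even terms. So I will use this hypothesis literally: at each test radius \(r_j\), the odd contribution \(|C_{2i+1}^0|r_j^{2i+1}\) is required to be smaller than \(|C_{2j}^0|r_j^{2j}\).

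To make this consistent, I will first fix the normalized \(a\)-direction and the ratios \(\rho_j\). I will then shrink \(\varepsilon_a\) and \(\varepsilon_b\), with \(\varepsilon_b^2\ll\varepsilon_a\), so that \(\delta^2/\varepsilon_a\to0\) while the radii \(r_j\) stay fixed. With the radii fixed, the inequality persists. I will also take the even coefficients of order \(\varepsilon_a\) times fixed constants, rather than letting them degenerate.

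\textbf{Step 3: counting.} By hypothesis, the signs of \(C_0^0,C_2^0,\dots,C_{2n}^0\) are \(s,s,-s,s,\dots\): there is no change between \(C_0^0\) and \(C_2^0\), and a change at each subsequent step. The sequence \(\operatorname{sgn}\tilde d(r_0),\dots,\operatorname{sgn}\tilde d(r_n)\) therefore has exactly \(n-1\) sign changes. By the intermediate value theorem, \(\tilde d\) has at least \(n-1\) zeros in \((r_1,r_n)\), all near the origin.

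Under the strong dominance, each sign change is produced essentially by the binomial \(C_{2j}^0r^{2j}+C_{2j+2}^0r^{2j+2}\). That binomial has a simple zero at \(r\approx\rho_{j+1}\), so a Rolle or derivative-dominance argument on each interval \((r_j,r_{j+1})\) gives simplicity. This yields at least \(n-1\) isolated periodic orbits, i.e. limit cycles. Undoing the change of coordinates \((u,y,\tau)\) in \eqref{uy}, which is a homeomorphism near the origin with positive time rescaling away from the \(y\)-axis, transfers these limit cycles to \eqref{or}.
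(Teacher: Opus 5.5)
Your proposal follows the paper's proof essentially step by step: test radii at which the single selected monomial $C_{2j}^0r^{2j+1}$ dominates, with the odd $O(\delta^2)$ coefficients and the analytic tail absorbed by the defining conditions of $\mathcal C_{\rm org}$ and by $r_n\ll1$. The same sign count follows (no change between $C_0^0$ and $C_2^0$, alternation afterwards), and the intermediate value theorem on $(r_j,r_{j+1})$, $j=1,\ldots,n-1$, gives the $n-1$ zeros.

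Two minor differences are worth keeping. Your explicit radii $\rho_j$ expose that one actually needs $\rho_1\ll\cdots\ll\rho_n$, i.e.\ the recursive (top-down) reading of the hierarchy $\ll$, not merely $|C_{2j-2}^0|\ll|C_{2j}^0|$; the paper's phrase ``strongly separated'' uses this tacitly. Your derivative-dominance argument for simplicity replaces the paper's appeal to a small generic perturbation.
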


\begin{proof}
By the definition of \(\mathcal C_{\rm org}\),
\[
0<|C_0^0|
\ll
|C_2^0|
\ll
\cdots
\ll
|C_{2n}^0|\ll1,
\]
and
\[
C_0^0C_2^0>0,
\qquad
C_{2j}^0C_{2j+2}^0<0,
\quad j=1,\ldots,n-1.
\]

Choose test radii
\(
0<r_0<r_1<\cdots<r_n\ll1
\)
so that at \(r=r_j\), the monomial
\(
C_{2j}^0r^{2j+1}
\)
dominates all other selected even monomials.  This is possible because the
selected even coefficients are strongly separated.

The odd coefficients satisfy
\(
C_{2j+1}^0=O(\delta^2),
\)
and are dominated by the selected even monomials at the same test radii.
The analytic tail is also dominated by taking \(r_n\) sufficiently small.
Therefore
\[
\operatorname{sgn}\bigl(P_0(r_j)-r_j\bigr)
=
\operatorname{sgn}C_{2j}^0,
\qquad
j=0,\ldots,n.
\]

Since \(C_0^0\) and \(C_2^0\) have the same sign, while
\(
C_2^0,\ C_4^0,\ldots,C_{2n}^0
\)
alternate in sign, the values
\[
P_0(r_1)-r_1,\quad
P_0(r_2)-r_2,\quad \ldots,\quad
P_0(r_n)-r_n
\]
alternate in sign.  Hence \(P_0(r)-r\) has at least one zero in each interval
\(
(r_j,r_{j+1}), \)
\(j=1,\ldots,n-1.
\)
Thus \(P_0\) has at least \(n-1\) positive fixed points near the origin.
After an arbitrarily small perturbation inside the same admissible set, these
fixed points may be assumed simple, and the corresponding periodic orbits are
limit cycles.
\end{proof}

%%%%%%%%%%%%%%%%%%%%%%%%%%%%%%%%%%
%%%%%%%%%%%%%%%%%%%%%%%%%%%%%%%%%%
%%%%%%%%%%%%%%%%%%%%%%%%%%%%%%%%%%
%%%%%%%%%%%%%%%%%%%%%%%%%%%%%%%%%%

\subsection{Limit cycles near infinity}
\label{subsec:limit-cycles-infinity}

We study limit cycles bifurcating from sufficiently large energy levels
of the exterior period annulus.  Here ``near infinity'' means that the
energy level \(h\) is chosen sufficiently large, or equivalently that
\(R=(2h)^{-1/6}>0\) is sufficiently small.  In particular, these periodic
orbits belong to the exterior period annulus \(h>1/6\) and are separated
from the two-cuspidal boundary \(h=1/6\).  Thus the limit cycles constructed
in this subsection are spatially separated from those bifurcating from the
lips.

For \(\varepsilon_a=\varepsilon_b=0\), the origin of system \eqref{uy} is
the linear center
\[
\frac{{\rm d}u}{{\rm d}\tau}=y,\qquad
\frac{{\rm d}y}{{\rm d}\tau}=-u .
\] Introduce polar coordinates at infinity in the \((u,y)\)-plane:
\[
u=\frac{\cos\theta}{r},\qquad
y=\frac{\sin\theta}{r},
\qquad
R=r^{1/3}.
\]
Thus \(R\to0^+\) corresponds to infinity.  Put
\[
\beta_i=\varepsilon_b b_i,\qquad i=0,\ldots,n,
\qquad
|\beta|=\max_{0\le i\le n}|\beta_i|.
\]
We write
\[
S_a(\omega)=\sum_{i=0}^n a_i\omega^{i-\frac12},
\qquad
S_\beta(\omega)=\sum_{i=0}^n\beta_i\omega^i,
\qquad
\Omega=\omega-1 .
\]
Here
\[
\omega
=
\left(3\frac{\cos^2\theta}{R^6}-1\right)^{1/3}+1,
\qquad
\Omega
=
\left(3\frac{\cos^2\theta}{R^6}-1\right)^{1/3}.
\]

On every outer arc where \(\Omega\neq0\), the denominator is nonvanishing,
and hence it can be expanded with respect to the perturbation parameters.
Since \(r=R^3\), the cubic parameter jet is
\[
\frac{{\rm d}R}{{\rm d}\theta}
=
-\frac13R^{-2}\frac{\sin^2\theta\,K}{\Omega^2}
+\frac13R^{-5}\frac{\cos\theta\sin^3\theta\,K^2}{\Omega^4}
-\frac13R^{-8}\frac{\cos^2\theta\sin^4\theta\,K^3}{\Omega^6}
+O(K^4).
\]

The points \(\Omega=0\), corresponding to \(x=\pm1\), are regular
crossings of the exterior periodic orbits.  Near these points we work
directly in the original coordinates, so no expansion involving negative
powers of \(\Omega\) is used across \(\Omega=0\). The regular local transitions at \(x=\pm1\) contribute only higher-order
terms and therefore do not affect the coefficients considered below.

For \(m=2,4,6\), define the functions
\(
\chi_{\alpha,\ell}^{(m)}(\theta)
\)
by
\[
\frac{\omega^\alpha}{\Omega^m}
=
\sum_{\ell\ge0}
\chi_{\alpha,\ell}^{(m)}(\theta)
R^{2m-2\alpha+2\ell}.
\]
The leading coefficient satisfies
\[
\chi_{\alpha,0}^{(m)}(\theta)
=
3^{\frac{\alpha-m}{3}}
|\cos\theta|^{\frac{2(\alpha-m)}{3}},
\]
and hence
\[
\chi_{\alpha,0}^{(m)}(\theta)>0
\qquad
\text{for }0<\theta<\frac{\pi}{2}.
\]

We now record the coefficient functions which will be used below.  The
first-order \(a\)-block is
\[
\varepsilon_a
\sum_{i=0}^n
\sum_{\ell\ge0}
a_iA_{i,\ell}(\theta)R^{3-2i+2\ell},
\]
where
\[
A_{i,\ell}(\theta)
=
-\frac13|\cos\theta|\sin^2\theta\,
\chi_{i-\frac12,\ell}^{(2)}(\theta).
\]
The first-order \(b\)-block is
\[
\sum_{j=0}^n
\sum_{\ell\ge0}
\beta_jB_{j,\ell}(\theta)R^{2-2j+2\ell},
\]
where
\[
B_{j,\ell}(\theta)
=
-\frac13\cos\theta\sin^2\theta\,
\chi_{j,\ell}^{(2)}(\theta).
\]
The quadratic \(b^2\)-block is
\[
\sum_{j,k=0}^n
\sum_{\ell\ge0}
\beta_j\beta_kD_{j,k,\ell}(\theta)
R^{3-2j-2k+2\ell},
\]
where
\[
D_{j,k,\ell}(\theta)
=
\frac13\cos^3\theta\sin^3\theta\,
\chi_{j+k,\ell}^{(4)}(\theta).
\]
The mixed \(\varepsilon_a\beta\)-block is
\[
\varepsilon_a
\sum_{i,j=0}^n
\sum_{\ell\ge0}
a_i\beta_jE_{i,j,\ell}(\theta)
R^{4-2i-2j+2\ell},
\]
where
\[
E_{i,j,\ell}(\theta)
=
\frac23|\cos\theta|\cos^2\theta\sin^3\theta\,
\chi_{i+j-\frac12,\ell}^{(4)}(\theta).
\]
Finally, the mixed \(\varepsilon_a\beta^2\)-block is
\[
\varepsilon_a
\sum_{i,j,k=0}^n
\sum_{\ell\ge0}
a_i\beta_j\beta_k
G_{i,j,k,\ell}(\theta)
R^{5-2i-2j-2k+2\ell},
\]
where
\[
G_{i,j,k,\ell}(\theta)
=
-|\cos\theta|\cos^4\theta\sin^4\theta\,
\chi_{i+j+k-\frac12,\ell}^{(6)}(\theta).
\]
When \(\ell=0\), we write simply
\[
A_i=A_{i,0},\quad
B_j=B_{j,0},\quad
D_{j,k}=D_{j,k,0},\quad
E_{i,j}=E_{i,j,0},\quad
G_{i,j,k}=G_{i,j,k,0}.
\]

The coefficient integrals below are understood as limits of the corresponding
truncated outer integrals.  From the explicit leading terms, for
\(q=0,\ldots,n-2\),
\(A_n=O(|\cos\theta|^{(2n-2)/3})\),
\(B_{n-q}=O(|\cos\theta|^{(2n-2q-1)/3})\),
\(D_{n,n-q}=O(|\cos\theta|^{(4n-2q+1)/3})\),
\(E_{n,n-q}=O(|\cos\theta|^{(4n-2q)/3})\), and
\(G_{n,n,n-q}=O(|\cos\theta|^{(6n-2q+2)/3})\) as
\(\cos\theta\to0\).  All these exponents are positive, so the truncated
integrals converge to the quadrant integrals used below.

\begin{lm}
\label{lm:weighted-picard-brudnyi}
Let
\(M=6n-3\), \(W=\frac1M R^M .\)
Assume that, after the formal expansion of the right-hand side with respect
to the effective perturbation parameters, the weighted equation can be written as
\[
\frac{\rm dW}{\rm d\theta}
=
\sum_{m\ge0}\mathscr A_m(\theta)R^m .
\]
{
Let $P_{\infty}(R)$ be the first return map in the $R$-variable.  Then the
coefficient $C_j^\infty$ of $R^j$ in the formal weighted displacement
\[
\Delta_\infty(R)
:=\frac{P_{\infty}(R)^M-R^M}{M}
=\sum_{j\ge0}C_j^\infty R^j
\]
}
is given by
\[
C_j^\infty
=
\sum_{s\ge1}
\sum_{\substack{m_1,\ldots,m_s\ge0\\
m_1+\cdots+m_s-(s-1)M=j}}
\kappa^M_{m_1,\ldots,m_s}
\int_{0\le t_1\le\cdots\le t_s\le2\pi}
\mathscr A_{m_1}(t_1)\cdots \mathscr A_{m_s}(t_s)
\,dt_1\cdots dt_s ,
\]
where
\[
\kappa^M_{m_1,\ldots,m_s}
=
\prod_{\nu=2}^{s}
\left(
m_\nu+\cdots+m_s-(s-\nu)M
\right).
\]
\end{lm}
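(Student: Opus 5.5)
The plan is to run Picard iteration not on $R$ itself but on the family of powers $R^\mu$, $\mu\in\mathbb Z$. Along a solution we have $\frac{{\rm d}W}{{\rm d}\theta}=R^{M-1}\frac{{\rm d}R}{{\rm d}\theta}$. Hence, for every exponent $\mu$,
\[
\frac{{\rm d}}{{\rm d}\theta}R^\mu=\mu R^{\mu-1}\frac{{\rm d}R}{{\rm d}\theta}=\mu R^{\mu-M}\frac{{\rm d}W}{{\rm d}\theta}=\mu\sum_{k\ge0}\mathscr A_k(\theta)R^{\mu+k-M}.
\]
This closed system of identities is the only structural input. Let $R(\theta)$ be the solution with $R(0)=R_0$. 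Integrating gives
\[
R(t)^\mu=R_0^\mu+\mu\int_0^t\sum_k\mathscr A_k(t')R(t')^{\mu+k-M}\,dt'.
\]
This plays the role of the linear recursion in Lemma~\ref{by}.

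First I write the displacement as
\[
\Delta_\infty(R_0)=W(2\pi)-W(0)=\sum_{m_s}\int_0^{2\pi}\mathscr A_{m_s}(t_s)R(t_s)^{m_s}\,dt_s .
\]
Next I substitute the integral identity for $R(t_s)^{m_s}$, with $\mu=m_s$, obtaining a new innermost variable $t_{s-1}\le t_s$ and index $m_{s-1}$. The resulting factor is $m_s$, and the new exponent is $m_{s-1}+m_s-M$. Then I iterate. After $s-1$ substitutions, the term carrying $\mathscr A_{m_1}(t_1)\cdots\mathscr A_{m_s}(t_s)$ on the simplex $0\le t_1\le\cdots\le t_s\le2\pi$ has multiplier
\[
\prod_{\nu=2}^{s}\bigl(m_\nu+\cdots+m_s-(s-\nu)M\bigr)=\kappa^M_{m_1,\ldots,m_s},
\]
and, after freezing the innermost remainder at $R_0$, it carries the power $R_0^{m_1+\cdots+m_s-(s-1)M}$. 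Collecting the terms with $m_1+\cdots+m_s-(s-1)M=j$ yields the stated formula for $C_j^\infty$. This is a bookkeeping check by induction on $s$: I will verify that the exponent after the $\nu$-th step is $m_\nu+\cdots+m_s-(s-\nu)M$ and that it is exactly the factor produced at the next step.

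The main obstacle is justifying the formal expansion, since the exponents can become negative and the $\mathscr A_m$ carry parameter factors. I would treat everything as an asymptotic expansion in the effective perturbation size. Each substitution adds at least one factor of a perturbation coefficient, namely $\varepsilon_a a_i$ or $\beta_j$, from $\mathscr A$. Therefore, for fixed $j$ and a fixed truncation order $N$ in the parameters, only finitely many words $(m_1,\ldots,m_s)$ with $s\le N$ contribute. The remainder after $N$ steps is an $N$-fold iterated integral of $R(t)^{\mu}$. On the outer annulus we have $R(t)=R_0(1+O(\delta R_0^{-\text{const}}))$ by a Gronwall estimate applied to the $R$-equation. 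This shows that the remainder is of higher order in the parameters uniformly on the fixed annulus, so the $C_j^\infty$ are well-defined asymptotic coefficients; they are not Taylor coefficients, consistent with the remark in the statement.

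Two further points need care. First, the crossings $\Omega=0$ must be handled. There I would invoke the regular local transitions at $x=\pm1$, which are computed in original coordinates and contribute only higher-order terms, so the integrals are interpreted as limits of truncated outer integrals; these converge by the positive exponents of $|\cos\theta|$ recorded above. Second, since $M$ is odd, zeros of $\Delta_\infty$ coincide with fixed points of $P_\infty$.
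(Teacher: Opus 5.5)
Your proposal is correct and is essentially the paper's argument: your integral identity $R(t)^\mu=R_0^\mu+\mu\int_0^t\sum_k\mathscr A_k(t')R(t')^{\mu+k-M}\,dt'$ is exactly the rule $\frac{d}{dW}R^m=mR^{m-M}$ that the paper inserts into Brudnyi's Picard iteration. Your inductive bookkeeping of the factors $m_s,\ m_{s-1}+m_s-M,\ldots$ and of the exponent $m_1+\cdots+m_s-(s-1)M$ reproduces $\kappa^M_{m_1,\ldots,m_s}$ term by term, and your extra remarks on asymptotic justification and the crossings $\Omega=0$ go beyond what is needed, since the lemma is stated only at the formal level.
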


\begin{proof}
The proof is the same Picard-iteration argument as in the proof of
\cite[Theorem 2.1]{Br1}.  We only record the modification caused by the
weighted variable.

Since
\[
W=\frac1M R^M,
\]
one has
\[
\frac{\rm d}{\rm dW}R^m=mR^{m-M}.
\]
The one-fold Picard contribution of
\[
\mathscr A_m(\theta)R^m
\]
has weighted degree \(m\).  For a two-fold ordered contribution
\[
\mathscr A_{m_1}(t_1)\mathscr A_{m_2}(t_2),
\qquad
0\le t_1\le t_2\le2\pi,
\]
the second variation differentiates \(R^{m_2}\) with respect to \(W\).
Thus it contributes the factor \(m_2\) and has weighted degree
\(
m_1+m_2-M .
\)
For a three-fold contribution the factor is
\(
m_3\bigl(m_2+m_3-M\bigr),
\)
and the weighted degree is
\(
m_1+m_2+m_3-2M .
\)
Continuing inductively, a word
\(
(m_1,\ldots,m_s)
\)
contributes to weighted degree
\(
m_1+\cdots+m_s-(s-1)M
\)
with coefficient
\[
\prod_{\nu=2}^{s}
\left(
m_\nu+\cdots+m_s-(s-\nu)M
\right).
\]
This gives the stated formula.
\end{proof}

{
Thus
\[
 \Delta_\infty(R)=\frac{P_{\infty}(R)^M-R^M}{M},
\]
and the positive zeros of $\Delta_\infty$ are exactly the positive fixed
points of $P_{\infty}$.
}  By the regular-transition estimate above, the cubic homogeneous terms of
weighted degree at most \(2n-3\) have the same coefficients as the truncated
outer maps.  Hence Lemma~\ref{lm:weighted-picard-brudnyi} gives
\begin{equation}
\Delta_\infty(R)
\sim
\sum_{j\ge0}C_j^\infty R^j .
\label{Cinf-def}
\end{equation}
Here the symbol \(\sim\) is used only for coefficient extraction; the
fourth- and higher-order parameter remainder is always taken from the exact
first return map, not from termwise integration of the singular outer
series.

\begin{lm}
\label{lm:low-coefficients-infty}
For \(q=0,\ldots,n-2\), the \(\varepsilon_a\beta^2\)-homogeneous part of
\(C_{2q+1}^\infty\) has the form
\begin{equation}
\bigl[C_{2q+1}^\infty\bigr]_{\varepsilon_a\beta^2}
=
\varepsilon_a
\left(
\Gamma_q^\infty a_n\beta_n\beta_{n-q}
+
\mathcal L_q^\infty(a,\beta_n,\ldots,\beta_{n-q+1})
\right),
\label{Codd-triangular-infty}
\end{equation}
where
\(
\Gamma_q^\infty<0,
\)
and where \(\mathcal L_q^\infty\) is independent of \(\beta_{n-q}\).
Moreover, no earlier coefficient
\[
C_1^\infty,\ldots,C_{2q-1}^\infty
\]
contains \(\beta_{n-q}\) in its \(\varepsilon_a\beta^2\)-homogeneous part.
\end{lm}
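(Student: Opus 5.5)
The plan is to apply Lemma~\ref{lm:weighted-picard-brudnyi} and keep only the words whose letters come from the five coefficient blocks and whose total parameter content is $\varepsilon_a\beta^2$. Up to permutation and ordering, there are exactly three such word types.
\begin{itemize}
\item[(a)] The one-letter word coming from the $\varepsilon_a\beta^2$-block $G_{i,j,k,\ell}$.
\item[(b)] Two-letter words pairing one $a$-letter or $b$-letter with one mixed letter: $A_{i,\ell}\cdot D_{j,k,\ell'}$ or $B_{j,\ell}\cdot E_{i,k,\ell'}$, in either order.
\item[(c)] Three-letter words $A\cdot B\cdot B$, in any order.
\end{itemize}

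First I will translate weights into $R$-exponents. Multiplying the $R$-equation by $R^{M-1}$ shifts every exponent by $M-1$. The rule ``weighted degree $m_1+\cdots+m_s-(s-1)M$'' then says that each word has degree equal to the sum of the original block exponents, plus a constant fixed by the number of letters. For type (a), the word $G_{i,j,k,\ell}$ has exponent $5-2(i+j+k)+2\ell$ in the original $R$-equation; after the shift it becomes $M+4-2(i+j+k)+2\ell$. Since $M=6n-3$, the choice $i=j=n$, $k=n-q$, $\ell=0$ gives weighted degree $2q+1$. Larger $k$ would require $\ell<0$. So $\beta_{n-q}$ appears only with $\ell=0$ and partners $i=j=n$, while terms with indices $>n-q$ and $\ell>0$ go into $\mathcal L_q^\infty$. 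I will run the same bookkeeping for types (b) and (c). Since every block exponent decreases by $2$ per unit of index, the total degree is a fixed affine function of $i+j+k-\ell$. Therefore the first appearance of $\beta_{n-q}$, paired with $a_n\beta_n$ (the maximal indices), occurs exactly in degree $2q+1$. Any word containing $\beta_{n-q}$ has degree $\ge 2q+1$. This proves the ``no earlier coefficient'' clause: $C_1^\infty,\dots,C_{2q-1}^\infty$ cannot involve $\beta_{n-q}$ in their $\varepsilon_a\beta^2$ part. Independence of $\mathcal L_q^\infty$ from $\beta_{n-q}$ follows the same way: the only $\beta_{n-q}$-terms at degree $2q+1$ are the extremal ones with $\ell=\ell'=0$.

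Second, I will compute $\Gamma_q^\infty$ as the sum of the extremal contributions:
\[
\Gamma_q^\infty=\int_0^{2\pi}G_{n,n,n-q}+(\text{weights})\iint(A_nD_{n,n-q}+\cdots)+(\text{weights})\iiint A_nB_nB_{n-q}+\cdots .
\]
The leading factor $\chi^{(m)}_{\alpha,0}=3^{(\alpha-m)/3}|\cos\theta|^{2(\alpha-m)/3}$ is explicit, so each integrand is an explicit monomial in $|\cos\theta|$, $\cos\theta$, $\sin\theta$. Here symmetry does much of the work. The $B$ and $D$ letters carry odd powers of $\cos\theta$ or $\sin\theta$, and this kills many orderings over the full period. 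In particular $\int_0^{2\pi}B_j=0$ and $\int A\,D$ reduces to terms with vanishing one-fold factor. The combinatorial weights $\kappa^M$ are linear in $M$, with $M$ large compared with the block exponents, so I expect the term linear in $M$ to dominate. It comes from the highest-order iterated integrals, namely the $ABB$ and $BE$ words, where the products reduce via $\iint_{t_1\le t_2}f(t_1)g(t_2)$ with $\int f=0$ to $-\int F g$, $F$ a primitive. Each surviving piece will be written as a Beta integral $\int_0^{\pi/2}\cos^p\theta\sin^r\theta\,d\theta>0$ multiplied by explicit sign factors. The claim $\Gamma_q^\infty<0$ then reduces to checking that the signed sum is negative. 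The one-fold $G$ term is $-\int|\cos|^{\cdots}\cos^4\sin^4<0$, and I would try to show that the remaining terms either vanish by parity or carry the same sign.

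The main obstacle is this sign determination. The two- and three-fold ordered integrals do not vanish automatically, their $M$-dependent weights can have either sign, and the three-fold words with $\kappa^M=m_3(m_2+m_3-M)$ may contribute with opposite sign to the $G$ term. If a clean parity cancellation fails, I would fall back on establishing $\Gamma_q^\infty\neq0$ (which is all that Lemma~\ref{lm:C-infty-nonempty} needs). I would do this by isolating the coefficient of the highest power of $M$ and evaluating it in closed form via Beta functions, treating the sign as a by-product.
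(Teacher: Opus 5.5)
\textbf{The degree part is fine; the sign of $\Gamma_q^\infty$ is not proved.}

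Your degree bookkeeping is correct, and slightly more complete than the paper's, which states the degree relation only for the $G$-block. All four word types $G$, $AD$, $BE$, $ABB$ land in weighted degree $6n+1-2(i+j+k)+2\sum\ell$. So $\beta_{n-q}$ first appears in $C_{2q+1}^\infty$, and there only through $a_n\beta_n\beta_{n-q}$ with every $\ell=0$. There is a minor slip: with $i=j=n$ one needs $k=n-q+\ell$, so it is \emph{smaller} $k$, not larger, that would force $\ell<0$.

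The genuine gap is the sign. The lemma asserts $\Gamma_q^\infty<0$, and you leave this open. Your fallback, $\Gamma_q^\infty\neq0$, is a weaker statement, and the mechanism you propose for it does not work:
\begin{itemize}
\item $M=6n-3$ is not a free parameter, and the integrands depend on $n$ through the powers of $|\cos\theta|$.
\item The extremal letters have weighted exponents $m_A=4n-1$, $m_B=4n-2$, $m_D=2n-1+2q$, $m_E=2n+2q$, $m_G=2q+1$, mostly of the same order as $M$.
\item Every relevant weight $\kappa^M$ is positive: $m_2$ for two-letter words, and $m_3(m_2+m_3-M)$ with both factors positive for the $ABB$ words. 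So there is no sign ambiguity in the weights and no ``top power of $M$'' to isolate.
\end{itemize}
What decides the sign is pairing the two orderings of each two-letter word. Write $I_{XY}$ for the ordered integral with $X$ at the earlier time; its weight is $m_Y$. One factor has zero mean over $[0,2\pi]$, so $I_{YX}=-I_{XY}$, and the pair contributes $(m_Y-m_X)I_{XY}$.

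\textbf{The paper's route.} The paper splits each ordered simplex into quadrants. It uses the parity relations ($A,D,G$ are $\pi$-periodic, $B,E$ are $\pi$-antiperiodic) to reduce everything to iterated integrals over $[0,\pi/2]$ of the positive functions $a,b_0,b_q,d_q,e_0,e_q,g_q$. It then asserts $\Gamma_q^\infty=-\mathcal K_{n,q}$, with $\mathcal K_{n,q}$ a sum of nonnegative terms and the $G$-term $4\nu_q\int_0^{\pi/2}g_q$ strictly positive. For example, $I_{AD}=4\int_0^{\pi/2}a\,D_q^*>0$ together with $m_D-m_A=-2(n-q)$ gives the term $8\nu_q(n-q)\int_0^{\pi/2}a\,D_q^*$ of $\mathcal K_{n,q}$.

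\textbf{A warning if you follow that route.} I repeated the same computation for the $BE$ pairs, using the paper's own conventions: $b_0=-B_n>0$ and $e_q=E_{n,n-q}>0$ on the first quadrant, and the weight carried by the later letter. I get $I_{EB}=4\int_0^{\pi/2}e_q\,B_0^*>0$ with net weight $m_B-m_E=2(n-q-1)>0$. The pair $(B_{n-q},E_{n,n})$ gives the analogous term with weight $2(n+q-1)$. This is a contribution of sign \emph{opposite} to the $G$ and $AD$ terms, whereas $\mathcal K_{n,q}$ lists it with the same sign. So your suspicion that some words oppose the $G$ term appears justified. Establishing $\Gamma_q^\infty<0$, or even $\Gamma_q^\infty\neq0$, then needs a genuine quantitative comparison of these competing first-quadrant integrals. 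Your Beta-function plan could in principle supply this, but it is exactly the part left undone.
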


\begin{proof}
The mixed \(\varepsilon_a\beta^2\)-block of the \(R\)-equation is
\[
\varepsilon_a
\sum_{i,j,k=0}^n
\sum_{\ell\ge0}
a_i\beta_j\beta_k
G_{i,j,k,\ell}(\theta)
R^{5-2i-2j-2k+2\ell}.
\]
After passing to the weighted variable
\(
W= M^{-1}R^M,
\)
a monomial
\[
a_i\beta_j\beta_kG_{i,j,k,\ell}(\theta)
R^{5-2i-2j-2k+2\ell}
\]
contributes to the weighted displacement with degree
\[
M-1+5-2i-2j-2k+2\ell
=
6n+1-2(i+j+k)+2\ell .
\]
Hence it contributes to \(C_{2q+1}^\infty\) precisely when
\(
i+j+k=3n-q+\ell .
\)
Taking
\(
i=n,\) \(j=n\), \(k=n-q\), \(\ell=0,\)
we obtain the monomial
\(
a_n\beta_n\beta_{n-q}
\)
in \(C_{2q+1}^\infty\).

The same degree relation shows that if \(p<q\), then \(C_{2p+1}^\infty\)
cannot contain \(\beta_{n-q}\) in its \(\varepsilon_a\beta^2\)-homogeneous
part.  This gives the triangular dependence.

It remains to compute the sign of the coefficient of
\(a_n\beta_n\beta_{n-q}\).  Set
\(
L=\frac{\pi}{2}.
\)
On \(0<t<L\), define
\[
a(t)=-A_n(t),
\qquad
b_0(t)=-B_n(t),
\qquad
b_q(t)=-B_{n-q}(t),
\]
\[
d_q(t)=D_{n,n-q}(t),
\qquad
e_0(t)=E_{n,n}(t),
\qquad
e_q(t)=E_{n,n-q}(t),
\qquad
g_q(t)=-G_{n,n,n-q}(t).
\]
By the explicit formulas for \(A,B,D,E,G\), and since
\[
\sin t>0,\qquad
\cos t>0,\qquad
\chi_{\alpha,0}^{(m)}(t)>0
\quad
\text{for }0<t<L,
\]
one has
\[
a(t)>0,\qquad b_0(t)>0,\qquad b_q(t)>0,
\]
\[
d_q(t)>0,\qquad e_0(t)>0,\qquad e_q(t)>0,\qquad g_q(t)>0.
\]
Set
\[
B_0^*(t)=\int_0^t b_0(u)\,du,
\qquad
B_q^*(t)=\int_0^t b_q(u)\,du,
\qquad
D_q^*(t)=\int_0^t d_q(u)\,du .
\]
Then
\[
B_0^*(t)>0,\qquad B_q^*(t)>0,\qquad D_q^*(t)>0
\qquad
\text{for }0<t<L.
\]

We compute the ordered Picard integrals by splitting each ordered simplex
according to the quadrants containing its ordered variables.  Thus, for a
\(k\)-fold ordered integral over
\(
0\le t_1\le\cdots\le t_k\le2\pi,
\)
we split the domain into the subdomains where each \(t_\nu\) belongs to one
of the four intervals
\[
\left[0,\frac{\pi}{2}\right],\quad
\left[\frac{\pi}{2},\pi\right],\quad
\left[\pi,\frac{3\pi}{2}\right],\quad
\left[\frac{3\pi}{2},2\pi\right].
\]
Only after this splitting do we use the symmetry of the integrands.

The parity relations are
\[
A_n(\theta+\pi)=A_n(\theta),
\qquad
B_j(\theta+\pi)=-B_j(\theta),
\]
\[
D_{j,k}(\theta+\pi)=D_{j,k}(\theta),
\qquad
E_{i,j}(\theta+\pi)=-E_{i,j}(\theta),
\qquad
G_{i,j,k}(\theta+\pi)=G_{i,j,k}(\theta).
\]
The coefficient of \(a_n\beta_n\beta_{n-q}\) is
\(
\Gamma_q^\infty=-\mathcal K_{n,q},
\)
where
\[
\begin{aligned}
\mathcal K_{n,q}
={}&
4\nu_q\int_0^L g_q(t)\,dt
+
8\nu_q(n-q)\int_0^L a(t)D_q^*(t)\,dt
\\
&+
8(n-q-1)
\int_0^L
\left[
B_0^*(t)e_q(t)
+
b_q(t)\int_0^t a(u)B_0^*(u)\,du
\right]dt
\\
&+
8(1-\delta_{q0})(n+q-1)
\int_0^L
\left[
B_q^*(t)e_0(t)
+
b_0(t)\int_0^t a(u)B_q^*(u)\,du
\right]dt
\\
&+
8(n-q)
\int_0^L
a(t)
\left[
D_q^*(t)
+
\int_0^t B_0^*(u)b_q(u)\,du
\right]dt .
\end{aligned}
\]
Here
\(
\nu_0=1,\) and \(\nu_q=2\quad(q\ge1).
\)

Every term in \(\mathcal K_{n,q}\) is nonnegative, and at least the first,
second, and last lines are strictly positive.  Therefore
\(
\mathcal K_{n,q}>0.
\)
Consequently,
\(
\Gamma_q^\infty=-\mathcal K_{n,q}<0.
\)
This proves the lemma.
\end{proof}

We define the selected infinity coefficients by
\[
C_{2q+1}^*(a,\beta)
=
\Gamma_q^\infty a_n\beta_n\beta_{n-q}
+
\mathcal L_q^\infty(a,\beta_n,\ldots,\beta_{n-q+1}),
\qquad q=0,\ldots,n-2,
\]
and for convenience, set
\(
z=R^2,
\)
such that
\[
Q_\infty^*(z,a,\beta)
=
C_1^*(a,\beta)
+
C_3^*(a,\beta)z
+\cdots+
C_{2n-3}^*(a,\beta)z^{n-2}.
\]

\begin{lm}
\label{lm:C-infty-nonempty}
Fix \(a\in\mathbb R^{n+1}\) with \(a_n\neq0\).  Then one can choose
\(
\beta_n,\beta_{n-1},\ldots,\beta_2
\)
so that the polynomial
\(
Q_\infty^*(z,a,\beta)
\)
has \(n-2\) simple positive zeros.
\end{lm}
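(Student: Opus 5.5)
The plan is to exploit the triangular structure of Lemma~\ref{lm:low-coefficients-infty}. There, \(C_{2q+1}^*\) depends on \(\beta_{n-q}\) only through the term \(\Gamma_q^\infty a_n\beta_n\beta_{n-q}\), whose coefficient is nonzero. Moreover, none of \(C_1^*,\ldots,C_{2q-1}^*\) involves \(\beta_{n-q}\). This lets us prescribe the coefficients of \(Q_\infty^*\) one at a time, starting from the lowest one.

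First fix \(\beta_n\neq0\) arbitrarily, say \(\beta_n=1\) up to the overall scale. Then
\[
C_1^*=\Gamma_0^\infty a_n\beta_n^2\neq0,
\]
since \(\Gamma_0^\infty<0\) and \(a_n\neq0\). Hence the constant term of \(Q_\infty^*\) is fixed and nonzero, and its sign is \(-\operatorname{sgn}a_n\).

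Next choose a target polynomial
\[
T(z)=C_1^*\prod_{j=1}^{n-2}\Bigl(1-\frac{z}{\zeta_j}\Bigr)
\]
with prescribed distinct roots \(0<\zeta_1<\cdots<\zeta_{n-2}\). These can be taken as small as we like, for instance \(\zeta_j=j\rho\) with \(0<\rho\ll1\), so that they fall in the region \(z\ll1\) required by \(\mathcal C_{\rm inf}\). Write \(T(z)=\sum_{q=0}^{n-2}t_qz^q\); note \(t_0=C_1^*\) automatically.

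For \(q=1,\ldots,n-2\) in increasing order, solve the linear equation \(C_{2q+1}^*(a,\beta)=t_q\) for \(\beta_{n-q}\):
\[
\beta_{n-q}=\frac{t_q-\mathcal L_q^\infty(a,\beta_n,\ldots,\beta_{n-q+1})}{\Gamma_q^\infty a_n\beta_n}.
\]
This is legitimate because \(\mathcal L_q^\infty\) contains only \(\beta_n,\ldots,\beta_{n-q+1}\), which are already fixed, and because later steps never change earlier coefficients. After the step \(q=n-2\), which determines \(\beta_2\), we have \(Q_\infty^*(\cdot,a,\beta)\equiv T\). It therefore has exactly the \(n-2\) simple positive zeros \(\zeta_j\).

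As a variant, one can aim for alternating signs with strong dominance, \(|C_1^*|\ll|C_3^*|\ll\cdots\), and count sign changes. Prescribing the roots directly is cleaner and also produces the points \(z_j^\pm\) straddling each \(\zeta_j\).

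The main point to check is that \(\mathcal L_q^\infty\) is genuinely affine in nothing but the listed variables. In particular, the \(\varepsilon_a\beta^2\)-part must not pick up \(\beta_{n-q}\) through pairs like \(\beta_j\beta_k\) with \(j+k\) fixed. The degree relation \(i+j+k=3n-q+\ell\) with \(i,j,k\le n\) forces every other index to be at least \(n-q+1\), except in the term \((n,n,n-q)\) itself. That is exactly what the lemma asserts, so no further obstacle remains. The values \(\beta_1,\beta_0\) stay free; they are fixed later, together with the \(O(\cdot)\) remainders, in Lemma~\ref{lm:infinity-cyclicity}.
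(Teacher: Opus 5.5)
Your proof is correct and follows the paper's argument. Both use the triangular structure of Lemma~\ref{lm:low-coefficients-infty} to match the coefficients of $Q_\infty^*$ to a target polynomial with prescribed simple positive roots, solving recursively for $\beta_{n-q}$ through the nonzero pivot $\Gamma_q^\infty a_n\beta_n$. The only cosmetic difference is the order of normalization: the paper fixes the target polynomial first and chooses the sign of its scale $\lambda$ so that $\Gamma_0^\infty a_n\beta_n^2=d_0$ is solvable, while you fix $\beta_n$ first and rescale the target to match $C_1^*$.
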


\begin{proof}
Choose numbers
\(
0<z_1<z_2<\cdots<z_{n-2}
\)
and set
\[
D(z)=d_0+d_1z+\cdots+d_{n-2}z^{n-2}
=
\lambda\prod_{j=1}^{n-2}(z-z_j),
\]
where \(\lambda\neq0\) will be chosen below.

Choose \(\lambda\) so that \(d_0\) has the same sign as
\(
\Gamma_0^\infty a_n .
\)
Then the equation
\(
\Gamma_0^\infty a_n\beta_n^2=d_0
\)
has a real solution \(\beta_n\neq0\).

Suppose that
\(
\beta_n,\beta_{n-1},\ldots,\beta_{n-q+1}
\)
have already been chosen.  Since
\(
\Gamma_q^\infty a_n\beta_n\neq0,
\)
the equation
\[
\Gamma_q^\infty a_n\beta_n\beta_{n-q}
+
\mathcal L_q^\infty(a,\beta_n,\ldots,\beta_{n-q+1})
=
d_q
\]
has a unique real solution for \(\beta_{n-q}\).  Iterating this for
\(
q=1,\ldots,n-2,
\)
we obtain
\[
C_{2q+1}^*(a,\beta)=d_q,
\qquad q=0,\ldots,n-2.
\]
Hence
\[
Q_\infty^*(z,a,\beta)=D(z),
\]
and \(Q_\infty^*(z,a,\beta)\) has the prescribed \(n-2\) simple positive zeros.
\end{proof}

The infinity admissible set is defined by
\[
\mathcal C_{\rm inf}
=
\left\{
(a,b,\varepsilon_a,\varepsilon_b):
\begin{array}{l}
	a_n\neq0,\quad
	\beta_i=\varepsilon_b b_i,\quad i=0,\ldots,n,\\[1mm]
	\text{there exist }
	0<z_1^-<z_1^+<\cdots<z_{n-2}^-<z_{n-2}^+\ll1
	\text{ such that}\\[1mm]
	Q_\infty^*(z_j^-,a,\beta)
	Q_\infty^*(z_j^+,a,\beta)<0,\\[1mm]
	\operatorname{sgn}
	\Delta_\infty\!\left(\sqrt{z_j^\pm}\right)
	=
	\operatorname{sgn}
	Q_\infty^*(z_j^\pm,a,\beta),
	\qquad j=1,\ldots,n-2.
\end{array}
\right\}.
\]

\begin{lm}
\label{lm:infinity-cyclicity}
If
\(
(a,b,\varepsilon_a,\varepsilon_b)\in\mathcal C_{\rm inf},
\)
then system \eqref{or} has at least \(n-2\) limit cycles bifurcating from
infinity.
\end{lm}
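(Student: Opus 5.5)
The argument is a sign-change count for the weighted displacement $\Delta_\infty$ on the interval $0<R\ll1$, using the data built into the definition of $\mathcal C_{\rm inf}$.

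\textbf{Step 1: locating a root in each bracket.} Given $(a,b,\varepsilon_a,\varepsilon_b)\in\mathcal C_{\rm inf}$, fix the points
\[
0<z_1^-<z_1^+<\cdots<z_{n-2}^-<z_{n-2}^+\ll1
\]
supplied by that definition, and set $R_j^\pm=\sqrt{z_j^\pm}$. Then
\[
\operatorname{sgn}\Delta_\infty(R_j^\pm)=\operatorname{sgn}Q_\infty^*(z_j^\pm,a,\beta),
\qquad
Q_\infty^*(z_j^-)\,Q_\infty^*(z_j^+)<0 .
\]
Hence $\Delta_\infty$ changes sign on each interval $[R_j^-,R_j^+]$. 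The intervals $(R_j^-,R_j^+)$, $j=1,\dots,n-2$, are pairwise disjoint because the $z_j^\pm$ are strictly ordered. By the intermediate value theorem, $\Delta_\infty$ has a zero $R_j^*$ in each of them.

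For this we need $\Delta_\infty$ to be continuous on $(0,R_{n-2}^+]$. I would get this from the exact $R$-equation: for fixed small nonzero parameters, the first return map $P_\infty$ of the exterior period annulus is continuous (indeed analytic) in $R>0$, since the orbits are regular. Here I use the exact return map, not the formal series of Lemma~\ref{lm:weighted-picard-brudnyi}.

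\textbf{Step 2: passing from zeros to limit cycles.} Because $M=6n-3$ is odd, as noted after Lemma~\ref{lm:weighted-picard-brudnyi}, each zero $R_j^*$ is a fixed point of $P_\infty$. Each such fixed point corresponds to a periodic orbit of \eqref{or} at energy of order $R^{-6}$, which is large. These orbits are distinct because the $R_j^*$ are distinct. They are isolated, since $\Delta_\infty$ is analytic and not identically zero: its sign differs at $R_j^-$ and $R_j^+$. So they are limit cycles, at least $n-2$ of them.

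They bifurcate from infinity in the following sense. Letting the perturbation parameters tend to zero inside the cone, with the $z_j^\pm\to0$ as permitted by the "$\ll1$" in the definition, pushes all of them to $R\to0$. If simplicity is wanted, the construction of Lemma~\ref{lm:C-infty-nonempty} gives simple zeros of $Q_\infty^*$. A small generic perturbation within the open conditions defining $\mathcal C_{\rm inf}$ then keeps the sign data, and with it the count, while making the zeros simple.

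\textbf{Main obstacle.} The delicate point is justifying that the sign conditions in $\mathcal C_{\rm inf}$ concern the \emph{exact} $\Delta_\infty$ and not its asymptotic surrogate. This is where one must control the remainder: the fourth- and higher-order parameter terms, and the regular transitions near $x=\pm1$. Since $\mathcal C_{\rm inf}$ is defined by that exact sign equality, the lemma itself needs only the continuity and parity argument above.

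For the separation from the lips, I would observe that all $R_j^*$ lie where $h\gg1/6$. This keeps these cycles disjoint from those of Lemma~\ref{hn5l}, which are needed later in Theorem~\ref{thm-hn}.
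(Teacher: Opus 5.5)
Your argument is the paper's argument. You take the sign data of $\Delta_\infty$ at the test points $R_j^\pm=\sqrt{z_j^\pm}$ from the definition of $\mathcal C_{\rm inf}$, apply the intermediate value theorem on the disjoint brackets, and identify positive zeros of $\Delta_\infty$ with fixed points of $P_\infty$; using the definition's own test points, as you do, is cleaner than the paper's detour through Lemma~\ref{lm:C-infty-nonempty} and newly chosen points. One caveat: do not claim continuity on all of $(0,R_{n-2}^+]$. For nonzero parameters $\deg g=5<2\deg f+1$, so condition~(i) of Theorem~\ref{cf-thm1} fails and the perturbed system is not monodromic at infinity. Hence the return map need not exist for arbitrarily small $R$. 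You only need it on $[R_1^-,R_{n-2}^+]$, where existence at the test points (presupposed by $\mathcal C_{\rm inf}$) together with trapping between two returning orbits gives existence and analyticity.
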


\begin{proof}
By Lemma \ref{lm:C-infty-nonempty}, the polynomial
\[
\bar Q_\infty^*(z)=Q_\infty^*(z,a,\bar\beta)
\]
has \(n-2\) simple positive zeros.  Choose test points
\[
0<z_1^-<z_1^+<\cdots<z_{n-2}^-<z_{n-2}^+\ll1
\]
around these zeros so that
\[
\bar Q_\infty^*(z_j^-)\bar Q_\infty^*(z_j^+)<0,
\qquad j=1,\ldots,n-2.
\]

The definition of \(\mathcal C_{\rm inf}\) gives
\[
\operatorname{sgn}\Delta_\infty(R_j^\pm)
=
\operatorname{sgn}Q_\infty^*(z_j^\pm,a,\beta).
\]
Hence
\[
\Delta_\infty(R_j^-)\Delta_\infty(R_j^+)<0,
\qquad j=1,\ldots,n-2.
\]
By the intermediate value theorem, \(\Delta_\infty\) has at least one zero in
each interval
\(
(R_j^-,R_j^+).
\)
These intervals are pairwise disjoint.  Therefore \(\Delta_\infty\) has at
least \(n-2\) positive zeros.

Since
\[
\Delta_\infty(R)
=
{\frac1M\left(P_{\infty}(R)^M-R^M\right),}
\]
and \(R>0\), the zeros of \(\Delta_\infty\) are exactly the positive fixed
points of ${P_{\infty}}$.  Therefore system \eqref{or} has at least \(n-2\) limit
cycles near infinity.  Here ``near infinity'' refers to the sufficiently
large-energy part of the exterior period annulus.
\end{proof}

%%%%%%%%%%%%%%%%%%%%%%%%%%%%%%%%%%
%%%%%%%%%%%%%%%%%%%%%%%%%%%%%%%%%%
%%%%%%%%%%%%%%%%%%%%%%%%%%%%%%%%%%%%%
%%%%%%%%%%%%%%%%%%%%%%%%%%%%%%%%%%%%%%
\subsection{Proofs of Theorems \ref{t-hn5}--\ref{thm-hn} and Proposition \ref{prop:P3-not-origin-max}}
\begin{proof}[Proof of Theorem \ref{t-hn5}]
First, assume
\(
(a,b,\varepsilon_a,\varepsilon_b)\in\mathbb P_1.
\)
By Lemma \ref{ln5o}, the displacement map near the origin has at
least \(n-1\) positive simple zeros.  Hence the perturbed system has at least
\(n-1\) limit cycles near the origin.

Second, assume
\(
(a,b,\varepsilon_a,\varepsilon_b)\in\mathbb P_2.
\)
{
By Lemma~\ref{lm:infinity-cyclicity}, the exact weighted displacement
$\Delta_\infty$ has at least $n-2$ positive zeros.  These zeros are precisely
the positive fixed points of the first return map $P_{\infty}$.  Therefore the perturbed system has at least $n-2$ limit cycles bifurcating from infinity.
For $n=2$, this contribution is zero.
}

Third, assume
\(
(a,b,\varepsilon_a,\varepsilon_b)\in\mathbb P_3.
\)
Then ${a\in\mathcal C_{\rm lip}^a}$.  By Lemma \ref{hn5l}, the two Melnikov
functions \(M^-\) and \(M^+\) have altogether at least
\[
\eta_{\rm lip}(n)=n+\left[\dfrac n3\right]+\left[\dfrac {n+1}3\right]
\]
simple zeros near the two-cuspidal loop.  Hence the perturbed system has at
least \(\eta_{\rm lip}(n)\) limit cycles near the lips.
\end{proof}

{
\begin{proof}[Proof of Theorem~\ref{thm-p12}]
Choose a point of the nonempty origin admissible set with $a_n\neq0$; this is
possible because the triangular origin construction leaves an open choice of
the final nonzero coefficient.  For this fixed $a$ and $b\in \mathcal C_{\rm inf}$, Lemma~\ref{lm:C-infty-nonempty}
chooses $\beta_n,\ldots,\beta_2$ realizes the required infinity signs after a
common small scaling of the $\beta$-parameters.  Taking
\[
 0<|\beta|^2\ll\varepsilon_a\ll|\beta|\ll1
\]
keeps all quadratic $b$-contributions dominated in the origin construction
while preserving the infinity sign conditions.  Hence
$\mathbb P_1\cap\mathbb P_2\neq\emptyset$.
\end{proof}

\begin{proof}[Proof of Theorem~\ref{thm-hn}]
The set $\mathcal C_{\rm lip}^a$ is a nonempty open cone.  Since
$\{a_n=0\}$ is a proper hyperplane, choose
$a\in\mathcal C_{\rm lip}^a$ with $a_n\neq0$.  For this fixed $a$, choose the
$b\in \mathcal C_{\rm inf}$  and
\[
 0<\varepsilon_b^2\ll\varepsilon_a\ll\varepsilon_b\ll1.
\]
Both local constructions are then valid, so
$\mathbb P_2\cap\mathbb P_3\neq\emptyset$.  Combining Theorem~\ref{t-hn5}(ii)
and (iii) gives
\[
 H_0(2n+1,5)\ge(n-2)+\eta_{\rm lip}(n)=B(n).
\]
\end{proof}
}

\begin{proof}[Proof of Proposition \ref{prop:P3-not-origin-max}]
Let
\[
\mathcal O(a)=
\bigl(\mathcal O_0(a),\mathcal O_1(a),\ldots,\mathcal O_n(a)\bigr)
\]
be the selected first-order coefficient map used in the origin construction.
By the triangular structure of the origin coefficients,
\[
\mathcal O_j(a)
=
\gamma_j a_j+\mathcal L_j(a_0,\ldots,a_{j-1}),
\qquad
\gamma_j\neq0,
\qquad j=0,\ldots,n.
\]

Suppose, to the contrary, that the maximal lips condition and the maximal
origin condition can hold simultaneously for arbitrarily strong dominance.
{Then there exist $\rho_k\to0^+$ and normalized parameter vectors}
\[
a^{(k)}=(a_0^{(k)},\ldots,a_n^{(k)}),
\qquad |a^{(k)}|=1,
\]
such that both dominance conditions hold and, in particular,
\[
0<|\mathcal O_0(a^{(k)})|
\le
\rho_k|\mathcal O_1(a^{(k)})|
\le
\cdots
\le
\rho_k^n|\mathcal O_n(a^{(k)})|.
\]
Passing to a subsequence, we may assume that
\[
a^{(k)}\to a^*,
\qquad |a^*|=1.
\]
Since \(\mathcal O_n\) is linear and the sequence \(a^{(k)}\) is bounded,
\(\mathcal O_n(a^{(k)})\) is bounded.  Hence, for \(j=0,\ldots,n-1\),
\[
|\mathcal O_j(a^{(k)})|
\le
\rho_k^{\,n-j}|\mathcal O_n(a^{(k)})|
\longrightarrow0.
\]
Therefore
\[
\mathcal O_0(a^*)=\cdots=\mathcal O_{n-1}(a^*)=0.
\]
Using the triangular form of \(\mathcal O\), we obtain recursively
\(
a_0^*=a_1^*=\cdots=a_{n-1}^*=0.
\)
Thus
\(a^*=\alpha e_n\), \( \alpha\neq0,\)
where
\[
e_n=(0,\ldots,0,1)\in\mathbb R^{n+1}.
\]

On the other hand, the maximal lips dominance gives
\[
0<|\Lambda_0^+(a^{(k)})|
\le
\rho_k|\Lambda_1^+(a^{(k)})|
\le
\cdots
\le
\rho_k^n|\Lambda_n^+(a^{(k)})|.
\]
As before, since \(\Lambda_n^+\) is linear and \(a^{(k)}\) is bounded, this
implies
\(
\Lambda_0^+(a^*)=0.
\)
, where  {$\Lambda_0^+$ is the first selected plus-side coefficient.}
But
\[
\Lambda_0^+(a^*)
=
\alpha\Lambda_0^+(e_n).
\]
By the reduction formula for the Abelian integrals,
\[
\Lambda_0^+(e_n)
=
\sum_{\nu=0}^2 u_{\nu0}^+P_{n\nu}(h_0)
=
I_n^+(h_0),
\qquad h_0=\frac16.
\]
{
Let $\Gamma_{h_0}^+$ be the boundary curve of the period annulus at
$h_0=1/6$, and let $D^+$ be the bounded domain enclosed by
$\Gamma_{h_0}^+$.  Then Green's formula gives
\[
 I_n^+(h_0)
 =\oint_{\Gamma_{h_0}^+}x^{2n}y\,dx
 =-\iint_{D^+}x^{2n}\,dx\,dy
 \neq0.
\]
}
Therefore
\(
\Lambda_0^+(e_n)\neq0.
\)
Since \(\alpha\neq0\), we get
\(
\Lambda_0^+(a^*)\neq0,
\)
which contradicts \(\Lambda_0^+(a^*)=0\).

Thus the maximal lips strong-dominance condition and the maximal origin
strong-dominance condition cannot hold simultaneously.
\end{proof}

\section*{Acknowledgements}
We would like to express our sincere gratitude to Professor Maoan Han for his insightful comments, which have significantly enhanced the professionalism of this paper. The first, second and fourth authors are partially supported by the National Natural Science Foundation of China (Nos. 12571190, 12322109) and the Science and Technology Innovation Program of Hunan Province (No. 2023RC3040). The third author is partially supported by ERC Grant \#885707.

	\appendix
	
	{
		\section{Appendix}
		\label{app:poincare}
		
		\subsection{Definitions}
		Let us recall the concepts of the Poincar\'{e}  transformation and the Poincar\'{e} disc referenced in \cite{DH, ZDHD}.
		Consider a unit sphere $\Sigma$ tangent to the plane $\pi$, where  $\pi$ is at its south pole. 
		A tangent plane $\prod_x$ is constructed such that it is tangent to  $\Sigma$ at its east pole.		
		By projecting lines from the center of $\Sigma$ to any point on $\pi$ (excluding points on the $y$-axis), we obtain a one-to-one correspondence between points on $\pi $ and points on $\prod_x$. This mapping allows us to project trajectories from $\pi$ (excluding the $y$-axis) onto $\prod_x$, including points at infinity.
		Similarly, for the point at infinity on the y-axis of the phase plane $\pi$, we can also find a corresponding transformation that maps it to the origin of the tangent plane $\prod_y$, where $\prod_y$ is a plane that is tangent to $\Sigma$ and intersects the phase plane $\pi$ perpendicularly at $y=1$.
		\begin{defi}
			In order to study situations at infinity on the phase plane $\pi$, we usually perform the following Poincar\'{e} transformations:
			
			For points on the $x$-axis:
			\[
			x=\frac1z,\qquad y=\frac uz.
			\] 
			
			For points on the $y$-axis:  
			\[
			x=\frac{v}{z},~~y=\frac{1}{z},~~z\neq0.
			\]
			The first transformation is for points on the $x$-axis, and the second one is for points on the $y$-axis.
			These transformations map points at infinity on $\pi$ to finite points on the corresponding tangent planes.
		\end{defi}
		It is worth noting that a line connecting the center of the sphere $\Sigma$ to any point on the phase plane $\pi$ will have an intersection point on the southern hemisphere of $\Sigma$.		
		\begin{defi}
			To better understand the global phase portraits of the trajectory on the phase plane $\pi$, we project $\Sigma$  vertically into $\pi$, forming a unit disc centered at the south pole of  $\Sigma$,  which is called the Poincar\'{e} disc. In this way, $\pi$ can be projected into the interior of the disc. The points at infinity on $\pi$ correspond to pairs of  diametrically opposite points on the circumference of the disc.
		\end{defi}
		
		We now define the concepts of center at infinity and focus at infinity, following \cite{BR}.
		When the equator is invariant, trajectories spiral around the equator, and there are no singularities on the equator or no other trajectories passing through singularities on the equator, the polynomial system  behaves as if the equator were collapsed to  a point $P_{\infty}$.
		\begin{defi}
			$P_{\infty}$ is a center at infinity of a planar differential equation if there is a neighborhood $U$ of the unit circle in the Poincar\'{e} disc that is filled with closed orbits, as shown in {\rm Fig. \ref{cf}(a)}.
		\end{defi}

		\begin{defi}
			$P_{\infty}$ is a stable (resp., an unstable) focus at infinity of a planar differential equation if there exists a neighborhood $U$ around the unit circle of the Poincar\'{e} disc, such that the $\omega$-limit (resp., $\alpha$-limit) set of every orbit in $U$ is the unit circle of the Poincar\'{e} disc, as shown in {\rm Fig. \ref{cf}(b) (resp., (c))}.
		\end{defi}	
		\begin{figure}[!htb]
			\centering
			\subfloat[A center at infinity]
			{\includegraphics[scale=0.7]{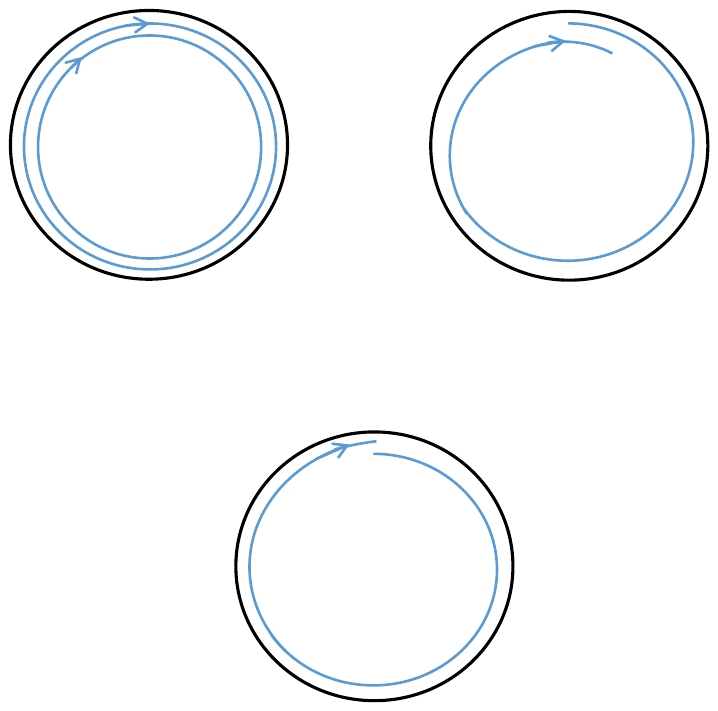}}\hspace{25pt}
			\subfloat[A  stable focus at infinity]
			{\includegraphics[scale=0.7]{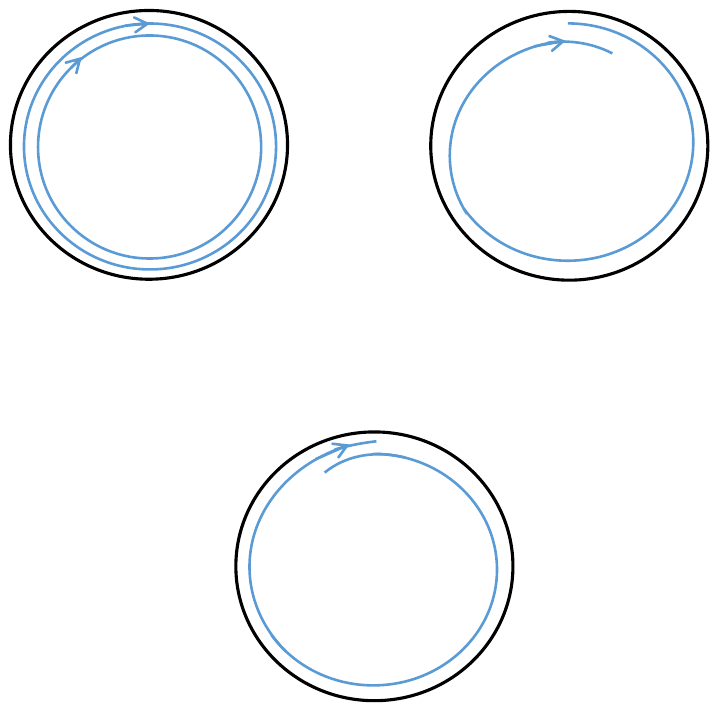}}\hspace{25pt}
			\subfloat[An  unstable focus at infinity]
			{\includegraphics[scale=0.7]{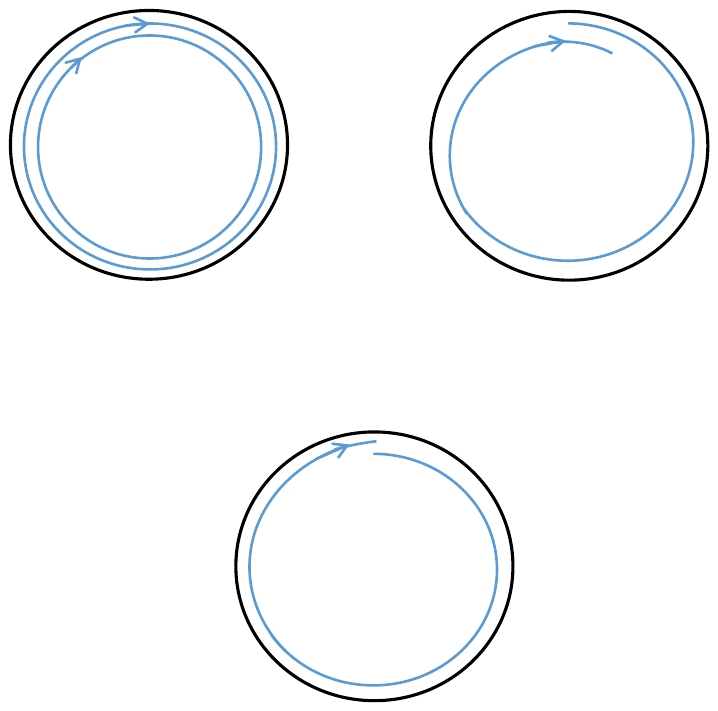}}\hspace{25pt}
			\caption{{\footnotesize The center--focus problem at infinity.}}
			\label{cf}
		\end{figure}

		\begin{defi}
			Consider a planar analytic differential system
			\[
			\dot{x}=P(x,y),\qquad \dot{y}=Q(x,y),
			\]
			with an isolated center at \(O=(0,0)\), and let
			\(J=D(P,Q)(O)\) be its Jacobian matrix. The center is called a
			{\bf linear-type center} if \(J\) has a nonzero pair of purely
			imaginary eigenvalues, equivalently
			\[
			\operatorname{tr}J=0,\qquad \det J>0.
			\]
			It is called a {\bf nilpotent center} if \(J\neq0\) is nilpotent.
			These definitions concern the linear part at the equilibrium; the
			full analytic vector field need not be linear.
		\end{defi}

		\subsection{Auxiliary results}
	We next give a separate realization result for zeros of the corresponding
	Abelian integrals on the regular energy interval
	\(
	h\in(1/6,\infty).
	\)

		{
			\renewcommand{\thethm}{A}
			\begin{thm}
				\label{thm-mid}
				For every integer $n\geq1$, there exist real coefficients
				$
				a_0,\ldots,a_n,
				$
				and sufficiently small parameters
				$
				0<\varepsilon_b\ll\varepsilon_a\ll1,
				$
				such that system \eqref{or} has at least $n$ limit cycles bifurcating from the period annulus
				\(
				\{H=h:\ h\in(1/6,\infty)\}.
				\)
			\end{thm}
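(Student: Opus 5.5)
The plan is to use the first-order Melnikov function on the exterior period annulus. The odd damping part does not contribute there, so the problem reduces to a linear combination of Abelian integrals.

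\textbf{Reduction.} For system \eqref{or} with $0<\varepsilon_b\ll\varepsilon_a\ll1$, the displacement map on the annulus $\{H=h:\ h\in(1/6,\infty)\}$ has the form
\[
d(h)=\varepsilon_a M(h)+O(\varepsilon_b)+O(\varepsilon_a^2),
\qquad
M(h)=\sum_{i=0}^n a_iI_i(h),
\qquad
I_i(h)=\oint_{L_h}x^{2i}y\,dx .
\]
The $b$-terms vanish at first order by the symmetry $\oint x^{2i+1}y\,dx=0$, already used before \eqref{M}. Dividing by $\varepsilon_a$ gives $d/\varepsilon_a=M+O(\varepsilon_b/\varepsilon_a+\varepsilon_a)$. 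By the standard Poincar\'e--Pontryagin argument, every simple zero $h^*\in(1/6,\infty)$ of $M$ therefore persists as a hyperbolic limit cycle near $L_{h^*}$. Counting limit cycles thus reduces to producing $n$ simple zeros of $M$ inside a compact subinterval of $(1/6,\infty)$.

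\textbf{Independence of the integrals.} The main step is to show that $I_0,\dots,I_n$ are linearly independent as functions on $(1/6,\infty)$. I would prove this by their asymptotics as $h\to\infty$. On $L_h$ the $x$-extent is of order $h^{1/6}$ and the $y$-extent is of order $h^{1/2}$. By Green's formula,
\[
I_i(h)=-\iint_{\{H\le h\}}x^{2i}\,dx\,dy ,
\]
so after scaling $x=h^{1/6}X$, $y=h^{1/2}Y$ one gets
\[
I_i(h)=-c_i\,h^{(2i+4)/6}\bigl(1+O(h^{-1/3})\bigr),
\qquad
c_i=\iint_{\{Y^2/2+X^6/6\le1\}}X^{2i}\,dX\,dY>0 .
\]
The leading exponents $(i+2)/3$ are pairwise distinct, so the functions are independent.

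\textbf{Chebyshev-type step.} Independence alone does not give $n$ zeros. What I need is that the evaluation map
\[
a\mapsto\bigl(M(h_1),\dots,M(h_{n+1})\bigr)
\]
is onto for some nodes $h_1<\dots<h_{n+1}$. For $n+1$ linearly independent continuous functions there always exist points at which the matrix $(I_i(h_j))_{i,j}$ is nonsingular; this follows by induction on minors. I would choose the nodes inside a compact interval $[h_-,h_+]\subset(1/6,\infty)$ and then solve for $a$ so that $M(h_j)=(-1)^j$. The intermediate value theorem then gives at least $n$ zeros of $M$, one in each interval $(h_j,h_{j+1})$.

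\textbf{Main obstacle: simplicity of the zeros.} The zeros produced by the sign changes need not be simple. I would handle this in one of two ways. The first is a small perturbation of $a$: the set of $a$ for which $M$ has a multiple zero in $[h_-,h_+]$ is a closed set with empty interior, so a generic choice keeps the sign pattern and makes all zeros simple. The second, cleaner route is to use the asymptotics directly. For large $h$, $M$ behaves like a generalized polynomial in $h^{1/3}$, and choosing $a$ with alternating signs and strong dominance $|a_0|\ll|a_1|\ll\dots\ll|a_n|$ gives $n$ zeros at well-separated large energies $h$, each simple by a dominance argument on $M'$. In that case I must check that the $O(h^{-1/3})$ corrections do not spoil the hierarchy; this is the delicate point. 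The final step is to choose $\varepsilon_b/\varepsilon_a$ and $\varepsilon_a$ small enough, relative to the fixed $a$ and the compact interval $[h_-,h_+]$, that the remainder in $d/\varepsilon_a$ does not destroy any of these $n$ simple zeros.
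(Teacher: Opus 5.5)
Your argument is correct, but it produces the $n$ zeros by a different mechanism than the paper; note that the paper's $J_k$ are your $I_k$.

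\textbf{Where you agree with the paper.} The reduction to $M(h)=\sum_{i=0}^n a_iI_i(h)$, with the odd terms removed by the $x\mapsto -x$ symmetry, is the paper's. So is the growth law $I_i(h)\sim \mathrm{const}\cdot h^{(i+2)/3}$ (its Step~2).

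\textbf{Where you differ.} The paper builds the zeros inductively. For $n=1$ it uses $J_1/J_0\to 1/6$ as $h\to1/6^+$ and $J_1/J_0\to\infty$ as $h\to\infty$. In the inductive step it adds $a_nJ_n$ with two properties:
\begin{itemize}
\item $|a_n|$ is small enough that the old endpoint signs survive;
\item its sign is opposite to $M_{n-1}(H)$ at a point $H$ beyond all previous intervals, so $J_k=o(J_n)$ forces a new sign change past $H$.
\end{itemize}
You use only linear independence, through the interpolation lemma: pick nodes with $\det(I_i(h_j))\neq0$ and prescribe $M(h_j)=(-1)^j$. Your route is shorter and works for any $n+1$ independent functions on an interval, with no boundary limit at $h=1/6$. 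The paper's route gives more information: each new zero is pushed to higher energy by a small top coefficient.

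\textbf{Simplicity of the zeros.} Your first option (a generic perturbation of $a$ that keeps the open endpoint conditions) is what the paper does. Both of you only assert the genericity. It holds because, by analyticity, $(I_i(h))_i$ and $(I_i'(h))_i$ are independent except at finitely many $h$ in $[h_-,h_+]$; otherwise every $I_j/I_0$ would be constant. Hence the set of $a$ with a double zero there has measure zero.

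Simplicity is in fact dispensable. The displacement map is analytic in $h$ and nonzero at the nodes, so each persistent sign change already gives an isolated periodic orbit. The same openness also lets you arrange $a_n\neq0$, which the system requires.

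\textbf{Your second route fails as written.} Take the hierarchy $|a_0|\ll\cdots\ll|a_n|$ and the model $\sum_i a_ic_iu^{i+2}$ with $u=h^{1/3}$. Its roots lie near $u\approx|a_{i-1}c_{i-1}/(a_ic_i)|$, which is small $u$, not large energy, and possibly not even in $h>1/6$. Moreover, in this direction the $O(h^{-1/3})$ correction of $a_iI_i$, of size $|a_i|u^{i+1}$, swamps the term $a_{i-1}c_{i-1}u^{i+1}$. That is exactly the spoiling you worried about.

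The hierarchy must be reversed, $|a_n|\ll\cdots\ll|a_0|$. Better still, impose it on the triangular coefficients of $h^{-(n+2)/3}M(h)$, viewed as an analytic function of $s=h^{-1/3}$ at $s=0$. Then every correction involves only smaller coefficients and is harmless.
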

		}

		In this subsection, we study the system \eqref{or} in the middle energy region $h \in (1/6, \infty)$ which establishes Theorem \ref{thm-mid}. 
		
		Consider the perturbed system
		\begin{equation}
			\dot{x} = y, \quad
			\dot{y} = -x(x^2 - 1)^2 - \varepsilon_a \sum_{k=0}^{n}a_k x^{2k}y - \varepsilon_b \sum_{k=0}^{n}b_k x^{2k+1}y.
			\label{mid-sys}
		\end{equation}
		For $h \in (1/6, \infty)$, the unperturbed orbits $\Gamma(h)$ defined by $H(x,y) = y^2/2 + V(x) = h$ form a continuous family of global periodic cycles enclosing all three equilibria $(\pm 1,0)$ and $(0,0)$. The first-order Melnikov function is given by the Abelian integral
		\begin{equation}
			M(h) = \varepsilon_a\sum_{k=0}^n a_k J_k(h) + \varepsilon_b\sum_{k=0}^n b_k \tilde{J}_k(h), 
		\end{equation}
		where
		$$
		J_k(h) = \oint_{\Gamma(h)} x^{2k} y \, {\rm d}x, \qquad \tilde{J}_k(h) = \oint_{\Gamma(h)} x^{2k+1} y \, {\rm d}x.
		$$
		
		Since the potential function $V(x) = \frac{1}{6}x^6 - \frac{1}{2}x^4 + \frac{1}{2}x^2$ is an even function, the Hamiltonian $H(x,y)$ is symmetric with respect to the $y$-axis. Consequently, the global closed orbits $\Gamma(h)$ are symmetric across the $y$-axis. Because the integrand $x^{2k+1} y$ is an odd function under the transformation $(x, y) \mapsto (-x, y)$, its integral over any symmetric closed curve vanishes identically. Thus, $\tilde{J}_k(h) \equiv 0$ for all $k \ge 0$, and the Melnikov function reduces to:
		\begin{equation*}
			M(h) = \varepsilon_a \sum_{k=0}^n a_k J_k(h).
		\end{equation*}

		\begin{lm}\label{lm-middle}
			For every integer $n\geq1$, there exist 
			$
			a_0,\ldots,a_n
			$
			such that the Abelian integral
			\[
			M_n(h)=\sum_{k=0}^{n}a_kJ_k(h)
			\]
			has at least $n$ simple zeros for
			$
			h\in(1/6,\infty).
			$
		\end{lm}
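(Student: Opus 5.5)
Since $J_k$ are Abelian integrals over the outer ovals $\Gamma(h)$, I will show that $J_0,\dots,J_n$ are linearly independent on $(1/6,\infty)$ and that they form a Chebyshev-type system, or at least one flexible enough that a suitable combination $\sum a_kJ_k$ changes sign $n$ times. I will use their behaviour at infinity rather than prove a Chebyshev property globally. For large $h$ the oval $\Gamma(h)$ is asymptotically the level curve $y^2/2+x^6/6=h$. Under the scaling $x=h^{1/6}X$, $y=h^{1/2}Y$ one gets
\[
J_k(h)=h^{(2k+4)/6}\Bigl(c_k+O(h^{-1/3})\Bigr),\qquad c_k=\oint_{Y^2/2+X^6/6=1}X^{2k}Y\,dX .
\]
Here $c_k\neq0$ by Green's formula, since $c_k=\pm\iint X^{2k}\,dX\,dY$. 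More precisely, each $J_k$ has a convergent expansion in powers of $h^{-1/3}$ (equivalently in $R^2$ with $R=(2h)^{-1/6}$), and the leading exponents $(k+2)/3$ are distinct for distinct $k$.

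The plan is to use these distinct exponents for a dominance argument. Step one is to establish the expansions
\[
J_k(h)=h^{(k+2)/3}\sum_{\ell\ge0}c_{k,\ell}h^{-\ell/3},\qquad c_{k,0}=c_k\neq0 .
\]
I will get them by expanding $y=\sqrt{2h-2V(x)}$ after the scaling, or equivalently from the recurrence in Lemma~\ref{MM}, which expresses every $J_k$ through $J_0,J_1,J_2$ with polynomial-in-$h$ coefficients; the recurrence is valid on the outer ovals as well. Step two uses the triangular structure: the coefficient of $h^{(n+2)/3-j/3}$ in $M_n$ equals $a_{n-j}c_{n-j}$ plus a linear combination of $a_{n-j+1},\dots,a_n$. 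So the map from $(a_0,\dots,a_n)$ to the first $n+1$ coefficients of $h^{-(n+2)/3}M_n(h)$, written as a series in $s=h^{-1/3}$, is triangular with nonzero diagonal, hence invertible. Step three prescribes those coefficients $d_0,\dots,d_n$ with alternating signs and strong dominance $|d_n|\ll\cdots\ll|d_0|$ (the highest power of $s$ has the smallest coefficient). This is the standard argument already used in Lemma~\ref{ln5o}. It yields $n$ sign changes of $h^{-(n+2)/3}M_n$ at points $s_1>\dots>s_n$ in a small interval $(0,s_*)$, that is at large $h$, and hence $n$ simple zeros. Simplicity follows by a small generic perturbation, or directly from the fact that the truncated polynomial has simple roots and the tail is $O(s^{n+1})$ uniformly.

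The main obstacle is justifying uniform control of the tail for $s\in(0,s_*)$, in other words convergence of the expansion in $s$ near $s=0$. One must check that $J_k(h)h^{-(k+2)/3}$ is analytic in $s$ at $0$ despite the orbits crossing $x=\pm1$. This holds because after the scaling the curve is $Y^2/2+X^6/6-s^2X^4/2+s^4X^2/2=1$, an analytic deformation of a smooth oval, so the integral is analytic in $s$. Given that, the rest is the routine triangular argument. Finally, the analytic zeros of $M_n$ persist as limit cycles of \eqref{mid-sys} for small $\varepsilon_a$ by the standard first-order Melnikov theorem; the term $\varepsilon_b\tilde J_k$ vanishes, and any $\varepsilon_b$-dependent higher-order terms are absorbed by taking $\varepsilon_b\ll\varepsilon_a$.
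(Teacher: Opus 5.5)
Your route is sound and differs from the paper's, but Step three is stated backwards and fails as written. Let $d_j$ be the coefficient of $s^j$ in $F(s)=h^{-(n+2)/3}M_n(h)$, and impose $|d_n|\ll\cdots\ll|d_0|$. Then for $0<s<s_*<1$ you have $\bigl|\sum_{j\ge1}d_js^j\bigr|\ll|d_0|$. The tail is $O(|a|s^{n+1})=O(|d_0|s^{n+1})$, because the inverse of your triangular map is bounded. So $F$ keeps the sign of $d_0$ on all of $(0,s_*)$. The roots of the truncated polynomial sit at $|s|\approx|d_{j-1}/d_j|\gg1$, outside the disc of convergence.

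You need the opposite hierarchy, $0<|d_0|\ll|d_1|\ll\cdots\ll|d_n|$ with alternating signs. This is the ordering actually used in Lemma~\ref{ln5o}. A clean way to realize it:
\begin{itemize}
\item prescribe the truncation $\lambda\prod_{j=1}^n(s-\sigma\hat s_j)$ with fixed $0<\hat s_1<\cdots<\hat s_n<1$;
\item solve your triangular system for $a$, which gives $|a|\le K|\lambda|$;
\item observe that $F(\sigma t)/(\lambda\sigma^n)=\prod_j(t-\hat s_j)+O(\sigma)$ in $C^1$ on $[0,1]$, using Cauchy estimates on the analytic tail.
\end{itemize}
This yields $n$ simple zeros at once. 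A minor point: with $s=h^{-1/3}$ the scaled curve is $Y^2/2+X^6/6-sX^4/2+s^2X^2/2=1$. Your version with $s^2,s^4$ corresponds to $s=h^{-1/6}$, which is harmless.

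With this correction your argument is complete, and it is genuinely different from the paper's. The paper inducts on $n$:
\begin{itemize}
\item the base case uses $J_1/J_0\to1/6$ as $h\to1/6^+$ together with $J_1/J_0\to\infty$;
\item the inductive step adds $a_nJ_n$ with $|a_n|$ so small that the old sign changes at finitely many test points survive;
\item one new zero is then produced further out, using only $J_k/J_n\to0$;
\item simplicity comes from an appeal to genericity.
\end{itemize}
This needs only the leading asymptotics $J_k\sim C_kh^{(k+2)/3}$ and no analyticity at $h=\infty$. Incidentally, its successive choices ($a_n$ tiny, new zero at larger $h$) realize exactly the corrected hierarchy.

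Your version requires the full convergent expansion in $h^{-1/3}$. You justify this correctly: the level $1$ stays regular, since the critical values of $X^6/6-sX^4/2+s^2X^2/2$ are $0$ and $s^3/6$. In return you get a one-shot construction and an explicit invertible parametrization of the first $n+1$ coefficients. All $n$ zeros are clustered at large $h$, away from the cusps, and simplicity follows directly rather than through a genericity argument.
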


		\begin{proof}
			We first record two elementary facts about the functions $J_k(h)$. We orient \(\Gamma(h)\) clockwise, so that \(J_k(h)>0\).
			
			\medskip
			
			\medskip
			\noindent
			\textbf{Step 1. The limiting ratio $J_1(h)/J_0(h)$ as $h\to1/6+$.}
			
			Recall that
			\[
			V(x)=\frac{x^6}{6}-\frac{x^4}{2}+\frac{x^2}{2}
			=\frac{(x^2-1)^3+1}{6}.
			\]
			For $h>1/6$, the outer oval $\Gamma(h)$ intersects the $x$-axis at
			$
			x=\pm r(h),
			$
			where $r(h)>1$ is determined by
			$
			V(r(h))=h.
			$
			Using
			\[
			V(x)=\frac{(x^2-1)^3+1}{6},
			\]
			we obtain
			\[
			\frac{(r(h)^2-1)^3+1}{6}=h.
			\]
			Hence
			\[
			r(h)^2=1+(6h-1)^{1/3}.
			\]
			In particular,
			$
			r(h)\to1
			$ {as } $h\to\frac16+.
			$
			
			By the definition of $J_k(h)$, we have
			\[
			J_k(h)
			=
			2\int_{-r(h)}^{r(h)}
			x^{2k}\sqrt{2(h-V(x))}\,{\rm d}x.
			\]
			As $h\to1/6+$, the interval $[-r(h),r(h)]$ converges to $[-1,1]$.
			Moreover, for $|x|\leq1$,
			\[
			\frac16-V(x)
			=
			\frac16-\frac{(x^2-1)^3+1}{6}
			=
			-\frac{(x^2-1)^3}{6}
			=
			\frac{(1-x^2)^3}{6}.
			\]
			Therefore
			\[
			2\left(\frac16-V(x)\right)
			=
			\frac{(1-x^2)^3}{3}.
			\]
			
			We claim that
			\[
			\lim_{h\to1/6+}J_k(h)
			=
			2\int_{-1}^{1}
			x^{2k}
			\sqrt{\frac{(1-x^2)^3}{3}}\,{\rm d}x.
			\]
			Indeed, after extending the integrand by zero outside $[-r(h),r(h)]$, the
			integrands converge pointwise to
			\[
			x^{2k}\sqrt{\frac{(1-x^2)^3}{3}}\,\chi_{[-1,1]}(x),
			\]
			where \(\chi_{[-1,1]}\) denotes the indicator function of \([-1,1]\). Furthermore, for $h$ sufficiently close to $1/6$, all intervals
			$[-r(h),r(h)]$ are contained in a fixed compact interval, say $[-2,2]$, and
			the integrands are bounded by an integrable function on this compact interval.
			Thus the claim follows from the dominated convergence theorem.
			
			Consequently,
			\[
			\lim_{h\to1/6+}J_k(h)
			=
			\frac{2}{\sqrt3}
			\int_{-1}^{1}
			x^{2k}(1-x^2)^{3/2}\,{\rm d}x.
			\]
			
			In particular,
			\[
			\lim_{h\to1/6+}\frac{J_1(h)}{J_0(h)}
			=
			\frac{
				\displaystyle
				\int_{-1}^{1}x^2(1-x^2)^{3/2}\,{\rm d}x
			}{
				\displaystyle
				\int_{-1}^{1}(1-x^2)^{3/2}\,{\rm d}x
			}.
			\]
			
			We compute these two integrals explicitly. First, by the substitution
			\[
			t=x^2,\qquad {\rm d}x=\frac{1}{2}t^{-1/2}\,{\rm d}t,
			\]
			and using the evenness of the integrand,
			\[
			\int_{-1}^{1}(1-x^2)^{3/2}\,{\rm d}x
			=
			2\int_0^1(1-x^2)^{3/2}\,{\rm d}x
			=
			\int_0^1t^{-1/2}(1-t)^{3/2}\,{\rm d}t.
			\]
			Hence
			\[
			\int_{-1}^{1}(1-x^2)^{3/2}\,{\rm d}x
			=
			B\left(\frac12,\frac52\right).
			\]
			Since
			\[
			B(p,q)=\frac{\Gamma(p)\Gamma(q)}{\Gamma(p+q)},
			\]
			we get
			\[
			B\left(\frac12,\frac52\right)
			=
			\frac{\Gamma(1/2)\Gamma(5/2)}{\Gamma(3)}.
			\]
			Using
			\[
			\Gamma\left(\frac12\right)=\sqrt\pi,\qquad
			\Gamma\left(\frac52\right)=\frac{3\sqrt\pi}{4},\qquad
			\Gamma(3)=2,
			\]
			we obtain
			\[
			\int_{-1}^{1}(1-x^2)^{3/2}\,{\rm d}x
			=
			\frac{3\pi}{8}.
			\]
			
			Similarly,
			\[
			\int_{-1}^{1}x^2(1-x^2)^{3/2}\,{\rm d}x
			=
			2\int_0^1x^2(1-x^2)^{3/2}\,{\rm d}x.
			\]
			Again setting $t=x^2$, we get
			\[
			2\int_0^1x^2(1-x^2)^{3/2}\,{\rm d}x
			=
			\int_0^1t^{1/2}(1-t)^{3/2}\,{\rm d}t.
			\]
			Thus
			\[
			\int_{-1}^{1}x^2(1-x^2)^{3/2}\,{\rm d}x
			=
			B\left(\frac32,\frac52\right).
			\]
			Therefore
			\[
			B\left(\frac32,\frac52\right)
			=
			\frac{\Gamma(3/2)\Gamma(5/2)}{\Gamma(4)}.
			\]
			Using
			\[
			\Gamma\left(\frac32\right)=\frac{\sqrt\pi}{2},\qquad
			\Gamma\left(\frac52\right)=\frac{3\sqrt\pi}{4},\qquad
			\Gamma(4)=6,
			\]
			we obtain
			\[
			\int_{-1}^{1}x^2(1-x^2)^{3/2}\,{\rm d}x
			=
			\frac{\pi}{16}.
			\]
			Consequently,
			\[
			\lim_{h\to1/6+}\frac{J_1(h)}{J_0(h)}
			=
			\frac{\pi/16}{3\pi/8}
			=
			\frac16.
			\]
			
			\medskip
			\noindent
			\textbf{Step 2. Growth of $J_k(h)$ as $h\to\infty$.}
			
			We claim that, for every $k\geq0$,
			\[
			J_k(h)\sim C_k h^{(k+2)/3},
			\qquad h\to\infty,
			\]
			where $C_k>0$.
			
			Recall that
			\[
			J_k(h)
			=
			2\int_{-r(h)}^{r(h)}
			x^{2k}\sqrt{2(h-V(x))}\,{\rm d}x,
			\]
			where
			\[
			r(h)^2=1+(6h-1)^{1/3}.
			\]
			In particular,
			$
			r(h)\to\infty$
		as  $h\to\infty,$
			and
			$
			r(h)\sim (6h)^{1/6}.
			$	
			Make the change of variables
			$
			x=r(h)u.
			$
			Then
			$
			{\rm d}x=r(h){\rm d}u,
			$
			and therefore
			\[
			J_k(h)
			=
			2r(h)^{2k+1}\sqrt{2h}
			\int_{-1}^{1}
			u^{2k}
			\sqrt{1-\frac{V(r(h)u)}{h}}
			{\rm d}u.
			\]
			
			We now analyze the integral. Since $V(r(h))=h$, we have
			\[
			h
			=
			\frac{r(h)^6}{6}-\frac{r(h)^4}{2}+\frac{r(h)^2}{2}.
			\]
			For fixed $u\in[-1,1]$,
			\[
			\frac{V(r(h)u)}{h}
			=
			\frac{
				\frac{r(h)^6u^6}{6}
				-\frac{r(h)^4u^4}{2}
				+\frac{r(h)^2u^2}{2}
			}{
				\frac{r(h)^6}{6}
				-\frac{r(h)^4}{2}
				+\frac{r(h)^2}{2}
			}.
			\]
			Dividing numerator and denominator by $r(h)^6/6$, we get
			\[
			\frac{V(r(h)u)}{h}
			=
			\frac{
				u^6-\frac{3u^4}{r(h)^2}+\frac{3u^2}{r(h)^4}
			}{
				1-\frac{3}{r(h)^2}+\frac{3}{r(h)^4}
			}.
			\]
			Since $r(h)\to\infty$, it follows that
			\[
			\frac{V(r(h)u)}{h}\to u^6,
			\qquad h\to\infty.
			\]
			Moreover, this convergence is uniform for $u\in[-1,1]$, because the right-hand
			side is a rational expression in $u$ whose coefficients converge uniformly.
			
			Hence
			\[
			1-\frac{V(r(h)u)}{h}
			\to
			1-u^6
			\]
			uniformly on $[-1,1]$. Indeed, since $|u|\leq1$, we have
			\[
			(r(h)u)^2-1\le r(h)^2-1.
			\]
			Since $s\mapsto s^3$ is increasing on $\mathbb R$,
			\[
			((r(h)u)^2-1)^3+1
			\le
			(r(h)^2-1)^3+1.
			\]
			Thus $V(r(h)u)\le V(r(h))=h$. Therefore
			\[
			u^{2k}
			\sqrt{1-\frac{V(r(h)u)}{h}}
			\to
			u^{2k}\sqrt{1-u^6}
			\]
			uniformly on $[-1,1]$. Consequently,
			\[
			\int_{-1}^{1}
			u^{2k}
			\sqrt{1-\frac{V(r(h)u)}{h}}
			{\rm d}u
			\to
			\int_{-1}^{1}
			u^{2k}\sqrt{1-u^6}\,{\rm d}u.
			\]
			The limiting integral is strictly positive.
			
			Thus
			\[
			J_k(h)
			\sim
			2r(h)^{2k+1}\sqrt{2h}
			\int_{-1}^{1}
			u^{2k}\sqrt{1-u^6}\,{\rm d}u.
			\]
			Since
			$
			r(h)\sim(6h)^{1/6},
			$
			we have
			\[
			r(h)^{2k+1}\sqrt h
			\sim
			(6h)^{(2k+1)/6}h^{1/2}
			=
			6^{(2k+1)/6}h^{(2k+1)/6+1/2}.
			\]
			But
			\[
			\frac{2k+1}{6}+\frac12
			=
			\frac{2k+1}{6}+\frac{3}{6}
			=
			\frac{2k+4}{6}
			=
			\frac{k+2}{3}.
			\]
			Therefore
			\[
			J_k(h)
			\sim
			C_kh^{(k+2)/3},
			\qquad h\to\infty,
			\]
			where
			\[
			C_k
			=
			2\sqrt2\,6^{(2k+1)/6}
			\int_{-1}^{1}
			u^{2k}\sqrt{1-u^6}\,{\rm d}u
			>0.
			\]
			
			In particular, if $0\leq m<k$, then
			\[
			\frac{J_m(h)}{J_k(h)}
			\sim
			\frac{C_m}{C_k}
			h^{(m+2)/3-(k+2)/3}
			=
			\frac{C_m}{C_k}h^{(m-k)/3}
			\to0.
			\]
			Hence
			\[
			\frac{J_m(h)}{J_k(h)}\to0,
			\qquad h\to\infty,
			\qquad 0\leq m<k.
			\]
			
			\medskip
			\noindent
			\textbf{Step 3. The case $n=1$.}
			
			Let
	$\mathcal R(h)=J_1(h)/J_0(h).
			$
			Since $J_0(h)>0$ for $h\in(1/6,\infty)$, the function $\mathcal R$ is continuous on
			$(1/6,\infty)$. By Step 1,
			\[
			\lim_{h\to1/6+}\mathcal R(h)=\frac16.
			\]
			By Step 2,
			\[
		\mathcal R(h)=\frac{J_1(h)}{J_0(h)}\to\infty,
			\qquad h\to\infty.
			\]
			Choose a number $c>1/6$. Then there exists
			\(
			\alpha_1\in(1/6,\infty)
			\)
			sufficiently close to $1/6$ such that
			$
			R(\alpha_1)<c.
			$
			Also, since $\mathcal R(h)\to\infty$, there exists
			$
			\beta_1>\alpha_1
			$
			such that
			$
			R(\beta_1)>c.
			$
			Now take
			$
			a_0=-c,$ 
			$a_1=1.
			$
			Then
			\[
			M_1(h)=a_0J_0(h)+a_1J_1(h)
			=
			J_1(h)-cJ_0(h)
			=
			J_0(h)(\mathcal R(h)-c).
			\]
			Since $J_0(h)>0$, we have
			\(M_1(\alpha_1)<0\), 
			\(M_1(\beta_1)>0.
			\)
			By the intermediate value theorem, there exists
			$
			h_1\in(\alpha_1,\beta_1)
			$
			such that
			$
			M_1(h_1)=0.
			$
			Thus the result holds for $n=1$. Notice that this zero is a sign-changing
			zero.
			
			\medskip
			\noindent
			\textbf{Step 4. Induction hypothesis.}
			
			Assume that the assertion holds for $n-1$, where $n\geq2$. Thus there exist
			coefficients
			$
			a_0,a_1,\ldots,a_{n-1}
			$
			such that
			\[
			M_{n-1}(h)=\sum_{k=0}^{n-1}a_kJ_k(h)
			\]
			has at least $n-1$ sign-changing zeros in $(1/6,\infty)$.
			
			Since these zeros are sign-changing, we may choose pairwise disjoint closed
			intervals
			\[
			[\alpha_1,\beta_1],
			[\alpha_2,\beta_2],
			\ldots,
			[\alpha_{n-1},\beta_{n-1}]
			\subset
			(1/6,\infty)
			\]
			such that
			$
			\beta_j<\alpha_{j+1}$, 
			$j=1,\ldots,n-2,
			$
			and
			\[
			M_{n-1}(\alpha_j)M_{n-1}(\beta_j)<0,
			\qquad j=1,\ldots,n-1.
			\]
			Hence each interval $(\alpha_j,\beta_j)$ contains at least one zero of
			$M_{n-1}$.
			
			\medskip
			\noindent
			\textbf{Step 5. Adding the $n$-th term.}
			
			We now consider
			\[
			M_n(h)=M_{n-1}(h)+a_nJ_n(h),
			\]
			where $a_n\neq0$ will be chosen sufficiently small.
			
			First, we show that the previous $n-1$ sign changes can be preserved. Set
			\[
			A_j=|M_{n-1}(\alpha_j)|,
			\qquad
			B_j=|M_{n-1}(\beta_j)|,
			\qquad j=1,\ldots,n-1.
			\]
			Since
			\[
			M_{n-1}(\alpha_j)M_{n-1}(\beta_j)<0,
			\]
			we have
			$
			A_j>0,
			$ 
			$B_j>0.
			$
			Let
			\[
			\mu=
			\frac12
			\min_{1\leq j\leq n-1}\{A_j,B_j\}.
			\]
			Then
			$
			\mu>0.
			$
			Also define the finite set
			\[
			K=
			\{\alpha_j,\beta_j:\ j=1,\ldots,n-1\}.
			\]
			Since $J_n$ is continuous and positive on $(1/6,\infty)$, we may put
			\[
			L=
			\max_{h\in K}J_n(h)>0.
			\]
			Choose $a_n$ so small that
			\[
			0<|a_n|<\frac{\mu}{L}.
			\]
			Then for every $j=1,\ldots,n-1$,
			\[
			|a_nJ_n(\alpha_j)|<\mu
			\]
			and
			\[
			|a_nJ_n(\beta_j)|<\mu.
			\]
			Therefore the signs of
			\[
			M_n(\alpha_j)=M_{n-1}(\alpha_j)+a_nJ_n(\alpha_j)
			\]
			and
			\[
			M_n(\beta_j)=M_{n-1}(\beta_j)+a_nJ_n(\beta_j)
			\]
			coincide respectively with the signs of
			$M_{n-1}(\alpha_j)$ and $M_{n-1}(\beta_j)$. Consequently,
			\[
			M_n(\alpha_j)M_n(\beta_j)<0,
			\qquad j=1,\ldots,n-1.
			\]
			Thus $M_n$ has at least one zero in each interval
			$
			(\alpha_j,\beta_j)$, $j=1,\ldots,n-1.
			$
			In particular, the previous $n-1$ zeros are preserved.
			
			It remains to create one additional zero to the right of all these intervals.
			Because each $J_k(h)$ is analytic on $(1/6,\infty)$, the function
			$M_{n-1}$ is analytic. Moreover, $M_{n-1}\not\equiv0$, since it has
			nonzero values at the endpoints $\alpha_j,\beta_j$. Hence the zeros of
			$M_{n-1}$ are isolated. Therefore we can choose
			\[
			H>\max_{1\leq j\leq n-1}\beta_j
			\]
			such that
			\(
			M_{n-1}(H)\neq0.
			\)
			
			We now choose the sign of $a_n$ by
			\(
			\operatorname{sgn}(a_n)
			=
			-\operatorname{sgn}(M_{n-1}(H)).
			\)
			At the same time, we choose $|a_n|$ small enough so that both
			$
			0<|a_n|<\frac{\mu}{L}
			$
			and
			\[
			|a_n|J_n(H)<\frac12|M_{n-1}(H)|
			\]
			hold. Then
			\[
			M_n(H)=M_{n-1}(H)+a_nJ_n(H)
			\]
			has the same sign as $M_{n-1}(H)$. Hence
			\[
			\operatorname{sgn}M_n(H)
			=
			\operatorname{sgn}M_{n-1}(H)
			=
			-\operatorname{sgn}(a_n).
			\]
			
			On the other hand, by Step 2, for every $k<n$,
			\[
			\frac{J_k(h)}{J_n(h)}\to0,
			\qquad h\to\infty.
			\]
			Therefore
			\[
			\frac{M_{n-1}(h)}{J_n(h)}
			=
			\sum_{k=0}^{n-1}a_k\frac{J_k(h)}{J_n(h)}
			\to0,
			\qquad h\to\infty.
			\]
			It follows that
			\[
			\frac{M_n(h)}{J_n(h)}
			=
			\frac{M_{n-1}(h)}{J_n(h)}+a_n
			\to a_n,
			\qquad h\to\infty.
			\]
			Since $J_n(h)>0$, for all sufficiently large $h$ we have
		$
			\operatorname{sgn}M_n(h)=\operatorname{sgn}(a_n).
$
			Thus we can choose
			$
			\gamma_n>H
			$
			large enough such that
		$
			\operatorname{sgn}M_n(\gamma_n)=\operatorname{sgn}(a_n).
		$
			Therefore
	$
			M_n(H)M_n(\gamma_n)<0.
		$
			By the intermediate value theorem, there exists
			$
			h_n\in(H,\gamma_n)
			$
			such that
			$
			M_n(h_n)=0.
			$
			This new zero lies to the right of all intervals
			\[
			[\alpha_1,\beta_1],\ldots,[\alpha_{n-1},\beta_{n-1}].
			\]
			Therefore $M_n$ has at least $n$ sign-changing zeros in
			$
			(1/6,\infty).
			$
			
			This completes the induction and proves that, for every $n\geq1$, there exist
			coefficients
			$
			a_0,\ldots,a_n
			$
			such that
			\(
			M_n(h)=\sum_{k=0}^n a_kJ_k(h)
			\)
			has at least $n$ sign-changing zeros in $(1/6,\infty)$.
			
			It remains to ensure that the zeros can be chosen simple. Let
			\(
			I_j=(\alpha_j,\beta_j)\),
			\(j=1,\ldots,n,
			\)
			denote the $n$ intervals constructed above. The endpoint inequalities
			\[
			M_n(\alpha_j)M_n(\beta_j)<0
			\]
			are open conditions on the coefficient vector
			$
			(a_0,\ldots,a_n).
			$
			Hence they remain true under all sufficiently small perturbations of this
			vector. Therefore a sufficiently small perturbation preserves at least one zero
			of $M_n$ in each interval $I_j$.
			
			Since the functions $J_0,\ldots,J_n$ are analytic on $(1/6,\infty)$, the
			family
			\[
			\sum_{k=0}^n a_kJ_k(h)
			\]
			is an analytic finite-dimensional family. For a generic choice of the
			coefficient vector, the zeros in the intervals $I_j$ are simple. Thus, after
			an arbitrarily small perturbation of $(a_0,\ldots,a_n)$, we may assume that
			$M_n$ has at least one simple zero in each $I_j$. Consequently, $M_n$ can
			be chosen to have at least $n$ simple zeros in $(1/6,\infty)$. The induction is complete.
		\end{proof}

}


{\small	\begin{thebibliography}{99}
		\bibitem{ALP}
		W. Aziz, J. Llibre, C. Pantazi, Centers of quasi-homogeneous polynomial differential equations of degree
		three, {\it Adv. Math.}, {\bf 254} (2014), 233-250.
		
		\bibitem{BR}
		T. R. Blows, C. Rousseau, Bifurcation at infinity in polynomial vector fields, {\it J. Differ. Equ.}, {\bf 104}
		(1993), 215-242.
		\bibitem{BL}
		T.R. Blows, N.G. Lloyd, The number of small-amplitude limit cycles of Li\'{e}nard equations, Math. Proc. Camb. Philos. Soc., {\bf95} (1984) 359–366.
		
		\bibitem{BPY}
		M. Briskin, F. Pakovich, Y. Yomdin, Algebraic geometry of the center focus problem for Abel differential equations, {\it Ergod. Theor. Dyn. Syst.}, {\bf 36} (2016), 714-744.
		
		\bibitem{BRY}
		M. Briskin, N. Roytvarf, Y. Yomdin, Center conditions at infinity for Abel differential
		equations, {\it Annals Math.}, {\bf 172} (2010), 437-483.
		\bibitem{Br}
		A. Brudnyi, On the center problem for ordinary differential equations, {\it Amer. J. Math.}, {\bf 128} (2006), 419-451.	
	
	\bibitem{Br1}
A. Brudnyi, An explicit expression for the first return map in the center problem, {\it J. Differ. Equ.}, {\bf 206} (2004), 306–314.
        
		\bibitem{CLZ}
		H. Chen, Z. Li, R. Zhang,  Establishing definitive conditions for global centers in generalized polynomial Li\'{e}nard systems,
		{\it Discrete Contin. Dyn. Syst. Ser. B}, {\bf30} (2025), 1314–1340.
		
		\bibitem{CFLZ}
		H. Chen, Z. Feng, Y. Lu, R. Zhang, Dynamics near infinity of polynomial Li\'{e}nard systems, submitted.	
		
		\bibitem{CLV}	
		I.E. Colak, J. Llibre, C. Valls, Hamiltonian nilpotent centers of linear plus cubic homogeneous polynomial
		vector fields, {\it Adv. Math.}, {\bf 259} (2014), 655-687.
		
		\bibitem{Conti98}	
		R. Conti, Centers of planar polynomial systems. {\it  A review, Matematiche} {\rm LIII}, (1998) 207-240.
		
		\bibitem{Dulac}	
		H. Dulac, D\'etermination et integration d’une certaine classe d\'equations diff\'erentielle ayant par point singulier un
		centre, {\it Bull. Sci. Math. S\'er. (2)}, {\bf 32} (1908), 230-252.
		
		
		\bibitem{DH}
		F. Dumortier, C. Herssens, Polynomial Li\'{e}nard equations near infinity, {\it J. Differ. Equ.}, {\bf 153}
		(1999), 1-29.
		
		\bibitem{Gine2017}
		J.  Gin\'e, Center conditions for polynomial Li\'enard systems, {\it Qual. Theory Dyn. Syst.}, {\bf 16} (2017), 119-26.


        
	
		\bibitem{Hale}
		J. K. Hale, {\it Ordinary Differential Equations}, Robert E. Krieger Publishing Company, New York, 1980.
		
		
		\bibitem{H1}
		M. Han, Liapunov constants and Hopf cyclicity of Li\'{e}nard systems, {\it Ann. Differ. Equ.}, {\bf15} (1999), 113–126.
		
		\bibitem{HR}	
		M. Han, V.G. Romanovski, On the number of limit cycles of polynomial Li\'{e}nard systems, {\it Nonlinear Anal. Real World Appl.}, {\bf14} (2013), 1655-1668.
		
		\bibitem{HSYC}
	M. Han, C. Shu, J. Yang, A. Chian, 	Polynomial Hamiltonian system with a nilpotent critical point,  {\it Adv. Space Res.}, {\bf46} (2010), 521–525.
		

		
		\bibitem{HZY}
		M. Han, H. Zang, J. Yang, Limit cycle bifurcations by perturbing a cuspidal loop in a Hamiltonian system, {\it J. Differ. Equ.}, {\bf246} (2009), 129–163.
		
		
		
		\bibitem{HLX}
		H. He, J. Llibre, D. Xiao, Hamiltonian polynomial differential systems with global centers in the plane, {\it Sci. China
			Math.},  {\bf 48} (2021), 2018.
		
		\bibitem{HXi}	
		H.	He, D. Xiao, On the global center of planar polynomial differential systems and the related problems, {\it J. Appl. Anal. Comput.}, {\bf 12} (2022),  1141-1157.        


        \bibitem{IL}
        Yu. Ilyashenko, Normal forms for local families and nonlocal bifurcations, {\it Astérisque}, {\bf222} (1994),  233-258.
		
		\bibitem{I}
		Yu. Ilyashenko, Centennial history of Hilbert's 16th problems, {\it Bull. Amer. Math. Soc.}, {\bf39} (2002), 254-301.
		
		\bibitem{IK}
		Yu. Ilyashenko,  V. Kaloshin, {\it Bifurcation of planar and spatial polycycles: Arnold's program and its development}, The Arnoldfest (Toronto, ON, 1997), Amer. Math. Soc., Providence, RI, 1999. 
		
		\bibitem{IY}
		Yu. Ilyashenko, S. Yakovenko, {\it Concerning the Hilbert 16th problems},   Amer. Math. Soc. Transl. Ser. 2, Amer. Math. Soc., Providence, RI, 1995.
		
		\bibitem{K}
		V. Kaloshin, The Existential Hilbert 16-th problem and an estimate for cyclicity of elementary polycycles, {\it Invent. Math.,} {\bf151} (2003), 451-512. 
		
		\bibitem{LY}
		L. Li and J. Yang, On the number of limit cycles for a quintic Li\'enard system under polynomial perturbations, {\it J. Appl. Anal. Comput.}, {\bf 9} (2019), 2464-2481.
		
		\bibitem{LMP}
		A. Lins, W. de Melo, C.C. Pugh, On Li\'enard equation, \textit{Lecture Notes in Math.}, \textbf{597} (1977), 335-357.
		
		\bibitem{LMT}
		J. Llibre, A.C. Mereu, M.A. Teixeira, Limit cycles of the generalized polynomial Li\'{e}nard differential equations, {\it Math. Proc. Cambridge Philos. Soc.}, {\bf148} (2010), 363–383.  
		
		
		\bibitem{LV}
		J. Llibre,  C. Valls, Global centers of the generalized polynomial Li\'enard differential systems, {\it J. Differ. Equ.}, {\bf 330} (2022), 66-80.
		
		\bibitem{M}
		R. Moussu, Sym\'{e}trie et forme normale des centres et foyers d\'{e}g\'{e}n\'{e}r\'{e}s, {\it Ergod. Theor. Dyn. Syst.}, {\bf2} (1982), 241-251.
		
		\bibitem{MY}
		Deyue Ma, Junmin Yang, Some properties of Melnikov function and the number of limit cycles near a heteroclinic loop with two nilpotent cusps, {\it Qual. Theor. Dyn. Syst.,} {\bf 24} (2025), 190.
		
		\bibitem{P}
		H. Poincar\'{e}, M\'{e}moire sur les courbes d\'{e}finies par les \'{e}quations diff\'{e}rentielles, {\it J. Math.}, {\bf37} (1881) 375-422; Oeuvres de Henri Poincar\'{e}, Gauthier-Villars, Paris, 1951, 3-84.
		
		
		\bibitem{Smale91}
		S. Smale, Dynamics retrospective: great problems, attempts that failed, \textit{Phys. D}, \textbf{51} (1991), 267-273.
		
		\bibitem{Smale98}
		S. Smale, Mathematical problems for the next century, \textit{Math. Intell.}, \textbf{20} (1998), 7-15.
		
			\bibitem{TY}
			M.A. Teixeira, J. Yang, The center focus problem and reversibility, {\it J. Differ. Equ.}, {\bf 174} (2001), 237-251.
			
		\bibitem{X1}
		Y. Xiong, Bifurcation of Limit cycles by perturbing a class of hyper-elliptic Hamiltonian systems of degree five, {\it J. Math. Anal. Appl.}, {\bf 411} (2014), 559-573.
		
		\bibitem{X2}
		Y. Xiong, On the number of limit cycles near a homoclinic loop with a nilpotent cusp of order $m$, {\it J. Differ. Equ.}, {\bf380}  (2024), 146-180.
		
		\bibitem{XH}
		Y. Xiong, M. Han, New lower bounds for the Hilbert number of polynomial systems of Liénard type, {\it J. Differ. Equ.}, {\bf257} (2014), 2565-2590.
		
				\bibitem{YH}
		J. Yang, X. Hu, Limit cycle bifurcations near a heteroclinic loop with two nilpotent cusps of general order, {\it Internat. J. Bifur. Chaos Appl. Sci. Engrg.}, {\bf 32} (2022), 2250083.
		
		
		\bibitem{ZDHD}
		Z. Zhang, T. Ding, W. Huang, Z. Dong,
		{\it Qualitative Theory of Differential Equations},
		Transl. Math. Monogr. 101,
		Amer. Math. Soc., Providence, RI, 1992.
		
	\end{thebibliography}

}
\end{document}